\newcommand{\bm}[1]{\mbox{\boldmath $#1$}}
\DeclareMathAlphabet{\mathonebb}{U}{bbold}{m}{n}
\newcommand{\1}{\ensuremath{\mathonebb{1}}}
\def\B{{\mathscr B}}
\def\E{{\mathbb{E}_{\gs}}}
\def\F{\mathbb{F}}
\def\IL{{\mathbb{L}}}
\def\N{{\mathbb{N}}}
\def\P{{\mathbb{P}}}
\def\Q{{\mathbb{Q}}} 
\def\R{{\mathbb{R}}}
\def\SS{{\mathbb{S}}}
\def\Z{{\mathbb{Z}}}
\def\A{{\mathscr A}}
\def\B{{{\mathscr B}}}
\def\CC{\mathscr C}
\def\EE{{\mathscr E}}
\def\FF{{\mathscr{F}}}
\def\GG{{\mathscr{G}}}
\def\HH{\mathscr H}
\def\I{{\mathcal I}}
\def\J{{\mathcal J}}
\def\LL{{\mathscr L}}
\def\M{\mathcal M}
\def\PP{\mathcal P}
\def\QQ{\mathcal Q}
\def\RR{{\mathscr R}}
\def\X{{\mathscr{X}}}
\def\S{{\mathbf S}}
\def\WW{\mathcal{W}}
\def\gf{\mathbf{f}}
\def\gg{\mathbf{g}}
\def\gh{\mathbf{h}}
\def\gP{{\mathbf{P}}}
\def\gp{\mathbf{p}}
\def\gq{\mathbf{q}}
\def\gr{{\mathbf{r}}}
\def\gu{{\mathbf{u}}}
\def\gs{{\mathbf{s}}}
\def\gt{{\mathbf{t}}}
\def\gT{{\mathbf{T}}}
\def\w{\mathbf{w}}
\def\gX{\boldsymbol{X}}
\def\gY{{\mathbf{Y}}}
\def\gx{{\mathbf{x}}}
\def\gZ{{\mathbf{Z}}}
\def\gK{\mathbf{K}}
\def\g0{{\mathbf{0}}}
\def\scr{\mathscr}
\def\gmu{\boldsymbol{\mu}}
\def\grho{\boldsymbol{\rho}}
\def\gvrho{\boldsymbol{\varrho}}
\def\gup{\boldsymbol{\Upsilon}}
\def\gW{{\mathbf{W}}}
\def\eps{{\varepsilon}}
\newcommand{\pen}{\mathop{\rm pen}\nolimits}
\def\<{{\langle}}
\def\>{{\rangle}}
\def\Var{{\rm Var}}
\newcommand{\eref}[1]{(\ref{#1})}
\newcommand{\pa}[1]{\left({#1}\right)}
\newcommand{\norm}[1]{\left\|{#1}\right\|}
\newcommand{\cro}[1]{\left[{#1}\right]}
\newcommand{\ab}[1]{\left|{#1}\right|}
\newcommand{\ac}[1]{\left\{{#1}\right\}}
\newtheorem{thm}{Theorem}
\newtheorem{lem}[thm]{Lemma}
\newtheorem{prop}[thm]{Proposition}
\newtheorem{cor}[thm]{Corollary}
\newtheorem{defi}[thm]{Definition}
\newtheorem{ass}[thm]{Assumption}
\def\1{1\hskip-2.6pt{\rm l}}
\def\telque{\big |}
\def\Y{\FF^{S}(\gs,\overline \gs,y)}
\def\Ent{{\overline\HH}}
\def\epp{{\epsilon}}
\newcommand{\Etc}[2]{#1_1,\ldots,#1_{#2}}
\begin{document}


\title{A new method for estimation and model selection:\\
{\Large$\rho\,$}-Estimation}

\author{Y. Baraud}
\address{Univ. Nice Sophia Antipolis, CNRS,  LJAD, UMR 7351, 06100 Nice, France.}
\email{baraud@unice.fr}
\author{L. Birg\'e}
\address{Sorbonne Universit\'es, UPMC Univ.\ Paris 06, CNRS - UMR 7599, LPMA - Case courrier 188, 75252 Paris Cedex 05, France.}
\email{lucien.birge@upmc.fr}
\author{M. Sart}
\address{Univ Lyon, UJM-Saint-Etienne, CNRS, Institut Camille Jordan UMR 5208, F-42023, SAINT-ETIENNE, France.}
\email{mathieu.sart@univ-st-etienne.fr}
\date{\today}

\begin{abstract}
The aim of this paper is to present a new estimation procedure that can be applied in various 
statistical frameworks including density and regression and which leads to both robust and 
optimal (or nearly optimal) estimators. In density estimation, they asymptotically coincide with 
the celebrated maximum likelihood estimators at least when the statistical model is regular 
enough and contains the true density to estimate. For very general models of densities, 
including non-compact ones, these estimators are robust with respect to the Hellinger 
distance and converge at optimal rate (up to a possible logarithmic factor) in all cases 
we know. In the regression setting, our approach improves upon the classical least squares 
in many respects. In simple linear regression for example, it provides an estimation of the 
coefficients that are both robust to outliers and simultaneously rate-optimal (or nearly 
rate-optimal) for a large class of error distributions including Gaussian, Laplace, Cauchy 
and uniform among others.
\end{abstract}

\maketitle

\section{Introduction\label{I}}
The primary scope of this paper was to design a new and more or less universal estimation 
method for the regression framework where we observe $n$ independent real random variables 
$X_{1},\ldots,X_{n}$ of the form $X_i=f_i+\varepsilon_i$ where the $f_i$ are the unknown 
parameters of interest and the $\varepsilon_i$ i.i.d.\ real random errors with a partially unknown 
distribution which may be quite different from the usual Gaussian one. The problem arose
from a question by Oleg Lepski to the first author during his visit to Nice in January 2012. This
question was about the regression framework when the errors have rather unusual distributions, 
in which case the classical least squares method can be far from optimal. 
That was the starting point of our study which finally resulted in a much broader approach
and the design of a new class of estimators with several remarquable and partly unexpected
properties.

The regression frameworks that we shall consider here are of the form $Z_i=f(W_i)+\varepsilon_i$ 
for $1\le i\le n$, where the $Z_i$ are real observations, the $\varepsilon_i$ i.i.d.\ errors with 
density $p$ with respect to the Lebesgue measure $\mu$ on $\R$, $f$ is an unknown 
function from $\WW$ to $\R$ and the $W_i\in\WW$ are explanatory variables which may either 
be deterministic, in which case $W_i=x_i$ and $f(x_i)=f_i$, or random and i.i.d. This leads to 
the two classical regression frameworks on $\mathbb{R}^n$ that we shall consider in the sequel:
\[
X_i=f_i+\varepsilon_i\qquad\mbox{and}\qquad X_i=(W_i,Y_i)\quad\mbox{with}\;\;
Y_i=f(W_i)+\varepsilon_i\quad\mbox{for }1\le i\le n.
\]
The first case corresponds to {\em fixed design regression} for which $X_i$ has density 
$p(\cdot-f_i)$ with respect to $\mu$, the second case to {\em random design regression} 
with i.i.d.\ random explanatory variables $W_i$ independent of the $\varepsilon_i$. 

Both examples can be set in the more general framework of independent observations with 
a distribution that may vary with $i$ and that we shall now describe more precisely. We 
observe $n$ independent random variables $X_{1},\ldots,X_{n}$ each $X_i$ with an 
unknown distribution $P_i$ on a measurable space ${(\X,\A)}$ and our aim is to use the 
vector $\gX=(X_{1},\ldots,X_{n})$ of observations to estimate their joint distribution 
$\gP=\bigotimes_{i=1}^nP_i$, that is to find a random approximation $\widehat{\gP}(\gX)=
\bigotimes_{i=1}^n\widehat{P}_i(\gX)$ of $\gP$ based on the observed variables $X_i$. 
To measure the quality of the approximation of $\gP$ by 
$\widehat{\gP}$ we need a distance on the set of product measures on $\X^n$. It is known 
from Le~Cam's work --- see for instance Le~Cam~\citeyearpar{MR856411} and Le~Cam and Yang~\citeyearpar{MR1066869} --- 
that a very convenient one is that (here denoted by $\gh$) derived from the 
Hellinger distance $h$ and introduced in Le~Cam~\citeyearpar{MR0395005}:
\[
\gh^2\left(\bigotimes_{i=1}^nP_i,\bigotimes_{i=1}^nQ_i\right)=\sum_{i=1}^nh^2(P_i,Q_i)=
\frac{1}{2}\sum_{i=1}^n\int\left(\sqrt{dP_i}-\sqrt{dQ_i}\right)^2.
\]
We recall that the Hellinger distance $h$ is the bounded distance on the set of all probabilities 
on $\X$ given by
\begin{equation}
h^2(R,T)=\frac{1}{2}\int\left(\sqrt{dR/d\mu}-\sqrt{dT/d\mu}\right)^2d\mu\le1,
\label{Eq-Hel9}
\end{equation}
where $\mu$ is an arbitrary positive measure which dominates both $R$ and $T$, the result 
being independent of the choice of $\mu$. This is why one writes symbolically $h^2(R,T)=
(1/2)\int(\sqrt{dR}-\sqrt{dT})^2$. 

The distance $\gh$ between product measures provides an indicator of the quality of an 
estimator $\widehat{\gP}$ of $\gP$ via their distance $\gh(\widehat{\gP},\gP)$ and our aim 
is to design estimators $\widehat{\gP}$ such that, with a probability close to one, 
$\gh(\widehat{\gP},\gP)$ is as small as possible. We shall in particular 
often measure the quality of $\widehat{\gP}$ by its {\em quadratic risk}\, $\mathbb{E}_{\gP}
[\gh^2(\widehat{\gP}(\gX),\gP)]$ which is a bounded function of 
$\gP$ since $\gh\le\sqrt{n}$, the notation $\mathbb{E}_{\gP}$ meaning that $\gX$ has the 
distribution $\gP$. As previously mentioned, we shall put a special emphasis on regression 
frameworks on $\mathbb{R}^n$ and on the particularily simple example of a constant 
function $f$, which corresponds to a {\em translation family} for i.i.d.\ observations.

\subsection{Translation families\label{I0}}
The simplest case of a general regression framework $Z_i=f(W_i)+\varepsilon_i$ occurs 
when the function $f$ is constant and equal to $\theta\in\Theta\subset\mathbb{R}$. It also 
corresponds to fixed design regression with $f_i=\theta$ for all $i$, in which case the 
observations $X_i$ are i.i.d.\ with density $p(\cdot-\theta)$ and distribution $P_{\theta}$, 
$\gP=\gP_{\theta}=P_{\theta}^{\otimes n}$ and $\gh^2(\gP_{\theta},\gP_{\theta'})=
nh^2(P_\theta,P_{\theta'})$ for $\theta,\theta' \in \Theta$. When $p$ is known, this is a 
parametric family with a single translation parameter $\theta$ for which the problem is 
to find an estimator $\widehat{\theta}_n=\widehat{\theta}_n(\gX)$ for $\theta$ so that 
$\widehat{\gP}=P_{\widehat{\theta}_n}^{\otimes n}$.
For all densities $p$ and $\Theta$ an interval of positive length, it follows from 
Le~Cam \citeyearpar{MR0334381} that, for all $\pi\in(0,1/2)$ and some constant $c(\pi)$ 
depending on $\pi$,   
\[
\sup_{\theta\in\Theta}\gP_{\theta}\left[h\left(P_{\widehat{\theta}_n},P_{\theta}\right)\ge c(\pi)n^{-1/2}\right]\ge\pi,\quad\mbox{whatever the estimator }\widehat{\theta}_n.
\]
When there exists a local relationship between the parameter distance and the corresponding
Hellinger distance of the form 
\[
a|\theta-\theta'|^\alpha\le h(P_\theta,P_{\theta'})\le A|\theta-\theta'|^\alpha\quad\mbox{for }
|\theta-\theta'|\le b\quad\mbox{with}\quad a,A,b>0,\;\;0<\alpha\le1,
\]
one cannot expect to build an estimator $\widehat{\theta}_n$ with convergence rate to 
the true $\theta$ better than $n^{-1/(2\alpha)}$ which we shall call the {\em optimal rate}.

In the past, various procedures have been considered for estimating $\theta$. Let us assume that 
the density $p$ is symmetric and have a look at three among the most classical ones:

i)  the {\em empirical mean} $\overline{X}_n=n^{-1}\sum_{i=1}^nX_i$ which is the minimizer with respect to $\theta$ of the squared empirical error $\sum_{i=1}^n(X_i-\theta)^2$ (least squares estimator);

ii) the {\em empirical median} $X_{(n/2)}$ or $X_{((n+1)/2)}$ according to the parity of $n$, where $X_{(i)}$ denotes the $i$-th element of the set $\{X_1,\cdots,X_n\}$ in ascending order;

iii) the {\em maximum likelihood estimator} (MLE for short) which maximizes the {\em likelihood function} $\theta\mapsto\prod_{i=1}^np(X_i-\theta)$.

Unfortunately, none of them is really satisfactory in the sense that each one may behave quite 
poorly for some densities $p$ as shown by the following examples. The empirical mean is only 
suitable when $\Bbb{E}[\varepsilon_i]=0$ and $\Bbb{E}[\varepsilon_i^2]<+\infty$ as in the
Gaussian case: $p(x)=\left(2\pi\sigma^2\right)^{-1/2}\exp\left[x^2/\left(2\sigma^2\right)\right]$ 
where it reaches the optimal rate $n^{-1/2}$. But it fails miserably when the density $p$ is 
Cauchy --- $p(x)=\left[\pi\left(1+x^2\right)\right]^{-1}$ --- in which case one could use instead 
the empirical median and get again the optimal rate $n^{-1/2}$. When the density 
$p=(1/2)\1_{[-1,1]}$ is uniform, both methods provide the rate $n^{-1/2}$ while the MLE 
converges at the optimal rate $n^{-1}$. It also provides the rate $n^{-1/2}$ for our two previous 
examples but, if $p(x)=(1/4)|x|^{-1/2}\1_{[-1,1]}(x)$ the likelihood function is unbounded and 
the MLE does not even exist! In this case the empirical mean and median do exist but none 
of them provides the optimal rate which is, in this last case, $n^{-2}$. It follows that none of 
the three methods reaches the optimal rate for all possible densities $p$. Actually, each $p$ 
requires the choice of a specific method depending on the characteristics of $p$.

There is, moreover, an additional problem which is due to the fact that our translation family 
is actually only a {\em model}, that is an approximation of the truth. This means that we 
pretend that our observations $X_i$ are i.i.d.\ with density $p(\cdot-\theta)$ and joint 
distribution $P_{\theta}^{\otimes n}$ for some unknown parameter $\theta\in\Theta$, 
therefore dealing with the statistical model 
\begin{equation}
\overline{S}=\left\{\gP_\theta=P_{\theta}^{\otimes n},\;\theta\in\Theta\right\}
\qquad\mbox{with}\qquad(dP_{\theta}/d\mu)=p(\cdot-\theta),
\label{Eq-trans}
\end{equation}
although the true distribution is $\gP=\bigotimes_{i=1}^nP_i$. Of course, if the distance 
$\inf_{\theta\in\Theta}\gh\left(\gP,P_{\theta}^{\otimes n}\right)$ from $\gP$ to our model is 
large, there is no hope to get a good estimation of $\gP$ by some $P_{\widehat{\theta}_n}
^{\otimes n}$. But when our model provides a reasonable approximation of $\gP$, 
one would like to derive an estimator $P_{\widehat{\theta}_n}^{\otimes n}$ which remains 
close to $\gP$. This is the so-called problem of {\em robustness} of estimators. It is known, 
for instance, that the replacement of the true $p$ by an approximation $q$, even if 
$h(p\cdot\mu,q\cdot\mu)$ is small, may considerably affect the value of the corresponding 
moments and makes methods based on moments estimation fail. The same phenomenon 
may happen with the MLE which should  be used with great caution as emphasized by 
Le~Cam~\citeyearpar{Lecam-MLE}.

The situation does not improve when we consider more general regression problems and 
it is well-known that both the method of least squares (the multidimensional analogue of 
the empirical mean) and the MLE suffer from the same weaknesses as for translation families. 

\subsection{What would be desirable?\label{I2}}
In view of the conclusions of the previous section, a natural question arises: is it possible to 
build an estimator that can be simultaneously optimal (in some suitable sense) when the 
model is true and also {\em robust}, that is not too sensitive to small differences between 
the true distribution and the chosen model? There are actually two distinct problems to be 
solved simultaneously: one of optimality and one of robustness. 

Let us first focus on optimality. We recall that we want to estimate an unknown distribution 
$\gP$ on $\X^n$ which belongs to the set $\PP$ of all product distributions 
$\bigotimes_{i=1}^nP_i$, that is of all possible joint distributions for the independent random 
variables $X_i$ and that we shall measure the quality of an estimator $\widehat{\gP}=
\bigotimes_{i=1}^n\widehat{P}_i\in\PP$ by its quadratic risk $\mathbb{E}_{\gP}
[\gh^2(\widehat{\gP},\gP)]\le n$. Most of the time we shall assume some prior information 
on $\gP$, for instance that it derives from some regression framework. We shall express this 
prior information by assuming that $\gP=\gP_{\gs}$ for some unknown parameter $\gs$ 
belonging to some given parameter set $\scr{S}$, often some subset of a linear space, 
either Euclidean (finite dimensional) or functional (infinite dimensional), with a one-to-one 
parametrization $\gs\mapsto\gP_{\gs}$. It follows that the metric $\gh$ on $\{\gP_{\gs},
\gs\in\scr{S}\}$ can be transfered to $\scr{S}$ and we shall write indifferently 
$\gh(\gP_{\gt},\gP_{\gu})$ or $\gh(\gt,\gu)$. Unfortunately, in many situations, the set
$\{\gP_{\gs}, \gs\in\scr{S}\}$ is too large for the existence of an estimator 
$\widehat{\gP}=\gP_{\widehat\gs}$ such that
\[
\sup_{\gs\in\scr{S}}\E\cro{\gh^{2}(\gP_{\gs},\gP_{\widehat\gs})}=\sup_{\gs\in\scr{S}}
\E\cro{\gh^{2}(\gs,\widehat{\gs})}\quad\left(\mbox{with }\:\E=\Bbb{E}_{\gP_\gs}\right)
\]
be substantially smaller than its maximal value $n$, so that the maximal quadratic risk does 
not provide a useful information on the quality of $\widehat{\gs}$ unless one focuses 
on some specific values of $\gs\in\scr{S}$. 

Following Birg\'e~\citeyearpar{MR2219712} and Baraud~\citeyearpar{MR2834722} 
but also much earlier contributions including the sieves method of Grenander~
\citeyearpar{MR599175} or the ones in Birg\'e and Massart~
\citeyearpar{MR1462939,MR1653272} and Barron, 
Birg\'e and Massart~\citeyearpar{MR1679028} among many others, 
our approach in this paper is based on {\em models}, that is subsets $\overline{S}$ of 
$\mathscr{S}$ of moderate size in order that there exists an estimator $\widehat{\gs}$ 
such that $\sup_{\gs\in \overline{S}}\E\cro{\gh^{2}(\gs,\widehat{\gs})}\ll n$. 
This means that we do as if $\gs$ did belong to $\overline{S}$ and design estimators 
$\widehat\gs(X_{1},\ldots,X_{n})$ with values in $\overline{S}$, although we do not 
necessarily assume that this is true. With this approach, a good indicator of the quality of 
the estimator $\widehat{\gs}$ under the assumption that $\gs$ truely belongs to 
$\overline{S}$ is its maximal risk $\sup_{\gs\in\overline{S}}\E\cro{\gh^{2}(\gs,\widehat\gs)}$,
as compared to the so-called {\em minimax risk} over $\overline{S}$, $R_M(\overline{S})=
\inf_{\widehat{\gs}}\sup_{\gs\in \overline{S}}\E[\gh^2(\gs,\widehat{\gs})]$ where the
infimum runs over all possible estimators $\widehat{\gs}$. An
{\em approximately optimal} estimation procedure $\widetilde{\gs}$ should satisfy 
\[
\sup_{\gs\in \overline{S}}\E
\cro{\gh^{2}(\gs,\widetilde\gs)}\le C_0R_M(\overline{S}),
\]
where $C_0$ (as well as all $C_j$'s with $j\in\mathbb{N}$ that we shall introduce below) 
denotes a positive universal constant (independent of $n$ and $\overline{S}$ and, ideally, 
not large).

Nevertheless, since there is no way to check precisely whether the true parameter value 
$\gs$ does actually belong to $\overline{S}$, one would like that the previous bound 
remains approximately true if the model $\overline{S}$ is slightly misspecified, that is when 
$\gs\not\in\overline{S}$ but $\gh(\gs,\overline{S})=\inf_{\gt\in\overline{S}}\gh(\gs,\gt)$ is small, 
in which case the estimator is robust. It is clear that whatever the estimator $\widehat{\gs}
\in \overline{S}$, $\E\left[\gh^2\left(\gs,\widehat{\gs}\right)\right]\ge\inf_{\gt\in \overline{S}}
\gh^2\left(\gs,\gt\right)$. In view of this fact and the definition of the minimax risk, an 
approximately optimal and robust estimator $\widetilde{\gs}$ based on the model 
$\overline{S}$ should satisfy
\begin{equation}
\E\cro{\gh^{2}(\gs,\widetilde\gs)}\le C_1\max\ac{R_M(\overline{S}),\ \inf_{\gt\in \overline{S}}
\gh^2\left(\gs,\gt\right)}\quad\mbox{for all }\gs\in\scr{S}.
\label{Eq-Opt}
\end{equation}
As already mentioned, most popular methods of estimation, in particular those based 
on moments estimation or the MLE, are not robust and minimum contrast estimators based 
on the $\IL_{2}$-contrast as well. As to classical methods which do possess some robustness 
properties with respect to misspecification, like methods based on the $\IL_{1}$-contrast 
in regression or quantile estimation, they may unfortunately lead to sub-optimal rates of estimation.

\subsection{The search for robust and optimal  estimators\label{I1}}
Even for the simple case of a translation parameter, finding robust estimators is definitely 
not obvious. An old result in this direction is from P. Huber~\citeyearpar{MR0161415}.
An important research activity about robustness developed in the 60's and 70's resulting 
in a large number of publications. For a summary, we refer the interested reader to 
Huber~\citeyearpar{MR606374}.

Attempts to design ``optimal" procedures of estimation in various settings have been 
made by Le~Cam \citeyearpar{MR0334381,MR0395005}, 
Birg\'e~\citeyearpar{MR722129,MR2219712}, Yang and Barron~
\citeyearpar{MR1742500} or Baraud~\citeyearpar{MR2834722} and the construction 
that we shall present here is in the line of these previous papers. Actually, the problem 
of estimating $\theta$ in the translation model as well as many other problems in density 
estimation can essentially be solved, modulo some weak assumptions, by using the 
methods developed in these papers. 

Things become more delicate when we turn to the regression framework and, more generally, 
to estimating the distribution of independent but not necessarily i.i.d.\ distributions, for which 
the number of unknown parameters $f_i$, $1\le i\le n$, is a priori equal to the number of 
observations. 

\subsubsection{T-estimators on a model\label{I1a}}
Birg\'e~\citeyearpar{MR2219712}, following ideas from Le~Cam~\citeyearpar{MR0334381,
MR0395005} and generalizing earlier constructions of Birg\'e~\citeyearpar{MR722129,
MR762855}, derived a general procedure for 
building new estimators (called T-estimators) that satisfy (\ref{Eq-Opt}) under some 
compactness assumptions on the model $\overline{S}$. The idea is first to build a finite 
discretization $S_\eta$ at scale $\eta$ of $\overline{S}$ (with respect to $\gh$) and then 
an estimator $\widetilde\gs$ with values in $S_\eta$, based on tests between balls 
in the metric space $(\scr{S},\gh)$, centered at the points of $S_\eta$. These are actually 
robust tests between the points of $S_\eta$, as described for instance in 
Birg\'e~\citeyearpar{Robusttests}, so that the estimator inherits from these tests 
its robustness properties. The performance of the estimator is driven by a function 
from $(0,+\infty)$ into $[1/2, +\infty]$, called the {\em metric dimension} 
$\widetilde{D}_{\overline{S}}$ of $\overline{S}$, that characterizes the number of points of 
$S_\eta$ that are contained in balls of radius $x\eta$ for $x\ge2$. As shown in Birg\'e~\citeyearpar{MR2219712}, with a convenient choice of $S_\eta$ the T-estimator 
$\widetilde\gs$ satisfies an analogue of (\ref{Eq-Opt}), namely that, for all $\gs\in\scr{S}$,
\begin{equation}
\E\cro{\gh^{2}(\gs,\widetilde\gs)}\le C_3\max\{\eta^2,
\inf_{\gt\in \overline{S}}\gh^2(\gs,\gt)\}\quad\mbox{if }\;\eta\mbox{ satisfies }
\eta^2\ge C_2\widetilde{D}_{\overline{S}}(\eta).
\label{Eq-Opt1}
\end{equation}
This implies in particular that $R_M(\overline{S})\le C_3\eta^2$ and that (\ref{Eq-Opt}) holds 
provided that $R_M(\overline{S})\ge C_4\eta^2>0$. In particular, if the function $\widetilde{D}_{\overline{S}}$ is bounded by the constant $\overline{D}_{\overline{S}}$ (the {\em finite 
dimensional case}), one can set $\eta^{2}=C_2\overline{D}_{\overline{S}}$ and the minimax 
risk $R_M(\overline{S})$ is bounded by $C_5\overline{D}_{\overline{S}}$.

\subsubsection{Several models\label{I1c}} 
As we can immediately see from (\ref{Eq-Opt1}) the choice of the model is crucial for the 
performance of $\widetilde{\gs}$ at a given parameter $\gs$. A good model should have 
a small dimension and be close to $\gs$. Unfortunately, since $\gs$ is unknown, choosing 
a good model from scratch is possible only under rather precise information on $\gs$. 
The solution provided by T-estimators is to deal with a large family $\overline{\SS}$ of 
models $\overline{S}$ and extend the construction of T-estimators to the union of all models 
contained in $\overline{\SS}$. This is precisely what has been done in
Birg\'e~\citeyearpar{MR2219712}, resulting in the following risk bound in the case of 
models of finite dimension,
\[
\E\cro{\gh^{2}(\gs,\widetilde\gs)}\le C_6\inf_{\overline{S}\in\overline{\SS}}\max\{\overline{D}_{\overline{S}},\Delta(\overline{S}),\inf_{\gt\in \overline{S}}\gh^2(\gs,\gt)\},
\]
where the weight function $\Delta$ satisfies $\sum_{\overline{S}\in\overline{\SS}}\exp[-\Delta(\overline{S})]\le1$. As compared to (\ref{Eq-Opt1}) we see that we get the same risk 
bound as the one corresponding to the best model, apart from the extra $\Delta(\overline{S})$ 
term which describes the complexity of the family $\overline{\SS}$. The larger this family, the 
larger the weights $\Delta(\overline{S})$. For simple families, one can choose 
$\Delta(\overline{S})\le C_7\overline{D}_{\overline{S}}$ and only loose a constant factor 
as compared to (\ref{Eq-Opt1}). Otherwise there is some additional loss which is sometimes unavoidable. The advantage of model selection is that it allows to handle many models 
simultaneously with the hope that one of them will be quite suitable for the estimation of 
the unknown parameter $\gs$. For a detailed discussion about model selection, we refer 
the reader to Barron, Birg\'e and Massart~\citeyearpar{MR1679028} or Birg\'e and Massart~\citeyearpar{MR2288064}.

\subsubsection{A history of dimensions\label{I1b}} 
The notion of metric dimension $\widetilde{D}_{\overline{S}}$ actually applies to subsets 
of any metric space, not only to $(\scr S,\gh)$, and it is actually the right notion that is 
needed to control the performance of T-estimators. It is one possible way of measuring 
the massiveness of a model $\overline{S}$ but definitely not the only one. Others have 
been developed earlier, the simplest one being the ordinary dimension of a Euclidean 
space, but one can also mention Kolmogorov's entropy --- see Kolmogorov and Tikhomirov~\citeyearpar{MR0124720} --- among other possible notions. The fact that there is often 
some close relationship between the minimax risk over $\overline{S}$ and some notion of 
dimension of $\overline{S}$ has been known for a long time, the simplest example being 
the estimation of the mean of a Gaussian vector with identity covariance matrix when this 
mean is assumed to belong to a $D$-dimensional linear space $\overline{S}$. Similar 
results hold for parametric statistical estimation problems which are regular enough. Upper 
bounds for the minimax risk based on some earlier (more restrictive) version of metric 
dimension were developed by Le~Cam~\citeyearpar{MR0334381,MR0395005} 
and generalized by Birg\'e~\citeyearpar{MR722129} together with the connection to lower 
bounds previously developed by Ibragimov and Has'minskii ~\citeyearpar{Ibrag-Hasm}.
The performance of the MLE on a parameter set $\overline{S}$ may also be deduced from 
some suitable notion of dimension, namely entropy with bracketing --- see van de Geer~\citeyearpar{MR1324688} and Birg\'e and Massart~\citeyearpar{MR1240719} --- and the 
concentration of the posterior distribution in Bayesian frameworks as well --- see Ghosal, 
Gosh and van der Vaart~\citeyearpar{MR1790007} ---. 

The superiority of the notion of metric dimension is due to the fact that it is a weaker notion 
than entropy. For instance, the metric dimension of a Euclidean space is roughly equal to 
its ordinary dimension while its entropy is infinite. The entropy of a compact set automatically 
controls its metric dimension while the reciprocal is not true. Nevertheless it is not possible to 
characterize the minimax risk over a model $\overline{S}$ by its metric dimension and we do 
not know of any notion ${\mathcal D}_{\overline{S}}$ such that
\begin{equation}
c\varphi({\mathcal D}_{\overline{S}})\le R_M(\overline{S})\le 
C\varphi({\mathcal D}_{\overline{S}})\quad\mbox{with }0<c<C,
\label{Eq-Opt2}
\end{equation}
for some suitable function $\varphi$, at least under very mild assumptions on $\overline{S}$. 

\subsection{From T- to $\rho$-estimators\label{I4}}   
The construction of estimators from tests between balls centered on the points of some 
finite set, which is due to Le~Cam~\citeyearpar{MR0334381}, has been extended to 
countable sets and developed at length in the form of T-estimators by Birg\'e~
\citeyearpar{MR2219712}. Then, in an attempt to build a procedure for selecting estimators, 
Baraud~\citeyearpar{MR2834722} designed a new method which amounts to replacing 
tests between balls centered at points $\gt$ and $\gu$ in $\scr{S}$ by tests that tend to 
decide which of the two distances $\gh(\gt,\gs)$ or $\gh(\gu,\gs)$ is smaller, where $\gs$ 
denotes the true parameter. When applied to the discretized models $S_\eta$ used for the 
construction of T-estimators, Baraud's estimators can be viewed as a particular version of 
T-estimators, but this alternative construction allows to relax some of the assumptions 
needed for the use of T-estimators. The procedure has been taken back later by 
Sart~\citeyearpar{Sart2014,refId0} in a 
context of dependent data. A modification of Baraud's construction, following an idea of 
Sart, finally led to the construction of $\rho$-estimators that we shall present here. 
It is an attempt not only to answer the above mentioned question of Oleg Lepski but also to 
solve this search for a ``universal" estimator, at least in the case of independent observations.

In order to give a brief account of our new procedure, let us consider the problem of density 
estimation for i.i.d.\ observations $X_1,\cdots,X_n$ with values in a measurable space $(\X,\A)$, 
in which case $\gs=(s,\ldots,s)$ where $s$ denotes the common density of the $X_i$ with 
respect to some dominating measure $\mu$ so that $\gP_{\gs}=(s\cdot\mu)^{\otimes n}$ and, 
for $\gt,\gu\in\scr{S}$,
\begin{equation}
\frac{1}{n}\gh^2(\gt,\gu)=h^2(t,u)={1\over 2}\int_{\X}\pa{\sqrt{t}-\sqrt{u}}^{2}d\mu=
1-\int_{\X}\sqrt{tu}\,d\mu=1-\rho(t,u),
\label{Eq-h-rho}
\end{equation}
where $\rho(t,u)$ is called the {\em Hellinger affinity} between $t$ and $u$. Let us start with 
a model $\overline{S}$ and two distinct densities $t_{0}$ and $t_{1}$ in $\overline{S}$ (that 
may be different from the true one $s$). The difference $\rho(s,t_{1})-\rho(s,t_{0})=h^2(s,t_0)-
h^2(s,t_1)$ tells us which of the points $t_{0}$ or $t_{1}$ is closer to $s$ with respect to the 
Hellinger distance. If we have at hand a good estimator $T_n(t_0, t_1)$ of $\rho(s,t_{1})-
\rho(s,t_{0})$, it can be used not only to decide which of $t_0$ and $t_1$ is closer to $s$ 
but also, considering $\sup_{t\in \overline{S}}T_n(t_0, t)$ as an estimator of
\[
T(t_0)=\sup_{t\in \overline{S}}\cro{h^2(s,t_0)-h^2(s,t)}=h^2(s,t_0)-\inf_{t\in \overline{S}}h^2(s,t),
\]
to see whether $t_0$ is likely to be almost a closest point to $s$ in $\overline{S}$. Indeed, 
the smaller the quantity $T(t_0)$, the better $t_0$ as an approximation of $s$ in $\overline{S}$. 
It seems therefore natural to try to minimize $\sup_{t\in \overline{S}}T_n(t_0, t)$ with respect 
to $t_0\in\overline{S}$ in order to derive a good estimator of $s$ within $\overline{S}$ provided that 
$T_n(t_0, t_1)$ is close enough to $\rho(s,t_{1})-\rho(s,t_{0})$ for all $t_0,t_1$. These are, 
roughly speaking, the ideas behind the construction of what we shall call a {\it $\rho$-estimator} 
since it is based on a suitable estimation of the Hellinger affinities between the true density 
and the points in the model. 

While the study of T-estimators mainly relies on combinatorial arguments, the study of $\rho$-estimators involves empirical processes techniques for which a lot of results are known. 

\subsection{What's new here?\label{I5}}
Our paper, although initially motivated by Oleg Lepski's question and the will of finding a 
generic treatment of fixed-design regression under very weak assumptions (in particular 
no boundedness restrictions and no moment conditions), also results in both an improvement 
over T-estimators and a path in the direction of solving the problem summarized by 
(\ref{Eq-Opt2}).

It happens, as already shown in Birg\'e~\citeyearpar{MR722129}, that in many situations, 
a lower bound on the minimax risk $R_M(\overline{S})$ over $\overline{S}$, as defined in 
Section~\ref{I2}, of the form 
\[
R_M(\overline{S})\ge C_4\eta^2\quad\mbox{for some }\eta^2\ge 
C_8\widetilde{D}_{\overline{S}}(\eta)
\]
actually holds, providing a reciprocal to (\ref{Eq-Opt1}); unfortunately this is not always the case. 
There are situations for which $\widetilde{D}_{\overline{S}}(\eta)=+\infty$ for all $\eta>0$ and 
this  typically happens when the diameter of $\overline{S}$ is $\sqrt{n}$, in particular when 
$\overline{S}$ is the translation model described by (\ref{Eq-trans}) with parameter space 
$\Theta=\R$. The use of a T-estimator therefore requires that $\theta$ belong to some known 
interval $[a, a+M]$ and its risk bound would unfortunately deteriorate as $M$ becomes larger. 
This difficulty can be fixed via the use of a preliminary estimator, a quantile estimator for 
instance, allowing to locate the parameter $\theta$ approximately but this solution does 
not extend to the regression framework. There are also cases with i.i.d.\ random variables 
where the quantity $\eta^{-2}R_M(\overline{S})$ with $\eta^2= 
C_2\widetilde{D}_{\overline{S}}(\eta)$ tends to zero when the number of observations 
tends to infinity which means that the risk bound (\ref{Eq-Opt1}) derived from the metric 
dimension has not the right order of magnitude. We shall even exhibit  in Section~\ref{E2} an example of a statistical model for which the metric dimension is infinite, hence the construction of a T-estimator is impossible, while a $\rho$-estimator reaches the optimal rate of convergence, namely $1/\sqrt{n}$, with respect to the Hellinger loss.

While $\rho$-estimators retain all the nice properties of T-estimators, in particular their 
robustness, their risk is bounded via new notions of dimensions which improve the one 
of metric dimension as shown by Corollary~\ref{C-main00} below. These dimensions can actually be suitably controlled for many non-compact models which is an essential property  for the statistical problems we want to solve. 

An additional attractive feature of $\rho$-estimators in density estimation lies in the fact that 
when $n$ is large enough, they recover the usual MLE at least when the model is parametric, 
regular enough and contains the true density to estimate. Some simulations developed by Sart  
show that this occurs even for moderate values of $n$. Another connection with the MLE lies 
in the fact that the risk bounds obtained for the MLE under bracketing entropy assumptions 
are still valid (up to possible numerical constants) for bounding the risks of $\rho$-estimators. 

In the regression framework, our procedure  improves upon the classical least squares from 
numerous aspects. First of all, we can deal with errors bearing no finite moments of any order 
such as the Cauchy distribution while the least squares approach cannot. Besides, we can 
handle various types of errors possibly leading to faster rates of estimation of the parameters 
than the ones reached by the least squares. Even in the case of 
the simple linear regression, our method may estimate at a much faster rate than the least 
squares, when the errors are uniformly distributed on $[-1,1]$ for instance. Finally, our procedure guarantees 
robustness properties for the resulting estimator that the use of least squares does not. 

Our procedure also substantially improves upon T-estimation. A first drawback of T-estimation lies in the fact that it requires that the supremum norm of the regression function be known. When the design is random, T-estimation also requires that its distribution be known in order to achieve the properties of robustness and optimality described above. These two assumptions are unfortunately rather restrictive. In contrast, although initially conceived to handle complicated situations of regression with fixed 
design, our procedure also allows to deal with various random design problems and therefore handles the whole regression framework in much greater generality. More precisely, $\rho$-estimation does not require any knowledge about a possible bound on the regression function and about the distribution of the design, at least when the errors are modelled as symmetric. These two properties illustrate the superiority of $\rho$-estimation over T-estimation and we are not aware of any statistical procedure that leads to a rate-optimal estimator (up to a possible 
logarithmic factor)  when the distribution of the errors is only assumed to belong to a large 
family of possible ones, including the Gaussian, Cauchy, uniform, etc.

\subsection{Connection with statistical learning theory}
The core of the proof of our main theorem relies on the control of the supremum of an empirical process, indexed by a bounded class of functions, over some vicinity of a specific element of this class. The same type of control is also needed to deal with empirical risk minimization, as explained in great details in the very nice paper by Koltchinskii~\citeyearpar{MR2329442}, and similar tools are used to handle both problems. Talagrand's concentration inequality allows to reduce the control of the supremum of the empirical process to that of  its expectation and universal entropy is then used to bound this expectation. Our use of VC-classes for bounding the universal entropy is somewhat analogous to that of Koltchinskii~\citeyearpar{MR2329442}. As a natural consequence of this parallelism, the notion of dimension that we introduce to control the risk of $\rho$-estimators is quite similar to the notion of local Rademacher complexity used in Koltchinskii~\citeyearpar{MR2329442} to control the performance of empirical risk minimization. This problem and, more specifically, that of binary classification was also considered and treated with similar tools (Talagrand's theorem, universal entropy and VC-classes) in Massart and N\'ed\'elec~\citeyearpar{MR2291502}. 
 
\subsection{Organization of the paper}
We present in Section~\ref{A} three statistical settings to which our procedure can be applied 
and the basic ideas underlying our approach in Section~\ref{B}. The construction of the 
estimator and the main results about its performance on a single model can be found in 
Section~\ref{C}. In Section~\ref{S-CMLE} we show that, in favourable cases, the MLE is a 
particular case of $\rho$-estimator. We also show that the assumptions which are used to 
analyze the performance of the MLE in favourable situations can also be used to derive 
similar risk bounds for $\rho$-estimators. In Section~\ref{E}, we illustrate the performance of 
$\rho$-estimators in the regression setting (with either fixed or random design) and provide 
an example for which their risk remains under control in a situation where the metric 
dimension of the model can be made arbitrary large and even infinite. In Section~\ref{MS}, 
we consider the problem of model selection. We establish there an oracle-type inequality 
and provide an application in view of estimating a regression function when the distribution 
of the errors belongs to a large class of densities including Laplace, Gaussian and uniform 
among others. We provide an annex on VC-subgraph classes in Section~\ref{S} since 
models $\overline{S}$ of these types play a special role in our results. Finally, Section~\ref{P} 
is devoted to the proofs.

\section{The statistical setting and examples\label{A}}

\subsection{Main notations and conventions\label{A7}}
In the sequel we shall use the following notations and conventions.  We set $\log_{+}x=\max\{\log x,0\}$
for $x>0$ and $\log_{+}0=0$. For $x,y\in\R$, $x\wedge y$ and $x\vee y$ denote $\min\{x,y\}$ 
and $\max\{x,y\}$ respectively, $\delta_x$ denotes the Dirac measure at point $x$ and $|A|$ 
the cardinality of the set $A$. Except if otherwise specified (in Section~\ref{B} below), we 
shall use the conventions $\sup\varnothing=0$, $0/0=1$, $0\times (+\infty)=0$ and 
$x/0=+\infty$ for all $x>0$. The word {\it countable} always means {\it finite or countable}. 
Throughout the paper, $C,C',\ldots$ denote positive {\it numerical positive} constants that 
may vary from line to line. The notations $C(\cdot), C'(\cdot),\ldots$ mean that $C,C',\ldots$ 
are  positive functions depending on the argument specified in the parenthesis (when the 
number of arguments is too large, the dependency is specified in the text).  We shall also 
often use the fact that
\begin{equation}
(x+y)^2\le(1+\alpha)x^2+\left(1+\alpha^{-1}\right)y^2=(1+\alpha)\left(x^2+\alpha^{-1}y^2\right)
\quad\mbox{for all }\alpha>0.
\label{Eq-square}
\end{equation}
Our definitions and results will actually involve a number of numerical constants. In order to avoid complicated formulas, we shall give specific names to the numerical constants that will be systematically used in the sequel.
\begin{equation}\left\{
\begin{array}{lll}  \displaystyle{c_0={1\over8}\pa{1-{1\over\sqrt{2}}}=\frac{1}{8\left(2+\sqrt{2}\right)};\quad\; c_{1}=2\left(7+4\sqrt{2}\right);\quad\; c'_1=2(c_1-1);} \\ 
\displaystyle{c_2=1+{1\over \sqrt{2}}=\frac{\sqrt{2}+1}{\sqrt{2}}}={1\over 16c_{0}};\quad\;\kappa=357;\quad\; c_{3}={8\kappa c_{2}};\quad\; c_4=2.5c_3.
\end{array}\right.
\label{Eq-cons1}
\end{equation}

\subsection{The general statistical setting\label{A0}}
Let $\gX=(X_{1},\ldots,X_{n})$ be a vector of independent random variables with values in a 
product of measured spaces $(\prod_{i=1}^{n}\X_{i},\bigotimes_{i=1}^{n}\A_{i},\bigotimes_{i=1}^{n}
\mu_{i})$. We assume that for each $i$, $X_{i}$ admits a density $s_{i}$ with respect to $\mu_{i}$ 
and our aim is to estimate $\gs=(s_{1},\ldots,s_{n})$ from the observation of $\gX$. To avoid 
trivialities, we shall always assume that $n\ge3$. We shall emphasize the dependence of the 
distribution of $\gX$ with respect to the unknown parameter $\gs$ by writing $\P_{\gs}[\gX\in A]$ 
for a measurable set $A$ and $\E[g(\gX)]$ for an integrable function $g$. 

On the measured space $(\X_{i},\A_{i},\mu_{i})$ we consider the set $\LL'_i$ of all measurable
real-valued functions $u$ such that $\int_{\X_{i}}|u|\,d\mu_{i}<+\infty$ and the subset $\LL_i$ of
$\LL'_i$ of all probability densities with respect to $\mu_{i}$, that is non-negative measurable
functions $u$ on $\pa{\X_{i},\A_{i}}$ such that $\int_{\X_{i}}u\,d\mu_{i}=1$. We equip $\LL_i$ with the
Hellinger pseudometric $h$ given, according to (\ref{Eq-Hel9}), by 
\[
h^2(u,u')={1\over 2}\int_{\X_{i}}\pa{\sqrt{u}-\sqrt{u'}}^2d\mu_{i}\quad\mbox{for all }u,u'\in\LL_i.
\]
Note that $h$ is only a pseudometric (symmetric and satisfying the triangular inequality)
since $h(u,u')=0$ if $u\ne u'$ but $u=u'$ $\mu$-a.e., although $h$ is a genuine distance on
the corresponding probability space since $h(u\cdot\mu,u'\cdot\mu)$ implies that $u\cdot\mu=
u'\cdot\mu$. In particular, $s_{i}$ may be any element of $\LL_{i}$ such that the distribution of
$X_{i}$ can be written $s_{i}\cdot\mu_{i}$.

We define $\LL_0$ as the product space $\prod_{i=1}^{n}\LL_i$, call the elements of 
$\LL_0$ densities and equip it with the pseudometric $\gh$ given, by analogy with
(\ref{Eq-h-rho}) and following Le~Cam~\citeyearpar{MR0395005}, by
\[
\gh^{2}(\gt,\gt')={1\over 2}\int\pa{\sqrt{\gt}-\sqrt{\gt'}}^2d\gmu=\sum_{i=1}^{n}h^2(t_i,t'_i)\le n
\quad\mbox{for }\gt,\gt'\in\LL_{0}.
\]
For simplicity we shall still call $h$ and $\gh$ {\em distances}, although they are only
pseudometrics on $\LL_{i}$, $1\le i\le n$ and $\LL_{0}$ respectively, call $(\LL_{0},\gh)$ a
pseudometric space and introduce the following definition.
%
\begin{defi}\label{def-ident}
A subset $S$ of $\LL_{0}$ is said to be identifiable if, when $\gu,\gu'\in S$ are such that
$\gu\ne\gu'$, then $\gh(\gu,\gu')>0$ or, equivalently, if $\gh$ is a genuine distance on $S$.
\end{defi}
For $\gu\in\LL_0$ and more generally for $\gu\in\prod_{i=1}^{n}\LL_i'$, we shall set
\[
\gu(\gX)=\sum_{i=1}^nu_i(X_i)\qquad\mbox{and}\qquad\int\gu\,d\gmu=
\sum_{i=1}^n\int_{\X_i}u_i\,d\mu_i.
\]
Hereafter, we shall deal with estimators with values in $\LL_{0}$ and measure their performances by
the risk induced by the loss function $\gh^2$. For simplicity we shall also call $\gh$ the Hellinger
distance and define the Hellinger affinity $\grho$ between two elements $\gt$ and $\gt'$ of $\LL_0$
as
\[
\grho(\gt,\gt')=\int\sqrt{\gt\gt'}\,d\gmu=
\sum_{i=1}^{n}\int_{\X_{i}}\sqrt{t_it'_i}\,d\mu_{i}=n-\gh^{2}(\gt,\gt')\ge0.
\]
For $\gt\in\LL_{0}$ and $y>0$, we shall denote by $\B(\gt, y)$ the closed ball of center $\gt$ and 
radius $y$ in the pseudometric space $(\LL_0,\gh)$ and, given some subset $S$ of $\LL_{0}$, by
$\B^{S}(\gs,y)=\B(\gs,y)\cap S$ the closed Hellinger ball in $S$ centered at $\gs$ with radius 
$y$. Finally, $\gh(\gt,S)=\inf_{\gt'\in S}\gh(\gt,\gt')$.

This general setting will allow us to deal in particular with the three following specific frameworks. 

\subsection{The density framework\label{A2}}
In this framework, we assume that the random variables $X_{1},\ldots,X_{n}$ are i.i.d.\ with 
values in a measured space $(\X,\A,\mu)$ and common density $s$ with respect to $\mu$, 
which leads to $\X_{i}=\X$, $\mu_{i}=\mu$ for all $i$ and $\gs=(s,\ldots,s)$. In this particular 
context, it will be convenient to identify a density $t$ on $(\X,\A,\mu)$ with the element 
$\gt=(t,t,\ldots,t)$ of $\LL_{0}$, which we shall do in the sequel.
Given two densities $t,t'$ on $(\X,\A,\mu)$, we have the relations
\[
\gh^{2}(\gt,\gt')=nh^{2}(t,t')\qquad\mbox{and}\qquad\grho(\gt,\gt')=n\rho(t,t').
\]
The risk of an estimator $\widetilde{\gs}=(\widetilde{s},\ldots,\widetilde{s})$ of $\gs$ is therefore 
$\E\left[\gh^2(\gs,\widetilde{\gs})\right]=n\E\left[h^2(s,\widetilde{s})\right]$.

\subsection{The homoscedastic regression framework with fixed design\label{A3}}
In this framework, we assume that the $X_{i}$ are real-valued random variables satisfying equations of the form
\[
X_{i}=f_{i}+\lambda\eps_{i}\quad\mbox{for }i=1,\ldots,n,\quad\lambda>0,
\]
where the vector $\gf=(f_1,\ldots,f_n)$ belongs to $\R^n$, the $\eps_{i}$ are real-valued 
i.i.d.\ random variables with density $p$ with respect to the {\em Lebesgue measure} 
$\mu$ on $(\R,{\mathcal B}(\R))$, ${\mathcal B}(A)$ denoting the Borel $\sigma$-algebra on 
the topological space $A$. It follows that the density of $X_i$ is $s_i(x)=\lambda^{-1}
p\left(\lambda^{-1}(x-f_i)\right)$ so that estimating $s_i$ amounts to estimating $\lambda$, $p$ 
and $f_i$. Our aim is therefore to estimate $\gf$, $p$ and $\lambda$ from the observation of 
$X_{1},\ldots,X_{n}$. To deal with this framework, it will be convenient to introduce the 
following notations: for $f$ and $x$ in $\R$, $\lambda$ in $\R_+\setminus\{0\}$, $\gf$ and 
$\gx=(x_1,\ldots,x_n)$ in $\R^n$, we set
\begin{equation}
p_{f,\lambda}(x)={1\over \lambda}p\pa{x-f \over \lambda}\quad\mbox{for all }x\in\R;\qquad p_f=p_{f,1};
\label{Eq-trsc1}
\end{equation}
\begin{equation}
\gp_{\gf,\lambda}(\gx)=\pa{{1\over \lambda}p\pa{x_1-f_{1} \over \lambda},\ldots,
{1\over \lambda}p\pa{x_n-f_{n} \over \lambda}}\;\mbox{ for all }\gx\in\R^n\quad\mbox{and}\quad\gp_{\gf}=\gp_{\gf,1}.
\label{Eq-trsc2}
\end{equation}
It follows that $p_f(x)=p(x-f)$, $p_{0,\lambda}(x)=\lambda^{-1}p(x/\lambda)$, etc.
In this framework we take, for $i=1,\ldots,n$, $\X_{i}=\R$, $\A_{i}={\mathcal B}(\R)$, 
$\mu_{i}=\mu$ and $s_i=p_{f_i,\lambda}$ so that the density of $\gX$ is 
$\bigotimes_{i=1}^np_{f_i,\lambda}$ and therefore entirely determined by $\gp_{\gf,\lambda}$.

\subsection{The homoscedastic regression framework with random design\label{A4}}
Let $(W,Y)$ be a pair of random variables with values in $(\mathscr{W}\times
\R,\mathcal{W}\otimes\B(\R))$ linked by the relation 
\begin{equation}\label{Model-Reg-Rand}
Y=f(W)+\eps
\end{equation}
where $f$ is unknown in a set $\FF$ of measurable functions from $(\mathscr{W},\mathcal{W})$ 
into $(\R,\B(\R))$ and $\eps$ is an unobservable random variable, independent of $W$ and 
admitting a {\em known} (or {\em approximately known}) density $p$ with respect to the 
{\em Lebesgue measure} $\mu$. In contrast, the distribution $\nu$ of $W$ is possibly unknown.  

Our aim is to estimate $f$, or equivalently the conditional distribution of $Y$ given $W$, from 
the observation of $\gX=(X_1,\ldots,X_n)$ where the $X_i$ are i.i.d.\ with the same distribution 
on $(\mathscr{W}\times\R)$ as that of the pair $(W,Y)$ so that $\gX$ can be identified to 
$(\gW,\gY)$ with $\gW=(W_1,\ldots,W_n)$ and $\gY=(Y_1,\ldots,Y_n)$. Since the density of 
$(W,Y)$ with respect to the dominating measure $\nu\otimes\mu$ is $s(w,y)=p(y-f(w))=p_{f}(w,y)$, 
$p_{f}(W,\cdot)$ is  the conditional density of $Y$ given $W$ with respect to $\mu$. It is therefore 
natural to look for estimators of $s$ of the form $\widehat s=p_{\widehat f}$ where 
$\widehat f=\widehat f(\gW,\gY)$ is an estimator of $f$ which also provides an estimator 
$p_{\widehat{f}}(w,\cdot)$ of the conditional density of $\gY$ when $\gW=w$. At this stage, it is 
important to emphasize the fact that the construction of $\widehat{s}$ {\it should not involve $\nu$} 
in order to provide genuine estimators of $f$ and $p_f$. As we shall see in Section~\ref{E4}, 
the $\rho$-estimator of $s$ derived from our general method does satisfy this requirement. 

To evaluate the performance of $\widehat f(\gW,\gY)$, we use the risk 
$\E\left[\gh^2(\gs,{\widehat \gs})\right]$ of $\widehat{\gs}(\gW,\gY)$ or, equivalently, the risk of 
$\gp_{\widehat\gf}$ which writes
\[
\E\left[\gh^2(\mathbf{p_{f}},\mathbf{p_{\widehat f}})\right]=
n\E \left[h^{2}(p_{f},p_{\widehat f})\right]=
n\E\left[\int_{\mathscr{W}}h^{2}\left(p_{f}(w,\cdot),p_{\widehat{f}}(w,\cdot)\right)d\nu(w)\right].
\]

\section{Basic ideas underlying our approach\label{B}}

\subsection{The density framework\label{B1}}
The aim of this section is to present the basic  ideas and formulas underlying our approach. For the sake of simplicity, we shall restrict this introduction to the density framework described in Section~\ref{A2} where the observations $X_{1},\ldots,X_{n}$ are i.i.d.\ with an unknown density $s$ with respect to $\mu$.

Given two candidate densities $t,t'$ for $s$, one should  prefer $t'$ to $t$ if it is closer to $s$, that is,  if $h^{2}(s,t')$ is smaller than $h^{2}(s,t)$ or equivalently if $\rho(s,t')-\rho(s,t)>0$. Deciding whether $t'$ is preferable to $t$ amounts thus to estimating the difference $\rho(s,t')-\rho(s,t)$ in a suitable way. To do so, we start by an approximation of the affinity $\rho$. For two densities $t$ and $t'$, we set
\begin{equation}\label{def-rhoQ}
r=\frac{t+t'}{2}\qquad\mbox{and}\qquad
\varrho(s,t,t')={1\over 2}\cro{\rho(t,r)+\int_{\X}\sqrt{{t\over r}}s\,d\mu}< +\infty,
\end{equation}
using the special convention that $t/r=0$ when $t=t'=r=0$. It was proved in Proposition~1 of Baraud~\citeyearpar{MR2834722} that 
\begin{equation}\label{rhoM-approx}
0\le\varrho(s,t,t')-\rho(s,t)\le\cro{h^{2}(s,t)+h^{2}(s,t')}/\sqrt{2}.
\end{equation}
The important point about~\eref{rhoM-approx} lies in the fact that the constant $\sqrt{2}$ is larger than 1. This makes it possible to use the sign of the difference 
\[
T(s,t,t')=\varrho(s,t',t)-\varrho(s,t,t')
\]
as an alternative benchmark to find which of $t$ and $t'$ is closer to $s$ (up to a multiplicative constant). It actually follows from~\eref{rhoM-approx}, as shown in Corollary~1 in Baraud~\citeyearpar{MR2834722}, that
%
\begin{equation}
T(s,t,t')\le\left(1+{1\over \sqrt{2}}\right)\!h^{2}(s,t)-\left(1-{1\over \sqrt{2}}\right)\!h^{2}(s,t')
=c_2h^{2}(s,t)-8c_0h^{2}(s,t')
\label{eq-UbT}
\end{equation}
%
and
%
\begin{equation}
T(s,t,t')\ge\left(1-{1\over \sqrt{2}}\right)\!h^{2}(s,t)-\left(1+{1\over \sqrt{2}}\right)\!h^{2}(s,t')
=8c_0h^{2}(s,t)-c_2h^{2}(s,t').
\label{eq-LbT}
\end{equation}
%
Given some subset $S$ of $\LL_0$, (\ref{eq-UbT}) and the fact that $T(s,t,t)=0$ also imply that, 
for $t\in S$
\[
0\le\sup_{t'\in S}T(s,t,t')\le c_2h^{2}(s,t)-8c_0h^{2}(s,S)\quad\mbox{and}\quad
0\le\inf_{t\in S}\sup_{t'\in S}T(s,t,t')\le\sqrt{2}h^{2}(s,S).
\]
If $u\in S$ is such that 
\[
h^2(s,u)>(8c_0)^{-1}\left(c_2+\sqrt{2}\right)h^{2}(s,S)=\left(5+4\sqrt{2}\right)h^{2}(s,S),
\]
it follows from (\ref{eq-LbT}) that
\[
\sup_{t'\in S}T(s,u,t')\ge8c_0h^{2}(s,u)-c_2h^{2}(s,S)
>\left[\left(c_2+\sqrt{2}\right)-c_2\right]h^{2}(s,S)=\sqrt{2}h^{2}(s,S),
\]
and $u$ cannot be a minimizer of $t\mapsto\sup_{t'\in S}T(s,t,t')$. Hence any minimizer 
$\overline{s}$ of this function does satisfy $h^2(s,\overline{s})\le\left(5+4\sqrt{2}\right)h^2(s,S)
<11h^2(s,S)$. Therefore, minimizing over $S$ the function $t\mapsto\sup_{t'\in S}T(s ,t,t')$ leads 
to some point $\overline{s}\in S$ which, up to a factor smaller than 11, is the closest to $s$, 
that is, the best approximation of $s$ in $S$. In particular, if $s\in S$, $\overline{s}=s$.

Unfortunately, $T(s ,t,t')$ depends on $\varrho(s ,t,t')$ which depends on the unknown $s$.
Our interest for the quantity $\varrho(s,t,t')$ rather than $\rho(s,t)$ lies in the fact that the former 
can be estimated by its empirical counterpart, namely 
\begin{equation}\label{def-rhohat}
\varrho(\gX,t,t')={1\over 2n}\sum_{i=1}^{n}\cro{\rho(t,r)+\sqrt{{t\over r}(X_{i})}}
\quad\mbox{with }r=\frac{t+t'}{2},
\end{equation}
which is an unbiased estimator of $\varrho(s,t,t')$. A natural way of deciding which of the densities $t$ or $t'$ is the closest to $s$ is therefore to replace the unknown $T(s,t,t')$ by an unbiased estimator, namely the statistic 
\[
T(\gX,t,t')=\varrho(\gX,t',t)-\varrho(\gX,t,t').
\]
Note that 
\begin{equation}\label{T-psi}
T(\gX,t,t')=\frac{1}{2}\cro{\rho(t',r)-\rho(t,r)}+{1\over \sqrt{2}\,n}\sum_{i=1}^{n}\psi\pa{\sqrt{t'\over t}(X_{i})},
\end{equation}
where $\psi$ is the Lipschitz, increasing function from $[0,+\infty]$ to $[-1,1]$ 
(with Lipschitz constant not larger than 1.143) given by
\begin{equation}\label{def-psi}
\psi(u)= \sqrt{{1\over 1+u^{-2}}}-{\sqrt{1\over 1+u^{2}}}={u-1\over \sqrt{1+u^{2}}}\quad\mbox{for }u\in [0,+\infty)\ \ \mbox{and}\ \ \psi(+\infty)=1.
\end{equation}
Here we use the convention that $t'(X_i)/t(X_i)=1$ when $t(X_i)=t'(X_i)=0$ as indicated in Section~\ref{A7}. This convention is indeed consistent with the one we started from on the ratio $t/r$ since when $t(X_{i})=t'(X_{i})=r(X_{i})=0$ for some $i$,
\[
\cro{\sqrt{{t'\over r}(X_{i})}-\sqrt{{t\over r}(X_{i})}}=0-0=0\quad\mbox{and}\quad
\psi\pa{\sqrt{t'\over t}(X_{i})}=\psi\pa{0\over 0}=\psi(1)=0.
\]

Replacing the ``ideal" statistic $T(s,t,t')$ by its empirical counterpart $T(\gX,t,t')$ leads to an estimation error given by the process $Z(\gX,.,.)$ defined on $\LL_{0}^{2}$  by
\begin{eqnarray}
Z(\gX,t,t')&=&\sqrt{2}\left[T(\gX,t,t')-T(s ,t,t')\right]\nonumber\\
&=&\sqrt{2}\left(\rule{0mm}{3.7mm}
\cro{\varrho(\gX,t',t)-\varrho(s,t',t)}-\cro{\varrho(\gX,t,t')-\varrho(s,t,t')}\right)\nonumber\\ 
&=& 
{1\over n}\sum_{i=1}^{n}\cro{\rule{0mm}{7.5mm}\psi\pa{\sqrt{t'\over t}(X_{i})}-
{\mathbb{E}_{s}}\cro{\psi\pa{\sqrt{t'\over t}(X_{i})}}}.\label{def-Z00}
\end{eqnarray}

\subsection{The general framework\label{B2}}
We may similarly apply the previous reasoning to the more general context of independent but not
necessarily i.i.d.\ variables $X_{i}$, $1\le i\le n$. To do so, we shall extend the previous notations
to elements of $\LL_0$ and, in view of the application to the regression setting, we shall not
renormalize the sums by $1/n$. This leads to the following notations to be used throughout this
paper: for $\gt,\gt'\in\LL_{0}$ and $\gr=(\gt+\gt')/2$, we set
\begin{eqnarray}
\gvrho(\gX,\gt,\gt')&=&{1\over 2}\cro{\grho(\gt,\gr)+\sqrt{{\gt\over \gr}}(\gX)}\label{def-rho}\;\;=\;\;
{1\over 2}\sum_{i=1}^{n}\cro{\rho(t_{i},r_{i})+\sqrt{{t_{i}\over r_{i}}}(X_{i})};\nonumber\\
\gT(\gX,\gt,\gt')&=&\gvrho(\gX,\gt',\gt)-\gvrho(\gX,\gt,\gt')\label{def-T}\\&=&
\frac{1}{2}\sum_{i=1}^{n}[\rho(t'_i,r_i)-\rho(t_i,r_i)]+{1\over \sqrt{2}}\sum_{i=1}^{n}
\psi\pa{\sqrt{t'_i\over t_i}(X_{i})};\nonumber\\
\gZ(\gX,\gt,\gt')&=&\psi\pa{\sqrt{\gt'\over \gt}(\gX)}-\E\cro{\psi\pa{\sqrt{\gt'\over \gt}(\gX)}}
\label{def-Z}\\&=&\sum_{i=1}^{n}\cro{\rule{0mm}{9mm}\psi\pa{\sqrt{t'_{i}\over 
t_{i}}(X_{i})}-{\mathbb{E}_{s_{i}}}\cro{\psi\pa{\sqrt{t'_{i}\over t_{i}}(X_{i})}}}\nonumber.
\end{eqnarray}
With these notations, $\gvrho(\gX,\gt,\gt')$ is in fact the analogue of $n\varrho(\gX,t,t')$ 
given by (\ref{def-rhohat}) (and actually equal to it in the density framework), and so on.

\section{Estimation on a model\label{C}}

\subsection{Models\label{A1}}
As already mentioned, our construction of estimators will be based on ``models".  A 
model $\overline{S}\subset\LL_0$ should be viewed as an approximation set for the true unknown parameter $\gs$, which is used to build an estimator. It does not necessarily 
contain $\gs$, although we shall occasionally assume so. Typical models are either the parametric models that are used in Statistics or more general subsets of $\LL_0$ with 
well-known approximation properties that are derived from Approximation Theory in 
order to get a control on the approximation error $\gh(\gs,\overline{S})$. For measurability reasons to be explained later, we shall adopt the following definition for a model.
%
\begin{defi}\label{D-model}
A model $\overline{S}$ is a nonempty {\em separable} subset of the pseudometric space 
$(\LL_0,\gh)$ which means that one can find a countable subset $S$ of $\overline{S}$ such that
$\gh(\gt,S)=0$ for all $\gt\in\overline{S}$.
\end{defi}
In typical situations, $\LL_0$ itself is separable so that any nonempty subset of $\LL_0$ can be
used as a model. In the density framework described in Section~\ref{A2}, we have identified
a density $t$ on $(\X,\A,\mu)$ with the element $\gt=(t,t,\ldots,t)$ of $\LL_{0}$. Similarly, we shall
identify a separable set $\overline S$ of densities on $\X$ to the subset $\{\gt=(t,\ldots,t),\ t\in \overline S\}
\subset \LL_{0}$ and for simplicity denote both sets the same way.

\subsection{Construction of the estimators\label{C1}}
In order to avoid measurability issues, the construction of our estimator will be performed over
countable subsets $S$ of $\LL_{0}$ only. Besides, this corresponds to the practical point
of view since numerical optimization will always be done over a finite set. Replacing the original
model $\overline{S}$ by a countable and dense subset $S$ does not increase the approximation
error since then $\gh(\gs,S)=\gh(\gs,\overline{S})$. If instead we replace $\overline{S}$ by
$S\subset\LL_0$ which only satisfies $\sup_{\gu\in\overline S}\gh(\gu,S)=\sup_{\gu\in\overline{S}}
\inf_{\gt\in S}\gh(\gu,\gt)\le\eta$, this replacement may involve an additional error $\left|\gh(\gs,S)
- \gh(\gs,\overline{S})\right|$ which is not larger than $\eta$. We postpone the discussion about
what should be suitable choices of $S$ for a given model $\overline S$ to Section~\ref{C3}.

We shall also always assume $S$ to be {\em identifiable} in order that $(S,\gh)$ be a genuine
metric space. Note that this identifiability condition is not restrictive at all: if $S'$ is countable but not
identifiable, one can withdraw from it the redundant points in order to get an identifiable subset
$S\subset S'$ with $\gh(\gs,S)=\gh(\gs,S')$ for all $\gs$ in $\LL_{0}$. If $\overline{S}$ is identifiable
and $S$ is a subset of $\overline{S}$, there is nothing to do. Otherwise, we proceed as indicated
above. In any case, we shall always assume in the sequel and sometimes without further notice, that the
countable sets $S$ that we shall use in our construction are identifiable so that $(S,\gh)$ is a metric space.

Given $S$, we noticed in Section~\ref{B1} that, in the density framework, an almost best approximation of the density $s$ in $S$ can be obtained by minimizing over $S$ the function $t\mapsto\sup_{t'\in S}T(s ,t,t')$. If we assume that $T(\gX,t,t')$ provides a good approximation of $T(s ,t,t')$, it looks natural to minimize $\sup_{t'\in S}T(\gX ,t,t')$ with respect to $t\in S$ in order to derive a good estimation $\widehat{s}(\gX)$ of $s$. This suggests the following construction in the general situation of independent random variables.
For each $\gt\in S$ we define
\[
\gup(S,\gt)=\sup_{\gt'\in S}\gT(\gX,\gt,\gt'),
\]
which is always non-negative since $\gT(\gX,\gt,\gt)=0$, and 
\begin{equation}\label{def-EE}
\EE(\gX,S)=\ac{\widetilde\gs\in S\,\left|\,\gup(S,\widetilde\gs)\le\inf_{\gt\in S}\gup(S,\gt)+
{\kappa\over 10}\right.}\quad\mbox{with $\kappa$ given by (\ref{Eq-cons1})}.
\end{equation}
Finally, we define our estimator  of $\gs$ as any element (chosen in a measurable way)
$\widehat \gs$ in the closure ${\rm Cl}\!\left(\EE(\gX,S)\strut\right)$ of $\EE(\gX,S)$ in
$\LL_0$ with respect to $\gh$, that is the set 
\[
\left\{\gt\in\LL_{0}\mbox{ such that }\gh\left(\gt,\EE(\gX,S)\strut\right)=0\right\}.
\]
We shall call such an estimator a {\it $\rho$-estimator}, the greek letter $\rho$ referring to the Hellinger
affinity. Although it depends on our choice of $S$ we shall, for simplicity, omit to make this dependence
of $\widehat{\gs}$ with respect to $S$ explicit in our notations. Though the risk bounds we shall establish
remain valid for any choice of $\widehat\gs$ in ${\rm Cl}\!\left(\EE(\gX,S)\strut\right)$, we recommend in
practice to choose $\widehat\gs$ as a minimizer of $\gup(S,\cdot)$ over $S$ whenever it exists.

\subsection{Main theorem\label{C2}}
The properties of our estimator follow from those of the empirical process $\gZ(\gX,.,.)$ defined by~\eref{def-Z}. Given an element $\overline \gs\in\LL_{0}$ and a positive number 
$y$, we set
\begin{equation}
\B^{S}(\gs,\overline \gs,y)=\ac{\gt\in S\,\left|\,\gh^{2}(\gs,\gt)+\gh^{2}(\gs,\overline \gs)\le 
y^{2}\right.}
\label{Eq-BS}
\end{equation}
and
\[
\w^{S}(\gs,\overline \gs,y)=\E\cro{\sup_{\gt\in \B^{S}(\gs,\overline \gs,y)}\ab{\gZ(\gX,\overline \gs,\gt)}},
\]
with $\w^{S}(\gs,\overline \gs,y)=0$ if $\B^{S}(\gs,\overline \gs,y)$ is empty, according to our convention. When $\gs$ belongs to $S$ and one takes $\overline \gs=\gs$, 
\[
\w^{S}(\gs,\gs,y)=\E\cro{\sup_{\gt\in \B^{S}(\gs,y)}\ab{\gZ(\gX,\gs,\gt)}}
\]
measures, in some sense, the massiveness of $S$ in a neighborhood of $\gs$. Since
$-1\le\psi\le1$, the process $|\gZ(\gX,.,.)|$ is bounded by $2n$ and the non-decreasing mapping 
$y\mapsto \w^{S}(\gs,\overline \gs,y)$ as well. This implies that the number
\[
D^{S}(\gs,\overline \gs)=\overline{y}^{2}\vee1\quad\mbox{ with }\quad \overline{y}=\sup\ac{y\ge0\,\left|\,\w^{S}(\gs,\overline \gs,y)>c_0y^{2}\right.}
\]
belongs to the interval $\left[1,2nc_0^{-1}\right]$. It follows from this definition of $D^{S}$ that
\begin{equation}\label{def-D}
\w^{S}(\gs,\overline \gs,y)\le c_0y^{2}\quad\mbox{for all }y>\sqrt{D^{S}(\gs,\overline \gs)}.
\end{equation}
\begin{thm}\label{main00}
Let $S$ be a countable and identifiable subset of $\LL_0$. The estimation procedure described in Section~\ref{C1} leads to the following bound which is valid for any $\rho$-estimator 
$\widehat \gs$ based on $S$, all $\gs$ in $\LL_{0}$ and all $\xi>0$ :
\begin{equation}\label{Mmain}
\P_{\gs}\left[\gh^{2}(\gs,\widehat \gs)\le \inf_{\overline \gs\in S}
\ac{c_1\gh^{2}(\gs,\overline \gs)-\gh^{2}(\gs,S)
+c_{2}D^{S}(\gs,\overline \gs)}+c_3(1.45+\xi)\right]\ge1-e^{-\xi}.
\end{equation} 
This implies in particular that
\begin{equation}\label{risk}
\E\cro{\gh^2(\gs,\widehat \gs)}\le \inf_{\overline \gs\in S}\ac{c_1\gh^2(\gs,\overline \gs)-\gh^{2}(\gs,S)
+c_{2}D^{S}(\gs,\overline \gs)}+c_4
\end{equation}
and, more generally,
\begin{equation}
\E\cro{\gh^{\ell}(\gs,\widehat \gs)}\le C(\ell)\left[\inf_{\overline \gs\in S}\ac{\gh^{\ell}(\gs,\overline \gs)+\left(D^{S}(\gs,\overline \gs)\right)^{\ell/2}}\right]\quad\mbox{for all }\ell\ge1.
\label{Eq-risk1}
\end{equation}
\end{thm}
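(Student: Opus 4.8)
The plan is to control the deviation of the $\rho$-estimator $\widehat\gs$ from $\gs$ by comparing the criterion $\gup(S,\cdot)$ evaluated at $\widehat\gs$ against its value at a well-chosen competitor $\overline\gs\in S$, and then to absorb the stochastic fluctuation of $\gT(\gX,\cdot,\cdot)$ around its mean $\gT(\gs,\cdot,\cdot)$ using the empirical process $\gZ(\gX,\cdot,\cdot)$ and the entropy-type quantity $\w^S$. Concretely, first I would fix $\overline\gs\in S$ and set $y^2=\gh^2(\gs,\widehat\gs)+\gh^2(\gs,\overline\gs)$, so that $\widehat\gs\in\B^S(\gs,\overline\gs,y)$ by definition of that ball in \eref{Eq-BS}. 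On the event where $\widehat\gs$ belongs to the set $\EE(\gX,S)$ (and then passing to the closure by continuity, as in the Remark), we have $\gup(S,\widehat\gs)\le\gup(S,\overline\gs)+\kappa/10$, which unwinds to an inequality of the form $\gT(\gX,\widehat\gs,\overline\gs)\le\gup(S,\overline\gs)+\kappa/10$ after using $\gup(S,\widehat\gs)\ge\gT(\gX,\widehat\gs,\overline\gs)$.

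Next I would split $\gT(\gX,\cdot,\cdot)$ into its deterministic part $\gT(\gs,\cdot,\cdot)$ plus $2^{-1/2}\gZ(\gX,\cdot,\cdot)$ (recall $\gZ=\sqrt2[\gT(\gX,\cdot,\cdot)-\gT(\gs,\cdot,\cdot)]$). The deterministic part is handled by the pointwise comparison inequalities \eref{eq-UbT}--\eref{eq-LbT}, applied in the general (summed, un-normalized) framework of Section~\ref{B2}: these give $\gT(\gs,\widehat\gs,\overline\gs)\ge 8c_0\gh^2(\gs,\widehat\gs)-c_2\gh^2(\gs,\overline\gs)$ from below, and dually an upper bound $\sup_{\gt'\in S}\gT(\gs,\overline\gs,\gt')\le c_2\gh^2(\gs,\overline\gs)-8c_0\gh^2(\gs,S)$ for the competitor term that appears inside $\gup(S,\overline\gs)$ after the same split. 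The stochastic terms are bounded on the ball $\B^S(\gs,\overline\gs,y)$ by $\w^S(\gs,\overline\gs,y)$ in expectation, and for the high-probability statement \eref{Mmain} by a concentration inequality (bounded-differences / McDiarmid applied to the bounded process $\psi\in[-1,1]$, giving a sub-Gaussian tail with variance proxy of order $n$, hence the $\sqrt{\xi}$ that gets turned into a linear $\xi$ term by the usual $\sqrt{ab}\le a+b$ trick). Combining, one reaches a self-referential bound roughly of the shape $8c_0\gh^2(\gs,\widehat\gs)\le \text{const}\cdot\gh^2(\gs,\overline\gs)-\text{const}\cdot\gh^2(\gs,S)+c_0 y^2+(\text{fluctuation terms in }\xi)$, where the $c_0y^2$ came from the definition of $D^S(\gs,\overline\gs)$ via \eref{def-D} — provided $y^2>D^S(\gs,\overline\gs)$; otherwise $y^2\le D^S$ directly and we are done in that regime.

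The key algebraic step is then to solve this quadratic inequality for $\gh^2(\gs,\widehat\gs)$: since $y^2=\gh^2(\gs,\widehat\gs)+\gh^2(\gs,\overline\gs)$, the $c_0y^2$ on the right contributes a $c_0\gh^2(\gs,\widehat\gs)$ which can be moved to the left and subtracted from $8c_0\gh^2(\gs,\widehat\gs)$, leaving $7c_0\gh^2(\gs,\widehat\gs)$ — and dividing through by $7c_0$ produces exactly the constant $c_1=2(7+4\sqrt2)$ (note $8c_0^{-1}=2\sqrt2/(\sqrt2-1)$ and careful bookkeeping of the $c_2$ terms gives the stated $c_1$, the $-\gh^2(\gs,S)$, and $c_2 D^S(\gs,\overline\gs)$, with the additive $\xi$-dependent remainder collapsing into $c_3(1.45+\xi)$). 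Taking the infimum over $\overline\gs\in S$ gives \eref{Mmain}. Then \eref{risk} follows by integrating the tail ($\E[\cdot]=\int_0^\infty\P[\cdot>t]\,dt$), and \eref{Eq-risk1} by the same tail integration against $t^{\ell/2-1}$, using that the tail is exponential in $\xi$ so all moments of the remainder are finite and controlled by $C(\ell)$ times powers of $D^S$; the elementary inequality \eref{Eq-square} is used repeatedly to split mixed terms like $(\gh(\gs,\overline\gs)+\sqrt{D^S})^\ell$.

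The main obstacle I anticipate is \emph{not} the algebra but the measurability/continuity bookkeeping together with getting the concentration step uniform over the (infinite) ball $\B^S(\gs,\overline\gs,y)$: one must control $\sup_{\gt\in\B^S}|\gZ(\gX,\overline\gs,\gt)|$ both in expectation (this is the definition of $\w^S$, so it is given) and with a sub-Gaussian deviation around that expectation, and the latter requires that the supremum be a genuine random variable — which is why $S$ is taken countable — and that the bounded-differences constant be uniform in $\gt$ (it is, since changing one coordinate $X_i$ changes each summand $\psi(\sqrt{t'_i/t_i}(X_i))$ by at most $2$, independently of $\gt,\gt'$). A secondary subtlety is the transition from the open set $\EE(\gX,S)$ to its closure $\mathrm{Cl}(\EE(\gX,S))$ in $\LL_0$: one must check that $\gh^2(\gs,\cdot)$ and the relevant inequalities pass to the limit, which is where separability of $\overline S$ and lower semicontinuity of $\gup(S,\cdot)$ (or the continuity hypothesis of the Remark) are invoked. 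Once those technical points are in place the rest is the deterministic comparison inequalities plus the quadratic solve described above.
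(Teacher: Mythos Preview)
Your overall architecture --- use the deterministic comparisons \eref{eq-UbT}--\eref{eq-LbT} to extract $8c_0\gh^2(\gs,\widehat\gs)$, bound the stochastic fluctuation $\gZ$, and solve the resulting self-referential inequality --- matches the paper's Step~2 in Section~\ref{P17}. But the stochastic control you propose would fail, and this is where essentially all the work lies.

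\textbf{Wrong concentration inequality.} Bounded differences (McDiarmid) gives a sub-Gaussian tail for $\sup_{\gt\in\B^S}|\gZ(\gX,\overline\gs,\gt)|$ with variance proxy of order $n$, not of order $y^2$: changing one coordinate $X_i$ moves the supremum by at most a constant, and there are $n$ coordinates, so the deviation at level $e^{-\xi}$ is $\sqrt{Cn\xi}$, not $\sqrt{\xi}$. There is no way to absorb $\sqrt{n\xi}$ into the bound $c_3(1.45+\xi)$, which is a universal constant times $\xi$. The paper instead uses Talagrand's inequality (Proposition~\ref{talagrand}), whose key input is a \emph{variance} bound, supplied by Proposition~\ref{variance}: $\sum_i\E[\psi^2(\sqrt{t_i/\overline s_i}(X_i))]\le 6[\gh^2(\gs,\gt)+\gh^2(\gs,\overline\gs)]\le 6y^2$ on the ball. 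This makes the fluctuation scale like $\sqrt{y^2\xi}+\xi$ rather than $\sqrt{n\xi}$, and only the former can be absorbed into $c_0 y^2$ after the usual AM--GM split.

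\textbf{Random radius / missing peeling.} You set $y^2=\gh^2(\gs,\widehat\gs)+\gh^2(\gs,\overline\gs)$ and then invoke $\w^S(\gs,\overline\gs,y)\le c_0 y^2$ via \eref{def-D}. But $y$ is random (it depends on $\widehat\gs$), so you cannot apply a concentration inequality on the ball $\B^S(\gs,\overline\gs,y)$ directly, nor can you say ``with probability $\ge 1-e^{-\xi}$ the supremum on this ball is $\le\w^S+\text{deviation}$'' for a single $\xi$. The paper's Lemma~\ref{lem1} handles this by a peeling argument: it partitions $S$ into deterministic annuli $B_j^S(\gs,\overline\gs)=\{\gt:y_j^2<\gh^2(\gs,\gt)+\gh^2(\gs,\overline\gs)\le y_{j+1}^2\}$ with geometrically growing radii $y_j^2=(5/4)^j[D^S(\gs,\overline\gs)+\tau(\Delta+\xi+1.4)]$, applies Talagrand on each annulus with $x_j\propto y_j^2$, and sums the resulting probabilities (the geometric growth makes $\sum_j e^{-x_j}$ summable). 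This yields a bound on $\gZ(\gX,\overline\gs,\gt)$ valid \emph{simultaneously for all} $\gt\in S$ --- in particular for $\gt=\widehat\gs$ --- with the right dependence on $\gh^2(\gs,\gt)+\gh^2(\gs,\overline\gs)$. Your two-case split ``either $y^2>D^S$ or not'' is not a substitute for this, because in the first case you still need the high-probability control at the random $y$.

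Once Lemma~\ref{lem1} is in hand, the paper's Step~2 is exactly your deterministic-plus-quadratic argument, and the constants $c_1,c_2,c_3$ drop out as you describe.
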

The proof will be provided in Section~\ref{P1}. If we set
\begin{equation}
D^{S}=\sup_{(\gs,\overline \gs)\in\LL_{0}\times S}D^{S}(\gs,\overline\gs)\qquad\mbox{and}\qquad
\overline{D}^{S}=\sup_{(\gs,\overline\gs)\in\LL_{0}\times\LL_0}D^{S}(\gs,\overline\gs),
\label{Eq-DS}
\end{equation}
then (\ref{Mmain}) becomes
\begin{equation}
\P_{\gs}\left[\gh^{2}(\gs,\widehat \gs)\le(c_1-1)\gh^{2}(\gs,S)
+c_{2}D^{S}+c_3(1.45+\xi)\right]\ge1-e^{-\xi}.
\label{Eq-Mmain}
\end{equation} 
\noindent{\bf Remark:} In the sequel, we shall often content ourselves to provide our results in the form of exponential deviations similar to (\ref{Mmain}) and (\ref{Eq-Mmain}) like
\[
\P_{\gs}\left[\gh^{2}(\gs,\widehat \gs)\le\Gamma+c\xi\right]\ge1-e^{-\xi}\quad\mbox{for all }
\xi>0,
\]
where $\Gamma$ depends on various quantities involved in our assumptions. 
Such a deviation bound immediately leads by integration to $\E\left[\gh^{2}(\gs,\widehat \gs)\right]
\le\Gamma+c$ and also implies bounds similar to (\ref{Eq-risk1}) for the moments of 
$\gh(\gs,\widehat \gs)$ as well as risk bounds for more general loss functions of the form 
$\ell(\gh(\gs,\widehat \gs))$.\vspace{2mm}

In the forthcoming sections, we shall use this central Theorem to establish risk bounds for our 
estimator over more general models than just countable ones. Before turning to these bounds, 
let us note here that we can already deduce from~\eref{risk} (taking $\overline \gs=\gs$) that the estimator $\widehat\gs$ satisfies, since $D^{S}(\overline\gs,\overline\gs)\ge 1$,
\begin{equation}\label{Eq-enEsp}
\sup_{\gs\in S}\E\cro{\gh^2(\gs,\widehat \gs)}\le Cd(S)\;\;\mbox{ with }\;\;d(S)
=\sup_{\overline \gs\in S}D^{S}(\overline \gs,\overline \gs).
\end{equation}
The assumption that $\gs$ belongs to $S$ is quite restrictive, nevertheless we shall see in the next section that a bound of this type is not only true for the elements $\gs$ lying in $S$ but also for those which are close enough to $S$ with respect to the Kullback-Leibler divergence.

\subsection{Robustness properties with respect to the Kullback-Leibler divergence\label{C4}}
We recall that the Kullback-Leibler divergence (KL-divergence for short) between two probabilities 
$P $ and $Q$ on $\X$ is given by 
\[
K(P,Q)=\int\log\pa{dP\over dQ}dP\in[0,+\infty]\;\;\mbox{if }P\ll Q\quad\mbox{and}\quad
K(P,Q)=+\infty\;\;\mbox{otherwise}.
\]
For $\gt,\gt'\in\LL_{0}$, we shall set for simplicity
\[
\gK(\gt,\gt')=K\left(\bigotimes_{i=1}^{n}(t_{i}\cdot\mu_{i}),\bigotimes_{i=1}^{n}(t'_{i}\cdot
\mu_{i})\right)=\sum_{i=1}^{n}K(t_{i}\cdot\mu_{i},t_{i}'\cdot\mu_{i}).
\]
It is well-known that $2\gh^2(\gt,\gt')\le\gK(\gt,\gt')$ for all $\gt,\gt'\in\LL_0$.
%
\begin{thm}\label{T-KL}
Let $\overline S$ be a model and $S$ a countable subset of $\overline S$ satisfying
\begin{equation}\label{hypo-KL}
\inf_{\overline \gs\in S}\gK(\gs,\overline{\gs})=\inf_{\overline \gs\in \overline S}\gK(\gs,\overline{\gs})=\gK(\gs,\overline S)\quad\mbox{for all }\gs\in\LL_{0}.
\end{equation}
Then, any $\rho$-estimator $\widehat{\gs}$ based on $S$ satisfies, with $d(S)$ given by 
(\ref{Eq-enEsp}),
\begin{equation}\label{eq-Krobuste}
\mathbb{E}_{\gs}\cro{\gh^{2}(\gs,\widehat \gs)}\le C\cro{\gK(\gs,\overline S)+d(S)}
\quad\mbox{for all }\gs\in\LL_{0}
\end{equation}
and some universal constant $C$.
\end{thm}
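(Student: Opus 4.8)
The plan is to deduce Theorem~\ref{T-KL} from the master risk bound~\eref{risk} of Theorem~\ref{main00} by taking an almost-minimizer of $\gK(\gs,\cdot)$ over $S$ as the comparison point $\overline\gs$. Concretely, fix $\gs\in\LL_0$. By hypothesis~\eref{hypo-KL}, for every $\delta>0$ we may pick $\overline\gs_\delta\in S$ with $\gK(\gs,\overline\gs_\delta)\le\gK(\gs,\overline S)+\delta$. Plugging $\overline\gs=\overline\gs_\delta$ into~\eref{risk} and using $\gh^2(\gs,S)\ge0$ gives
\[
\E\cro{\gh^2(\gs,\widehat\gs)}\le c_1\gh^2(\gs,\overline\gs_\delta)
+c_2D^S(\gs,\overline\gs_\delta)+c_4.
\]
Now bound the first term by $\gK$: since $2\gh^2(\gt,\gt')\le\gK(\gt,\gt')$ for all $\gt,\gt'\in\LL_0$, we get $\gh^2(\gs,\overline\gs_\delta)\le\tfrac12\gK(\gs,\overline\gs_\delta)\le\tfrac12\bigl(\gK(\gs,\overline S)+\delta\bigr)$. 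The second term is bounded using the definition~\eref{Eq-enEsp}: indeed $D^S(\gs,\overline\gs_\delta)\le\sup_{(\gt,\gt')\in\LL_0\times S}D^S(\gt,\gt')$, but to reach $d(S)=\sup_{\overline\gs\in S}D^S(\overline\gs,\overline\gs)$ rather than the larger quantity $D^S$ of~\eref{Eq-DS}, one must exploit the structure of $D^S(\gs,\overline\gs)$ more carefully — this is where the real work lies.

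The key observation for that second term is that $D^S(\gs,\overline\gs_\delta)$ can itself be controlled in terms of $\gK(\gs,\overline\gs_\delta)$ plus $d(S)$. The mechanism is that the ball $\B^S(\gs,\overline\gs_\delta,y)$ appearing in the definition of $\w^S$ is, by the triangle inequality, contained in a Hellinger ball of radius $\bigl(\gh(\gs,\overline\gs_\delta)+y\bigr)$ centered at $\overline\gs_\delta$; and for $\gt$ in that ball the centering term in $\gZ(\gX,\overline\gs_\delta,\gt)$ is taken under $\gP_\gs$, which one replaces by the centering under $\gP_{\overline\gs_\delta}$ at the cost of a term measuring $\sum_i\E_{s_i}[\psi]-\E_{(\overline s_\delta)_i}[\psi]$; because $\psi$ is bounded by $1$ and Lipschitz, this replacement cost is controlled by $\gh(\gs,\overline\gs_\delta)$, hence by $\sqrt{\gK(\gs,\overline\gs_\delta)}$. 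So $\w^S(\gs,\overline\gs_\delta,y)\le \w^S(\overline\gs_\delta,\overline\gs_\delta,y+\gh(\gs,\overline\gs_\delta))+(\text{cross term})$, and unwinding the definition of $D^S(\cdot,\cdot)$ as the threshold where $\w^S(\cdot,\cdot,y)\le c_0y^2$ yields $D^S(\gs,\overline\gs_\delta)\le C\bigl(d(S)+\gK(\gs,\overline S)+\delta\bigr)$ for a universal $C$. Here one uses~\eref{Eq-square} to separate the $y^2$ and $\gh^2(\gs,\overline\gs_\delta)$ contributions, and the fact that $d(S)\ge1$ to absorb additive constants.

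Combining the two bounds and letting $\delta\to0$ gives
\[
\E\cro{\gh^2(\gs,\widehat\gs)}\le c_1\cdot\tfrac12\gK(\gs,\overline S)
+c_2\cdot C\bigl(d(S)+\gK(\gs,\overline S)\bigr)+c_4
\le C'\cro{\gK(\gs,\overline S)+d(S)},
\]
which is~\eref{eq-Krobuste}. The main obstacle is the middle paragraph: getting the \emph{right} quantity $d(S)$ — involving only $\sup_{\overline\gs\in S}D^S(\overline\gs,\overline\gs)$ — rather than the crude $D^S$ of~\eref{Eq-DS}, which would not be finite in the unbounded examples the paper cares about. This forces the change-of-measure argument on the empirical process $\gZ$, where the Lipschitz-and-bounded properties of $\psi$ in~\eref{def-psi} are exactly what makes the cross term $O(\gh(\gs,\overline\gs_\delta))=O(\sqrt{\gK})$ rather than something uncontrollable. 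A secondary care point is measurability/separability, but since $S$ is assumed countable this is automatic, and hypothesis~\eref{hypo-KL} is precisely what guarantees the almost-minimizer $\overline\gs_\delta$ lives in the countable set $S$ and not merely in $\overline S$.
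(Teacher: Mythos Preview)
Your middle paragraph is where the proof breaks, and the paper takes a genuinely different route.

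You attempt to bound $D^S(\gs,\overline\gs_\delta)$ by $C(d(S)+\gK)$ via a change of measure on the empirical process $\gZ$. But going from $\w^S(\gs,\overline\gs_\delta,y)$ to $\w^S(\overline\gs_\delta,\overline\gs_\delta,y')$ requires changing \emph{both} the centering constants \emph{and} the outer expectation from $\gP_\gs$ to $\gP_{\overline\gs_\delta}$. For the centering term alone, the Lipschitz property of $\psi$ is irrelevant (the argument $\sqrt{t_i/(\overline s_\delta)_i}$ does not depend on $s_i$); only $|\psi|\le1$ helps, and it gives
\[
\sup_{\gt}\left|\sum_{i=1}^n\bigl(\E_{s_i}-\E_{(\overline s_\delta)_i}\bigr)\psi\!\left(\sqrt{t_i/(\overline s_\delta)_i}\right)\right|
\le\sum_{i=1}^n\|s_i-(\overline s_\delta)_i\|_1\le 2\sqrt{2}\,\sqrt{n}\,\gh(\gs,\overline\gs_\delta),
\]
not $O(\gh(\gs,\overline\gs_\delta))$ as you claim. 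The extra $\sqrt{n}$ destroys the bound $D^S(\gs,\overline\gs_\delta)\le C(d(S)+\gK)$. Worse, you do not address the change of the \emph{outer} expectation $\E_\gs[\sup|\gZ|]\to\E_{\overline\gs_\delta}[\sup|\gZ|]$ at all; this is a change of measure on the full product law applied to a supremum bounded only by $2n$, and naive control via total variation or Pinsker costs another large factor.

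The paper sidesteps this entirely. It does \emph{not} start from~\eref{risk} with true parameter $\gs$. Instead it applies~\eref{Mmain} with $\overline\gs\in S$ playing the role of the \emph{true} parameter (and of the comparison point), so that the quantity that appears is $D^S(\overline\gs,\overline\gs)\le d(S)$ automatically. This yields an exponential tail bound for the scalar $\gh^2(\overline\gs,\widehat\gs)$ under $\gP_{\overline\gs}$. The change of measure is then performed once, on this single real-valued random variable, via Proposition~\ref{P-KLrob} (a variant of Barron's lemma): a bound $\gP_{\overline\gs}[T\ge z]\le ae^{-bz}$ transfers to $\E_\gs[T]\le z_0+b^{-1}(1+c'+\sqrt{2c'})$ with $c'$ involving $\gK(\gs,\overline\gs)$. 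A triangle inequality $\gh^2(\gs,\widehat\gs)\le(1+\alpha)\gh^2(\overline\gs,\widehat\gs)+(1+\alpha^{-1})\gh^2(\gs,\overline\gs)$ and $\gh^2\le\gK/2$ then finish. The point is that changing measure on a scalar tail bound costs exactly $\gK$, whereas changing measure inside the definition of $\w^S$ (on a whole empirical process) does not.
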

\begin{proof}
It relies on Theorem~\ref{main00} and the following proposition (to be proved in Section~\ref{P22}) which is a variant of the lemma (Section~5.3) in Barron~\citeyearpar{MR1154352} and of independent interest since it applies to many other situations.
%
\begin{prop}\label{P-KLrob}
Let $\gs$ and $\overline{\gs}$ belong to $\LL_0$ and $T(\gX)$ be a random variable  such that
\[
\mathbb{P}_{\overline{\gs}}\cro{T(\gX)\ge z}\le ae^{-z}\quad\mbox{for all }z\ge0\quad\mbox{and some }a>0.
\]
Then, if $c=\log(1+a)+\gK(\gs,\overline\gs)$,
\begin{equation}\label{Eq-KLa}
\mathbb{E}_{\gs}\cro{T(\gX)}\le1+c+\log\left(1+c+\sqrt{2c}\right)<1+c+\sqrt{2c}.
\end{equation}
If $\mathbb{P}_{\overline{\gs}}\cro{T(\gX)\ge z}\le ae^{-bz}$ for all $z\ge z_0\ge0$ with $a,b>0$, then
\begin{equation}
\mathbb{E}_{\gs}\cro{T(\gX)}\le z_0+ b^{-1}\left(1+c'+\sqrt{2c'}\right)\quad\mbox{with}\quad
c'=\log\left(1+ae^{-bz_0}\right)+\gK(\gs,\overline\gs).
\label{Eq-KLb}
\end{equation}
\end{prop}
It follows from (\ref{Mmain}) with $\gs$ replaced by $\overline{\gs}\in S$ that
\begin{equation}
\P_{\overline\gs}\left[\gh^{2}(\overline\gs,\widehat \gs)>c_{2}D^{S}(\overline \gs,\overline \gs)
+1.45c_3+\xi\right]\le e^{-\xi/c_3}\quad\mbox{for all }\xi>0
\label{Eq-risk0}
\end{equation}
and we may therefore apply (\ref{Eq-KLb}) to $T(\gX)=\gh^{2}(\overline\gs,\widehat \gs)-c_{2}D^{S}(\overline \gs,\overline \gs)-1.45c_3$ with $a=1$, $b=1/c_3$ and $z_0=0$. Then $c'=\log2+\gK$ with 
$\gK=\gK(\gs,\overline\gs)$ and
\[
\mathbb{E}_{\gs}\cro{\gh^{2}(\overline\gs,\widehat \gs)}\le
c_{2}D^{S}(\overline{\gs},\overline\gs)+1.45c_3+c_{3}\left(1+\log 2+\gK+\sqrt{2(\log2+\gK)}\right).
\]
We finally derive from the triangular inequality and (\ref{Eq-square}) with $\alpha=1/50$ that
\[
\mathbb{E}_{\gs}\cro{\gh^{2}(\gs,\widehat \gs)}\le\frac{51}{50}\left[c_{2}D^{S}(\overline{\gs},\overline\gs)+c_{3}\left(2.45+\log 2+\gK+\sqrt{2(\log2+\gK)}\right)+50\gh^{2}(\gs,\overline{\gs})\right].
\]
Since $\overline{\gs}$ is arbitrary in $S$ and $\gh^2\le\gK/2$, it follows that
\[
\mathbb{E}_{\gs}\cro{\gh^{2}(\gs,\widehat \gs)}\le\frac{51}{50}\,\inf_{\overline{\gs}\in S}\left\{c_{2}D^{S}(\overline{\gs},\overline\gs)+c_{3}\left(2.45+\log 2+\gK+\sqrt{2(\log2+\gK)}\right)+25\gK\right\},
\]
hence by (\ref{Eq-enEsp}) and \eref{hypo-KL}, 
\[
\mathbb{E}_{\gs}\cro{\gh^{2}(\gs,\widehat \gs)}\le C\cro{\inf_{\overline \gs\in S}\gK(\gs,\overline \gs)+d(S)}=C\cro{\gK(\gs,\overline S)+d(S)}
\]
for some universal constant $C>0$.
\end{proof}
Inequality~\eref{eq-Krobuste} shows that~\eref{Eq-enEsp} is not only true when $\gs$ belongs to $S$ but also when it belongs to $\overline S$ provided that \eref{hypo-KL} holds and that this risk bound deteriorates by at most the additional term $\gK(\gs,\overline S)=\inf_{\overline \gs\in\overline{S}}
\gK(\gs,\overline \gs)$ when $\gs$ does not belong to $\overline S$. The estimator $\widehat \gs$ is therefore robust with respect to the KL-divergence. 

Similar results actually hold for any estimator $\widetilde{\gs}$ and any non-negative loss function $\ell$ such that an analogue of (\ref{Eq-risk0}) is satisfied, more precisely if
\[
\P_{\overline\gs}\left[\ell(\overline\gs,\widehat \gs)>C(\overline{\gs})+\xi\right]\le e^{-b\xi}\quad\mbox{for all }\xi>0\mbox{ and }\overline{\gs}\in S.
\]
This indeed implies by (\ref{Eq-KLb}) that
\[
\mathbb{E}_{\gs}\cro{\ell(\overline{\gs},\widehat \gs)}\le C(\overline{\gs})+(C'/b)
\left[1+\gK(\gs,\overline\gs)\right]
\]
and, if the loss function $\ell$ satisfies $\ell(\gs,\gt)\le A\left[\ell(\gs,\gu)+\ell(\gu,\gt)\right]$ 
for some constant $A$ and all $\gs,\gt,\gu$, then
\[
\mathbb{E}_{\gs}\cro{\ell(\gs,\widehat \gs)}\le A\left[C(\overline{\gs})+(C'/b)
\left[1+\gK(\gs,\overline\gs)\right]+\ell(\gs,\overline{\gs})\strut\right]\quad\mbox{for all }
\overline{\gs}\in S
\]
and finally
\[
\mathbb{E}_{\gs}\cro{\ell(\gs,\widehat \gs)}\le A\sup_{\overline{\gs}\in S}
C(\overline{\gs})+C_0\left[1+\gK(\gs,S)+\ell(\gs,S)\strut\right]
\quad\mbox{with}\quad\ell(\gs,S)=\inf_{\overline \gs\in S}\ell(\gs,\overline \gs).
\]
This means that, if one allows bias terms depending on KL-divergences, which is often the case in density estimation when one uses likelihood-based methods, one can always assume that the true parameter belongs to the model $S$ and then extend the result to all $\gs$ satisfying $\inf_{\overline{\gs}\in S}\gK(\gs,\overline{\gs})<+\infty$.

\subsection{Robustness properties with respect to the Hellinger distance}\label{C3}
Unfortunately, if $\gh^2(\gs,\overline \gs)\le\gK(\gs,\overline \gs)/2$, the reciprocal 
$\gh^2(\gs,\overline \gs)\ge c\gK(\gs,\overline \gs)/2$ for some positive $c$ is definitely not true in general and we cannot use the previous results to get robustness properties with respect to the Hellinger distance, which is actually a much stronger property. Hopefully, this robustness property is already included in our Theorem~\ref{main00}. We do need this robustness property in order to work with general models, not only countable ones. Since our models, as described in Section~\ref{A1} (for instance the classical sets that are used in Approximation Theory), are typically uncountable, we have to replace them by countable approximations. 

We shall therefore apply the following strategy: given a model $\overline{S}$ for $\gs$ replace it by a countable and identifiable approximating set $S$ to build our estimator. The natural question at this stage is then:
``given $\overline{S}$, how to choose $S$?". First, $S$ should approximate $\overline{S}$ within some (typically small) $\eta$, according to the following definition.
%
\begin{defi}\label{D-etanet}
Given a model $\overline{S}$ in the pseudometric space $(\LL_0,\gh)$ and $\eta\ge0$, we say 
that a {\em countable} subset $S[\eta]$ of $\LL_0$ (not necessarily included in 
$\overline{S}$) is an $\eta$-net for $\overline{S}$ if $\sup_{\gt\in\overline S}\gh(\gt,S[\eta])
\le \eta$. In particular a countable and dense subset of $\overline{S}$ is a 0-net.
\end{defi}
This immediately leads to the following corollary:
%
\begin{cor}\label{C-main0}
Let $S=S[\eta]$ be a countable and identifiable $\eta$-net for $\overline{S}$ and $\widehat \gs$ a $\rho$-estimator based on $S$. Then, for all $\xi>0$,
\begin{equation}\label{Mmain2}
\P_{\gs}\left[\gh^{2}(\gs,\widehat \gs)\le c'_1\gh^{2}\left(\gs,\overline{S}\right)+\left(c'_1\eta^2+c_2D^{S[\eta]}\right)+c_3(1.45+\xi)\right]\ge1-e^{-\xi},
\end{equation} 
hence
\begin{equation}\label{risk2}
\E\cro{\gh^2(\gs,\widehat \gs)}\le c'_1\gh^{2}\left(\gs,\overline{S}\right)+
\left(c'_1\eta^2+c_2D^{S[\eta]}\right)+c_4.
\end{equation} 
\end{cor}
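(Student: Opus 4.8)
The plan is to derive Corollary~\ref{C-main0} directly from Theorem~\ref{main00} by carefully tracking how the $\eta$-net approximation interacts with the three terms appearing in the infimum of~\eref{Mmain}. First I would fix $\gs\in\LL_0$ and apply~\eref{Mmain} with $S=S[\eta]$, but rather than taking the infimum over all $\overline\gs\in S$ I would select a single convenient competitor: since $S[\eta]$ is an $\eta$-net for $\overline S$, for every $\gt\in\overline S$ there exists $\overline\gs\in S[\eta]$ with $\gh(\gt,\overline\gs)\le\eta$; choosing $\gt$ to be (nearly) a best approximation of $\gs$ in $\overline S$ and applying the triangle inequality together with~\eref{Eq-square} gives $\gh^2(\gs,\overline\gs)\le(1+\alpha)\gh^2(\gs,\overline S)+(1+\alpha^{-1})\eta^2$ for any $\alpha>0$, and in the limit (or by a standard infimum argument) one can push this to $\gh^2(\gs,\overline\gs)$ being controlled by $\gh^2(\gs,\overline S)+\eta^2$ up to constants; in fact since $\gt$ can be taken arbitrarily close to realizing $\gh(\gs,\overline S)$ and $\overline\gs$ arbitrarily close to $\gt$, one gets the clean bound $\gh^2(\gs,\overline\gs)\le\left(\gh(\gs,\overline S)+\eta\right)^2$.

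The second step is to handle the term $-\gh^2(\gs,S)$ in~\eref{Mmain}. Here the point is simply that $\gh^2(\gs,S[\eta])\ge0$, so dropping it (replacing $-\gh^2(\gs,S[\eta])$ by $0$) only weakens the bound; this is what accounts for the coefficient becoming $c'_1=2(c_1-1)$ rather than $c_1$ in the corollary — indeed from $c_1\gh^2(\gs,\overline\gs)$ with $\gh^2(\gs,\overline\gs)\le\left(\gh(\gs,\overline S)+\eta\right)^2\le 2\gh^2(\gs,\overline S)+2\eta^2$ one obtains $c_1\gh^2(\gs,\overline\gs)\le 2c_1\gh^2(\gs,\overline S)+2c_1\eta^2$, and absorbing a $-\gh^2(\gs,\overline S)$ refinement (or more carefully, keeping track that~\eref{Mmain} actually allows subtracting $\gh^2(\gs,S)$ which here we bound crudely) yields the stated $c'_1=2(c_1-1)$. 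I would check the arithmetic carefully at this point since the precise constant $2(c_1-1)$ suggests the authors use $\left(\gh(\gs,\overline S)+\eta\right)^2$ split as $\gh^2(\gs,\overline S)+\eta^2+2\gh(\gs,\overline S)\eta$ and then a more delicate bookkeeping rather than the lossy factor-2 inequality; the cleanest route is probably $c_1\left(\gh(\gs,\overline S)+\eta\right)^2 - \gh^2(\gs,\overline S) \le (c_1-1)\gh^2(\gs,\overline S) + c_1\eta^2 + 2c_1\gh(\gs,\overline S)\eta$ followed by Young's inequality on the cross term, but I expect the authors in fact simply absorb things to land on $c'_1=2(c_1-1)$.

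The third step concerns the dimension term $c_2 D^{S[\eta]}(\gs,\overline\gs)$: since the chosen $\overline\gs$ depends on $\gs$ and is hard to control individually, I would bound $D^{S[\eta]}(\gs,\overline\gs)$ by the supremum $D^{S[\eta]}=\sup_{(\gs,\overline\gs)\in\LL_0\times S[\eta]}D^{S[\eta]}(\gs,\overline\gs)$ defined in~\eref{Eq-DS}, which is exactly the quantity appearing in~\eref{Mmain2}. Combining the three steps gives, on the event of probability at least $1-e^{-\xi}$,
\[
\gh^2(\gs,\widehat\gs)\le c'_1\gh^2(\gs,\overline S)+c'_1\eta^2+c_2D^{S[\eta]}+c_3(1.45+\xi),
\]
which is~\eref{Mmain2}. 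Finally,~\eref{risk2} follows by integrating the deviation inequality: $\E[\gh^2(\gs,\widehat\gs)]\le c'_1\gh^2(\gs,\overline S)+c'_1\eta^2+c_2D^{S[\eta]}+1.45c_3+c_3\int_0^\infty e^{-\xi}d\xi = c'_1\gh^2(\gs,\overline S)+c'_1\eta^2+c_2D^{S[\eta]}+2.45c_3$, and $2.45c_3=c_4$ by~\eref{Eq-cons1}. The only genuinely delicate point is getting the constant in front of $\gh^2(\gs,\overline S)$ exactly right — namely seeing why it is $c'_1=2(c_1-1)$ and why the $\eta^2$ term carries the same constant — which amounts to combining the triangle-inequality expansion $\left(\gh(\gs,\overline S)+\eta\right)^2$ with the $-\gh^2(\gs,S[\eta])$ slack term in~\eref{Mmain} in an optimal way; everything else is bookkeeping.
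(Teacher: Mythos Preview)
Your overall strategy is right — apply Theorem~\ref{main00}, relate $S[\eta]$ to $\overline S$ via the $\eta$-net property and a triangle inequality, replace $D^{S[\eta]}(\gs,\overline\gs)$ by its supremum $D^{S[\eta]}$, and integrate to pass from~\eref{Mmain2} to~\eref{risk2}. The place where you get stuck is exactly the constant, and the reason is that you choose the wrong competitor $\overline\gs$.

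The paper does \emph{not} start from~\eref{Mmain} with $\overline\gs$ chosen near a best approximant in $\overline S$. Instead it first passes to~\eref{Eq-Mmain}: there one takes $\overline\gs\in S[\eta]$ to (nearly) realize $\gh(\gs,S[\eta])$, so that $c_1\gh^2(\gs,\overline\gs)-\gh^2(\gs,S[\eta])$ collapses to $(c_1-1)\gh^2(\gs,S[\eta])$, and simultaneously $D^{S[\eta]}(\gs,\overline\gs)\le D^{S[\eta]}$. Only \emph{after} this cancellation does one invoke the $\eta$-net bound
\[
\gh^2(\gs,S[\eta])\le\bigl(\gh(\gs,\overline S)+\eta\bigr)^2\le 2\gh^2(\gs,\overline S)+2\eta^2,
\]
which immediately gives $(c_1-1)\gh^2(\gs,S[\eta])\le c'_1\gh^2(\gs,\overline S)+c'_1\eta^2$ with $c'_1=2(c_1-1)$. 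Your route --- pick $\overline\gs$ near the $\overline S$-projection and then drop $-\gh^2(\gs,S[\eta])$ --- throws away the cancellation and lands on $2c_1$ rather than $2(c_1-1)$; no amount of Young-inequality juggling on the cross term $2\gh(\gs,\overline S)\eta$ will recover the lost unit. The fix is simply to reverse the order: cancel first (i.e.\ use~\eref{Eq-Mmain}), then approximate.
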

\begin{proof}
The first bound follows from the inequality $\gh^{2}(\gs,S[\eta])\le2\gh^{2}(\gs,\overline{S})+2\eta^2$ applied to (\ref{Eq-Mmain}) and the second one by integration.
\end{proof}
We see that (\ref{risk2}) corresponds to a decomposition of the risk into the sum of three terms among which only one, namely $c'_1\eta^2+c_2D^{S[\eta]}$ depends on the chosen net $S[\eta]$. We shall therefore focus our attention on a similar quantity, introducing the two following new notions of dimension. 
%
\begin{defi}\label{def-dimG}
Given a model $\overline{S}$ in $\LL_{0}$, we define its dimension $D(\overline{S})$ and its uniform dimension $\overline D(\overline{S})$ by
\begin{equation}
D(\overline{S})=\inf_{S}\cro{2c_1\sup_{\gu\in\overline S}\gh^2(\gu,S)+c_{2}D^{S}}\ \mbox{and}\ \ \overline{D}(\overline{S})=\inf_{S}\cro{2c_1\sup_{\gu\in\overline S}\gh^2(\gu,S)+c_{2}\overline{D}^{S}},
\label{Eq-dimG}
\end{equation}
where $D^S$ and $\overline{D}^S$ have been defined in (\ref{Eq-DS}) and, in both cases, the infimum is taken over all countable and identifiable subsets $S$ of $\LL_{0}$.
\end{defi}
It follows from these definitions and the bounds $1\le D^{S}(\gs,\overline \gs)\le2nc_0^{-1}$ that 
\begin{equation}
1<c_2\le D(\overline{S})\le \overline D(\overline{S})\le2n\left(c_1+c_2c_0^{-1}\right)<144n.
\label{Eq-dims}
\end{equation}
If $\overline{S}\subset\overline{S'}$, then $D(\overline{S})\le D(\overline{S'})$ and 
$\overline D(\overline{S})\le \overline D(\overline{S'})$ which means that $D$ and 
$\overline D$ are  non-decreasing with respect to the inclusion. These two dimensions 
have actually different purposes: we shall use $D(\overline{S})$ to bound the risk of a 
$\rho$-estimator on a given model $\overline S$ while $\overline D(\overline{S})$ will 
be used for model selection purposes in Section~\ref{MS}.

Since the separability of $\overline{S}$ implies the existence of $\eta$-nets $S[\eta]$ 
for $\overline{S}$ whatever $\eta\ge0$, \eref{Eq-dimG} can be reformulated as 
\begin{equation}
D(\overline{S})=\inf_{\eta\ge0}\inf_{S[\eta]}\cro{2c_1\eta^{2}+c_{2}D^{S[\eta]}}
\qquad\mbox{and}\qquad\overline{D}(\overline{S})=
\inf_{\eta\ge0}\inf_{S[\eta]}\cro{2c_1\eta^{2}+c_{2}\overline D^{S[\eta]}},
\label{Eq-dim}
\end{equation}
where the infima now run over all possible identifiable $\eta$-nets $S[\eta]\subset\LL_{0}$ for $\overline S$.

The important property of these dimensions lies in the fact that they allow to replace the model $\overline{S}$ by a suitable $\eta$-net $S[\eta]$ ($\eta\ge0$) to which 
Theorem~\ref{main00} applies. In particular, choosing $S[\eta]$ such that 
\[
2c_1\eta^{2}+c_{2}D^{S[\eta]}\le D(\overline{S})+c_3/20,
\]
which is always possible in view of (\ref{Eq-dim}), we get from (\ref{Mmain2}) and an integration with respect to $\xi$ the following risk bounds.
%
\begin{cor}\label{C-main1}
Given a model $\overline{S}$, there exists a $\rho$-estimator $\widehat\gs$ such that for all $\gs\in\LL_{0}$
\begin{equation}
\P_{\gs}\left[\gh^{2}(\gs,\widehat \gs)\le c'_1\gh^{2}\left(\gs,\overline{S}\right)+D(\overline{S})
+c_3(1.5+\xi)\right]\ge1-e^{-\xi}\quad\mbox{for all }\xi>0
\label{Mmain3}
\end{equation} 
and
\begin{equation}
\E\cro{\gh^2(\gs,\widehat \gs)}\le c'_1\gh^{2}\left(\gs,\overline{S}\right)+D(\overline{S})+c_4.
\label{risk3}
\end{equation} 
\end{cor}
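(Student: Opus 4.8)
The plan is to derive Corollary~\ref{C-main1} directly from Corollary~\ref{C-main0} by choosing the $\eta$-net in an optimal way. Recall that \eref{Eq-dim} gives
\[
D(\overline{S})=\inf_{\eta>0}\inf_{S[\eta]}\cro{2c_1\eta^{2}+c_{2}D^{S[\eta]}},
\]
so for any $\delta>0$ we may pick $\eta>0$ and a countable $\eta$-net $S=S[\eta]$ for $\overline{S}$ such that $2c_1\eta^{2}+c_{2}D^{S[\eta]}\le D(\overline{S})+\delta$. I would take $\delta=c_3/20$, which is harmless because $c_3$ is a fixed numerical constant. Note also that $c_1'\eta^2+c_2D^{S[\eta]}\le 2c_1\eta^2+c_2D^{S[\eta]}$ since $c_1'=2(c_1-1)<2c_1$, so in particular $c_1'\eta^2+c_2D^{S[\eta]}\le D(\overline{S})+c_3/20$.

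Next I would invoke Corollary~\ref{C-main0} for this particular net: \eref{Mmain2} gives, for all $\xi>0$,
\[
\P_{\gs}\left[\gh^{2}(\gs,\widehat \gs)\le c'_1\gh^{2}\left(\gs,\overline{S}\right)+\left(c'_1\eta^2+c_2D^{S[\eta]}\right)+c_3(1.45+\xi)\right]\ge1-e^{-\xi}.
\]
Substituting the bound $c'_1\eta^2+c_2D^{S[\eta]}\le D(\overline{S})+c_3/20$ into the probability event (the event on the left only gets larger when we enlarge the right-hand side) yields
\[
\P_{\gs}\left[\gh^{2}(\gs,\widehat \gs)\le c'_1\gh^{2}\left(\gs,\overline{S}\right)+D(\overline{S})+c_3/20+c_3(1.45+\xi)\right]\ge1-e^{-\xi},
\]
and since $1.45+1/20=1.5$ this is exactly \eref{Mmain3}. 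The $\rho$-estimator $\widehat\gs$ in question is the one built on this chosen net $S[\eta]$; it exists because $\overline{S}$ is separable (Definition~\ref{D-model}), which guarantees the existence of $\eta$-nets for every $\eta\ge0$.

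Finally, \eref{risk3} follows from \eref{Mmain3} by integration over $\xi$: writing $\Gamma=c'_1\gh^{2}(\gs,\overline{S})+D(\overline{S})+1.5c_3$, the deviation bound $\P_{\gs}[\gh^2(\gs,\widehat\gs)>\Gamma+c_3\xi]\le e^{-\xi}$ gives $\E[\gh^2(\gs,\widehat\gs)]\le\Gamma+c_3$, that is $\E[\gh^2(\gs,\widehat\gs)]\le c'_1\gh^{2}(\gs,\overline{S})+D(\overline{S})+2.5c_3=c'_1\gh^{2}(\gs,\overline{S})+D(\overline{S})+c_4$ by the definition $c_4=2.5c_3$ in \eref{Eq-cons1}. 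There is essentially no obstacle here: the whole argument is a bookkeeping exercise in which the only substantive input, namely the concentration and approximation estimate \eref{Mmain2}, has already been established in Corollary~\ref{C-main0}. The mild point to be careful about is the order of quantifiers --- the net $S[\eta]$ (and hence $\widehat\gs$) depends on $\overline{S}$ but not on $\gs$, so the resulting bound is genuinely uniform over $\gs\in\LL_0$, as claimed.
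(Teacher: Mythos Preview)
Your proof is correct and follows essentially the same approach as the paper: choose an $\eta$-net $S[\eta]$ with $2c_1\eta^{2}+c_{2}D^{S[\eta]}\le D(\overline{S})+c_3/20$, apply \eref{Mmain2}, and integrate. You have simply made explicit the inequality $c_1'<2c_1$ and the integration step that the paper leaves to the reader.
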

The replacement of $\overline{S}$ by a suitable pair $(\eta,S[\eta])$ leads to the risk bound 
(\ref{risk3}) depending on $\overline{S}$ only. This means that, given a model $\overline{S}$, 
the risk of  $\widehat\gs$ breaks down, up to numerical constants, into the bias term 
$\gh^{2}(\gs,\overline{S})$ which depends on the quality of the approximation of $\gs$ by 
the model $\overline{S}$ and the dimensional term $D(\overline{S})$ which measures in 
some sense the massiveness of $\overline{S}$. In particular, in the i.i.d.\ case, we get
\[
\E\cro{h^{2}(s,\widehat s)}\le c'_1h^{2}(s,\overline{S})+n^{-1}\left[D(\overline{S})+c_4\right],
\]
as expected.

\subsubsection{Models which are VC-subgraph classes\label{C3a}}
The first situation that we shall consider is about models $\overline{S}$ such that any 
countable and identifiable subset $S$ of $\overline{S}$ satisfies $D^S\le D'$ where $D'$ only depends on 
$\overline{S}$ but not on the choice of the subset $S$. In such a case it is natural to choose 
for $S$ a countable and dense subset of $\overline{S}$ which is a 0-net for $\overline{S}$ 
so that (\ref{risk2}) leads to 
\begin{equation}\label{risk-VC}
\E\cro{\gh^2(\gs,\widehat \gs)}\le c'_1\gh^{2}\left(\gs,\overline{S}\right)+c_2D'+c_{4}.
\end{equation} 
In order to deal with this situation, we first need to prove an auxiliary result and 
for this we shall consider an element  ${\bf t}=(t_{1},\ldots,t_{n})$ in $\LL_{0}$ as a real-valued function on $\overline \X=\bigcup_{i=1}^{n}\pa{\{i\}\times \X_{i}}$ defined by
\begin{equation}
{\bf t}(\overline x)=t_{i}(x)\quad\mbox{for all }\overline x=(i,x)\in\overline{\mathscr{X}}.
\label{Eq-Xbar}
\end{equation}
Replacing the $X_{i}$ by the random variables $\overline X_{i}=(i,X_{i})$ so that ${\bf t}(\overline X_{i})=t_i(X_i)$, we see that $\w^{S}({\bf s},\overline{\bf s},y)$ can be written as
\begin{equation}
\w^{S}({\bf s},\overline{\bf s},y)=\mathbb{E}_{\bf s}\left[
\sup_{{\bf f}\in\mathscr{F}^{S}({\bf s},\overline{\bf s},y)}\left|\,\sum_{i=1}^{n}
\left({\bf f}(\overline X_{i})-\mathbb{E}_{\bf s}\left[{\bf f}(\overline{X_{i}})\right]\right)\right|\right],
\label{Eq-VCwS}
\end{equation}
where the supremum runs among the class $\mathscr{F}^{S}({\bf s},\overline{\bf s},y)$ of real-valued functions ${\bf f}$ on $\overline{\mathscr{X}}$ given by 
\[
\mathscr{F}^{S}({\bf s},\overline{\bf s},y)=\left\{\left.\psi\left(\sqrt{{\bf t}/\overline{\bf s}}\right)
\,\right|\,{\bf t}\in\mathscr{B}^{S}({\bf s},\overline{\bf s},y)\right\}.
\]
For a set of real-valued functions $\FF$ on $\overline \X$ and a probability $Q$ on $\overline \X$, we denote by $N(\FF,Q,\eta)$ the $\eta$-covering number of $\FF$ with respect to $Q$, that is, the smallest number of closed balls (with respect to the distance in $\IL_{2}(Q)$) with centers in $\FF$ and radius $\eta$ needed to cover $\FF$. We finally introduce the following assumption for a function $\Ent$.
\begin{ass}\label{A-VC}
The function $\Ent$ defined on $[1/2,+\infty)$ is non-negative, non-decreasing and
\[
L=\sup_{x\ge 1/2}\left\{x\left[\Ent(x)\right]^{-1/2}\int_{x}^{+\infty}u^{-2}\sqrt{\Ent(u)}\,du\right\}<+\infty.
\]
\end{ass}
The next result will be proved in Section~\ref{P3}. 
\begin{prop}\label{Kolt2}
For all $y>0$, assume that there exists a function $\Ent_{\!y}$, possibly depending on $y$ and satisfying Assumption~\ref{A-VC} with $L=L_y$, such that for all $\gs,\overline \gs\in\ \LL_{0}$, 
\begin{equation}
\log N\pa{\Y,{1\over n}\sum_{i=1}^n\delta_{\overline X_i(\omega)},z}\le\Ent_{\!y}\pa{1\over z}
\quad\mbox{for all }\omega\in\Omega\mbox{ and } 0<z\le2.
\label{vap0}
\end{equation}
There exists a universal constant $C_0$ such that
\begin{equation}
\w^{S}(\gs,\overline \gs,y)\le C_0\cro{yL_y\sqrt{6H_y}+
L_y^{2}H_y}\quad\mbox{with}\quad H_y=\Ent_{\!y}\pa{{\sqrt{n\over24y^2}}\bigvee\frac{1}{2}},
\label{B10}
\end{equation}
hence
\begin{equation}
D^{S}(\gs,\overline\gs)\le \overline D^{S}\le \sup\left\{y^2\,\left|\,0<y^2<{2C_{0}\over c_0}
\pa{1+{3C_{0}\over c_0}}L_y^2H_y\right.\right\}\bigvee1.
\label{B21}
\end{equation}
\end{prop}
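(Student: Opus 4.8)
The plan is to bound the supremum of the empirical process $\gZ(\gX,\overline\gs,\cdot)$ over the localized ball $\B^S(\gs,\overline\gs,y)$ by a chaining argument, and then to convert such a bound on $\w^S(\gs,\overline\gs,y)$ into a bound on $D^S(\gs,\overline\gs)$ via its very definition. By \eref{Eq-VCwS} the quantity $\w^S(\gs,\overline\gs,y)$ is the expected supremum of a centered empirical process indexed by the function class $\FF^S(\gs,\overline\gs,y)$; since $\psi$ takes values in $[-1,1]$, every function in this class is bounded by $1$, and since $\psi$ is $1.143$-Lipschitz, the $\IL_2(Q)$-diameter of $\FF^S(\gs,\overline\gs,y)$ is controlled by that of $\{\sqrt{{\bf t}/\overline{\bf s}}\,\}$. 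The key quantitative input is the covering-number hypothesis \eref{vap0}, which says that the (random) $\IL_2$-entropy of $\FF^S(\gs,\overline\gs,y)$ under the empirical measure $n^{-1}\sum_i\delta_{\overline X_i}$ is at most $\Ent_{\!y}(1/z)$ at scale $z$, uniformly in $\omega$.

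First I would control the squared radius of the relevant ball: if ${\bf t}\in\B^S(\gs,\overline\gs,y)$ then $\gh^2(\gs,\gt)+\gh^2(\gs,\overline\gs)\le y^2$, and by the triangle inequality $\gh^2(\overline\gs,\gt)\le 2y^2$, which in turn — using that $\psi$ is Lipschitz and relating the $\IL_2(s_i)$-distance between $\psi(\sqrt{t_i/\overline s_i})$ and $\psi(\sqrt{t'_i/\overline s_i})$ to Hellinger-type quantities — gives that the $\IL_2$-diameter of $\FF^S(\gs,\overline\gs,y)$ is at most a constant times $y/\sqrt{n}$ after renormalization, i.e.\ at most of order $\sqrt{6}y$ in the unnormalized sum of $n$ terms (this is the origin of the $\sqrt{6y^2}$ and the $\vee 1$ truncation in $H_y$). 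Then I would apply a maximal inequality for bounded empirical processes of Talagrand/Giné–Koltchinskii type: the expected supremum is bounded, up to a universal constant, by $\int_0^{\sigma} \sqrt{\log N(\FF,Q_n,z)}\,dz$ plus a lower-order term $\log N$ coming from the bounded-differences/Bernstein part, where $\sigma$ is the diameter just computed. Substituting \eref{vap0} and changing variables $u=1/z$ turns the entropy integral into $\int u^{-2}\sqrt{\Ent_{\!y}(u)}\,du$, and Assumption~\ref{A-VC} with constant $L_y$ is exactly what makes this integral converge and bounds it by $L_y[\Ent_{\!y}(1/\sigma)]^{1/2}/\sigma \cdot \sigma = L_y\sqrt{H_y}$; multiplying by the diameter $y\sqrt{6}$ gives the first term $yL_y\sqrt{6H_y}$, and the Bernstein correction gives the second term $L_y^2 H_y$, producing \eref{B10}.

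Having \eref{B10}, the passage to \eref{B21} is purely bookkeeping from the definition of $D^S(\gs,\overline\gs)$: recall $D^S(\gs,\overline\gs)=y_0^2\vee 1$ where $y_0=\sup\{y\ge 0\,|\,\w^S(\gs,\overline\gs,y)>c_0 y^2\}$. So any $y$ with $\w^S(\gs,\overline\gs,y)\le c_0 y^2$ lies to the right of $y_0$; by \eref{B10} this holds as soon as $C_0(yL_y\sqrt{6H_y}+L_y^2H_y)\le c_0 y^2$. Solving this quadratic inequality in $y$ — using $(x+z)^2$-type bounds or simply separating the two terms with $ab\le (a^2+b^2)/2$ — shows that $y^2\ge (2C_0/c_0)(1+3C_0/c_0)L_y^2H_y$ suffices, hence $y_0^2$ is at most the supremum of those $y^2$ that fail this, which is exactly the right-hand side of \eref{B21} before the $\vee 1$; taking the maximum with $1$ finishes.

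The main obstacle I anticipate is not the chaining itself but two technical subtleties: first, that $\Ent_{\!y}$ and hence $L_y$, $H_y$ may depend on $y$, so one cannot simply optimize once and for all — one must carry $y$ through the maximal inequality and only at the end take the supremum over the $y$'s that violate $\w^S\le c_0 y^2$; second, the maximal inequality must be the version valid for a \emph{random} (data-dependent) metric entropy bound that holds for \emph{every} $\omega$, which is why \eref{vap0} is stated with "for all $\omega\in\Omega$" — this lets one use a pathwise Dudley bound inside the expectation and then integrate, avoiding any need to control expected covering numbers. Getting the constant $C_0$ to be genuinely universal (independent of $y$, $n$, $S$, $\gs$, $\overline\gs$) requires that the Lipschitz constant of $\psi$, the factor $\sqrt 6$ from the diameter computation, and the absolute constants in the empirical-process inequality all be tracked, but these are routine once the structure above is in place. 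The rescaling that turns the "sum of $n$ terms" normalization (used throughout the general framework in Section~\ref{B2}) into the "average" normalization needed for standard empirical-process theorems is the one place where the factor $\sqrt{n/(6y^2)}$ enters, and care is needed so that the $\vee 1$ there matches the $\vee 1$ in $H_y$.
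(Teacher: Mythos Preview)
Your overall strategy matches the paper's: reduce to a Gin\'e--Koltchinskii-type maximal inequality (the paper packages this as Lemma~\ref{Koltb}), feed in the entropy hypothesis \eref{vap0} together with a variance bound $v^2=6y^2$, and then use $2ab\le\alpha a^2+\alpha^{-1}b^2$ with $\alpha=c_0/C_0$ to split the mixed term $yL_y\sqrt{6H_y}$ and read off \eref{B21} from the definition of $D^S$.

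There is one genuine gap. Your justification of the variance bound via ``$\psi$ is Lipschitz plus the triangle inequality $\gh^2(\overline\gs,\gt)\le 2y^2$'' does not go through: the quantity to be bounded is $\sum_i\int\psi^2\!\left(\sqrt{t_i/\overline s_i}\right)s_i\,d\mu_i$, and a Lipschitz estimate $\psi^2(\sqrt{t_i/\overline s_i})\le C(\sqrt{t_i}-\sqrt{\overline s_i})^2/\overline s_i$ produces, after integrating against $s_i$, an uncontrolled factor $s_i/\overline s_i$ that no Hellinger distance between $\gt$ and $\overline\gs$ can absorb. What you need instead is Proposition~\ref{variance}, which gives $\sum_i\E_{s_i}\!\left[\psi^2\!\left(\sqrt{t_i/\overline s_i}(X_i)\right)\right]\le 6\!\left[\gh^2(\gs,\gt)+\gh^2(\gs,\overline\gs)\right]\le 6y^2$ directly; this relies on the explicit form $\psi^2(u)=(u-1)^2/(1+u^2)$ rather than mere Lipschitzness, and is exactly what the paper invokes. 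With that substitution your plan is correct. (A minor side remark: the $L_y^2H_y$ term in \eref{B10} does not arise from a Bernstein-type additive correction but from resolving the self-referential inequality $E\le C'(v+\sqrt{E})L\sqrt{H}$ that appears once the random radius $\widehat\sigma_n$ is bounded by $\sqrt{(v^2+8E)/n}$ in the Gin\'e--Koltchinskii argument.)
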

It happens that an inequality such as~\eref{vap0} is typically satisfied 
for VC-subgraph classes $\FF^S$. In order to avoid a long digression, we differ the relevant 
definitions, properties and proofs about VC-subgraph classes to Section~\ref{S}. At this stage, 
it is sufficient to recall that the index $\overline{V}$ of a VC-subgraph class is a positive integer. 
Our main result about models $\overline{S}$ which are VC-subgraph classes is as follows.
%
\begin{thm}\label{main1}
If $\overline{S}$, viewed as a set of real-valued functions on $\overline{\X}$ as defined by 
(\ref{Eq-Xbar}), is VC-subgraph with index $\overline V$, then for all countable, identifiable and dense subsets $S$ of $\overline S$,
\begin{equation}
\overline{D}(\overline{S})\le c_2\overline D^S\le C\overline V\left[1+\log_+\left(n/\overline V\right)\right]
\label{Eq-DSbar}
\end{equation}
for some universal constant $C$. Consequently, any $\rho$-estimator $\widehat{\gs}$ based 
on such an $S$ satisfies, whatever $\gs\in\LL_{0}$,
\begin{equation}\label{risk-VC01}
\P_{\gs}\cro{C'\gh^2(\gs,\widehat \gs)\le \gh^{2}\left(\gs,\overline{S}\right)+
\overline V\left[1+\log_+\left(n/\overline V\right)\right]+\xi}\ge 1-e^{-\xi}\quad\mbox{for all }\xi>0
\end{equation} 
and
\begin{equation}\label{risk-VC1}
C''\E\cro{\gh^2(\gs,\widehat \gs)}\le \gh^{2}\left(\gs,\overline{S}\right)+
\overline V\left[1+\log_+\left(n/\overline V\right)\right]
\end{equation} 
for some universal constants $C',\,C''>0$.
If $\X_{i}= \X$ for all $i$ and $\overline{S}$ is of the form $\{\gt=(t,\ldots,t), t\in\Theta\}$ 
for a set $\Theta$ of real valued functions on $\X$ which is VC-subgraph with index $\overline V$, 
the previous bound still holds.
\end{thm}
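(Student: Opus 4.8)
The plan is to combine Proposition~\ref{Kolt2} with the combinatorial properties of VC-subgraph classes collected in Section~\ref{S}, feeding the resulting bound on $\overline D^S$ into Corollary~\ref{C-main0} (with $\eta=0$, since we take $S$ countable and dense in $\overline S$) and Theorem~\ref{main00}. The first step is to verify that the class $\FF^S(\mathbf s,\overline{\mathbf s},y)=\{\psi(\sqrt{\mathbf t/\overline{\mathbf s}})\mid \mathbf t\in\B^S(\mathbf s,\overline{\mathbf s},y)\}$ is itself VC-subgraph with a controlled index. Since $\mathbf t\mapsto\sqrt{\mathbf t/\overline{\mathbf s}}$ is a fixed monotone transformation applied coordinatewise and $\psi$ is monotone, composing the VC-subgraph class $\overline S$ (of index $\overline V$) with a fixed monotone function does not increase the VC-subgraph index; this is one of the permanence properties that Section~\ref{S} is designed to provide. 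Hence $\FF^S(\mathbf s,\overline{\mathbf s},y)$ is VC-subgraph with index at most $\overline V$ (or a universal multiple of it), and its elements are uniformly bounded by $1$ because $\psi$ takes values in $[-1,1]$.

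Next I would invoke the classical uniform entropy bound for uniformly bounded VC-subgraph classes (Haussler's bound, again available from Section~\ref{S}): for any probability measure $Q$ on $\overline\X$ and any $z\in(0,1]$,
\[
\log N\!\left(\FF^S(\mathbf s,\overline{\mathbf s},y),Q,z\right)\le C\overline V\,\log_+(1/z)\le C\overline V\,\log\!\pa{e/z},
\]
with the bound being trivially $0$ for $z\ge1$ since the functions are bounded by $1$. Applied with $Q=n^{-1}\sum_i\delta_{\overline X_i}$, this shows that \eqref{vap0} holds with the choice $\Ent_{\!y}(x)=C\overline V\,(1+\log x)$ (for $x\ge1/2$, say $\Ent_{\!y}(x)=C\overline V(1+\log_+ x)$ extended appropriately on $[1/2,1)$), a function that does \emph{not} depend on $y$. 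The point to check is that this $\Ent$ satisfies Assumption~\ref{A-VC} with a \emph{universal} constant $L$: the integral $\int_x^{\infty}u^{-2}\sqrt{\log u}\,du$ behaves like $x^{-1}\sqrt{\log x}$ for large $x$, so $x[\Ent(x)]^{-1/2}\int_x^\infty u^{-2}\sqrt{\Ent(u)}\,du$ stays bounded; this gives $L_y\le L\sqrt{\overline V}$ for a numerical $L$ — actually since $\Ent$ scales linearly in $\overline V$ the $\overline V$ cancels in the normalization and $L_y\le L$, a pure numerical constant. (I would be a little careful here: whether the $\overline V$ cancels exactly or leaves a $\sqrt{\overline V}$ is a computation one should do cleanly, but either way $L_y^2H_y$ is of order $\overline V\log(n/\overline V)$ up to constants, which is all that matters.)

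With $L_y\le L$ and $H_y=\Ent_{\!y}(\sqrt{n/(6y^2)}\vee1)\le C\overline V\,[1+\log_+(\sqrt n/(\sqrt 6\,y))]$, inequality \eqref{B21} of Proposition~\ref{Kolt2} says $D^S(\mathbf s,\overline{\mathbf s})$ is bounded by (roughly) the largest $y^2$ with $y^2\lesssim L^2 H_y\lesssim \overline V[1+\log_+(\sqrt n/y)]$. Solving this self-referential inequality — substitute a candidate $y^2\asymp \overline V(1+\log_+(n/\overline V))$ back in and check consistency, noting $\log_+(\sqrt n/y)\le\frac12\log_+(n/\overline V)+O(1)$ when $y^2\gtrsim\overline V$ — yields $D^S(\mathbf s,\overline{\mathbf s})\le C\overline V[1+\log_+(n/\overline V)]$ uniformly over $(\mathbf s,\overline{\mathbf s})\in\LL_0\times\LL_0$, hence $\overline D^S\le C\overline V[1+\log_+(n/\overline V)]$ and therefore, by Definition~\ref{def-dimG} applied with the $0$-net $S$, $\overline D(\overline S)\le c_2\overline D^S\le C\overline V[1+\log_+(n/\overline V)]$, which is \eqref{Eq-DSbar}. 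I expect this last elementary-but-fiddly bootstrap of the fixed-point inequality \eqref{B21}, together with keeping all the constants universal, to be the main technical obstacle; the VC permanence and entropy inputs are standard.

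Finally, \eqref{risk-VC01} and \eqref{risk-VC1} follow by plugging $D^S\le C\overline V[1+\log_+(n/\overline V)]$ into \eqref{Eq-Mmain} of Theorem~\ref{main00} (with $\gh^2(\gs,S)=\gh^2(\gs,\overline S)$ since $S$ is dense in $\overline S$), absorbing the additive numerical term $c_3(1.45+\xi)$ and the factors $c_1-1,c_2,c_3$ into the universal constants $C',C''$, and integrating the deviation bound over $\xi$ for the expectation version. The last sentence of the theorem, about $\overline S=\{(t,\dots,t):t\in\Theta\}$ with $\Theta$ VC-subgraph on $\X$, is immediate: viewing $(t,\dots,t)$ as a function on $\overline\X=\bigcup_i(\{i\}\times\X_i)$ that ignores the first coordinate is again a fixed transformation that does not increase the VC-subgraph index, so the preceding argument applies verbatim with the same $\overline V$.
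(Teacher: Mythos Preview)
Your proposal is correct and follows essentially the same route as the paper: pass from $\overline S$ to the class $\psi(\sqrt{\mathbf t/\overline{\mathbf s}})$ via the permanence properties of Section~\ref{S} (the paper invokes item~(vii) of Proposition~\ref{perm} specifically, which handles the division by $\overline{\mathbf s}$ and the $0/0$ convention in one stroke), use the uniform entropy bound~\eqref{Eq-h(x)} for bounded VC-subgraph classes, apply Proposition~\ref{Kolt2} with $\Ent(x)=2\overline V\log_+(Ax)$, and feed the resulting bound on $\overline D^S$ into Corollary~\ref{C-main0} with $\eta=0$. The paper carries out the $L$-computation explicitly by integration by parts to get $L_y\le 3/2$, but your observation that the $\overline V$ factor cancels in the definition of $L$ so that $L_y$ is a pure numerical constant is exactly right and is what makes the argument work.
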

\begin{proof}
Let $S$ be any countable subset of $\overline{S}$. Then it is VC-subgraph with index not larger 
than $\overline V$. Since for all $\gs$ and $\overline \gs$ in $\LL_{0}$ and $y>0$, 
$\Y\subset\left\{\left.\psi\left(\sqrt{{\bf t}/\overline{\bf s}}\right)\,\right|\,{\bf t}\in S\right\}$, it follows 
from $(vii)$ of Proposition~\ref{perm} that $\Y$ is VC-subgraph with index not larger than 
$\overline{V}$. Then, by (\ref{Eq-h(x)}) below, there exists a universal constant $A$ such that, 
for all $\gs,\overline \gs\in\LL_{0}$, $y,z>0$ and any probability $Q$ on $\overline\X$, 
\begin{equation}\label{unifB}
\log N\pa{\Y,Q,z}\le 2\overline V \log_{+}(A/ z).
\end{equation}
Proposition~\ref{Kolt2} therefore applies with $\Ent_y(x)=2\overline V \log_+(Ax)$ and we may assume that $A\ge2e$ so that $\log_+(Au)=\log(Au)\ge1$ for $u\ge1/2$. An integration by parts then leads, for $x\ge1/2$, to
\[
\frac{\int_{x}^{+\infty}u^{-2}\sqrt{\log(Au)}\,du}{x^{-1}\sqrt{\log(Ax)}}=1+\frac{x}{\sqrt{\log(Ax)}}
\int_{x}^{+\infty}\frac{du}{2u^2\sqrt{\log(Au)}}<1+x\int_{x}^{+\infty}\frac{du}{2u^2}=\frac{3}{2},
\]
which shows that $L_y\le3/2$ and~\eref{Eq-DSbar} follows from  (\ref{B21}). Inequalities~\eref{risk-VC01} and~\eref{risk-VC1} derive from~\eref{Mmain2} and~\eref{risk2} respectively with $S[\eta]=S$ and $\eta=0$ since $\overline{V}\ge1$.
\end{proof}
%
\subsubsection{Models which are totally bounded\label{C3b}}
Of course, not all models are VC-subgraph classes but there exists another type of models for which we are able to bound $D^{S[\eta]}$ for suitable $\eta$-nets of $\overline{S}$. When $\overline{S}$ is totally bounded, one can take $S[\eta]$ finite for all $\eta>0$ and so are the subsets $\B^{S[\eta]}(\gs,\overline \gs,y)$ of $S[\eta]$ for all positive $y$ and $\eta$. Conversely, if, for all $y,\eta>0$, one can choose $S[\eta]$ so that the sets $\B^{S[\eta]}(\gs,\overline \gs,y)$ are finite, this is in particular true for $y=\sqrt{2n}$ and, since the distance $\gh$ is bounded by $\sqrt{n}$, $S[\eta]=\B^{S[\eta]}(\gs,\overline \gs,\sqrt{2n})$ is finite  for all $\eta>0$. This implies that $\overline{S}$ is totally bounded so that this approach based on the cardinality of $\B^{S[\eta]}(\gs,\overline \gs,y)$ is restricted to totally bounded models only. It nevertheless has the advantage to require the control of the supremum of the process $\left|\gZ(\gX,\overline \gs,.)\right|$ over a finite set which can be done via the following result to be proved in Section~\ref{P2}.
\begin{prop}\label{Kolt}
Let $\gs$ and $\overline{\gs}$ belong to $\LL_{0}$, $y>0$ and $S$ be a countable and identifiable subset of $\LL_{0}$. Assume that $\left|\B^{S}(\gs,y)\right|<+\infty$, then
\[
\w^{S}(\gs,\overline \gs,y)\le2\left[y\sqrt{3\log_{+}(N)}+\log_{+}(N)\right]\quad\mbox{with}\quad 
N=2\left|\B^{S}(\gs,y)\right|.
\]
\end{prop}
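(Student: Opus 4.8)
The plan is to control the supremum of $\left|\gZ(\gX,\overline\gs,\cdot)\right|$ over the finite set $\B^{S}(\gs,y)$ by a standard chaining-free argument, using the fact that each term of $\gZ$ is a sum of $n$ independent, bounded, centered contributions. First I would fix $\gt\in\B^{S}(\gs,y)$ and write $\gZ(\gX,\overline\gs,\gt)=\sum_{i=1}^{n}\zeta_i$ with $\zeta_i=\psi\bigl(\sqrt{t_i/\overline s_i}(X_i)\bigr)-\E_{s_i}\bigl[\psi\bigl(\sqrt{t_i/\overline s_i}(X_i)\bigr)\bigr]$. Since $-1\le\psi\le1$, each $\zeta_i$ lies in an interval of length at most $2$, so by Hoeffding's inequality $\P_{\gs}\bigl[\ab{\gZ(\gX,\overline\gs,\gt)}\ge u\bigr]\le2\exp(-u^2/(2n))$ for all $u>0$; a union bound over the $\ab{\B^{S}(\gs,y)}$ points then gives a tail bound for the supremum involving $N=2\ab{\B^{S}(\gs,y)}$.

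Next I would convert this tail bound into the claimed bound on the expectation $\w^{S}(\gs,\overline\gs,y)$. Writing $M=\sup_{\gt\in\B^{S}(\gs,y)}\ab{\gZ(\gX,\overline\gs,\gt)}$, the union bound yields $\P_{\gs}[M\ge u]\le N\exp(-u^2/(2n))$ for $u>0$, and also the trivial bound $M\le 2n$ (the deterministic bound on $\ab{\gZ}$ noted before Theorem~\ref{main00}). Integrating the tail, $\E[M]\le u_0+\int_{u_0}^{\infty}N\exp(-u^2/(2n))\,du$ for any $u_0>0$; choosing $u_0=\sqrt{2n\log N}$ makes the integrand at $u_0$ equal to $1$, and a crude bound on the Gaussian tail integral (e.g. $\int_{u_0}^{\infty}e^{-u^2/(2n)}\,du\le (n/u_0)e^{-u_0^2/(2n)}$ when $u_0>0$, together with $N\ge2$) gives a residual term of order $\sqrt{n}$ or smaller. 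One must be slightly careful to absorb constants so that the final bound reads exactly $2\bigl[y\sqrt{3\log_+N}+\log_+N\bigr]$; here the hypothesis $\gt\in\B^{S}(\gs,y)$ should be used, presumably via a variance refinement of Hoeffding (Bernstein's inequality), replacing the crude variance bound $n$ by something like $\sum_i\Var(\zeta_i)$, which for $\gt$ close to $\gs$ in Hellinger distance is controlled by $y^2$ up to a constant. This is because $\ab{\psi(\sqrt{t_i/\overline s_i}(X_i))-\psi(\sqrt{\overline s_i/\overline s_i}(X_i))}$ is, via the Lipschitz property of $\psi$ and elementary inequalities, bounded in $\IL_2(s_i)$ by a multiple of $h(s_i,t_i)$ — though one also needs to handle the centering around $\overline\gs$ rather than $\gs$.

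The main obstacle I anticipate is getting the variance estimate right: to obtain a bound of the form $y\sqrt{\log_+N}$ (linear in $y$, not in $\sqrt{n}$) one cannot use Hoeffding directly but needs a Bernstein-type inequality together with a bound $\sum_{i=1}^n\E_{s_i}\bigl[\psi(\sqrt{t_i/\overline s_i}(X_i))-\E_{s_i}[\cdots]\bigr]^2\le Cy^2$ for $\gt\in\B^{S}(\gs,\overline\gs,y)$, and the definition of $\B^{S}(\gs,y)$ versus $\B^{S}(\gs,\overline\gs,y)$ in the statement must be reconciled. The clean way is: control the increments $\gZ(\gX,\overline\gs,\gt)-\gZ(\gX,\overline\gs,\overline\gs)=\gZ(\gX,\overline\gs,\gt)$ (since $\gZ(\gX,\overline\gs,\overline\gs)=0$), bound their per-coordinate variances by $c\,h^2(s_i,t_i)\le c\,\gh^2(\gs,\gt)\le cy^2$, apply Bernstein pointwise with this variance proxy plus the boundedness constant $2$, union-bound, and integrate. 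The numerical constant $2$ and the factors $3$ and $\log_+$ in the statement then come out by a careful but routine optimization of the free parameter in the Bernstein bound; I would not grind through these constants in the plan but flag that matching them exactly is where the bookkeeping lies. The deterministic bound $\ab{\gZ}\le 2n$ is what makes the truncation at $u_0$ legitimate even when $\log_+N$ is small, so that $\log_+$ (rather than $\log$) appears.
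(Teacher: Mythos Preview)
Your plan is essentially the paper's: bound the expectation of the maximum of finitely many centered sums via a Bernstein-type argument, using as variance proxy a Hellinger-based bound of order $y^{2}$, then optimize to obtain the $y\sqrt{\log_{+}N}+\log_{+}N$ shape. The paper packages this slightly differently---it proves a generic lemma (Proposition~\ref{P-Espsup}) bounding $\E\bigl[\sup_{t\in T}|S_{n,t}|\bigr]\le bH+v\sqrt{2H}$ directly from the Laplace transform (the Bernstein moment-generating-function bound followed by Jensen and a union inside the exponential), rather than pointwise Bernstein plus union bound plus tail integration---but that is a cosmetic difference, and yields the constants cleanly.

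The one place where your outline is genuinely incomplete is the variance bound, which you correctly flag as the obstacle but whose proposed justification is not right. You suggest controlling $\sum_{i}\Var_{s_i}\bigl[\psi(\sqrt{t_i/\overline s_i}(X_i))\bigr]$ by a multiple of $\gh^{2}(\gs,\gt)$ via the Lipschitz property of $\psi$. That argument gives $\bigl|\psi(\sqrt{t_i/\overline s_i})\bigr|\le L\,\bigl|\sqrt{t_i}-\sqrt{\overline s_i}\bigr|/\sqrt{\overline s_i}$, and after squaring and integrating against $s_i$ you are left with $\int s_i(\sqrt{t_i}-\sqrt{\overline s_i})^{2}/\overline s_i\,d\mu_i$, which is \emph{not} controlled by $h^{2}(s_i,t_i)$ (or by $h^{2}(t_i,\overline s_i)$) without an $\IL_\infty$ bound on $s_i/\overline s_i$. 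The correct bound, and the one the paper invokes, is Proposition~\ref{variance}:
\[
\sum_{i=1}^{n}\E\Bigl[\psi^{2}\bigl(\sqrt{t_i/\overline s_i}(X_i)\bigr)\Bigr]\le 6\bigl[\gh^{2}(\gs,\gt)+\gh^{2}(\gs,\overline\gs)\bigr],
\]
which for $\gt\in\B^{S}(\gs,\overline\gs,y)$ gives $v^{2}=6y^{2}$ (note the ball $\B^{S}(\gs,\overline\gs,y)$, not $\B^{S}(\gs,y)$; the latter is only used because $\B^{S}(\gs,\overline\gs,y)\subset\B^{S}(\gs,y)$, to bound the cardinality). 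This variance inequality is not a Lipschitz estimate but a specific computation from Baraud~(2011); once you take it as given, your Bernstein-plus-union-bound route goes through and matches the paper's constants after applying Proposition~\ref{P-Espsup} with $b=\sqrt{2}$, $v^{2}=3y^{2}$ (the extra $\sqrt{2}$ in the definition~\eref{def-U} of $U_{i,\gt}$ absorbs the factor $6\to3$), and $H=\log_{+}N$.
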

With such a result at hand, bounding $D^{S[\eta]}(\gs,\overline\gs)$ amounts to controlling $|S[\eta]\cap \B(\gs,y)|$ when $S[\eta]$ is minimal. Since $\gs$ is unknown, we need to bound the number of points of $S[\eta]$ lying in an arbitrary Hellinger ball of radius $y$. It is then natural to introduce the following entropy bounds.
\begin{defi}\label{D-metdim}
Given a totally bounded model $\overline{S}$ of $\LL_{0}$, $\eta>0$ and $S[\eta]$ an $\eta$-net for $\overline{S}$, we set
\[
\HH^{\overline{S}}(\eta,S[\eta],y)=\sup_{\gs\in\LL_0}\log\left|S[\eta]\cap \B(\gs,y)\strut\right|\ge0\quad\mbox{for }y\ge\eta.
\]
We shall say that $\overline{S}$ has an entropy dimension bounded by $V\ge0$ if, for all $\eta>0$, there exists some $\eta$-net $S[\eta]$ for $\overline{S}$ such that
\begin{equation}\label{def-entropy}
\HH^{\overline{S}}(\eta,S[\eta],y)\le V\log\pa{y/\eta}\quad\mbox{for all }y\ge2\eta.
\end{equation} 
Let $\widetilde D$ be a right-continuous function from $(0,+\infty)$ into $[1/2,+\infty]$ with 
$\widetilde D\left(\eta\right)=1/2$ for $\eta\ge\sqrt{n}$. We shall say that $\overline{S}$ has a metric 
dimension bounded by $\widetilde{D}(\cdot)$ if, for all $\eta>0$, there exists some $\eta$-net 
$S[\eta]$ for $\overline{S}$ such that
\begin{equation}\label{def-birge}
\HH^{\overline{S}}(\eta,S[\eta],y)\le(y/\eta)^{2}\widetilde D(\eta)\quad\mbox{for all }y\ge2\eta.
\end{equation}
\end{defi}

The definition of the metric dimension is due to  Birg\'e~\citeyearpar{MR2219712} 
(Definition~6 p.~293). Since the distance $\gh$ that we use here is bounded by $\sqrt{n}$, 
any singleton $\{\gt\}$ in $\LL_0$ is a $\sqrt{n}$-net for any subset of $\LL_0$ so that $\HH^{\overline{S}}(\eta,\{\gt\},y)=0$ 
for $y/2\ge\eta\ge\sqrt{n}$ and we can always set $\widetilde D\left(\eta\right)=1/2$ 
for $\eta\ge\sqrt{n}$. The logarithm being a slowly varying function, 
it is not difficult to see that the notion of metric dimension is more general than the 
entropy one in the sense that if $\overline{S}$ has an entropy dimension bounded 
by some $V$, then it also has a metric dimension bounded by $\widetilde D(\cdot)$ with
\begin{equation}\label{comp}
\widetilde D(\eta)\le(1/2)\vee[V(\log2)/4]\quad\mbox{for all }\eta>0.
\end{equation}
%
\begin{prop}\label{bound1}
Let $\overline{S}$ be a totally bounded nonempty subset of $\LL_{0}$ with metric dimension bounded by $\widetilde D(\cdot)$. Let $\overline \eta$ be defined by
\[
\overline \eta=\inf\ac{\eta>0\,\left|\,\eta^{-2}\widetilde D(\eta)\le8c_0^2/131\right.}.
\]
Then one can find an $\overline \eta$-net $S[\overline \eta]$ for $\overline{S}$ which satisfies 
$\overline{D}^{S[\overline{\eta}]}\le4\overline\eta^{2}$. Hence $\overline{D}(\overline{S})\le 2\left(c_1+2c_2\right)\overline\eta^{2}$ and any $\rho$-estimator $\widehat\gs$ based on $S[\overline{\eta}]$ satisfies
\[
\P_{\gs}\left[\gh^{2}(\gs,\widehat \gs)\le c'_1\gh^{2}\left(\gs,\overline{S}\right)+2(c_1+2c_2)
\overline\eta^{2}+c_3(1.45+\xi)\right]\ge1-e^{-\xi}\quad\mbox{for all }\xi>0.
\]
\end{prop}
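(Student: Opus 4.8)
The plan is to produce, for a well-chosen scale $\eta$, a finite $\eta$-net $S[\eta]$ for $\overline S$ whose local cardinalities are controlled by the metric-dimension hypothesis~\eref{def-birge}, and then to feed this control into Proposition~\ref{Kolt} to bound $\overline D^{S[\eta]}$, and finally into Corollary~\ref{C-main0} (or directly \eref{Eq-dims}) to get the stated deviation inequality. First I would fix $\overline\eta$ as in the statement and choose an $\overline\eta$-net $S[\overline\eta]$ achieving (up to a negligible slack, if the infimum in Definition~\ref{D-metdim} is not attained) the value $\HH^{\overline S}(\overline\eta,y)$ for every $y\ge 2\overline\eta$; since $\overline S$ is totally bounded we may take $S[\overline\eta]$ finite. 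For any $\gs,\overline\gs\in\LL_0$ and any $y>0$ I would bound $\bigl|\B^{S[\overline\eta]}(\gs,y)\bigr|$: when $y\ge 2\overline\eta$ this is at most $\exp\bigl[(y/\overline\eta)^2\widetilde D(\overline\eta)\bigr]$ by~\eref{def-birge}, and when $y<2\overline\eta$ one still has $\B^{S[\overline\eta]}(\gs,y)\subset\B^{S[\overline\eta]}(\gs,2\overline\eta)$, so the same bound with $y$ replaced by $2\overline\eta$ applies. Writing $N=2\bigl|\B^{S[\overline\eta]}(\gs,y)\bigr|$, Proposition~\ref{Kolt} gives $\w^{S[\overline\eta]}(\gs,\overline\gs,y)\le 2\bigl[y\sqrt{3\log_+N}+\log_+N\bigr]$.

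The core computation is then to verify that, with the definition $D^{S[\overline\eta]}(\gs,\overline\gs)=y_0^2\vee 1$ where $y_0=\sup\{y\ge0\mid \w^{S[\overline\eta]}(\gs,\overline\gs,y)>c_0y^2\}$, one has $y_0^2\le 4\overline\eta^{2}$. It suffices to show that for every $y>2\overline\eta$ the inequality $\w^{S[\overline\eta]}(\gs,\overline\gs,y)\le c_0 y^2$ holds, because then $y_0\le 2\overline\eta$ and hence $D^{S[\overline\eta]}(\gs,\overline\gs)\le 4\overline\eta^2\vee 1=4\overline\eta^2$ (the last equality since $\overline\eta^2\ge 1/(\text{const})\cdot\widetilde D(\overline\eta)\ge$ a constant$\,\ge 1/4$; more carefully, $8c_0^2/131<\widetilde D(\overline\eta)/\overline\eta^2$ fails at $\overline\eta$ by right-continuity, giving $\overline\eta^{-2}\widetilde D(\overline\eta)\le 8c_0^2/131$, and since $\widetilde D\ge 1/2$ this forces $\overline\eta^2\ge 131/(16c_0^2)>1$). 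So for $y>2\overline\eta$, using $\log_+N\le \log 2+(y/\overline\eta)^2\widetilde D(\overline\eta)$ and $\widetilde D(\overline\eta)\le (8c_0^2/131)\overline\eta^{2}$ — wait, the inequality from the definition of $\overline\eta$ gives $\widetilde D(\overline\eta)\le(8c_0^2/131)\overline\eta^2$, hence $(y/\overline\eta)^2\widetilde D(\overline\eta)\le(8c_0^2/131)y^2$ — one gets $\log_+N\le\log2+(8c_0^2/131)y^2\le (C/131)\cdot c_0^2 y^2$ absorbing the $\log 2$ into $y^2$ using $y^2>4\overline\eta^2>4$. Plugging into Proposition~\ref{Kolt}, $\w^{S[\overline\eta]}(\gs,\overline\gs,y)\le 2y\sqrt{3}\cdot\sqrt{(C/131)}c_0 y+2(C/131)c_0^2y^2$; choosing the numerical constant $131$ exactly so that $2\sqrt{3}\sqrt{1/131}+2/131\le 1$ (after multiplying through by $c_0$ and recalling $c_0<1$) yields $\w^{S[\overline\eta]}(\gs,\overline\gs,y)\le c_0y^2$. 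This is precisely the role of the constant $8c_0^2/131$ in the definition of $\overline\eta$, and checking that the arithmetic closes with this constant is the main obstacle — everything else is bookkeeping.

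Taking the supremum over $(\gs,\overline\gs)\in\LL_0\times\LL_0$ yields $\overline D^{S[\overline\eta]}\le 4\overline\eta^2$. Then by Definition~\ref{def-dimG} applied with the single net $S[\overline\eta]$ (noting $\sup_{\gu\in\overline S}\gh^2(\gu,S[\overline\eta])\le\overline\eta^2$),
\[
\overline D(\overline S)\le 2c_1\overline\eta^2+c_2\overline D^{S[\overline\eta]}\le 2c_1\overline\eta^2+4c_2\overline\eta^2=2(c_1+2c_2)\overline\eta^2,
\]
and a fortiori $D(\overline S)\le\overline D(\overline S)\le 2(c_1+2c_2)\overline\eta^2$. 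The final deviation bound follows from Corollary~\ref{C-main0} (inequality~\eref{Mmain2}) with $S=S[\overline\eta]$ and $\eta=\overline\eta$: since $c'_1\overline\eta^2+c_2D^{S[\overline\eta]}\le c'_1\overline\eta^2+4c_2\overline\eta^2\le 2(c_1+2c_2)\overline\eta^2$ (using $c'_1=2(c_1-1)<2c_1$), we obtain
\[
\P_\gs\!\left[\gh^2(\gs,\widehat\gs)\le c'_1\gh^2(\gs,\overline S)+2(c_1+2c_2)\overline\eta^2+c_3(1.45+\xi)\right]\ge 1-e^{-\xi}
\]
for all $\xi>0$, which is the assertion. \qed
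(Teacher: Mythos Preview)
Your approach is essentially identical to the paper's: both choose a minimal $\overline\eta$-net, bound $\log_+\bigl(2|\B^{S[\overline\eta]}(\gs,y)|\bigr)$ via the metric-dimension inequality~\eref{def-birge} together with the right-continuity of $\widetilde D$ at $\overline\eta$, feed this into Proposition~\ref{Kolt}, and check that the resulting bound on $\w^{S[\overline\eta]}(\gs,\overline\gs,y)$ is below $c_0y^2$ for all $y\ge 2\overline\eta$, whence $\overline D^{S[\overline\eta]}\le 4\overline\eta^2$ and the rest follows from~\eref{Mmain2}.

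One minor correction: your displayed numerical check ``$2\sqrt{3}\sqrt{1/131}+2/131\le 1$'' drops the factor $C=8+4\log 2$ coming from $\log_+N\le\log 2+(8c_0^2/131)y^2$ (the $\log 2$ is absorbed using $\widetilde D\ge 1/2$ and $y\ge 2\overline\eta$, exactly as the paper does, yielding $\log_+N\le\bigl(1+\tfrac{\log 2}{2}\bigr)\tfrac{8c_0^2}{131}y^2=ay^2$). The actual inequality to verify is $2(\sqrt{3a}+a)<c_0$, equivalently $2\sqrt{3(8+4\log 2)/131}+2c_0(8+4\log 2)/131<1$, which does hold --- but only just, so the constant $131$ is tight and your sketch with $C=1$ would not reveal this.
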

\begin{proof} 
For $\eta>0$, let $S[\eta]$ be a minimal $\eta$-net for $\overline{S}$. Using~\eref{def-birge} and the fact that $\widetilde D(\eta)\ge 1/2$, we derive that 
\[
\log_{+}\pa{2\,|\B^{S[\eta]}(\gs,y)|}\le\log2+{y^{2}\widetilde D(\eta)\over\eta^{2}}\le\left(1+\frac{\log2}{2}
\right){y^{2}\widetilde D(\eta)\over\eta^{2}}\quad\mbox{for all }y\ge2\eta.
\]
If, moreover, $y\ge2\overline{\eta}$, using the fact that $\widetilde D(\eta)$ is right-continuous and the definition of $\overline \eta$, we see that
\[
\frac{y^{2}\widetilde D(\overline{\eta})}{\overline{\eta}^{2}}\le\frac{8c_0^2y^2}{131},
\]
so that we can apply Proposition~\ref{Kolt} with
\[
\log_{+}\pa{2\,|\B^{S[\overline{\eta}]}(\gs,y)|}\le[1+(\log2)/2]\left[8c_0^{2}y^{2}/131\right]=ay^{2}
\]
and get
\[
\w^{S[\overline{\eta}]}(\gs,\overline\gs,y)\le 2\pa{a+\sqrt{3a}}y^{2}<c_0y^2\quad\mbox{for all }
\gs,\,\overline{\gs}\in\LL_{0}\mbox{ and }y\ge2\overline{\eta}.
\]
Therefore $D^{S[\overline{\eta}]}(\gs,\overline\gs)\le4\overline\eta^{2}$ which leads 
to the bounds for $\overline{D}^{S[\overline{\eta}]}$ and $\overline{D}(\overline{S})$. 
The bound for $\gh^{2}(\gs,\widehat \gs)$ then follows from (\ref{Mmain2}).
\end{proof}
\noindent{\bf Remark:} Since $\widetilde{D}(\overline{\eta})\ge1/2$, $2(c_1+2c_2)\overline{\eta}^2 
\ge(131/8)(c_1+2c_2)c_0^{-2}$. It follows that the bounds provided by Proposition~\ref{bound1} 
are trivial if $n$ is not larger than this last quantity.

\subsubsection{Minimax risk on a model\label{C3c}}
Let us now focus on the specific case of the risk of $\rho$-estimators over a model 
$\overline{S}$ when $\gs$ is an arbitrary point in $\overline{S}$ or equivalently on the maximal 
risk of $\rho$-estimators over a model $\overline{S}$ in $\LL_{0}$ since it provides an upper 
bound for the minimax risk $R_M(\overline{S})$ over $\overline{S}$ defined by
\[
R_M(\overline{S})=\inf_{\widetilde \gs} \sup_{\gs\in\overline{S}}\E\cro{\gh^{2}(\gs,\widetilde \gs)},
\]
where the infimum runs among all possible estimators $\widetilde \gs$ of $\gs$. In particular, 
$R_M(\overline{S})\le\RR_{\rho}(\overline{S})$ where $\RR_{\rho}(\overline{S})=
\sup_{\gs\in\overline{S}}\E\cro{\gh^{2}(\gs,\widehat{\gs})}$ denotes the maximal risk of any $\rho$-estimator $\widehat{\gs}$ over $\overline{S}$. Restricting ourselves to $\rho$-estimators that satisfy (\ref{risk3}) and using \eref{Eq-dims}, we get
\[
R_M(\overline{S})\le\RR_{\rho}(\overline{S})\le D(\overline{S})+c_4\le\overline{D}(\overline{S})+c_4.
\]
It follows that it suffices to bound $D(\overline S)$ (or $ \overline{D}(\overline{S})$) from above in order to control the minimax risk over $\overline{S}$ which can be done by using the bounds of the previous sections and results in the next corollary.
%
\begin{cor}\label{C-main00}
If $\overline{S}$, viewed as a set of real-valued functions on $\overline{\X}$ as defined by (\ref{Eq-Xbar}) is  VC-subgraph with index $\overline V$, then 
\begin{equation}\label{risk-VC2}
R_M(\overline{S})\le\RR_{\rho}(\overline{S})\le c_4+
C\overline V\left[1+\log_+\left(n/\overline V\right)\right].
\end{equation} 
If $\overline{S}$ is a totally bounded nonempty subset of $\LL_{0}$ with metric dimension bounded 
by $\widetilde D(\cdot)$, then 
\[
R_M(\overline{S})\le\RR_{\rho}(\overline{S})\le c_4+2\left(c_1+2c_2\right)\overline\eta^{2}\quad
\mbox{with}\quad\overline \eta=\inf\ac{\eta>0\,\left|\,\eta^{-2}\widetilde D(\eta)\le8c_0^2/131\right.}.
\]
\end{cor}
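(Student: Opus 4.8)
The plan is to read the corollary off from the risk bound~\eref{risk3} of Corollary~\ref{C-main1}, specialized to a true parameter lying in the model, and then to substitute the two dimension estimates that have already been established: Theorem~\ref{main1} for VC-subgraph models and Proposition~\ref{bound1} for totally bounded ones. In other words, there is essentially no new work to do — the corollary is the assembly of results proved in the preceding sections.

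First I would note that, by the very definition of the minimax risk, $R_M(\overline S)\le\RR_{\rho}(\overline S)$, so it suffices to bound the maximal risk over $\overline S$ of one suitably chosen $\rho$-estimator. Fixing $\gs\in\overline S$ we have $\gh(\gs,\overline S)=0$, so~\eref{risk3} reduces to $\E\cro{\gh^2(\gs,\widehat\gs)}\le D(\overline S)+c_4$; taking the supremum over $\gs\in\overline S$ and using $D(\overline S)\le\overline D(\overline S)$ from~\eref{Eq-dims} gives
\[
R_M(\overline S)\le\RR_{\rho}(\overline S)\le D(\overline S)+c_4\le\overline D(\overline S)+c_4 .
\]
This is precisely the chain already recorded in the paragraph preceding the statement, and it reduces the corollary to an upper bound on $D(\overline S)$ (equivalently on $\overline D(\overline S)$).

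For the first assertion I would then invoke~\eref{Eq-DSbar} in Theorem~\ref{main1}: when $\overline S$, viewed via~\eref{Eq-Xbar} as a class of real-valued functions on $\overline\X$, is VC-subgraph with index $\overline V$, one has $\overline D(\overline S)\le c_2\overline D^S\le C\overline V[1+\log_+(n/\overline V)]$ for a countable dense subset $S$ of $\overline S$, and inserting this into the display above yields~\eref{risk-VC2}. For the second assertion I would invoke Proposition~\ref{bound1}, which, for a totally bounded $\overline S$ whose metric dimension is bounded by $\widetilde D(\cdot)$, produces an $\overline\eta$-net $S[\overline\eta]$ with $\overline D(\overline S)\le 2(c_1+2c_2)\overline\eta^2$, where $\overline\eta=\inf\ac{\eta>0\,\left|\,\eta^{-2}\widetilde D(\eta)\le8c_0^2/131\right.}$; combining this with $\RR_{\rho}(\overline S)\le\overline D(\overline S)+c_4$ gives the stated bound.

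I do not expect any genuine obstacle here: all the analytic content — the empirical-process estimates of Propositions~\ref{Kolt2} and~\ref{Kolt}, the VC covering-number bound~\eref{unifB}, and the metric-entropy computations behind Proposition~\ref{bound1} — is already packaged into Theorem~\ref{main1} and Proposition~\ref{bound1}, so what remains is a purely formal specialization. The only point meriting a word of care is that the bound thus obtained is the maximal risk of the \emph{particular} $\rho$-estimator built on the net used in each of those two results (a dense $0$-net in the VC-subgraph case, the net $S[\overline\eta]$ in the totally bounded case); since $R_M(\overline S)$ is an infimum over \emph{all} estimators, exhibiting one $\rho$-estimator achieving this maximal risk is all that is needed, which is exactly what the corollary asserts.
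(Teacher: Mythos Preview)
Your proposal is correct and follows exactly the approach taken in the paper: the chain $R_M(\overline S)\le\RR_{\rho}(\overline S)\le D(\overline S)+c_4\le\overline D(\overline S)+c_4$ is derived from~\eref{risk3} and~\eref{Eq-dims} in the paragraph immediately preceding the corollary, and the two dimension bounds are then imported from Theorem~\ref{main1} and Proposition~\ref{bound1} respectively. There is nothing to add.
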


The bound (\ref{risk-VC2}) for $\RR_{\rho}(\overline{S})$ that we derived from Theorem~\ref{main1} involves a logarithmic factor while one would rather expect a bound of the form $\RR_{\rho}(\overline{S})\le C\overline V $. If we compare this result to~\eref{def-entropy} (with $\eta=z\sqrt{n}$ in order to make $\gh$ and the $\IL_{2}(Q)$-distance comparable), we see that this phenomenon is due to the entropy bound~\eref{unifB} which is uniform with respect to $y$. An entropy bound of the form
\begin{equation}\label{unifB2}
\log N\pa{\Y,Q,z}\le C\overline V  \log_{+}\pa{{Ay/\sqrt{n}\over z}}\quad\mbox{for all }y
\mbox{ and }z>0
\end{equation}
would lead to the expected inequality $\RR_{\rho}(\overline{S})\le C\overline V $. Unfortunately, 
we do not know whether a bound such as~\eref{unifB2} is true or not but there exists at least 
one situation where this extra logarithmic factor can be removed: when $\overline{S}$ consists 
of piecewise constant functions. For the sake of simplicity we shall only consider the density 
framework described in Section~\ref{A2}. 

{\bf Histograms:}
Assume that we are in the density framework and have at hand some countable partition $\I$ of 
$\X$ such that $0<\mu(I)<+\infty$ for all $I\in\I$. We consider the set 
$\overline{S}=\overline{S}_{\I}$ of all densities on $(\X,\A,\mu)$ which are piecewise constant 
on each element $I$ of $\I$, which means that
\begin{equation}
\overline{S}_{\I}=\ac{\left.t=\sum_{I\in\I}\frac{t_{I}}{\mu(I)}\1_{I}\,\right|\,t_{I}\ge0
\quad\mbox{for all }I\in\I\quad\mbox{and}\quad\sum_{I\in\I}t_{I}=1}.
\label{Eq-Histo1}
\end{equation}
If we choose for $S$ a subset of $\overline{S}_{\I}$, the resulting $\rho$-estimator $\widehat\gs$ 
will therefore be an histogram-type estimator and the following result, to be proved in Section~\ref{P6}, holds. 
\begin{prop}\label{histo}
Let $S$ be a countable subset of $\overline{S}_{\I}$ and $s$ be a density with respect to $\mu$ such that $\J(s)=\left\{I\in\I\,\left|\,\int_{I}s\,d\mu>0\right.\right\}$ is finite. Then, for any $\overline{s}\in S$, $D^S(\gs, \overline{\gs})\le6c_0^{-2}|\J(s)|$.
\end{prop}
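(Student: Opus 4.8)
The key observation is that, for densities $\gt=(t,\dots,t)$ and $\gt'=(t',\dots,t')$ in $\overline{S}_{\I}$, the function $\psi(\sqrt{t'/t})$ appearing in $\gZ(\gX,\gt,\gt')$ is piecewise constant on the partition $\I$ and vanishes outside $\J(t)\cup\J(t')$. More precisely, on each $I\in\I$ the ratio $t'/t$ is constant (being a quotient of two functions constant on $I$), with the convention $0/0=1$, so $\psi(\sqrt{t'/t})$ takes a single value on $I$, and that value is $\psi(1)=0$ whenever both $t$ and $t'$ vanish on $I$. Since $s$ has $\J(s)$ finite and every $\gt\in\B^{S}(\gs,\overline\gs,y)$ contributes to $\gZ$ only through the values of $\psi(\sqrt{t/\overline s}(X_i))$, which depend on $\omega$ only through which cell $I$ contains $X_i$, the whole supremum defining $\w^{S}(\gs,\overline\gs,y)$ reduces to a supremum over a class of functions indexed by a finite-dimensional object of dimension $|\J(s)|$ (up to controlling the role of $\overline s$, which is fixed).

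First I would make the reduction explicit: write $\w^{S}(\gs,\overline\gs,y)=\E[\sup_{\gt}|\sum_{i=1}^n(g_{\gt}(X_i)-\E_{s}[g_{\gt}(X_i)])|]$ where $g_{\gt}=\psi(\sqrt{t/\overline s})$ is constant on each cell $I\in\I$, bounded by $1$ in absolute value, and satisfies $g_{\gt}\equiv\psi(1)=0$ on any cell where $\overline s$ vanishes — and, more to the point, where $s$ vanishes the term has zero expectation and the variable $X_i$ a.s. does not fall there. So the effective index set is the vector of cell-values $(g_{\gt}(I))_{I\in\J(s)}\in[-1,1]^{\J(s)}$, and the centered sum is governed by the empirical counts $N_I=\#\{i:X_i\in I\}$ on the finitely many cells $I\in\J(s)$. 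Thus $\sup_{\gt}|\sum_i(g_{\gt}(X_i)-\E_s g_{\gt}(X_i))|\le \sum_{I\in\J(s)}|N_I-n p_I|$ where $p_I=\int_I s\,d\mu$, using $|g_{\gt}|\le1$ and that the cell-values are free in $[-1,1]$. Taking expectations and using $\E|N_I-np_I|\le\sqrt{\Var(N_I)}=\sqrt{np_I(1-p_I)}\le\sqrt{np_I}$, Cauchy–Schwarz over $\J(s)$ gives $\w^{S}(\gs,\overline\gs,y)\le\sqrt{|\J(s)|}\sqrt{n\sum_I p_I}=\sqrt{n|\J(s)|}$. That is not yet the claimed bound because it is not in terms of $y$; one needs the sharper, $y$-dependent estimate.

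The sharper step is to exploit that $\gt\in\B^{S}(\gs,\overline\gs,y)$ forces $\gh^2(\gs,\gt)\le y^2$, i.e. $n\sum_{I}(\sqrt{t_I}-\sqrt{p_I})^2/2\le y^2$ roughly (working with the cell-masses), which constrains how far the cell-values $g_{\gt}(I)$ can move. Combined with the Bernstein/Bennett-type bound on each centered cell-count — or more directly by a chaining/variance argument analogous to the proof of Proposition~\ref{Kolt} on a finite set of cardinality controlled by $|\J(s)|$ — one gets $\w^{S}(\gs,\overline\gs,y)\le C[y\sqrt{|\J(s)|}+|\J(s)|]$ for a suitable universal $C$. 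Feeding this into the definition of $D^{S}(\gs,\overline\gs)$ (the largest $y^2\vee1$ with $\w^S>c_0y^2$) and solving the quadratic inequality $C[y\sqrt{|\J(s)|}+|\J(s)|]\le c_0 y^2$ yields $D^S(\gs,\overline\gs)\le (C'/c_0^2)|\J(s)|$; tracking the constants as in the paper's conventions gives the stated $6c_0^{-2}|\J(s)|$.

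The main obstacle is getting the linear-in-$|\J(s)|$ dependence rather than the $\sqrt{|\J(s)|}$-type dependence that a crude covering argument would produce, and ensuring the bound holds uniformly over all $\overline s\in S$ and all $\gs$ (not just $\gs\in\overline{S}_{\I}$): the point is that for arbitrary $\gs$, only $\J(s)$ matters, so one must argue that cells outside $\J(s)$ genuinely contribute nothing — both because $X_i\notin I$ a.s. there and because the expectation term vanishes — which is exactly where the piecewise-constant structure and the convention $\psi(0/0)=\psi(1)=0$ are used. I would handle this by the direct empirical-count argument above, which sidesteps entropy entirely and makes the finiteness of $\J(s)$ the only thing that enters; the $y$-refinement then comes from Bennett's inequality applied cell by cell together with the Hellinger-ball constraint bounding $\sum_I (\text{increment in } g_{\gt}(I))^2$.
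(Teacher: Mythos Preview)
Your reduction to cell-counts is correct and is exactly the paper's starting point: write
\[
S_n(t)=\Bigl|\sum_{I\in\J(s)}\psi\!\Bigl(\sqrt{t_I/\overline s_I}\Bigr)\bigl(N_I-nP_s(I)\bigr)\Bigr|,
\]
using that $X_i\in\bigcup_{I\in\J(s)}I$ $P_s$-a.s. But the gap lies in the ``sharper step''. Neither of the two mechanisms you propose actually delivers the bound you claim. Proposition~\ref{Kolt} requires a \emph{finite} index set $T$, and $\B^S(\gs,\overline\gs,y)$ need not be finite (nor does the effective parameter space $[-1,1]^{|\J(s)|}$ have cardinality controlled by $|\J(s)|$; its log-covering numbers scale like $|\J(s)|\log(1/\eps)$, which reintroduces a logarithmic factor). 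And ``Bennett cell by cell plus the Hellinger constraint'' is not an argument: you have not shown how to aggregate the per-cell deviations under the constraint to obtain $C[y\sqrt{|\J(s)|}+|\J(s)|]$, and it is not obvious how to do so without essentially rediscovering the paper's device.

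The paper's actual step is a single weighted Cauchy--Schwarz. Write
\[
S_n(t)\le\Bigl[\sum_{I\in\J(s)}\psi^2\!\Bigl(\sqrt{t_I/\overline s_I}\Bigr)P_s(I)\Bigr]^{1/2}
\Bigl[\sum_{I\in\J(s)}\frac{(N_I-nP_s(I))^2}{P_s(I)}\Bigr]^{1/2}.
\]
The first factor is exactly $n^{-1/2}$ times the square root of the variance term controlled by Proposition~\ref{variance}, hence bounded by $\sqrt{6y^2/n}$ for every $\gt\in\B^S(\gs,\overline\gs,y)$; note that this is where the Hellinger-ball constraint enters, and it does so \emph{uniformly} in $\gt$, so the supremum can be taken before addressing the second factor. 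The second factor is a $\chi^2$-type statistic independent of $\gt$; taking expectations (with Jensen for the square root) gives $\E\bigl[\sum_I(N_I-nP_s(I))^2/P_s(I)\bigr]^{1/2}\le\sqrt{n|\J(s)|}$. Combining yields $\w^S(\gs,\overline\gs,y)\le y\sqrt{6|\J(s)|}$ with no additive $|\J(s)|$ term, and the definition of $D^S$ then gives $6c_0^{-2}|\J(s)|$ directly. The missing idea in your plan is this weighted splitting, which decouples the $\gt$-dependence (absorbed into the variance bound) from the randomness (a single $\gt$-free quadratic form on $|\J(s)|$ cells).
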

There are various potential applications of this result but let us focus here on the case of a finite measure $\mu$ and a finite partition $\I$ so that $|\J(s)|\le|\I|$ for all $\gs\in\LL_0$. It then follows from the previous proposition and (\ref{Eq-DS}) that $D^S\le6c_0^{-2}|\I|$ for all countable subsets $S$ of $\overline{S}_{\I}$, hence $D(\overline{S}_{\I})\le6c_2c_0^{-2}|\I|$. In this case $\overline{S}_{\I}$ is a subset of a linear space with dimension $ |\I|$ and is therefore VC-subgraph with index not larger than $|\I|+2$. Comparing our bound for $D(\overline{S}_{\I})$ with the one provided by Theorem~\ref{main1} for $\overline D(\overline{S}_{\I})$ we see that the extra logarithmic factor has disappeared. Nevertheless Proposition~\ref{histo} only provides an upper bound for $D(\overline{S}_{\I})$ and not for $\overline D(\overline{S}_{\I})$.

\section{Connection with the Maximum Likelihood Estimator}\label{S-CMLE}
Throughout this section, we consider the problem of density estimation from $n$ i.i.d.\ observations $X_1,\ldots,X_n$ as described in Section~\ref{A2}. Our aim is to show that $\rho$-estimation may 
recover the classical MLE in various situations.

\subsection{Regular parametric models}\label{S-CMLE1}
We consider here a parametric set of densities $\{t_{\theta},\,\theta\in\Theta'\}$ on the measured 
space $(\X,\A,\mu)$ indexed by some open subset $\Theta'$ of $\R^{d}$ and such that the 
mapping $\theta\mapsto P_\theta=t_\theta\cdot\mu$ is one-to-one. Our model is $\overline S=
\{t_{\theta},\,\theta\in\Theta\}$ for some $\Theta\subset\Theta'$ and we set $\norm{t}_{\infty}=
\sup_{x\in\X}\ab{t(x)}$ for any function $t$ on $\X$. There have been a number of different 
assumptions for the ``regularity" of a parametric set of densities. Here we mean a modern version 
of the notion, as inspired by the pioneering works of Le~Cam~\citeyearpar{MR0267676} and 
H\'ajek~\citeyearpar{MR0400513}. One may, for instance, use the definition given in Chapter~I, Section~7.1 of Ibragimov and Has{'}minski{\u\i}~\citeyearpar{MR620321}.
\begin{ass}\label{MLE}\mbox{}
\begin{enumerate}
	\item[($i$)] The parameter set $\Theta$ is a compact and convex subset of $\Theta'$ and 
	the true density $s$ is an element $t_{\vartheta}\in\overline{S}$ such that $\vartheta$ is an 
	interior point of $\Theta$.\vspace{1mm}
	\item[($ii$)] The parametric family $\{t_{\theta},\,\theta\in\Theta'\}$ is regular and the Fisher 
	Information matrix is invertible on $\Theta$.\vspace{1mm} 
	\item[($iii$)] There exists a constant $A_1$ such that
\[
\norm{\sqrt{t_{\theta}\over t_{\theta'}}-\sqrt{t_{\overline\theta}\over t_{\theta'}}}_{\infty}\le  
A_1\left|\overline\theta-\theta\right|\quad\mbox{for all }\theta,\,\overline\theta
\mbox{ and }\theta'\in\Theta.\vspace{1mm}
\]
	\item[($iv$)] With probability tending to one when $n$ goes to infinity, there exists a maximum likelihood estimator $\widetilde \theta_{n}$ which is consistent.
\end{enumerate}
\end{ass}
One can then prove (in Section~\ref{P14}):
\begin{thm}\label{thm-MLE}
Let $\overline S$ be a parametric model of densities satisfying Assumption~\ref{MLE} and $S$ an arbitrary countable and dense subset of $\overline S$. With probability tending to 1 as $n$ tends to infinity, $t_{\widetilde \theta_{n}}$ belongs to ${\rm Cl}\!\left(\EE(\gX,S)\strut\right)$ and is therefore a $\rho$-estimator.
\end{thm}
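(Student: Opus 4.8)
The plan is to show that with probability tending to one, the MLE $\widetilde\theta_n$ is a near-minimizer of $\gt\mapsto\gup(S,\gt)$, so that $t_{\widetilde\theta_n}$ lies in ${\rm Cl}(\EE(\gX,S))$. Since $S$ is dense in $\overline S$, by the Remark following the construction of the estimator it suffices to work with $\overline S$ directly once we check that $(\gt,\gt')\mapsto\gT(\gx,\gt,\gt')$ is continuous on $\overline S\times\overline S$ for every realization $\gx$; this continuity follows from Assumption~\ref{MLE}$(iii)$, which makes $\theta\mapsto\sqrt{t_\theta/t_{\theta'}}$ uniformly Lipschitz and hence $\theta\mapsto\psi(\sqrt{t_{\theta'}/t_\theta}(X_i))$ continuous, together with the fact that $\rho(t_{\theta'},r)$ and $\rho(t_\theta,r)$ vary continuously. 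So the goal reduces to proving
\[
\gup(\overline S,t_{\widetilde\theta_n})\le\inf_{\gt\in\overline S}\gup(\overline S,\gt)+\frac{\kappa}{10}
\]
with probability going to $1$.

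First I would relate $\gup$ to the log-likelihood. Recall from (\ref{T-psi})--(\ref{def-Z}) that $\gT(\gX,\gt,\gt')$ equals a deterministic affinity term $\frac12\sum_i[\rho(t'_i,r_i)-\rho(t_i,r_i)]$ plus $\frac1{\sqrt2}\sum_i\psi(\sqrt{t'_i/t_i}(X_i))$. The key point is that for $\theta$ close to $\vartheta$ and $\theta'$ near $\theta$, a Taylor expansion of $\psi$ near $1$ gives $\psi(u)\approx\frac12(u^2-1)$ to second order, i.e. $\psi(\sqrt{t_{\theta'}/t_\theta})\approx\frac12(t_{\theta'}/t_\theta-1)$, and the log-likelihood ratio $\log(t_{\theta'}(X_i)/t_\theta(X_i))$ has the same second-order behaviour; the affinity term contributes a correction of the same order. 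The upshot, using regularity (Assumption~\ref{MLE}$(ii)$) and the Lipschitz control $(iii)$, is that on a neighbourhood of $\vartheta$ of size $O(n^{-1/2}\log n)$ — which, by $\sqrt n$-consistency $(iv)$ and by concentration of $\rho$-estimators established in the previous sections (they too converge at the parametric rate), contains both $\widetilde\theta_n$ and any $\rho$-estimator with overwhelming probability — the quantity $\gup(\overline S,t_\theta)$ agrees with a multiple of $\sup_{\theta'}[\ell_n(\theta)-\ell_n(\theta')]$ up to an error that is $o_P(1)$ uniformly, where $\ell_n(\theta)=\sum_i\log t_\theta(X_i)$ is the log-likelihood. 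Since $\widetilde\theta_n$ maximizes $\ell_n$, it minimizes $\sup_{\theta'}[\ell_n(\theta)-\ell_n(\theta')]$ exactly, hence is within $o_P(1)$, in particular eventually within $\kappa/10$, of the infimum of $\gup(\overline S,\cdot)$.

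To make this rigorous I would proceed in the following order. \textbf{Step 1:} localize — prove that both $\widetilde\theta_n$ and every $\rho$-estimator $\widehat\theta_n$ lie in a ball $B_n=\{|\theta-\vartheta|\le\rho_n\}$ with $\rho_n=n^{-1/2}K_n$, $K_n\to\infty$ slowly, with probability $\to1$; for the MLE this is $(iv)$, for the $\rho$-estimator it follows from the risk bound of Theorem~\ref{main00} or Theorem~\ref{main1} applied to this finite-dimensional (VC-subgraph) model combined with the local equivalence $a|\theta-\theta'|\le h(P_\theta,P_{\theta'})$ coming from invertibility of the Fisher information. Hence it suffices to compare $\gup$ over $B_n$. \textbf{Step 2:} on $B_n$, expand $\psi$, $\rho(t_\theta,r)$ and $\log(t_{\theta'}/t_\theta)$ to second order using $(ii)$ and $(iii)$; write $\gup(\overline S,t_\theta)=\Lambda_n(\theta)+R_n(\theta)$ where $\Lambda_n(\theta)=\sup_{\theta'\in B_n}[c\,(\ell_n(\theta)-\ell_n(\theta'))]$ for the appropriate constant $c$ (the factor coming from $\sqrt2$ and the Taylor coefficients) and $\sup_{\theta\in B_n}|R_n(\theta)|\to0$ in probability — this uses that empirical processes indexed by the low-dimensional smooth family concentrate, via the VC/entropy control already in the paper, and that the remainders are of order $n\rho_n^3=o(1)$ times bounded fluctuations. \textbf{Step 3:} conclude: $\widetilde\theta_n$ minimizes $\Lambda_n$ over $B_n$, so $\gup(\overline S,t_{\widetilde\theta_n})\le\min_{B_n}\Lambda_n+o_P(1)\le\inf_{B_n}\gup(\overline S,\cdot)+o_P(1)=\inf_{\overline S}\gup(\overline S,\cdot)+o_P(1)$, the last equality because the infimum of $\gup$ is attained in $B_n$ by Step 1. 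Eventually $o_P(1)\le\kappa/10$, giving the claim.

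The main obstacle is Step 2: controlling the remainder $R_n$ uniformly over the shrinking ball $B_n$. One must show that the second-order Taylor approximations of $\psi$ and of the Hellinger affinity are good enough that the discrepancy between the $\rho$-criterion and the log-likelihood criterion is negligible at the parametric scale — this is where Assumption~\ref{MLE}$(iii)$ (uniform Lipschitz bound on $\sqrt{t_\theta/t_{\theta'}}$) and the regularity/Fisher-information nondegeneracy $(ii)$ are essential, and where one needs a uniform (over $\theta,\theta'\in B_n$) maximal inequality for the centered empirical process $\sum_i[\psi(\sqrt{t_{\theta'}/t_\theta}(X_i))-\E_s(\cdot)]$; fortunately the VC-subgraph machinery of Section~\ref{S} and Proposition~\ref{Kolt2} supply exactly such a bound, so the estimates are available, but assembling the Taylor remainders and the stochastic fluctuations into a clean $o_P(1)$ bound is the delicate bookkeeping part of the argument.
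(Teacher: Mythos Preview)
Your localization in Step~1 addresses where the \emph{infimum} of $\theta\mapsto\gup(\overline S,t_\theta)$ is attained, but the object $\gup(\overline S,t_\theta)=\sup_{\theta'\in\Theta}\gT(\gX,\gt_\theta,\gt_{\theta'})$ involves a supremum over \emph{all} $\theta'\in\Theta$, not only over $B_n$. Your decomposition $\gup=\Lambda_n+R_n$ in Step~2, with $\Lambda_n(\theta)=\sup_{\theta'\in B_n}[c(\ell_n(\theta)-\ell_n(\theta'))]$, implicitly assumes that this supremum is achieved inside $B_n$, which you never establish. For $\theta'$ far from $\vartheta$ the ratio $t_{\theta'}/t_\theta$ is not close to~$1$, the Taylor expansion of $\psi$ around~$1$ does not apply, and there is no a~priori reason why $\gT(\gX,\gt_{\widetilde\theta_n},\gt_{\theta'})$ should be small. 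Knowing that every $\rho$-estimator lies in $B_n$ does not help here: that concerns the $\theta$ variable, not the $\theta'$ variable over which the supremum ranges.

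The paper's proof faces this issue head-on. It splits the supremum over $\theta'$ into a ``far'' region $\{h(\vartheta,\theta')>2^{J/2}\delta\}$ and a ``near'' region. On the far region it shows (Proposition~\ref{etape0001}, via the expectation bound~\eref{eq-UbTb} and a chaining/Bernstein argument over dyadic shells) that $\gT(\gX,\gt_\theta,\gt_{\theta'})<0$ uniformly for $\theta$ in the small ball, so these $\theta'$ do not contribute to the supremum. On the near region the paper does essentially what your Step~2 attempts: it uses the expansion $\psi(1+u)=(1/\sqrt2)\log(1+u)+O(u^3)$ together with the MLE inequality $\sum_i\log t_{\theta'}(X_i)\le\sum_i\log t_{\widetilde\theta_n}(X_i)$ and a direct cubic bound on the affinity difference to obtain $\gT(\gX,\gt_{\widetilde\theta_n},\gt_{\theta'})\le Cnh^3(\widetilde\theta_n,\theta')=O(n^{-1/2})$. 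Since $\inf_\theta\gup\ge0$ always, this absolute bound alone yields $\gup(\overline S,t_{\widetilde\theta_n})\le\kappa/10$ for large $n$; the comparison with the infimum via an intermediate $\Lambda_n$ is unnecessary. The missing ingredient in your outline is precisely the far-region argument, and it is not a mere technicality---it is where most of the work in the paper's proof lies.
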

This result shows that when the model is regular enough and contains the true density, 
$\rho$-estimation allows to recover the MLE, at least when $n$ is large enough. The numerical study of Mathieu Sart~\citeyearpar{sart2016} on very simple statistical models $\overline S$ seems to indicate that our procedure allows to recover the MLE in almost all simulations even when the number of observations $n$ is small. Consequently, there seems to be some space for  improvement in Theorem~\ref{thm-MLE}. At least, as we shall see in the next section, Assumption~\ref{MLE} could be weakened. 

\subsection{A direct computation on a non-regular model}\label{S-CMLE2}
In this section, we give an example of a non-regular statistical model (in the usual statistical sense) on which we also recover the MLE with probability 1. This means that the connections between the MLE and $\rho$-estimators are not restricted to situations where the parameter is estimated at the usual parametric rate $n^{-1/2}$. 

Let us consider the problem of estimating $\theta$ from the observation of a sample 
$X_{1},\ldots,X_{n}$ of an unknown density $s$ belonging to the model $\overline S=
\{q_{\theta}=\1_{[-1/2+\theta,1/2+\theta]},\,\theta\in\R\}$. Elementary calculations show that
\begin{equation}
h^{2}(q_{\theta},q_{\theta'})=\ab{\theta-\theta'}\wedge1\quad\mbox{for all }\theta,\theta'\in\R,
\label{Eq-36}
\end{equation}
hence $S=\{q_{\theta},\ \theta\in\Q\}$ provides a countable and dense subset of $\overline S$.
\begin{prop}
Assume that $s\in \overline S$ and let $X_{(1)}<\ldots<X_{(n)}$ be the order statistics corresponding to our sample. The estimator $\widetilde \theta_{n}=\left(X_{(1)}+X_{(n)}\right)/2$ of $\theta$ maximizes the likelihood and $q_{\widetilde \theta_{n}}$ is a $\rho$-estimator of $s$.
\end{prop}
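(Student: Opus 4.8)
The plan is to verify the two assertions separately: first that $\widetilde\theta_n = (X_{(1)}+X_{(n)})/2$ is a maximum likelihood estimator, and second that $q_{\widetilde\theta_n}$ lies in ${\rm Cl}\left(\EE(\gX,S)\strut\right)$, so that it qualifies as a $\rho$-estimator by the construction of Section~\ref{C1}. For the MLE claim, I would observe that the likelihood $\prod_{i=1}^n q_\theta(X_i) = \prod_{i=1}^n \1_{[-1/2+\theta,\,1/2+\theta]}(X_i)$ equals $1$ exactly when every observation lies in $[-1/2+\theta,\,1/2+\theta]$, i.e. when $X_{(1)}\ge -1/2+\theta$ and $X_{(n)}\le 1/2+\theta$, which amounts to $X_{(n)}-1/2 \le \theta \le X_{(1)}+1/2$; this interval is nonempty precisely because $X_{(n)}-X_{(1)}\le 1$ (true since $s=q_{\theta_0}$ is supported on an interval of length $1$), and $\widetilde\theta_n$ is its midpoint. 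Hence $\widetilde\theta_n$ maximizes the likelihood (any point of that interval does). Note, however, that since $s$ is a genuine density, the MLE must be understood with respect to the model $\overline S$; the point is simply that $q_{\widetilde\theta_n}$ attains the maximal achievable likelihood value.

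For the $\rho$-estimator claim, I would use the explicit form of $\gT$. In the i.i.d.\ density setting, for $t=q_{\theta_0}$ and $t'=q_\theta$ we have by \eref{def-psi} and \eref{def-T} that $\gT(\gX,q_{\theta_0},q_\theta)$ splits into the deterministic term $\frac12\sum_i[\rho(q_\theta,r)-\rho(q_{\theta_0},r)]$, where $r=(q_{\theta_0}+q_\theta)/2$, plus $\frac{1}{\sqrt2}\sum_{i=1}^n \psi\!\left(\sqrt{q_\theta/q_{\theta_0}}(X_i)\right)$. The ratio $q_\theta(X_i)/q_{\theta_0}(X_i)$ is one of $1$ (when $X_i$ lies in both supports), $0$ (when $X_i$ lies in the support of $q_{\theta_0}$ but not $q_\theta$), or $+\infty$ (the reverse), so $\psi$ of its square root is $0$, $-1$, or $+1$ respectively. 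Since $s=q_{\theta_0}$, every $X_i$ lies in the support of $q_{\theta_0}$, so the $+\infty$ case never occurs, and the stochastic sum equals $-N_\theta$, where $N_\theta$ is the number of observations falling outside $[-1/2+\theta,\,1/2+\theta]$. Thus $\gup(S,q_{\theta_0}) = \sup_{\theta\in\Q}\gT(\gX,q_{\theta_0},q_\theta)$, and the key observation is that $N_{\theta_0}=0$: when $\theta_0=\widetilde\theta_n$ (or more generally any $\theta_0$ compatible with the data), the penalizing stochastic term vanishes and only the bounded deterministic term remains. By \eref{Eq-36} the deterministic term is itself uniformly bounded (each $\rho(q_\theta,r)\in[0,1]$ and the $h^2$ values are at most $1$, so the whole deterministic sum is $O(1)$, well below $\kappa/10$ once one checks constants). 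Conversely, for any $\theta$ with $N_\theta\ge 1$ one shows $\gup(S,q_\theta)$ is strictly larger by at least $1/\sqrt2$ minus a controlled deterministic discrepancy — more cleanly, comparing $\gup(S,q_{\widetilde\theta_n})$ against $\inf_{\theta}\gup(S,q_\theta)$, the contribution $-N_\theta/\sqrt2$ can only help a candidate with $N_\theta=0$, and among such candidates all behave symmetrically. I would therefore argue that $q_{\widetilde\theta_n}$ attains (up to the slack $\kappa/10$, indeed exactly) the infimum defining $\EE(\gX,S)$, possibly after passing to a limit along $\Q$ since $\widetilde\theta_n$ need not be rational — this is exactly why the estimator is defined as an element of ${\rm Cl}\left(\EE(\gX,S)\strut\right)$.

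The main obstacle will be the bookkeeping around the deterministic term $\frac12\sum_i[\rho(q_\theta,r)-\rho(q_{\theta_0},r)]$: one must show it is small enough (uniformly in $\theta$) that it cannot overturn the decisive gain $-N_\theta/\sqrt2$ coming from observations outside a misaligned support, and simultaneously that it does not prevent $q_{\widetilde\theta_n}$ from being a near-minimizer of $\gup$. Concretely I expect to compute $\rho(q_\theta,r)$ explicitly — with $r=(q_{\theta_0}+q_\theta)/2$ and $q_{\theta_0},q_\theta$ indicators of unit intervals offset by $|\theta-\theta_0|=:\delta\le 1$, one gets $\rho(q_\theta,r)=(1-\delta)\sqrt{1}+\delta\sqrt{1/2}=1-\delta(1-1/\sqrt2)$ on the overlap plus the disjoint part, so the deterministic term is $O(\delta)$ and in any case bounded by a small absolute constant. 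Checking that this constant, summed appropriately, stays below $\kappa/10 = 35.7$ is then routine. Once that is settled, the claim $q_{\widetilde\theta_n}\in{\rm Cl}\left(\EE(\gX,S)\strut\right)$ follows, and the proposition is proved.
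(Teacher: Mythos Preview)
Your overall strategy is sound and matches the paper's, but there is a gap in how you handle the deterministic term $\frac{1}{2}\sum_{i=1}^{n}[\rho(q_{\theta'},r)-\rho(q_{\theta},r)]$. You propose to bound it by ``a small absolute constant'' and check it fits under $\kappa/10$. This will not work as stated: in the i.i.d.\ setting that sum equals $\frac{n}{2}[\rho(q_{\theta'},r)-\rho(q_{\theta},r)]$, so any genuinely nonzero difference of order $\delta$ would contribute a term of order $n\delta$, which for large $n$ swamps both the $\kappa/10$ slack and the $-N_\theta/\sqrt{2}$ gain. Your computation that $\rho(q_\theta,r)=1-\delta(1-1/\sqrt{2})$ depends only on $\delta=|\theta-\theta'|$ is in fact the whole point: it shows $\rho(q_{\theta'},r)=\rho(q_{\theta},r)$ by symmetry (equivalently, because $q_0$ is even), so the deterministic term is \emph{exactly zero}, not merely bounded. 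Once you state this, the argument collapses to the stochastic part alone.

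With the deterministic term gone, the paper's proof and yours coincide: writing $\widehat\Theta=[X_{(n)}-1/2,X_{(1)}+1/2]\cap\Q$, for any $\theta\in\widehat\Theta$ one has $q_\theta(X_i)=1$ for every $i$, hence $\psi\!\left(\sqrt{q_{\theta'}/q_\theta}(X_i)\right)\le 0$ for all $\theta'$ and $\gup(S,\gq_\theta)=0$; for $\theta\notin\widehat\Theta$, taking any $\theta'\in\widehat\Theta$ gives $\gT(\gX,\gq_\theta,\gq_{\theta'})\ge 1/\sqrt{2}$, so $\gup(S,\gq_\theta)\ge 1/\sqrt{2}>0$. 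Thus the minimizers of $\gup(S,\cdot)$ are exactly $\{\gq_\theta:\theta\in\widehat\Theta\}$, and since $\widetilde\theta_n$ is the Euclidean midpoint of the closure of $\widehat\Theta$, continuity of $\theta\mapsto\gq_\theta$ (via \eref{Eq-36}) places $\gq_{\widetilde\theta_n}$ in ${\rm Cl}(\EE(\gX,S))$. Also tighten your notation: you use $\theta_0$ both for the true parameter and for a generic candidate in $\widehat\Theta$; keep these separate.
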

\begin{proof}
The fact that the likelihood $\theta\mapsto\prod_{i=1}^{n}\1_{[X_{i}-1/2,X_{i}+1/2]}(\theta)$ is maximal for $\theta=\widetilde \theta_{n}$ is easy to check. It remains to show that if $s\in \overline S$, $q_{\widetilde \theta_{n}}$ belongs to ${\rm Cl}\!\left(\EE(\gX,S)\strut\right)$ with probability 1. 

In the sequel, we only consider points $\theta$ and $\theta'$ that belong to $\Q$. Since the density $q_{0}$ is even, $\rho(q_{\theta},(q_{\theta'}+q_{\theta})/2)=\rho(q_{\theta'},(q_{\theta'}+q_{\theta})/2)$ and  therefore
\[
\gT(\gX,\gq_{\theta},\gq_{\theta'})={1\over \sqrt{2}}\sum_{i=1}^{n}\psi\pa{\sqrt{{q_{\theta'}\over q_{\theta}}}(X_{i})}\quad\mbox{for all }\theta,\theta'\in\Q.
\]
For all $\theta'$, $q_{\theta'}(\cdot)$ takes its values in $\{0,1\}$ and for all $i\in\{1,\ldots,n\}$, $q_{\theta'}(X_{i})=1$ if and only if $\theta'\in [X_{i}-1/2,X_{i}+1/2]$. It follows that 
$\theta'\in\Q$ and $q_{\theta'}(X_{i})=1$ for all $i$ if and only if $\theta'\in\widehat \Theta$ 
with
\[
\widehat \Theta=\cro{X_{(n)}-1/2,X_{(1)}+1/2}\cap \Q.
\]
This random subset of $\Q$ is non-void since, when $s\in \overline S$, the diameter of $\widehat \Theta$ is $\Delta(\gX)=1-\pa{X_{(n)}-X_{(1)}}>0$ $\P_{\gs}$-a.s. For all 
$\theta'\in \widehat \Theta$ and $i\in\{1,\ldots,n\}$
\[
\psi\pa{\sqrt{{q_{\theta'}\over q_{\theta}}}(X_{i})}=\psi\pa{\sqrt{{1\over q_{\theta}(X_{i})}}}= \1_{\{q_{\theta}=0\}}(X_{i}),
\]
which implies that  $\gT(\gX,\gq_{\theta},\gq_{\theta'})\ge 1/\sqrt{2}$ if $\theta\not\in\widehat \Theta$, hence
\[
\gup(S,\gq_{\theta})=\sup_{\theta'\in\Q}\gT(\gX,\gq_{\theta},\gq_{\theta'})\ge\sup_{\theta'\in\widehat \Theta}
\gT(\gX,\gq_{\theta},\gq_{\theta'})\ge {1\over \sqrt{2}}\quad\mbox{for }\theta\not\in \widehat \Theta.
\]
For $\theta\in \widehat \Theta$, $q_\theta(X_i)=1$ for all $i$ so that $q_{\theta'}(X_i)\le 
q_\theta(X_i)$ for all $i$ and $\gT(\gX,\gq_{\theta},\gq_{\theta'})\le0$ whatever $\theta'$. It follows that
\[
\gup(S,\gq_{\theta})=\sup_{\theta'\in \Q}\gT(\gX,\gq_{\theta},\gq_{\theta'})=0.
\]
Hence, $\theta\mapsto \gup(S,\gq_{\theta})$ is minimum for the elements $\theta\in\widehat\Theta$
 and $\{\gq_{\theta},\ \theta\in \widehat \Theta\}\subset \EE(\gX,S)$. Since $\widetilde \theta_{n}$ 
 belongs to the closure of $\widehat \Theta$ (with respect to the Euclidean distance) and since
for any sequence  $(\theta_j)_{j\ge1}$ converging towards $\widetilde \theta_{n}$, $\gq_{\theta_{j}}$ 
converges towards $\gq_{\widetilde \theta_{n}}$ with respect to the Hellinger distance by
(\ref{Eq-36}), $\gq_{\widetilde \theta_{n}}$ belongs to ${\rm Cl}\!\left(\EE(\gX,S)\strut\right)$ and is 
therefore a $\rho$-estimator.
\end{proof}

\subsection{Risk bounds under entropy with bracketing}\label{S-CMLE3}
Since, for some specific models of densities $\overline S$ that contain the true density $s$, 
the MLE is a $\rho$-estimator with probability close to 1, it is natural to wonder how to compare 
the performance of these two estimators on more general models $\overline S$, possibly not 
containing $s$. One way to do so is to compare their risk bounds. In the literature, the risk 
bounds which are established for the MLE  usually take the following form 
\begin{equation}\label{eq-MLE-RB}
C\E\cro{h^{2}(s,\tilde s)}\le K(s,\overline S)+\tau_{n}^{2}\vee n^{-1},
\end{equation}
where $C$ is a positive universal constant and $K(s,\overline S)=\inf_{t\in \overline S}K(s,t)$. As to the number $\tau_{n}^{2}$, which usually corresponds to the maximal risk over $\overline S$, it is obtained by solving an equation depending on the  bracketing entropy of $\overline S$. Such a result appears as Theorem~7.11 in Massart~\citeyearpar{MR2319879}. The aim of this section is to establish an analogue of (\ref{eq-MLE-RB}) with the same value of $\tau_{n}$ for our $\rho$-estimator. Our assumptions are similar to those used by Massart with a slight modification (replacing his assumption $(M)$ by $(i)$ below) which corresponds to the fact that we only use countable models.
\begin{ass}\label{A-MLE}
There exists a countable subset $S$ of $\overline S$ with the following properties.
\begin{itemize}
\item[$(i)$]  For all densities $s$ on $(\X,\A,\mu)$, $K(s,S)=K(s,\overline S)$.
\item[$(ii)$] For all $\sigma>0$ and $\overline s\in S$ there exists a non-increasing mapping 
$z\mapsto \HH_{[\ ]}^{S}(\overline s,\sigma,z)$ from $(0,+\infty)$ into $(0,+\infty)$ and 
a family $\I(\overline s,\sigma,z)$ of pairs of non-negative measurable functions on the 
measured space $(\X,\A,\mu)$ such that \[
\log 2\le \log \ab{\I(\overline s,\sigma,z)}\le \HH_{[\ ]}^{S}(\overline s,\sigma,z)\quad\mbox{for all }z>0.
\]
Moreover, for all $\gt\in \B^{S}(\overline \gs,\sigma\sqrt{n})$ one can find a pair $(t_{L},t_{U})\in \I(\overline s,\sigma,z)$ such that $t_{L}\le t\le t_{U}$ and 
\[
{1\over 2}\int\pa{\sqrt{t_{U}}-\sqrt{t_{L}}}^{2}d\mu\le z^{2}.
\]
\item[$(iii)$] There exists a non-decreasing function $\phi$ from $(0,+\infty)$ into $(0,+\infty)$  such that $x\mapsto \phi(x)/x$ is non-increasing on $(0,+\infty)$ and for which
\[
\sup_{\overline s\in S}\int_{0}^{\sigma}\sqrt{\HH_{[\ ]}^{S}(\overline s,\sigma,z)}\ dz\le \phi(\sigma).
\]
\end{itemize}
\end{ass}
From these assumptions, we can derive the following result to be proved in Section~\ref{P15}.
\begin{thm}\label{T-crochet}
Let $X_{1},\ldots,X_{n}$ be an $n$-sample with values in $(\X,\A,\mu)$ and density $s$ with respect to $\mu$. Let $\overline S$ be a model of densities satisfying Assumption~\ref{A-MLE} and
\[
\tau_{n}=\inf\left\{\sigma>0,\ \phi(\sigma)\le\sqrt{n}\sigma^{2}\right\}.
\]
Then there exist universal constants $C,C'>0$ such that 
\begin{equation}\label{borneD-S}
d(S)=\sup_{\overline \gs\in S}D^{S}(\overline \gs,\overline \gs)\le\left(Cn\tau_{n}^{2}\right)\vee 1\ \ \end{equation}
and for any $\rho$-estimator $\widehat s$ of $s$
\begin{equation}\label{eq-BR-rho-MLE}
C'\E\cro{h^{2}(s,\widehat s)}\le K(s,\overline S)+\tau_{n}^{2}\vee n^{-1}.
\end{equation}
\end{thm}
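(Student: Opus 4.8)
The plan is to first establish the dimension bound \eref{borneD-S}; the risk bound \eref{eq-BR-rho-MLE} then falls out for free. In the density framework $\gK(\gs,\overline\gs)=nK(s,\overline s)$, so Assumption~\ref{A-MLE}$(i)$ is exactly hypothesis \eref{hypo-KL} of Theorem~\ref{T-KL}; that theorem gives $n\,\mathbb{E}_{s}\!\left[h^{2}(s,\widehat s)\right]=\mathbb{E}_{\gs}\!\left[\gh^{2}(\gs,\widehat\gs)\right]\le C\!\left[nK(s,\overline S)+d(S)\right]$, and substituting \eref{borneD-S} and dividing by $n$ yields \eref{eq-BR-rho-MLE}. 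Hence everything rests on controlling $d(S)=\sup_{\overline\gs\in S}D^{S}(\overline\gs,\overline\gs)$.

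To that end, fix $\overline s\in S$, write $\gt=(t,\dots,t)$ and $g_{t}:=\psi\!\left(\sqrt{t/\overline s}\right)$, and set $y=\sigma\sqrt n$. Since $\gh^{2}(\overline\gs,\gt)=nh^{2}(\overline s,t)$, the ball $\B^{S}(\overline\gs,\overline\gs,y)$ is $\B^{S}(\overline\gs,\sigma\sqrt n)=\{t\in S:h^{2}(\overline s,t)\le\sigma^{2}\}$, and
\[
\w^{S}(\overline\gs,\overline\gs,y)=\mathbb{E}_{\overline s}\!\left[\sup_{t:h^{2}(\overline s,t)\le\sigma^{2}}\Big|\sum_{i=1}^{n}\big(g_{t}(X_{i})-\mathbb{E}_{\overline s}[g_{t}(X_{i})]\big)\Big|\right].
\]
Two elementary facts localize and control this bounded empirical process. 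First, $\|g_{t}\|_{\infty}\le1$ and, since $\psi(\sqrt{t/\overline s})^{2}\,\overline s\le(\sqrt t-\sqrt{\overline s})^{2}$ pointwise, $\Var_{\overline s}(g_{t})\le\mathbb{E}_{\overline s}[g_{t}^{2}]\le2h^{2}(\overline s,t)\le2\sigma^{2}$. Second, $\psi$ being increasing and Lipschitz (constant $<2$), a bracket $[t_{L},t_{U}]$ for $t$ with $(1/2)\int(\sqrt{t_{U}}-\sqrt{t_{L}})^{2}d\mu\le z^{2}$ induces the bracket $\big[\psi(\sqrt{t_{L}/\overline s}),\psi(\sqrt{t_{U}/\overline s})\big]$ for $g_{t}$, whose endpoints differ by at most $2$ in sup norm and by at most a constant multiple of $z$ in $\IL_{2}(\overline s\cdot\mu)$. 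Thus, by Assumption~\ref{A-MLE}$(ii)$, the class $\{g_{t}:h^{2}(\overline s,t)\le\sigma^{2}\}$ has bracketing number in $\IL_{2}(\overline s\cdot\mu)$ at level $Cz$ bounded by $\exp\!\big(\HH_{[\ ]}^{S}(\overline s,\sigma,z)\big)$, and, by Assumption~\ref{A-MLE}$(iii)$ together with the monotonicity of $x\mapsto\phi(x)/x$, its bracketing entropy integral is at most $C\phi(\sigma)$.

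I would then invoke the bracketing maximal inequality for uniformly bounded empirical processes that underlies Theorem~7.11 of Massart~\citeyearpar{MR2319879} (see the Bernstein-bracket chaining bounds of its Chapter~6): for a countable class of functions bounded by an absolute constant, with variances at most $2\sigma^{2}$ and bracketing entropy integral at most $C\phi(\sigma)$ with $\phi/x$ non-increasing, one obtains a bound of the shape
\[
\w^{S}(\overline\gs,\overline\gs,\sigma\sqrt n)\le C_{1}\!\left[\sqrt n\,\phi(\sigma)+\frac{\phi(\sigma)^{2}}{\sigma^{2}}\right].
\]
It remains to convert this into $\w^{S}(\overline\gs,\overline\gs,y)\le c_{0}y^{2}$ for $y$ above a threshold of order $\sqrt n\,\tau_{n}$. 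Since $\phi>0$ and $x\mapsto\phi(x)/x$ is non-increasing, necessarily $\tau_{n}>0$, and one may pick $\sigma_{0}\in(\tau_{n},2\tau_{n})$ with $\phi(\sigma_{0})\le\sqrt n\,\sigma_{0}^{2}$; then $\phi(\sigma)/\sigma\le\phi(\sigma_{0})/\sigma_{0}\le\sqrt n\,\sigma_{0}<2\sqrt n\,\tau_{n}$ for all $\sigma\ge\sigma_{0}$, so $\phi(\sigma)\le2\sqrt n\,\tau_{n}\,\sigma$ and hence $\sqrt n\,\phi(\sigma)\le2n\sigma^{2}(\tau_{n}/\sigma)$, $\phi(\sigma)^{2}/\sigma^{2}\le4n\sigma^{2}(\tau_{n}/\sigma)^{2}$. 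Taking $\sigma\ge K\tau_{n}$ with $K$ a large enough absolute constant makes the right-hand side of the maximal inequality at most $c_{0}n\sigma^{2}=c_{0}y^{2}$; combined with the trivial bound $\w^{S}\le2n\le c_{0}y^{2}$ valid for $y^{2}\ge2n/c_{0}$, this gives $\w^{S}(\overline\gs,\overline\gs,y)\le c_{0}y^{2}$ for all $y\ge K\tau_{n}\sqrt n$, whence $D^{S}(\overline\gs,\overline\gs)=y_{0}^{2}\vee1\le(K^{2}n\tau_{n}^{2})\vee1$ by the very definition of $D^{S}$ (as $y_{0}^{2}\vee1$ with $y_{0}$ the largest radius violating this inequality). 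These bounds are uniform in $\overline s\in S$ because the supremum over $\overline s\in S$ already appears in Assumption~\ref{A-MLE}$(iii)$, so $d(S)\le(Cn\tau_{n}^{2})\vee1$, which is \eref{borneD-S}. The main obstacle is precisely the maximal inequality step: one must check that the brackets produced by Assumption~\ref{A-MLE}$(ii)$ are genuine Bernstein brackets (bounded in sup norm, small in $\IL_{2}(\overline s\cdot\mu)$) feeding into the chaining argument with variance proxy $2\sigma^{2}$ so as to yield exactly the $\sqrt n\,\phi(\sigma)+\phi(\sigma)^{2}/\sigma^{2}$ form with absolute constants, and then that the conversion above works uniformly in $\overline s\in S$ and for every $y$ beyond the threshold — not merely at $y=K\tau_{n}\sqrt n$ — so that the definition of $D^{S}$ genuinely applies.
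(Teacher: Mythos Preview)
Your proposal is correct and follows essentially the same route as the paper: reduce \eref{eq-BR-rho-MLE} to \eref{borneD-S} via Theorem~\ref{T-KL}, then bound $\w^{S}(\overline\gs,\overline\gs,\sigma\sqrt n)$ by a bracketing maximal inequality from Massart~\citeyearpar{MR2319879} (the paper cites specifically Theorem~6.8, bound~(6.25)), and finally use the monotonicity of $\phi(x)/x$ together with the definition of $\tau_n$ to force $\w^{S}\le c_0y^{2}$ for $y\ge\lambda_0\tau_n\sqrt n$ with $\lambda_0$ an absolute constant. The only cosmetic differences are that the paper uses the variance bound $6\sigma^{2}$ from Proposition~\ref{variance} (your pointwise inequality $\psi(\sqrt{t/\overline s})^{2}\overline s\le(\sqrt t-\sqrt{\overline s})^{2}$ giving $2\sigma^{2}$ is sharper and also fine), verifies explicitly the Bernstein moment condition $\E[(\Delta\psi)^{k}]\le(k!/2)\epsilon^{2}$ for the induced brackets, and pins down the threshold constant $\lambda_0=2555$ rather than leaving it abstract.
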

\subsection{Histogram estimators}\label{S-CMLE4}
Let us go back to the framework that we introduced at the end of Section~\ref{C3c} which means
that we consider the problem of estimation of a density $s$ with respect to $\mu$ using the model
of piecewise constant functions $\overline{S}_{\I}$ defined by (\ref{Eq-Histo1}) with a countable
partition $\I$ of $\X$ satisfying $0<\mu(I)<+\infty$ for all $I\in\I$. Note that this model is identifiable
so that $(\overline{S}_{\I},h)$ is a metric space. The model $\overline{S}_{\I}$ can then be identified
with the unit simplex $\scr{S}$ in $[0,1]^{|\I|}$ since by (\ref{Eq-Histo1}), for $t\in\overline{S}_{\I}$,
$\sum_{I\in\I}t_I=1$ with $t_I=\int_{I}t(x)\,d\mu(x)$. With this identification, the metric space 
$(\overline{S}_{\I},h)$ is topologically equivalent to the separable Euclidean simplex $\scr{S}$
so that $\overline{S}_{\I}$ is also separable for the distance $h$. We finally set $\overline{S}=
\{\gt=(t,t,\ldots,t),t\in\overline{S}_{\I}\}\subset\left(\overline{S}_{\I}\right)^n$.

Given $n$ i.i.d.\ observations $\Etc{X}{n}$ with values in $\X$ and density $t\in\overline{S}_{\I}$ 
with respect to $\mu$, we set $N_I=\sum_{i=1}^n\1_{I}(X_i)$ for $I\in\I$. The vector 
$(N_I)_{I\in\I}\in[0,n]^{|\I|}$ is a multinomial vector with parameter $(t_I)_{I\in\I}$, the MLE over 
$\scr{S}$ is then given by $\{\widehat{t}_I,\,I\in\I\}$ with $\widehat{t}_I=N_I/n$ and the 
corresponding density estimator $\widehat{t}=\sum_{I\in\I}\left(\widehat{t}_I/\mu(I)\right)\1_{I}$ 
of $t$ is the MLE on the model $\overline{S}_{\I}$. It is also the histogram estimator of 
the true density $s$ with respect to the partition $\I$ of $\X$.
%
\begin{prop}\label{P-discrete}
The histogram estimator $\widehat{t}=\sum_{I\in\I}[N_I/(n\mu(I))]\1_{I}$ is a $\rho$-estimator 
built on the model $\overline{S}$.
\end{prop}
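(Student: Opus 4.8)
The plan is to show that the histogram estimator $\widehat t=\sum_{I\in\I}[N_I/(n\mu(I))]\1_I$ actually minimizes $\gt\mapsto\gup(\overline S,\gt)=\sup_{\gt'\in\overline S}\gT(\gX,\gt,\gt')$ over $\overline S$, so that it certainly lies in $\EE(\gX,S)$ once one descends to a countable dense subset $S$; then one invokes the continuity remark after Definition~\ref{D-model} to conclude that $\widehat t$ is a genuine $\rho$-estimator. The starting point is to rewrite $\gT(\gX,\gt,\gt')$ for $\gt,\gt'\in\overline S$ using (\ref{def-T}). Since all functions involved are constant on each $I\in\I$, both the deterministic term $\tfrac12\sum_i[\rho(t'_i,r_i)-\rho(t_i,r_i)]$ and the stochastic term $\tfrac1{\sqrt2}\sum_i\psi(\sqrt{t'_i/t_i}(X_i))$ collapse to sums over $I\in\I$; for the stochastic term, $\psi(\sqrt{t'/t}(X_i))$ equals $\psi(\sqrt{t'_I/t_I})$ whenever $X_i\in I$, so that term becomes $\tfrac1{\sqrt2}\sum_{I\in\I}N_I\,\psi(\sqrt{t'_I/t_I})$ (with the usual $0/0=1$ convention on empty cells handled by $\psi(1)=0$). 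Writing $a_I=t_I$, $b_I=t'_I$, and using $\rho(q_{a_I}\1_I/\mu(I),\cdot)$ in terms of the scalar Hellinger affinity, $\gT(\gX,\gt,\gt')$ reduces to an explicit function $F(a,b)=\sum_{I\in\I}g_I(a_I,b_I)$ of the two probability vectors $a,b$ on $\I$, where each $g_I$ depends on the data only through $N_I$.

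The key computational step is then to understand, for fixed $a$ (i.e.\ fixed candidate histogram $\widehat t$-or-not $\gt$), the inner supremum $\sup_b F(a,b)$ over the simplex, and to show it is minimized at $a=\widehat a:=(N_I/n)_{I\in\I}$. Here I expect to reuse exactly the machinery behind (\ref{eq-UbT})--(\ref{eq-LbT}) but now with the \emph{empirical} measure playing the role of the unknown $s$: because $\overline S_\I$ is a convex model closed under the relevant operations and the empirical density $\widehat t$ lies \emph{inside} $\overline S_\I$, the ``true'' point in Section~\ref{B1}'s argument can be taken to be $\widehat t$ itself, and the computations showing that any minimizer of $t\mapsto\sup_{t'}T(s,t,t')$ is within a constant of the projection of $s$ specialize, when $s\in S$, to the statement that $\widehat t$ is an exact minimizer with $\gup$-value $0$. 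Concretely, one checks that $F(\widehat a,b)\le 0$ for every $b$ (using $-1\le\psi\le1$ and the elementary inequality relating $\rho(t,r)$, $\rho(t',r)$ and the chi-square-type quantity $\sum_I N_I\psi(\sqrt{b_I/\widehat a_I})$ that makes the deterministic and empirical parts cancel), while $\gT(\gX,\widehat t,\widehat t)=0$ gives $\gup(\overline S,\widehat t)=0$; and conversely for any $a\neq\widehat a$ one exhibits a $b$ (e.g.\ $b$ supported where $a$ under-weights relative to the data) with $F(a,b)>0$, so $\gup(\overline S,\gt)>0$ there. This pins $\widehat t$ as the unique minimizer.

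Finally I would pass to a countable dense subset $S\subset\overline S$: since $(a,b)\mapsto \gT(\gx,\gt,\gt')=F(a,b)$ is continuous on $\overline S_\I\times\overline S_\I$ for every fixed data point $\gx$ (the maps $a\mapsto\rho(a,\cdot)$ and $a\mapsto\psi(\sqrt{b/a})$ are continuous on the simplex, with the empty-cell conventions matching up as in Section~\ref{B1}), the Remark after the construction in Section~\ref{C1} applies: $\gup(S,\gt)=\gup(\overline S,\gt)$ for $\gt\in\overline S$, and ${\rm Cl}(\EE(\gX,S))=\EE(\gX,\overline S)$. Since $\gup(S,\cdot)$ attains its infimum $0$ exactly at $\widehat t\in\overline S\subset{\rm Cl}(\EE(\gX,S))$, $\widehat t$ is a $\rho$-estimator built on $\overline S$. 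The main obstacle I anticipate is the algebraic identity behind $F(\widehat a,b)\le 0$: one must verify that, with $r_I=(\widehat a_I+b_I)/2$, the combination $\tfrac12[\rho(q_{b_I},q_{r_I})-\rho(q_{\widehat a_I},q_{r_I})]+\tfrac1{\sqrt2}N_I\psi(\sqrt{b_I/\widehat a_I})$ is $\le 0$ cell by cell (or at least after summation), which is where the precise normalization $\widehat a_I=N_I/n$ is essential and where a careless bound would only give a constant factor rather than the sharp $0$.
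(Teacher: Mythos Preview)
Your central idea is right, and in fact cleaner than the route the paper takes, but you leave the crucial step implicit and then worry about the wrong obstacle.

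The identity you are circling around should be stated outright: for $t,t'\in\overline S_{\I}$ the piecewise-constant structure gives
\[
\sum_{i=1}^n\psi\!\left(\sqrt{\tfrac{t'}{t}}(X_i)\right)
=\sum_{I\in\I}N_I\,\psi\!\left(\sqrt{\tfrac{t'_I}{t_I}}\right)
=n\int_{\X}\psi\!\left(\sqrt{\tfrac{t'}{t}}\right)\widehat t\,d\mu,
\]
so by (\ref{T-psi}) and the definition of $T(s,t,t')$ in Section~\ref{B1}, $\gT(\gX,\gt,\gt')=nT(\widehat t,t,t')$ for all $t,t'\in\overline S_{\I}$. Once this is written down there is nothing left to verify: inequality~(\ref{eq-UbT}) with $s=t=\widehat t$ gives $T(\widehat t,\widehat t,t')\le -8c_0\,h^2(\widehat t,t')\le 0$, hence $\gup(\overline S,\widehat\gt)=0$. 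The cell-by-cell inequality you worry about at the end is \emph{false} --- on cells with $u_I>\widehat t_I$ the summand is positive --- but it is irrelevant: the simplex constraint enters only through the global inequality~(\ref{eq-UbT}), not through any coordinate-wise bound.

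By contrast, the paper does \emph{not} invoke (\ref{eq-UbT}) at all. It computes $\gT(\gX,\widehat\gt,\gu)=\frac{n}{2\sqrt2}\bigl[\sum_{I\in\J}\widehat t_I\,G(u_I/\widehat t_I)+\sigma(u)\bigr]$ with $G(x)=(\sqrt x-1)(3+x)/\sqrt{1+x}$, analyses $G'$ to obtain $(1-x)(G'(x)-\sqrt2)>0$ for $x\ne1$, and then runs a mass-transport argument on the simplex: shifting weight from a cell with $u_I>\widehat t_I$ to one with $u_J<\widehat t_J$ strictly increases $\gT$, forcing the maximum over $u$ to be attained at $u=\widehat t$ with value~$0$. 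Your route replaces this calculus-plus-optimization by a one-line appeal to an inequality already proved; the paper's argument yields a bit more (strict uniqueness of the maximizer in $u$) but that extra is not needed.

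There is, however, a genuine gap in your closing step. The Remark after Section~\ref{C1} requires continuity of $(\gt,\gt')\mapsto\gT(\gx,\gt,\gt')$ on \emph{all} of $\overline S\times\overline S$, and this fails: on any cell $I$ with $N_I>0$ the map $(a,b)\mapsto\psi(\sqrt{b/a})$ is discontinuous at $(0,0)$. The fix is local rather than global. Use Lemma~\ref{L-cont} for the $\rho$-part and, for the $\psi$-part, that only the finitely many cells $I\in\J=\{I:N_I>0\}$ contribute, each with $\widehat t_I>0$, so $t\mapsto\gT(\gX,\gt,\gu)$ is continuous at $t=\widehat t$ \emph{uniformly in $u\in\overline S$}. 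Hence for $t_\varepsilon\in S$ with $h(t_\varepsilon,\widehat t)$ small, $\gup(S,\gt_\varepsilon)\le\gup(\overline S,\gt_\varepsilon)\to 0$, and $\widehat\gt\in\mathrm{Cl}(\EE(\gX,S))$. This is essentially the paper's closing paragraph.
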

\begin{proof}
For $t,u\in\overline{S}_{\I}$, $\rho(t,u)=\sum_{I\in\I}\sqrt{t_Iu_I}$ and $(du/dt)(x)=u_I/t_I$ for $x\in I$ 
with the convention $0/0=1$. The definition (\ref{def-T}) of the function $\gT$ therefore implies that
\begin{eqnarray*}
\gT(\gX,\gt,\gu)&=&\frac{n}{2\sqrt{2}}\sum_{I\in\I}\left[\sqrt{t_Iu_I+u_I^2}-\sqrt{t_Iu_I+
t_I^2}\right]+\frac{1}{\sqrt{2}}\sum_{I\in\I}\psi\left(\sqrt{\frac{u_I}{t_I}}\right)N_I
\\&=&\frac{n}{2\sqrt{2}}\sum_{I\in\I}\left[\sqrt{t_I+u_I}
\left(\sqrt{u_I}-\sqrt{t_I}\right)+2\widehat{t}_I\frac{\sqrt{u_I/t_I}-1}{\sqrt{1+(u_I/t_I)}}\right].
\end{eqnarray*}
It follows, setting $\J=\left\{I\in\I\,\left|\,\widehat{t}_I>0\right.\right\}=\left\{I\in\I\,\left|\,N_I>0\right.\right\}$, that
\[
\gT(\gX,\widehat{\gt},\gu)=\frac{n}{2\sqrt{2}}\left[\sum_{I\in\J}\left(3\widehat{t}_I+u_I\right)
\frac{\sqrt{u_I/\widehat{t}_I}-1}{\sqrt{1+(u_I/\widehat{t}_I)}}+\sum_{I\in\J^c}u_I\right].
\]
Setting $x_I=u_I/\widehat{t}_I$ for $I\in\J$ and $\sigma(u)=\sum_{I\in\J^c}u_I$, we finally get,
\[
\gT(\gX,\widehat{\gt},\gu)=\frac{n}{2\sqrt{2}}\left[\sum_{I\in\J}\widehat{t}_IG(x_I)
+\sigma(u)\right]\quad\mbox{with }\;G(x) = \frac{(\sqrt{x}-1) \left(3+x\right)}{\sqrt{1+x}}\quad\mbox{for all }x\ge0.
\]
Since, for all $x>0$,
\[
G'(x)=\frac{2x^2-x^{3/2}+3x+\sqrt{x}+3}{2\sqrt{x}(1+x)^{3/2}}\quad\mbox{ and }\quad G''(x)=\frac{\left(x^{5/2}-5x^{3/2}-9x-3\right)\sqrt{1+x}}{4x(1+x)^3\sqrt{x}},
\]
we can see that $G'(1)=\sqrt{2}$ and $(1-x)\left(G'(x)-\sqrt{2}\right)>0$ for all $x\ne1$. It follows that if 
$u'\ne u$ and $x'_I=u'_I/\widehat{t}_I$ for $I\in\J$ we get
\begin{equation}
\widehat{t}_I\left[G(x_I)-G(x'_I)\right]<\sqrt{2}\left(u_I-u'_I\right)
\quad\mbox{if either }\;x_I>x'_I\ge1\;\mbox{ or }\;x_I<x'_I\le1.
\label{Eq-G}
\end{equation}
We now consider two cases.\\
--- If $\sigma(u)>0$ there exists some $u_{I'}>0$ for $I'\in\J^c$ and some $I\in\J$ with $u_I<\widehat{t}_I$. 
It is therefore possible to decrease $u_{I'}$ to $u_{I'}-\varepsilon\ge0$ and increase $u_I$ to 
$u_I+\varepsilon\le\widehat{t}_I$ for $\varepsilon>0$ small enough which implies for 
$\gT$ an increase larger than $n2^{-3/2}[\sqrt{2}-1]\varepsilon>0$. It follows that 
\[
\gT(\gX,\widehat{\gt},\gu)<\sup_{\gt\in\overline{S}}\gT(\gX,\widehat{\gt},\gt)
\quad\mbox{for all }\gu=(u,u,\ldots, u)\mbox{ such that }\sigma(u)>0.
\]
--- If $\sigma(u)=0$ and $u\ne\widehat{t}$ one can find $J,J'\in\J$ with $x_J<1<x_{J'}$, hence, for 
$\varepsilon>0$ small enough, $x'_J=x_J+\varepsilon/\widehat{t}_J\le1$ and $x'_{J'}=x_{J'}-\varepsilon/\widehat{t}_{J'}\ge1$. For such an $u$, we define $u'$ by $u'_J=u_J+\varepsilon$, $u'_{J'}=u_{J'}-\varepsilon$ and $u'_I=u_I$ for all other $I\in\I$ so that $\sum_{I\in\I}u'_I=\sum_{I\in\I}u_I=1$ and $u'_{\I}\in\scr{S}$
as required. It then follows from (\ref{Eq-G}) that
\[
\gT(\gX,\widehat{\gt},\gu)-\gT(\gX,\widehat{\gt},\gu')<(n/2)\left[u_J-u'_J+u_{J'}-u'_{J'}\right]=0.
\]
It follows that, for all $u\ne\widehat{t}$, $\gT(\gX,\widehat{\gt},\gu)<\sup_{\gt\in\overline{S}}\gT(\gX,\widehat{\gt},\gt)$ and finally
\[
\sup_{\gt\in\overline{S}}\gT(\gX,\widehat{\gt},\gt)=\gT(\gX,\widehat{\gt},\widehat{\gt})=0.
\]
Since $|\J|\le n$ and the mapping
\[
y\mapsto\frac{\sqrt{u_I}-\sqrt{\widehat{t}_I+y}}{\sqrt{\widehat{t}_I+y+u_I}}
\left(3\widehat{t}_I+y+u_I\right)
\]
is continuous at 0 uniformly with respect to $u_I\in[0,1]$ for $\widehat{t}_I>0$, replacing $\overline{S}$ by a dense subset $S$ and $\widehat{\gt}$ by a close enough approximation 
$\widehat{\gt}_\varepsilon$ with $\varepsilon>0$ and $\sigma\left(\widehat{t}_\varepsilon\right)=0$ leads to 
\[
\gup\left(S,\widehat{\gt}_\varepsilon\right)=
\sup_{\gu\in S}\gT(\gX,\widehat{\gt}_\varepsilon,\gu)<\varepsilon.
\]
Since $\varepsilon$ is arbitrary, this proves that the MLE $\widehat{\gt}$ belongs to 
${\rm Cl}\!\left(\EE(\gX,S)\strut\right)$.
\end{proof}
This applies in particular to the example of Section~\ref{C3c}. It also applies to the case of 
the $X_i$ taking their values in a finite set $\X=\{a_1,\ldots,a_r\}$, $r>1$ (or even a countable set 
$\X=\{a_j,\,j\in\N\}$) and to the estimation of the density $s$ of the $X_i$ with respect to the 
counting measure $\mu$ on $\X$. Then the MLE $\widehat{s}$ over the set $\overline{S}$ 
of all densities on $\X$ is given by $\widehat{s}(a_j)=N_j/n$ with 
$N_j=\sum_{i=1}^n\1_{a_j}(X_i)$ for $1\le j\le r$ (or $j\in\N$) and it is a $\rho$-estimator 
with respect to the model $\overline{S}$. In particular, if the $X_i$ are i.i.d.\ Bernoulli variables 
with parameter $\theta$, the empirical mean is a $\rho$-estimator.

\section{Examples\label{E}}
\subsection{Homoscedastic regression with fixed design\label{E1}}
In this section, we consider the statistical framework described in Section~\ref{A3}. Our 
aim is therefore to estimate the function $\gf$ from the observation of the $X_{i}$. 

The choice of a model $\overline{S}_{q,F}$ corresponds here to those of a density $q$ 
(with respect to the Lebesgue measure $\mu$) to approximate $p$ and of a subset $F$ 
of $\R^n$ to approximate $\gf$. More precisely, given $q$ and $F$, we define the model 
$\overline{S}_{q,F}$ as the set of functions from $\R^n$ to $\R^n$ given by
\[
\overline{S}_{q,F}=\ac{\left.\gx\mapsto\gq_{\gg}(\gx)=\pa{q(x_1-g_{1}),\ldots,q(x_n-g_{n})}\,\right|\,\gg\in F},
\]
which is clearly identifiable.
\begin{ass}\label{hypoQ}
The density $q$ is unimodal. 
\end{ass}
\begin{thm}\label{thmVC}
Let Assumption~\ref{hypoQ} be satisfied. If $F$, viewed as a class of functions on 
$\{1,\ldots,n\}$, is VC-subgraph with index $\overline V$, then 
\begin{equation}\label{borne-DVC}
\overline D(\overline{S}_{q,F})\le C\overline V \left[1+\log_{+}\left(n/\overline V \right)\right]
\end{equation}
and the estimator $\widehat \gs=\gq_{\widehat \gf}$ built in Section~\ref{C1} and based on a countable and dense subset of $\overline{S}_{q,F}$ satisfies, for any 
density $p$ and vector $\gf\in\R^{n}$,
\begin{equation}
\P_{\gs}\cro{C\gh^{2}\pa{\gp_{\gf},\gq_{\widehat \gf}}\le\gh^{2}\pa{\gp_{\gf},\overline{S}_{q,F}}+\overline V\cro{1+\log_+\!\pa{{n/\overline V}}}+\xi}\ge 1-e^{-\xi}\quad\mbox{for all }\xi>0.
\label{Eq-risk2}
\end{equation}
\end{thm}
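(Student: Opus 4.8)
The plan is to deduce Theorem~\ref{thmVC} from the general machinery of Section~\ref{C3a}, in particular Theorem~\ref{main1}, once we establish that the model $\overline{S}_{q,F}$, viewed as a class of real-valued functions on $\overline{\X}=\bigcup_{i=1}^n(\{i\}\times\R)$ via the identification (\ref{Eq-Xbar}), is VC-subgraph with index controlled by $\overline V$. Recall that a point $\gt=\gq_{\gg}\in\overline{S}_{q,F}$ corresponds to the function $\overline x=(i,x)\mapsto q(x-g_i)$ on $\overline{\X}$. So the first and main step is: \emph{if $F$ is VC-subgraph with index $\overline V$ as a class of functions on $\{1,\dots,n\}$, then $\{(i,x)\mapsto q(x-g_i)\,|\,\gg\in F\}$ is VC-subgraph on $\overline{\X}$ with index bounded by a numerical multiple of $\overline V$ (or $\overline V$ up to an additive constant).} This is where Assumption~\ref{hypoQ} (unimodality of $q$) enters. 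The idea is that $\gg\mapsto(i,x)\mapsto g_i$ already forms a VC-subgraph class of index $\overline V$ (it is essentially $F$ with a dummy extra coordinate), that $(i,x)\mapsto x$ is a fixed function, hence $(i,x)\mapsto x-g_i$ is VC-subgraph of index $\le\overline V$ (or $\le\overline V+$const) by the stability properties of VC-subgraph classes (Proposition~\ref{perm} in Section~\ref{S}), and finally that composing with the fixed function $q$ preserves the VC-subgraph property \emph{when $q$ is unimodal}: a unimodal function is of the form ``increasing then decreasing'', so the subgraph of $q\circ(\text{affine in }g)$ can be written using a bounded number of subgraphs/supergraphs of the affine family, keeping the VC index under control. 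This monotone-composition fact for VC-subgraph classes is exactly the kind of statement collected in Section~\ref{S}, and I would invoke the relevant item of Proposition~\ref{perm}; unimodality is precisely what is needed to stay within that framework (a general $q$ could destroy the VC property).

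Once $\overline{S}_{q,F}$ is known to be VC-subgraph with index $\overline V'\le C\overline V$, the rest is essentially a citation. Theorem~\ref{main1} gives, for any countable dense subset $S$ of $\overline{S}_{q,F}$,
\[
\overline D(\overline{S}_{q,F})\le c_2\overline D^S\le C'\overline V'\,[1+\log_+(n/\overline V')]\le C''\overline V\,[1+\log_+(n/\overline V)],
\]
which is (\ref{borne-DVC}); the last inequality uses that $x\mapsto x[1+\log_+(n/x)]$ is monotone and that $\overline V'\asymp\overline V$. The probability bound (\ref{risk-VC01}) of Theorem~\ref{main1}, applied to $\gs=\gp_{\gf}$ (the true parameter, which indeed lies in $\LL_0$ in the fixed-design setting of Section~\ref{A3} since $s_i=p_{f_i,1}=p_{f_i}$), reads
\[
\P_{\gs}\bigl[C'\gh^2(\gp_{\gf},\gq_{\widehat\gf})\le\gh^2(\gp_{\gf},\overline{S}_{q,F})+\overline V'[1+\log_+(n/\overline V')]+\xi\bigr]\ge1-e^{-\xi},
\]
and after absorbing the $\overline V'\leftrightarrow\overline V$ comparison into the universal constant this is exactly (\ref{Eq-risk2}). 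Note $\widehat\gs=\gq_{\widehat\gf}$ is by construction a $\rho$-estimator built on a countable dense subset of $\overline{S}_{q,F}$, so Theorem~\ref{main1} applies verbatim.

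The genuinely delicate point is the first step: verifying that the class of functions $(i,x)\mapsto q(x-g_i)$ on $\overline{\X}$ is VC-subgraph and tracking how its index depends on $\overline V$. One must be a little careful about what ``$F$ is VC-subgraph on $\{1,\dots,n\}$'' means when we pass from functions of $i$ alone to functions of $(i,x)$, and about the fact that the shift variable $x$ ranges over $\R$ while the ``parameter'' $g_i$ is selected by $i$; here the right viewpoint is that $\{(i,x)\mapsto g_i\,|\,\gg\in F\}$ has the same subgraph-shattering behaviour as $F$ (adding a coordinate that is ignored does not increase the VC-subgraph index), then $-g_i$, then $x-g_i$ (adding a fixed function), and finally composing with the fixed monotone-unimodal $q$. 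Each of these moves is covered by an item of Proposition~\ref{perm}, but assembling them and keeping the constant explicit (the statement only claims a universal $C$, so we have some slack) is the substantive content. Everything downstream is a mechanical application of Theorem~\ref{main1}.
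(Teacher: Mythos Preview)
Your proposal is correct and follows essentially the same route as the paper: view each $\gg\in F$ as a function $(i,x)\mapsto g_i$ on $\overline{\X}$, apply the stability properties of Proposition~\ref{perm} (specifically items $(iii)$, $(i)$, $(vi)$: negation, translation by the fixed function $(i,x)\mapsto x$, and composition with the unimodal $q$) to conclude that $\overline{S}_{q,F}$ is VC-subgraph with index at most $C'\overline V$, and then invoke Theorem~\ref{main1}. Your observation that adding an ignored coordinate does not increase the VC-subgraph index, and your handling of the $\overline V'\leftrightarrow\overline V$ comparison via the monotonicity of $x\mapsto x[1+\log_+(n/x)]$, are exactly the right ingredients.
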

\begin{proof}
A vector $\gg\in F$ and the element $\gq_{\gg}\in \overline{S}_{q,F}$ can both be viewed as functions on $\overline \X=\{1,\ldots,n\}\times \R$ defined respectively, for $\overline x=(i,x)
\in\overline \X$, by $\gg(\overline x)=g_{i}$ and $\gq_{\gg}(\overline x)=q(x-g_{i})$. Under Assumption~\ref{hypoQ}, it follows from the properties $(iii), (i), (vi)$ of Proposition~\ref{perm}, that $\overline{S}_{q,F}$ is VC-subgraph with index not larger than $C'\overline V$. We conclude with Theorem~\ref{main1}.
\end{proof}
Since this proof relies on the fact that $\overline{S}_{q,F}$ is VC-subgraph, Assumption~\ref{hypoQ} can be replaced by ``$q$ is multimodal with no more than $k$ modes". Indeed, in this case the set 
$\overline{S}_{q,F}$ is still VC-subgraph but with index bounded by $C'(k)\overline V$ as noticed in the remark at the end of Section~\ref{S}. It follows that the constants $C$ appearing in~\eref{borne-DVC} and \eref{Eq-risk2} now depend on $k$.

There are various ways of applying the previous theorem according to the type of 
bound we would like to get. Let us first note that, by the triangular inequality,  
$\gh\pa{\gp_{\gf},\gq_{\gg}}\le \gh\pa{\gp_{\gf},\gp_{\gg}}+\gh\pa{\gp_{\gg},\gq_{\gg}}$ and, by translation invariance, $\gh^2\pa{\gp_{\gg},\gq_{\gg}}=nh^2(p,q)$ so that $\gh^{2}\pa{\gp_{\gf},\overline{S}_{q,F}}\le2\gh^{2}\pa{\gp_{\gf},\overline{S}_{p,F}}+2nh^2(p,q)$. Therefore (\ref{Eq-risk2}) implies that, for all $\xi>0$,
\[
\P_{\gs}\cro{C\gh^{2}\pa{\gp_{\gf},\gq_{\widehat \gf}}\le nh^2(p,q)+
\inf_{\gg\in F}\gh^2\pa{\gp_{\gf},\gp_{\gg}}+\overline V\pa{1+\log_+\pa{{n/\overline V}}}+\xi}\ge1-e^{-\xi}
\]
and, by the same argument,
\[
\P_{\gs}\cro{C\gh^{2}\pa{\gp_{\gf},\gq_{\widehat \gf}}\le nh^2(p,q)+
\inf_{\gg\in F}\gh^2\pa{\gq_{\gf},\gq_{\gg}}+\overline V\pa{1+\log_+\pa{n/\overline V}}+\xi}
\ge1-e^{-\xi}.
\]
Noticing that $\gh^{2}\pa{\gq_{\gf},\gq_{\widehat \gf}}\le2\gh^{2}\pa{\gq_{\gf},\gp_{\gf}}
+2\gh^{2}\pa{\gp_{\gf},\gq_{\widehat \gf}}$, we also derive similarly that
\begin{equation}
\P_{\gs}\cro{C\gh^{2}\pa{\gq_{\gf},\gq_{\widehat \gf}}\le nh^2(p,q)+
\inf_{\gg\in F}\gh^2\pa{\gq_{\gf},\gq_{\gg}}+\overline V\pa{1+\log_+\pa{n/\overline V}}+\xi}\ge1-e^{-\xi}.
\label{Eq-risk3}
\end{equation}
This last formula provides a risk bound for the estimation of $\gf$ by $\widehat{\gf}$ when the loss 
function takes the special form $\ell(\gf,\widehat{\gf})=\gh^{2}\pa{\gq_{\gf},\gq_{\widehat \gf}}$
for a known density $q$:
\[
C\E\cro{\ell\left({\gf},\widehat \gf\right)}\le nh^2(p,q)+
\inf_{\gg\in F}\ell\left(\gf,\gg\right)+\overline V\pa{1+\log_+\pa{n/\overline V}}.
\]
If $p$ is known so that we can set $q=p$ we find the usual ``bias plus variance" risk bound without the $nh^2(p,q)$ term, $\overline V\pa{1+\log_+\pa{n/\overline V}}$ playing here the
role of a variance term. This shows that the price to pay for not knowing the density 
$p$ of the errors and replacing it by $q$ is an additional bias term of order $nh^2(p,q)$.

To make this last risk bound more precise, we introduce the following definition.
\begin{defi}\label{D-order}
We shall say that a density $q$ is of order $\alpha\in (-1,1]$ if it satisfies
\begin{equation}\label{regul}
a_q\cro{\ab{u-v}^{1+\alpha}\wedge A_q^{-1}}\le h^{2}(q_{u},q_{v})\le A_q\cro{\ab{u-v}^{1+\alpha}\wedge A_q^{-1}}\ \ \mbox{for all}\ \ u,v\in\R
\end{equation}
and some constants $A_q\ge a_q>0$ depending on $q$.
\end{defi}
The reader can find in Ibragimov and Has{'}minski{\u\i}~\citeyearpar{MR620321} Chapter VI p.~281 
some sufficient conditions on the density $q$ to ensure that \eref{regul} holds. For illustration, we present 
here some examples borrowed from these authors. The density $q=\1_{[-1/2,1/2]}$ is of order 0. 
For $\alpha\in (-1,1)$, the density $q(x)=[2(1+\alpha)]^{-1}(1-|x|)^{\alpha}\1_{[-1,1](x)}$ is of order 
$\alpha$ and so is $q(x)=C(\alpha)\exp\cro{-|x|^{\alpha/2}}$ for $\alpha\in (0,1)$. For $\alpha>1$, 
this latter density is of order 1. If the translation model $\theta\mapsto q(\cdot-\theta)$ is regular 
(which, in this case, is equivalent to the fact that $\sqrt{q}$ is differentiable in quadratic mean), 
it is of order 1.

Let us now set, for $\alpha\in (-1,1]$, $\gg,\gg'\in \R^{n}$ and $G\subset \R^{n}$, 
\begin{equation}
d_{1+\alpha}(\gg,\gg')=\sum_{i=1}^{n}\pa{\ab{g_{i}-g'_{i}}^{1+\alpha}\wedge A_q^{-1}}
\qquad\mbox{and}\qquad d_{1+\alpha}(\gg,G)=\inf_{\gg'\in G}d_{1+\alpha}(\gg,\gg').
\label{eq-d1}
\end{equation}
Applying Theorem~\ref{thmVC} with the bound (\ref{Eq-risk3}) leads to the following result.
\begin{cor}\label{cor4}
Let $F$ be a subset of $\R^{n}$ which is VC-subgraph with index $\overline V$ and $q$ 
be a density on $\R$ of order $\alpha\in (-1,1]$ which satisfies Assumption~\ref{hypoQ}. 
Then the estimator $\widehat \gs=\gq_{\widehat \gf}$ satisfies, for all $\gf\in\R^{n}$ and 
all $\xi>0$,
\begin{equation}\label{borne42}
\P_{\gs}\cro{d_{1+\alpha}(\gf,\widehat \gf)\le C'(q)\!
\pa{d_{1+\alpha}(\gf,F)+nh^2(p,q)+\overline V \pa{1+\log_{+}\pa{{n/\overline V}}}+\xi}}\ge
1-e^{-\xi}.
\end{equation}
\end{cor}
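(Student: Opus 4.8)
The plan is to deduce Corollary~\ref{cor4} directly from Theorem~\ref{thmVC}, using the order-$\alpha$ property of $q$ only to convert the Hellinger loss appearing in~\eref{Eq-risk3} into the discrepancy $d_{1+\alpha}$.

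First I would verify that Theorem~\ref{thmVC} applies in the present situation: by assumption $F$ is VC-subgraph with index $\overline V$ and $q$ is unimodal, i.e.\ satisfies Assumption~\ref{hypoQ}. Hence the $\rho$-estimator $\widehat\gs=\gq_{\widehat\gf}$ built on a countable dense subset of $\overline{S}_{q,F}$ satisfies~\eref{Eq-risk3}: for all $\gf\in\R^{n}$ and $\xi>0$,
\[
\P_{\gs}\cro{C\gh^{2}\pa{\gq_{\gf},\gq_{\widehat \gf}}\le nh^2(p,q)+\inf_{\gg\in F}\gh^2\pa{\gq_{\gf},\gq_{\gg}}+\overline V\pa{1+\log_{+}\pa{n/\overline V}}+\xi}\ge1-e^{-\xi}.
\]

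The second step is the translation to $d_{1+\alpha}$. Since $\gh^2(\gq_{\gg},\gq_{\gg'})=\sum_{i=1}^{n}h^2(q_{g_i},q_{g'_i})$ for $\gg,\gg'\in\R^{n}$, summing the two-sided bound~\eref{regul} over the $n$ coordinates yields
\[
a_q\,d_{1+\alpha}(\gg,\gg')\le\gh^2(\gq_{\gg},\gq_{\gg'})\le A_q\,d_{1+\alpha}(\gg,\gg')\qquad\mbox{for all }\gg,\gg'\in\R^{n}.
\]
Applying the left-hand inequality to $\gh^2(\gq_{\gf},\gq_{\widehat\gf})$ and the right-hand inequality (together with $\inf_{\gg\in F}d_{1+\alpha}(\gf,\gg)=d_{1+\alpha}(\gf,F)$) inside the event above, and then dividing through by $Ca_q$, produces exactly the claimed bound with $C'(q)=(1\vee A_q)/(Ca_q)$: the choice $C'(q)\ge(Ca_q)^{-1}$ guarantees that this single constant can be factored in front of all four terms $d_{1+\alpha}(\gf,F)$, $nh^2(p,q)$, $\overline V(1+\log_{+}(n/\overline V))$ and $\xi$.

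I expect no genuine difficulty here: all the probabilistic and combinatorial content (the VC-subgraph dimension bound for $\overline{S}_{q,F}$ and the control of the empirical process) is already contained in Theorem~\ref{thmVC}, and~\eref{regul} is precisely the device that turns Hellinger loss into the parameter discrepancy. The only point deserving a line of care is the bookkeeping of the constants $a_q,A_q$ and the verification that a single $q$-dependent constant suffices, which works because~\eref{regul} is two-sided with comparable exponents on both sides.
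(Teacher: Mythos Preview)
Your proposal is correct and follows essentially the same approach as the paper: the paper merely states that the corollary follows from ``Applying Theorem~\ref{thmVC} with the bound (\ref{Eq-risk3}),'' and you have spelled out precisely that argument, using~\eref{regul} coordinatewise to pass between $\gh^2(\gq_{\gg},\gq_{\gg'})$ and $d_{1+\alpha}(\gg,\gg')$.
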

To comment on this result, let us consider the simple example of the shift model for i.i.d.\ observations. 
Assume that the $X_{i}$ are i.i.d.\ with common density $p(\cdot-\theta)$ for some unknown parameter 
$\theta\in \R$ but a known density $p$ that we assume to be of order $\alpha\in (-1,1]$ and to satisfy 
Assumption~\ref{hypoQ}. In this case $f_{i}=\theta$ for all $i$ and it is natural to fix $q=p$ and consider 
as a model for $\gf$ the linear span $F$ of $(1,\ldots,1)$ in $\R^{n}$. The distance 
$d_{1+\alpha}(\gf,\gg)$ between $\gf=(\theta,\ldots,\theta)$ and an element 
$\gg=(\theta',\ldots,\theta')$ of $F$ becomes
\[
d_{1+\alpha}(\gf,\gg)=n\left(\ab{\theta-\theta'}^{1+\alpha}\wedge A_p^{-1}\right)
\]
and we can deduce from~\eref{borne42} that our estimator $\widehat \gf=(\widehat \theta,\ldots,\widehat \theta)$ satisfies, for $n$ large enough and with a probability close to 1, 
\[
|\theta-\widehat\theta|\le C(p,\alpha)[(\log n)/n]^{1/(1+\alpha)}.
\]
As soon as $\alpha\in (-1,1)$, the rate we get improves on the usual parametric one $1/\sqrt{n}$ 
achieved by the classical least-squares estimator (under suitable moment conditions on the 
$\eps_{i}$). Though faster, this rate can still be improved by a logarithmic factor, as, in fact, 
the maximum likelihood estimator can achieve the rate $n^{-1/(1+\alpha)}$ (we refer to Theorem~6.3 
p.~314 of the book by Ibragimov and Has{'}minski{\u\i}~\citeyearpar{MR620321}). We do not 
know whether this extra logarithmic factor is due to our techniques or if it is really necessary in order to get a robust estimator (which is not the case for the MLE) with unbounded models. If we were ready to make the additional assumption that the set $F$ is totally bounded, we could use a T-estimator or a 
$\rho$-estimator based on a finite model and use the arguments of Section~\ref{C3b} to get a risk which would not involve this extra $\log n$ factor. But for unbounded sets, we do not know how to avoid the use VC-subgraph classes and it is their introduction that leads to this $\log n$ factor.

\subsection{Simple linear regression\label{E6}}
As an illustration of the superiority of $\rho$-estimators over the least squares method in some 
regression frameworks, we consider the very simple situation of observations $Y_i=a+bx_i+
\varepsilon_i$, $1\le i\le n$, that is a simple linear regression, where the errors $\varepsilon_i$ 
are i.i.d.\ with a known unimodal density $p$ and satisfy $\mathbb{E}[\varepsilon_i]=0$ and 
$\Var(\varepsilon_i)=1$. We moreover assume that $n=2r-1$ with $r\ge 2$ is odd and 
$x_i=n^{-1}[2i-n-1]$ ($x_{1}=-1+1/n,\ldots,x_{r-1}=-2/n,x_{r}=0,x_{r+1}=2/n,\ldots,x_{n}=1-1/n$) 
so that $x_{i}\in(-1,1)$ for all $i=1,\ldots,n$, 
\[
\sum_{i=1}^nx_i=0\qquad\mbox{and}\qquad\sum_{i=1}^nx_i^2=\frac{n^2-1}{3n}.
\]
This corresponds to an affine regression fonction and to the model $F=\{\gf(a,b)\in\Bbb{R}^n,
\,(a,b)\in\R^2\}$ with $f_i(a,b)=a+bx_i$ for $1\le i\le n$. It is well known that, in this case, the 
least squares estimator $(\widetilde{a},\widetilde{b})$ of the parameter $(a,b)$ satisfies 
\[
\mathbb{E}\left[(\widetilde{a}-a)^2\right]={1\over n}\qquad\mbox{and}\qquad
\mathbb{E}\left[(\widetilde{b}-b)^2\right]=\frac{3n}{n^2-1}>{3\over n}.
\]
Let us now assume that the density $p$ satisfies
\begin{equation}
h^2\left(p(\cdot-\theta),p(\cdot-\theta')\strut\right)\ge c \left[|\theta-\theta'|^\gamma\wedge1\right]
\label{Eq-Hel-Euc}
\end{equation}
for some $\gamma\in (0,2)$ and $c>0$. The joint density of the observations $Y_i$ is 
$\prod_{i=1}^np(y_i-a-bx_i)=\prod_{i=1}^np_{a+bx_i}(y_i)$ and can be estimated by a $\rho$-estimator 
based on the model $F$, resulting in the estimated density $\prod_{i=1}^n
p_{\widehat{a}+\widehat{b}x_i}\left(y_i\right)$ and we know that, with large probability,
\[
\sum_{i=1}^nh^2\left(p_{\widehat{a}+\widehat{b}x_i},p_{a+bx_i}\right)\le B\log n,
\]
for some constant $B$ independent of $n$. Then (\ref{Eq-Hel-Euc}) implies, with 
$\alpha=\widehat{a}-a$, $\beta=\widehat{b}-b$, that
\[
\sum_{i=1}^n\left[\left|\widehat{a}-a+\left(\widehat{b}-b\right)x_i\right|^\gamma\wedge 1\right]=
\sum_{i=1}^n\left[|\alpha+\beta x_i|^\gamma\wedge1\right]\le B\log n/c.
\]
Note that if $\alpha\beta\ge 0$ then $|\alpha+\beta x_i|=|\alpha|+|\beta|x_{i}$ for all $i\ge r$ and if 
$\alpha\beta<0$ then $|\alpha+\beta x_{i}|=|\alpha|+|\beta|x_{2r-i}$ for all $i\le r$ so that 
\begin{eqnarray*}
B\log n/c&\ge& \sum_{i=1}^n\left[|\alpha+\beta x_i|^\gamma\wedge1\right]\;\;\ge\;\; 
\sum_{i=r}^{2r-1}\cro{\pa{|\alpha|+|\beta| x_i}^\gamma\wedge1}\\&\ge& 
\sum_{i=r}^{2r-1}\max\{|\alpha|^{\gamma}\wedge1,(|\beta|^{\gamma}\wedge 1) x_i^\gamma\}
\;\;\ge\;\;r\max\{\pa{|\alpha|^{\gamma}\wedge 1},(|\beta|^{\gamma}\wedge 1)I\},
\end{eqnarray*}
with 
\[
I={1\over r}\sum_{i=r+1}^{2r-1}x_{i}^{\gamma}\ge {n\over 2r}\int_{0}^{1-1/n}u^{\gamma}du= 
{(1-1/n)^{\gamma+1}\over (\gamma+1)(1+1/n)}\ge {(2/3)^{\gamma}\over 2(\gamma+1)}.
\]
It follows that 
\[
\max\{\pa{|\alpha|^{\gamma}\wedge 1},(|\beta|^{\gamma}\wedge1)\}\le 
C(\gamma)n^{-1}\log n,
\]
hence $\max\{|\alpha|,|\beta|\}<1$ for $n$ large enough and finally
\[
|\alpha|^2+|\beta|^2=\left|\widehat{a}-a\right|^2+\left|\widehat{b}-b\right|^2=
O_P\left((\log n/n)^{2/\gamma}\right),
\]
which improves on the estimation by least-squares, at least when $n$ is large, since $\gamma<2$.

\subsection{Homoscedastic regression with random design\label{E4}}
In this section, we consider the regression framework with random design described in Section~\ref{A4}. 
Least squares or penalized least squares are the classical estimators which are used in this context 
and many efforts have been made to analyze their performances  under suitable conditions on the 
moments of the errors and the distribution of the design (see Baraud~\citeyearpar{MR1918295}, 
Audibert and Catoni~\citeyearpar{MR2906886} and the references therein). Our point of view is 
different. We shall rather assume that the distribution of the design is completely unknown while the 
distribution of the errors is approximately known and symmetric, but possibly without moments. 
Furthermore, while the $\IL_{2}$-norm with respect to the law of the design is the usual loss function 
that is used for analyzing the performance of the least squares, we shall rather stick to Hellinger-type 
losses. More precisely, we evaluate the performance of an estimator $\widehat{f}$ of $f$ by the risk
\[
\E\left[\gh^2(\mathbf{p_{f}},\mathbf{p_{\widehat f}})\right]=
n\E\left[\int_{\mathscr{W}}h^{2}\left(p_{\widehat{f}}(w,\cdot),p_{f}(w,\cdot)\right)d\nu(w)\right],
\]
with $p_g(w,\cdot)=p(\cdot-g(w))$ for all $g\in\FF$. This actually corresponds to the use of the loss function $\ell(g,g')$ on $\FF$ with
\begin{equation}
\ell(g,g')=h^2(p_g,p_{g'})=
\int_{\mathscr{W}}h^{2}\left(p_g(w,\cdot),p_{g'}(w,\cdot)\right)d\nu(w).
\label{Eq-ell}
\end{equation}
When the density $p$ is of order $\alpha\in(-1,1]$, as given by Definition~\ref{D-order},  
one can relate $\ell$ to some power of a more classical $\IL_{1+\alpha}$-loss since then, according to (\ref{regul}),
\[
a_p\int_{\mathscr{W}}\cro{|g-g'|^{1+\alpha}\wedge A_p^{-1}}d\nu\le\ell(g,g')\le A_p
\int_{\mathscr{W}}\cro{|g-g'|^{1+\alpha}\wedge A_p^{-1}}d\nu\quad\mbox{for all }g,g'\in\FF.
\]
If, moreover, the $\IL_\infty$-norms of the elements of $\FF$ are uniformly bounded by some number $b>0$, then
\[
{1\over[A_p(2b)^{1+\alpha}]\vee1}|g-g'|^{1+\alpha}\le |g-g'|^{1+\alpha}\wedge A_p^{-1}\le 
|g-g'|^{1+\alpha}\quad \mbox{for all }g,g'\in\FF
\]
and $\ell(g,g')$ becomes of the same order as $\norm{g-g'}_{1+\alpha,\nu}^{1+\alpha}=\int_{\mathscr{W}}|g-g'|^{1+\alpha}d\nu$ since 
\[
{a_p\over[A_p(2b)^{1+\alpha}]\vee1}\norm{g-g'}_{1+\alpha,\nu}^{1+\alpha}\le\ell(g,g') 
\le A_p\norm{g-g'}_{1+\alpha,\nu}^{1+\alpha}.
\]
In particular, we recover the usual $\IL_{2}$-loss when $p$ is of order 1 which is the case for the Gaussian, Cauchy and Laplace distributions among others.

To estimate $f$ we proceed as follows: we choose a candidate density $q$ for $p$ which we assume to be symmetric and unimodal and consider a model $F\subset\FF$, which is 
VC-subgraph with index $\overline{V}(F)$ to approximate $f$. To $F$, we associate the model of densities (with respect to $\nu\otimes\mu$) given by 
\[
\overline S_F=\left\{{\bf q_{g}}=(q_g,\ldots,q_g)\,|\,g\in F\right\}
\quad\mbox{where}\quad q_g(w,y)=q(y-g(w))
\]
and estimate the density $s$ of $(W,Y)$ from the observation of $(W_{1},Y_{1}),\ldots,
(W_{n},Y_{n})$ building the corresponding $\rho$-estimator from a countable and dense subset $S$ of $\overline S_F$. We can apply this procedure without knowing $\nu$ since, under the assumptions that $q$ is symmetric and $\mu$ is the Lebesgue measure, for all $g,g'$ and $w\in\mathscr{W}$,
\[
\int_{\R}\sqrt{q_{g}(w,y)r(w,y)}\,d\mu(y)=\int_{\R}\sqrt{q_{g'}(w,y)r(w,y)}\,d\mu(y)
\quad\mbox{with}\quad r={q_{g}+q_{g'}\over 2},
\]
so that by integration with respect to $\nu$, $\rho(q_{g},r)=\rho(q_{g'},r)$.
Therefore $\gT((\gW,\gY),p_{g},p_{g'})$ simply becomes
\[
\gT\left(\strut(\gW,\gY),p_{g},p_{g'}\right)={1\over \sqrt{2}}\sum_{i=1}^{n}\psi\pa{\sqrt{{p_{g'}(W_{i},Y_{i})\over p_{g}(W_{i},Y_{i})}}}.
\]
As in the proof of Theorem~\ref{thmVC}, it follows from Proposition~\ref{perm} that, under Assumption~\ref{hypo-homo1}, $\{q_g, g\in F\}$ is VC-subgraph with index not larger than $C'\overline V(F)$. Applying Theorem~\ref{main1} then leads to the following result.
%
\begin{thm}\label{T-RD}
If $q$ is unimodal and symmetric and $F$ is a model for $f$ which is VC-subgraph of index 
$\overline{V}(F)$ there exists a $\rho$-estimator $\widehat s=q_{\widehat f}$ of $s=p_{f}$ such 
that for all $\xi>0$, with probability at least $1-e^{-\xi}$,
\begin{eqnarray*}
Ch^{2}(p_{f},q_{\widehat f})&\le& \inf_{g\in F}h^{2}\pa{p_{f},q_{g}}+{\overline V(F)\over n}
\left[1+\log_+\left(\frac{n}{\overline V(F)}\right)\right]+{\xi\over n}\\&\le& 2h^{2}(p,q)+
2\inf_{g\in F}h^{2}\pa{p_{f},p_{g}}+
{\overline V(F)\over n}\left[1+\log_+\left(\frac{n}{\overline V(F)}\right)\right]+{\xi\over n}.
\end{eqnarray*}
If, in particular, $p=q$, then for all $\xi>0$,
\[
\P_{\gs}\left[C\ell(f,\widehat f)\le\inf_{g\in F}\ell(f,g)+{\overline V(F)\over n}
\left[1+\log_+\left(\frac{n}{\overline V(F)}\right)\right]+{\xi\over n}\right]\ge1-e^{-\xi}.
\]
If, moreover, (\ref{regul}) holds and $\max\left\{\sup_{g\in F}\|g\|_\infty,\|f\|_\infty\right\}\le b<+\infty$, then
\[
\P_{\gs}\left[C'\norm{f-\widehat f}_{1+\alpha,\nu}^{1+\alpha}\le
\inf_{g\in F}\norm{f-g}_{1+\alpha,\nu}^{1+\alpha}+{\overline V(F)\over n}
\left[1+\log_+\left(\frac{n}{\overline V(F)}\right)\right]+{\xi\over n}\right]\ge1-e^{-\xi},
\]
for all $\xi>0$ and some constant $C'$ depending only on $A_p,a_p,b$ and $\alpha$.
\end{thm}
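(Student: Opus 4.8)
The whole statement follows from Theorem~\ref{main1} applied to the density model $\overline S_F=\{\mathbf{q_g}:g\in F\}$, once we know this model is a VC-subgraph class of index at most $C'\overline V(F)$. So I would first fix a countable dense subset $S$ of $\overline S_F$ and let $\widehat\gs=\mathbf{q_{\widehat f}}$ be a $\rho$-estimator built on $S$ (for instance a minimizer of $\gup(S,\cdot)$, which lies in $\overline S_F$). Since here $\gs=\mathbf{p_f}$ and all coordinates coincide, one has $\gh^2(\mathbf{p_f},\mathbf{q_g})=nh^2(p_f,q_g)$ for every $g$, hence $\gh^2(\mathbf{p_f},\overline S_F)=n\inf_{g\in F}h^2(p_f,q_g)$ and $\gh^2(\mathbf{p_f},\widehat\gs)=nh^2(p_f,q_{\widehat f})$. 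Because $q$ is unimodal, writing $q_g(w,y)=q(y-g(w))$ as a composition of the map $(w,y)\mapsto y-g(w)$ (affine in $g$) with $q$ and invoking Proposition~\ref{perm} exactly as in the proof of Theorem~\ref{thmVC} shows that $\{q_g:g\in F\}$, viewed as a class of functions on $\mathscr{W}\times\R$, is VC-subgraph with index at most $C'\overline V(F)$. The second part of Theorem~\ref{main1} (the case $\X_i=\X$ for all $i$) then applies with $\X=\mathscr{W}\times\R$ and $\Theta=\{q_g:g\in F\}$, in its deviation form \eref{risk-VC01}; dividing the resulting deterministic inequality by $n$ and absorbing $C'$ into the numerical constants — using that $x\mapsto x[1+\log_+(n/x)]$ is nondecreasing and $\log_+(n/(C'\overline V(F)))\le\log_+(n/\overline V(F))$ — gives the first displayed bound of the theorem.

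The second displayed line comes from the triangle inequality and \eref{Eq-square} with $\alpha=1$, namely $h^2(p_f,q_g)\le 2h^2(p_f,p_g)+2h^2(p_g,q_g)$, together with $h^2(p_g,q_g)=\int_{\mathscr{W}}h^2\bigl(p(\cdot-g(w)),q(\cdot-g(w))\bigr)\,d\nu(w)=h^2(p,q)$ by translation invariance of $h$ on $\R$; one then takes the infimum over $g\in F$. When $p=q$ the definition \eref{Eq-ell} of $\ell$ reads $\ell(f,\widehat f)=h^2(p_f,q_{\widehat f})$ and $\inf_{g\in F}\ell(f,g)=\inf_{g\in F}h^2(p_f,q_g)$, so the first bound immediately becomes the announced inequality for $\ell$.

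For the last statement I would use, as in the discussion preceding the theorem, that under \eref{regul} and the uniform sup-norm bound $b$ one has, whenever $\|g\|_\infty\vee\|g'\|_\infty\le b$,
\[
\frac{a_p}{[A_p(2b)^{1+\alpha}]\vee1}\,\norm{g-g'}_{1+\alpha,\nu}^{1+\alpha}\le\ell(g,g')\le A_p\,\norm{g-g'}_{1+\alpha,\nu}^{1+\alpha}.
\]
Applying the left-hand inequality to $\ell(f,\widehat f)$ and the right-hand one to each $\ell(f,g)$, $g\in F$ (here $\widehat f$ still has $\nu$-essential sup-norm at most $b$, because $\widehat\gs\in{\rm Cl}(\EE(\gX,S))\subset{\rm Cl}(\overline S_F)$ and Hellinger convergence of the $q_{g_k}$ forces $\IL_1(\nu)$-convergence of the $g_k$), then renaming constants, turns the $\ell$-bound into the stated $\IL_{1+\alpha}(\nu)$-bound with a constant $C'$ depending only on $a_p,A_p,b$ and $\alpha$.

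The single point that is not pure bookkeeping is the VC-subgraph estimate for $\overline S_F$, but that is the computation already carried out for Theorem~\ref{thmVC}; everything else is the passage between $\gh^2$ on $\LL_0$ and the various losses on $F$, together with elementary inequalities. If $q$ is only assumed to have at most $k$ modes the same argument goes through with a constant depending on $k$, as noted at the end of Section~\ref{S}.
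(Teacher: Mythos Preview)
Your proposal is correct and follows essentially the same approach as the paper: establish that $\{q_g:g\in F\}$ is VC-subgraph of index at most $C'\overline V(F)$ via Proposition~\ref{perm} (exactly as in the proof of Theorem~\ref{thmVC}), then invoke Theorem~\ref{main1}; the remaining displayed inequalities are the straightforward bookkeeping you describe, which the paper leaves to the discussion preceding the theorem. Your write-up simply makes explicit the passage from $\gh^2$ to $h^2$, the triangle-inequality step, and the comparison between $\ell$ and the $\IL_{1+\alpha}(\nu)$-loss.
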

%
\subsection{Further examples for regression problems\label{E3}}
Let us recall that we want to estimate the unknown function $f$ on $\mathscr{W}$ in both the random 
design framework where $X=(W,Y)$ and $Y=f(W)+\varepsilon$ and the fixed design framework which corresponds, 
with analogous notations, to $X=f(w)+\varepsilon$ with $w\in\mathscr{W}=\{1,\ldots,n\}$.

As we noticed in the previous sections, when dealing with both regression frameworks, when the 
model $F$ for the regression function $f$ is VC-subgraph, the performance of the estimator $\widehat{f}$ depends on the VC-index $\overline{V}(F)$. A common practice to design regression models is to 
choose for $F$ a $D$-dimensional linear space of functions which, according to Section~\ref{S}, is 
VC-subgraph with index bounded by $D+2$. This includes the celebrated ``linear model" in the fixed 
design framework when $F$ is the linear span of $D$ linearly independent vectors $\gg^1,\ldots,
\gg^D$ in $\R^n$ or, equivalently, of $D$ functions $g^1,\ldots,g^D$ on $\{1,\ldots,n\}$.

Let us, for a moment, focus on this situation of $F$ being a $D$-dimensional linear space. Classical 
least squares estimators in the fixed design case lead to risk bounds of order $D/n$ when the errors 
are Gaussian or, more generally, have a few moments, but fail miserably when they are Cauchy 
while, as we have seen in Section~\ref{E1}, $\rho$-estimators provide the same rate of convergence, 
apart from an extra $\log(n/D)$ factor, with the loss function
\[
d_{2}(\gg,\gg')=\frac{1}{n}\sum_{i=1}^{n}\pa{\ab{g_{i}-g'_{i}}^{2}\wedge A_q^{-1}}.
\]
When the errors have a uniform distribution, we derive a bound of order $(D/n)\log(n/D)$ for the loss
\[
d_{1}(\gg,\gg')=\frac{1}{n}\sum_{i=1}^{n}\pa{\ab{g_{i}-g'_{i}}\wedge A_q^{-1}}
\]
while errors with unbounded densities of the form $q(x)=[2(1-\beta)]^{-1}(1-|x|)^{-\beta}\1_{[-1,1](x)}$ with 
$0<\beta<1$ lead to the same bound with the loss
\[
d_{1-\beta}(\gg,\gg')=\frac{1}{n}\sum_{i=1}^{n}\pa{\ab{g_{i}-g'_{i}}^{1-\beta}\wedge A_q^{-1}}.
\]
Since subsets of VC-subgraph classes are also VC-subgraph one can restrict $F$ to be a bounded subset of such a linear space and get similar results for the random design situation, according to Theorem~\ref{T-RD}, with loss functions of the form 
$\|f-\widehat f\|_{2,\nu}^{2}$, $\|f-\widehat f\|_{1,\nu}$ and 
$\|f-\widehat f\|_{1-\beta,\nu}^{1-\beta}$ respectively.

An alternative way of building models that still satisfy the assumptions which are needed to apply our results is as follows. We start from a $D$-dimensional linear space $G$ of functions on $\mathscr{W}$ and consider some monotone function $\Psi$. Finally we take for $F$ the set $\{\Psi\circ g, g\in G\}$. It follows from
(ii) of Proposition~\ref{perm} that $F$ is still VC-subgraph with index not larger than $D+2$ and the previous results still holds. We may replace ``monotone" by ``unimodal" and get similar results according to (vi) of the same proposition. This allows, given $D$ independent functions $g^1,\ldots,g^D$, to use for instance models $F$ of the following forms:
\[
\left\{\exp\left[\sum_{j=1}^D\beta_jg^j\right]\!,\,\beta_j\in\R\mbox{ for }1\le j\le D\right\}\quad
\mbox{or}\quad\left\{\left|\sum_{j=1}^D\beta_jg^j\right|,\,\beta_j\in\R\mbox{ for }1\le j\le D\right\},
\]
among many other possibilities.

\subsection{A parametric bound over a set with infinite metric dimension\label{E2}}
In this section, we want to show that, unlike T-estimators, the construction and performance of which heavily depend on the metric dimension of the model that is used, our estimator can, in some cases, achieve a parametric rate which is not connected to its metric dimension. The following illustration given for the density framework described in Section~\ref{A2} is borrowed from Birg\'e~\citeyearpar{MR722129} (Section~6).

Let $\Lambda$ be any nonvoid subset of $\N$ and $\Theta=\Lambda^{\N\setminus\{0\}}$,
$\X=\bigcup_{j\ge 1}\Lambda^{j}$ be respectively the sets of infinite and finite sequences
with entries in $\Lambda$. Note that the set $\X$ is countable and that we may introduce on
$\X$ the family of probabilities $\{P_{\theta},\,\theta\in\Theta\}$ given by  
\[
P_{\theta}=\sum_{j\ge 1}2^{-j}\delta_{(\theta_{1},\,\ldots\, ,\theta_{j})}\quad\mbox{for all }
\theta=(\theta_{1},\ldots,\theta_k,\ldots)\in \Theta.
\]
In the sequel, we denote by $s_{\theta}$ the density of $P_{\theta}$ with respect to the counting measure 
on $\X$, that is $s_{\theta}(x)=P_{\theta}(\{x\})$ for all $x\in \X$, and set $\overline S=\{s_{\theta},\,
\theta\in\Theta\}$, which is identifiable. Our aim is to estimate $s_\theta$ from the observation of a sample $X_1,\ldots,X_n$.

It will be convenient to define the following operators : $\ell(x)$ is the length of an element $x\in\X$, 
that is $\ell(x)=j$ if $x\in\Lambda^j$, $\pi_j$ is the operator from $\Theta$ to $\Lambda^j$ such that 
$\pi_j(\theta)=(\theta_{1},\ldots,\theta_{j})$ and $\pi_{-1}$ is an operator from $\X$ to $\Theta$ such 
that if $x\in\Lambda^j$, $\pi_j\circ \pi_{-1}(x)=x$ or, equivalently, $\pi_{\ell(x)}\circ \pi_{-1}(x)=x$ 
for all $x\in\X$. It follows that $\pi_{-1}(\X)$ is a countable subset of $\Theta$ and 
$S=\left\{s_\theta,\,\theta\in\pi_{-1}(\X)\right\}$ is a countable subset of $\overline{S}$.

Let $J$ be the mapping from $\Theta^{2}$ to $\N\cup\{+\infty\}$ defined by
\[
J(\theta,\theta')=\sup\{j\in\N\,|\,\theta_k=\theta'_k\quad\mbox{for }1\le k\le j\}\quad\mbox{with }
\sup\N=+\infty,\;\;\sup\varnothing=0.
\]
Since $s_{\theta}(x)=2^{-\ell(x)}$ if $\pi_{\ell(x)}(\theta)=x$ and $s_{\theta}(x)=0$ otherwise, the Hellinger distance between two densities $s_{\theta}$ and $s_{\theta'}$ with $\theta,\theta'\in \Theta$ is given by 
\begin{equation}\label{f-h}
h^{2}(s_{\theta},s_{\theta'})=1-\rho(s_{\theta},s_{\theta'})=1-\sum_{j=1}^{J(\theta,\theta')}2^{-j}=2^{-J(\theta,\theta')}
\quad\mbox{with}\quad\sum_{\varnothing}=0;\quad2^{-\infty}=0.
\end{equation}
For any $x\in\X$ with $\ell(x)=j$ one can find a subset $\Theta_j$ of $\Theta$ with $|\Theta_j|=|\Lambda|$
and such that $\pi_j(\theta)=x$ for all $\theta\in\Theta_j$ but all $\pi_{j+1}(\theta)$ are different. It suffices 
for that to let $\theta_{j+1}$ go across all elements of $\Lambda$ to build the elements of $\Theta_j$. 
As a consequence, $h^2(\theta,\theta')=2^{-j}$ for all $\theta\ne\theta'\in\Theta_j$ and $\Theta_j$ is 
included in a closed ball of radius $2^{-j/2}$. This shows that the metric dimension of $\overline{S}$ 
can be made arbitrarily large or even infinite by playing with the cardinality of $\Lambda$. 

Though $\overline{S}$ can be massive, the parameter $\theta$ is not difficult to estimate.
Let us first observe that, if $\ell(X_i)=j$, then $X_i=\pi_j(\theta)$ $P_\theta$-a.s. Therefore, if $k=\ell(X_{i_{0}})=\sup_{1\le i\le n}\ell(X_i)$ and $\widehat\Theta=\pi_k^{-1}(X_{i_{0}})$, for all $\widehat \theta\in \widehat \Theta$,
\begin{equation}\label{ex01}
s_{\widehat \theta}(X_{i})=2^{-\ell(X_{i})}=s_{\theta}(X_{i}),\quad\mbox{for }i\in\{1,\ldots,n\},
\quad P_\theta\mbox{-a.s.}
\end{equation}
while for all $\theta'\not\in\widehat \Theta$ $s_{\theta'}(X_{i})\le s_\theta(X_{i})$ for 
$1\le i\le n$ and $0=s_{\theta'}(X_{i_0})<s_\theta(X_{i_0})$. It follows that the likelihood reaches its maximum over the elements $\widehat \theta\in\widehat \Theta$ $P_\theta$-a.s.
It is proven in Birg\'e~\citeyearpar{MR722129} that these maximum likelihood estimators satisfy
\begin{equation}\label{riex}
\mathbb{E}_{\theta}\cro{{h^{2}(s_{\theta},s_{\widehat \theta})}}\le Cn^{-1},\quad\mbox{for all }\theta\in \Theta
\end{equation}
and some numerical constant $C>0$. In particular, the minimax risk over $\overline S$ converges to 
zero with parametric rate.

Let us now consider the set of $\rho$-estimators of $s_\theta$ build from the model $S$, that is the set
${\rm Cl}\!\left(\EE(\gX,S)\strut\right)$. The following result then holds.
%
\begin{prop}\label{P-cexem}
For all $\theta\in \Theta$ and any choice of a maximum likelihood estimator $\widehat \theta$ in 
$\widehat \Theta$, $s_{\widehat \theta}$ is a $\rho$-estimator of $s_{\theta}$, $P_{\theta}$-a.s.
\end{prop}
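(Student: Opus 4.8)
The plan is to imitate the argument given for the uniform translation family in Section~\ref{S-CMLE2}: first simplify $\gT(\gX,\cdot,\cdot)$ on the model, then identify the minimizers of $\gup(S,\cdot)$ over $S$, and finally pass to the Hellinger closure.

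The first and only delicate point is to check that the affinity contribution to $\gT$ cancels on $\overline S$, i.e. that for all $\theta,\theta'\in\Theta$ one has $\rho(s_\theta,r)=\rho(s_{\theta'},r)$ with $r=(s_\theta+s_{\theta'})/2$, so that by~\eref{def-T} (and the identification of $s_\theta$ with $(s_\theta,\ldots,s_\theta)$),
\[
\gT(\gX,s_\theta,s_{\theta'})={1\over\sqrt2}\sum_{i=1}^n\psi\!\left(\sqrt{s_{\theta'}(X_i)/s_\theta(X_i)}\right).
\]
This is the analogue of the ``$q_0$ even'' remark of Section~\ref{S-CMLE2}. To prove it I would use that $s_\theta(x)\in\{0,2^{-\ell(x)}\}$ for every $x\in\X$ and split the (absolutely convergent) sum $\rho(s_\theta,r)=\sum_{x\in\X}\sqrt{s_\theta(x)r(x)}$ according to whether $x$ is a common prefix of $\theta$ and $\theta'$, a prefix of $\theta$ only, a prefix of $\theta'$ only, or neither. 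The common prefixes contribute $\sum_{1\le j\le J(\theta,\theta')}2^{-j}$ to both $\rho(s_\theta,r)$ and $\rho(s_{\theta'},r)$, while the prefixes of $\theta$ not shared with $\theta'$ (resp. of $\theta'$ not shared with $\theta$) form the set $\{\pi_j(\theta):j>J(\theta,\theta')\}$ (resp. $\{\pi_j(\theta'):j>J(\theta,\theta')\}$) of total mass $2^{-J(\theta,\theta')}$, so they contribute $2^{-J(\theta,\theta')}/\sqrt2$ to $\rho(s_\theta,r)$ (resp. to $\rho(s_{\theta'},r)$); adding up gives the claimed equality, the case $\theta=\theta'$ being trivial.

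Next I would fix $\widehat\theta\in\widehat\Theta$. By~\eref{ex01}, $s_{\widehat\theta}(X_i)=2^{-\ell(X_i)}$ for $1\le i\le n$, $P_\theta$-a.s., and this is the largest value any density in $\overline S$ can take at $X_i$; hence $s_{\theta'}(X_i)/s_{\widehat\theta}(X_i)\in\{0,1\}$ for every $\theta'$, so $\psi(\sqrt{\cdot})\in\{-1,0\}$ and therefore $\gT(\gX,s_{\widehat\theta},s_{\theta'})\le0$ for all $\theta'\in\Theta$, with equality at $\theta'=\widehat\theta$. Consequently, whenever $\widehat\theta\in\widehat\Theta\cap\pi_{-1}(\X)$ we have $s_{\widehat\theta}\in S$ and $\gup(S,s_{\widehat\theta})=\sup_{\theta'\in\pi_{-1}(\X)}\gT(\gX,s_{\widehat\theta},s_{\theta'})=0$. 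Since $\gT(\gX,\gt,\gt)=0$ we also have $\gup(S,\gt)\ge0$ for every $\gt\in S$, and since $\pi_{-1}(X_{i_0})\in\widehat\Theta\cap\pi_{-1}(\X)$ it follows that $\inf_{\gt\in S}\gup(S,\gt)=0$; hence $\{s_{\theta'}:\theta'\in\widehat\Theta\cap\pi_{-1}(\X)\}\subset\EE(\gX,S)$.

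It remains to treat an arbitrary $\widehat\theta\in\widehat\Theta$ (possibly outside $\pi_{-1}(\X)$), which is a density argument: put $\theta^{(j)}=\pi_{-1}(\pi_j(\widehat\theta))$. For $j\ge k$ the sequences $\theta^{(j)}$ and $\widehat\theta$ agree on their first $j\ge k$ coordinates, so $\pi_k(\theta^{(j)})=\pi_k(\widehat\theta)=X_{i_0}$, i.e. $\theta^{(j)}\in\widehat\Theta\cap\pi_{-1}(\X)$ and thus $s_{\theta^{(j)}}\in\EE(\gX,S)$; moreover, by~\eref{f-h}, $h^2(s_{\theta^{(j)}},s_{\widehat\theta})=2^{-J(\theta^{(j)},\widehat\theta)}\le2^{-j}\to0$. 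Therefore $s_{\widehat\theta}\in{\rm Cl}(\EE(\gX,S))$, i.e. $s_{\widehat\theta}$ is a $\rho$-estimator of $s_\theta$, $P_\theta$-a.s. The whole argument presents no real obstacle; the only step requiring care is the cancellation identity of the second paragraph, after which the reasoning runs exactly parallel to Section~\ref{S-CMLE2}.
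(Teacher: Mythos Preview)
Your proof is correct and follows essentially the same route as the paper's: you establish the affinity cancellation $\rho(s_\theta,r)=\rho(s_{\theta'},r)$, deduce $\gT(\gX,s_{\widehat\theta},s_{\theta'})\le0$ from $s_{\theta'}(X_i)/s_{\widehat\theta}(X_i)\in\{0,1\}$, conclude that $\gup(S,s_{\widehat\theta})=0$ for $\widehat\theta\in\widehat\Theta\cap\pi_{-1}(\X)$, and then pass to the closure. Your treatment of the closure step (via the explicit approximants $\theta^{(j)}=\pi_{-1}(\pi_j(\widehat\theta))$ for $j\ge k$) is in fact more carefully spelled out than in the paper, which merely invokes the density of $S$ in $\overline S$ at the outset and leaves the reader to check that the approximating sequence stays inside $\widehat\Theta$.
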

\begin{proof}
Let us observe that if $\theta\in\Theta$, $x=\pi_j(\theta)$ and $\theta'=\pi_{-1}(x)$, it follows from 
(\ref{f-h}) that $h^2(s_\theta,s_{\theta'})\le2^{-j}$ so that $S$ is a dense subset of $\overline{S}$. To show 
that $\{s_{\widehat\theta},\,\widehat \theta\in\widehat\Theta\}\subset{\rm Cl}\!\left(\EE(\gX,S)\strut\right)$ 
it is therefore enough to prove that the elements of $\widehat S=
\{s_{\widehat\theta},\,\widehat\theta\in\widehat \Theta\}\cap S$ minimize $\gup(S,\cdot)$ over $S$.
First note that for $\theta,\theta'\in \Theta$ with $\theta\neq \theta'$, 
\begin{eqnarray*}
\rho\pa{s_{\theta},{s_{\theta}+s_{\theta'}\over 2}}&=&\sum_{x\in\X}\sqrt{s_{\theta}(x){s_{\theta}(x)+
s_{\theta'}(x)\over2}}\;\;=\;\;\sum_{j=1}^{J(\theta,\theta')}2^{-j}+\sum_{j>J(\theta,\theta')}{2^{-j}\over\sqrt{2}}
\\&=&1-\pa{1-{1\over\sqrt{2}}}2^{-J(\theta,\theta')}\;\;=\;\;\rho\pa{s_{\theta'},{s_{\theta}+s_{\theta'}\over2}}.
\end{eqnarray*}
Let us now fix some element $s_{\widehat \theta}\in \widehat S$. Because of \eref{ex01},  
$\gT(\gX,\gs_{\widehat \theta},\gs_{\widehat \theta'})=0$ for $s_{\widehat \theta'}\in \widehat S$ 
and, for $s_{\theta'}\in S\setminus \widehat S$,
\[
\gT(\gX,\gs_{\widehat \theta},\gs_{\theta'})= \sum_{i=1}^{n}{\sqrt{s_{\theta'}(X_{i})}-
\sqrt{s_{\widehat \theta}(X_{i})}\over \sqrt{2\left(s_{\theta'}(X_{i})+s_{\widehat \theta}(X_{i})\right)}}\le{\sqrt{s_{\theta'}(X_{i_{0}})}-\sqrt{s_{\widehat \theta}(X_{i_{0}})}\over\sqrt{2\left(s_{\theta'}(X_{i_{0}})+s_{\widehat \theta}(X_{i_{0}})\right)}}=-\frac{1}{\sqrt{2}}.
\]
Consequently $\gup(S,\gs_{\widehat \theta})=0$ and therefore $\gs_{\widehat \theta}$ minimizes 
$\gup(S,\cdot)$ over $S$.
\end{proof}
We shall prove in Section~\ref{P4}, the following result.
\begin{prop}\label{P-dimpara}
For all $s\in \overline S$ and $\overline s\in S$, 
\[
D^{S}(\gs,\overline \gs)\le4c_0^{-2}\cro{2\sqrt{2}+n^{2}h^{2}(s,\overline s)}^{2}.
\]
\end{prop}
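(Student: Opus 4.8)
The goal is to bound $D^{S}(\gs,\overline{\gs})$, which by its definition requires controlling the quantity $\w^{S}(\gs,\overline{\gs},y)=\E\bigl[\sup_{\gt\in\B^{S}(\gs,\overline{\gs},y)}|\gZ(\gX,\overline{\gs},\gt)|\bigr]$ and then comparing it with $c_0 y^2$. I would start by writing $\gZ(\gX,\overline{\gs},\gt)$ explicitly: for $\gs=(s,\ldots,s)$, $\overline{\gs}=(\overline s,\ldots,\overline s)$, $\gt=(t,\ldots,t)$ with $s=s_\theta$, $\overline s=s_{\overline\theta}$, $t=s_{\theta'}$ densities of the form above, one has
\[
\gZ(\gX,\overline{\gs},\gt)=\sum_{i=1}^{n}\Bigl[\psi\bigl(\sqrt{t/\overline s}(X_i)\bigr)-\E_{s}\bigl[\psi\bigl(\sqrt{t/\overline s}(X_i)\bigr)\bigr]\Bigr].
\]
The key structural fact I would exploit is that the densities $s_{\theta'}$ take only the values $2^{-j}$, $j\ge1$ (or $0$), and that $s_{\theta'}(x)$ and $\overline s(x)$ agree except at the ``branching'' points; so the ratio $\sqrt{s_{\theta'}/s_{\overline\theta}}(x)$ takes very few distinct values, and $\psi$ evaluated on it takes values in a small finite set determined by which of $\theta'$, $\overline\theta$, the point $x$ is compatible with. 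Concretely I expect $\psi(\sqrt{t/\overline s}(X_i))$ to be $0$ unless $X_i$ lies in the (small) symmetric difference of the supports, so that the whole process $\gZ$ is really a sum of a bounded number of centered Bernoulli-type terms.

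**Main steps.** First, I would show that for fixed $\overline\theta$ and varying $\theta'$, the function $x\mapsto\psi(\sqrt{s_{\theta'}/s_{\overline\theta}}(x))$ is supported on the set $\{x:\ \ell(x)\le J(\theta',\overline\theta)+1$ fails to extend both ways$\}$ — more precisely, on the at most countable set of $x$ that are prefixes of $\theta'$ but not of $\overline\theta$, or vice versa — and that on its support $\psi$ takes values in $\{-1/\sqrt2,\,1/\sqrt2,\,-1,\,1\}$ or similar fixed constants depending only on the values $2^{-\ell(x)}$. The point is that the collection of functions $\{\psi(\sqrt{t/\overline s}):\ t\in\B^{S}(\gs,\overline{\gs},y)\}$, as a class of functions of $X_i$, is then controlled: a union bound or a direct variance computation gives $\w^{S}(\gs,\overline{\gs},y)\le C\sqrt{n\,\mathrm{Var}}+C\log(\text{number of relevant values})$. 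Second, using the relation $h^2(s_\theta,s_{\theta'})=2^{-J(\theta,\theta')}$ from \eref{f-h} together with the ball constraint $\gh^2(\gs,\gt)+\gh^2(\gs,\overline{\gs})\le y^2$, i.e.\ $n h^2(s,t)+n h^2(s,\overline s)\le y^2$, I would translate $y^2$ into a lower bound on the branching depth $J(\theta,\theta')$, hence into a bound on the variance of the relevant summands (each term contributes variance at most $\P_s(\text{support})\lesssim h^2(s,t)+h^2(s,\overline s)\le y^2/n$). This should give $\w^{S}(\gs,\overline{\gs},y)\le C_0[y\sqrt{H}+H]$ with $H$ logarithmic in the relevant quantities, or even $H$ bounded by an absolute constant plus $\log(n^2 h^2(s,\overline s))$, which is exactly the shape of the asserted bound.

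**The obstacle.** The main difficulty is the dependence on $n^2 h^2(s,\overline s)$ in the claimed bound rather than just $n h^2(s,\overline s)$: this suggests the estimate is driven not by a variance term (which would be $O(n h^2)$) but by the number of \emph{distinct} values the process can take, or by a crude worst-case bound over the finitely many relevant coordinates, squared. I would therefore expect the real work to be in step one: carefully identifying the \emph{finite} set of functions $\psi(\sqrt{t/\overline s})$ that actually arise (as $t$ ranges over $\B^{S}(\gs,\overline{\gs},y)$), bounding its cardinality by something like $(2\sqrt2+n^2 h^2(s,\overline s))$ via the structure of $\X$ and the $P_\theta$'s, and then invoking the finite maximal-inequality Proposition~\ref{Kolt} (with $N$ equal to twice that cardinality) to get $\w^{S}\le 2[y\sqrt{3\log_+ N}+\log_+ N]$; plugging into the definition of $D^{S}(\gs,\overline\gs)$ and solving the resulting quadratic inequality $c_0 y^2 < 2[y\sqrt{3\log_+N}+\log_+N]$ for $y^2$ then yields $D^{S}(\gs,\overline\gs)\le 4c_0^{-2}\log_+^2 N$ up to the stated constants, and identifying $\log_+ N$ with $2\sqrt2+n^2 h^2(s,\overline s)$ (after a short computation showing the number of reachable points/branchings is of that order) completes the proof. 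The delicate bookkeeping is making the counting $2\sqrt2+n^2h^2(s,\overline s)$ come out with exactly those constants, which is why I expect this to be a ``direct computation'' argument rather than an application of the VC or metric-dimension machinery.
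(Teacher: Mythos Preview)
Your approach has a genuine gap. You propose to apply Proposition~\ref{Kolt} after bounding the cardinality of $\B^{S}(\gs,y)$ (or of the set of distinct functions $\psi(\sqrt{t/\overline s})$) by something like $2\sqrt{2}+n^{2}h^{2}(s,\overline s)$, and then to ``identify $\log_{+}N$ with $2\sqrt{2}+n^{2}h^{2}(s,\overline s)$''. These two statements are inconsistent, and neither is what happens. First, when $|\Lambda|=+\infty$ the ball $\B^{S}(\gs,y)$ is genuinely infinite (there are infinitely many $\theta'$ with any prescribed value of $J(\theta,\theta')$), so Proposition~\ref{Kolt} does not apply. Second, even after reducing to the finitely many distinct function values on the sample, the bound from Proposition~\ref{Kolt} has the shape $\w^{S}\le 2[y\sqrt{3\log_{+}N}+\log_{+}N]$, which after solving $c_{0}y^{2}<\w^{S}$ gives $D^{S}\lesssim c_{0}^{-2}\log_{+}N$, not $c_{0}^{-2}(\log_{+}N)^{2}$; there is no natural choice of $N$ for which this recovers $4c_{0}^{-2}[2\sqrt{2}+n^{2}h^{2}(s,\overline s)]^{2}$.

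The paper's argument is structurally different. Writing $h^{2}(s,\overline s)=2^{-J}$, one splits $\w^{S}(\gs,\overline\gs,y)=E_{1}+E_{2}$ according to the event $\Omega_{J}=\{\ell(X_{i})\le J\mbox{ for all }i\}$. On $\Omega_{J}$ one has $s(X_{i})=\overline s(X_{i})$ for every $i$, so $\psi(\sqrt{t/\overline s}(X_{i}))$ reduces to $-\1_{\{\ell(X_{i})>J(\theta,\theta')\}}$; after symmetrization the supremum over $\theta'$ becomes a maximum of Rademacher partial sums $|\sum_{i\le k}\eps_{i}|$, which Doob's maximal inequality bounds by $2\sqrt{\widehat N}$, and $\E[\widehat N]\le 2y^{2}$ by the ball constraint. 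This gives $E_{1}\le 4\sqrt{2}\,y$. On $\Omega_{J}^{c}$ one uses the crude bound $|\gZ|\le 2n$ together with $\P_{\gs}[\Omega_{J}^{c}]\le n2^{-J}=nh^{2}(s,\overline s)$, yielding $E_{2}\le 2n^{2}h^{2}(s,\overline s)\le 2n^{2}h^{2}(s,\overline s)\,y$ for $y\ge1$. Thus $\w^{S}(\gs,\overline\gs,y)\le 2[2\sqrt{2}+n^{2}h^{2}(s,\overline s)]\,y$, a bound \emph{linear} in $y$, and the definition of $D^{S}$ immediately gives $D^{S}(\gs,\overline\gs)\le 4c_{0}^{-2}[2\sqrt{2}+n^{2}h^{2}(s,\overline s)]^{2}$. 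The quantity $2\sqrt{2}+n^{2}h^{2}(s,\overline s)$ is the slope of this linear bound, not a cardinality or its logarithm; the $n^{2}$ factor is exactly the price of the union bound $n\cdot\P_{s}[\ell(X_{1})>J]$ on the bad event, and the $2\sqrt{2}$ comes from the Doob step.
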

Applying~\eref{risk} we get, since $\gh^{2}(\cdot,\cdot)=nh^{2}(\cdot,\cdot)$, 
\[
\E\cro{h^2(s,\widehat s)}\le c_1h^2(s,\overline s)+\frac{4c_2c_0^{-2}\pa{2\sqrt{2}+
n^2h^2(s,\overline s)}^2+2.45c_3}{n}\quad\mbox{for all }s\in \overline S\mbox{ and }\overline s\in S.
\]
Choosing $\overline s$ arbitrarily close to $s\in \overline S$ shows that any $\rho$-estimator 
$\widehat s$ (and therefore any  maximum likelihood estimator) satisfies
\[
\E\cro{h^{2}(s,\widehat s)}\le\left(32c_{2}c_0^{-2}+2.45c_{3}\right)n^{-1}\quad\mbox{for all } s\in\overline S
\]
and thus achieves a parametric rate of convergence independently of the metric dimension of $\overline{S}$.

\section{Model selection\label{MS}}
Let us now assume that, in place of a single model as in the previous sections, we have at disposal 
a countable collection $\overline{\SS}$ of such models $\overline{S}$ for the parameter $s$. We may 
therefore associate to each $\overline{S}\in\overline{\SS}$ a $\rho$-estimator $\widehat{s}(\overline{S})$ 
with quadratic risk $\E\cro{\gh^2\!\pa{\gs,\widehat\gs(\overline{S})}}$ and our aim is to select from the 
data $\gX$ a model $\widehat S\in\overline{\SS}$ or, equivalently, an estimator $\widehat{s}(\widehat{S})$ among the family of candidates $\{\widehat s(\overline{S}),\,\overline{S}\in\overline{\SS}\}$, in such a 
way that its risk is as close as possible to the minimal risk over the family, namely 
$\inf_{\overline{S}\in\overline{\SS}}\E\cro{\gh^2\!\pa{\gs,\widehat\gs(\overline{S})}}$. 

\subsection{Estimation procedure and main result\label{MS1}}
Let $\overline{\SS}$ be a countable family of models in $\LL_{0}$, endowed with a mapping 
$\Delta$ from $\overline{\SS}$ into $\R_{+}$ satisfying
\begin{equation}
\sum_{\overline{S}\in\overline{\SS}}\exp\left[-\Delta\left(\overline{S}\right)\right]\le1.
\label{Eq-subprob}
\end{equation}
To each $\overline{S}\in\overline{\SS}$ we attach some identifiable subset $S$ of $\LL_0$ which is
either a countable $\eta$-net for $\overline{S}$ or a dense subset of $\overline{S}$, as we did for a 
single model in Section~\ref{C3}, the connection between $\overline{S}$ and $S$ being only 
emphasized by the notations. This results in a new collection $\SS$ of subsets $S$ of 
$\LL_0$, each $S\in\SS$ corresponding to a model $\overline{S}\in\overline{\SS}$ and 
the set $\S=\bigcup_{\overline S\in\overline\SS}S=\bigcup_{S\in\SS}S$ is a 
countable subset of $\LL_0$. Let $\pen$ be some positive function on $\S$,  
\begin{equation}
\overline\gup(\S,\gt)=\sup_{\gt'\in\S}\ac{\gT(\gX,\gt,\gt')-\pen(\gt')}+\pen(\gt)
\quad\mbox{for all }\gt\in\S
\label{Eq-gup}
\end{equation}
and
\[
\EE(\gX,\S)=\left\{\widetilde\gs\in\S\,\left|\,\overline\gup(\S,\widetilde\gs)\le \inf_{\gt\in \S}\overline\gup(\S,\gt)
+\frac{\kappa}{10}\right.\right\}\quad\mbox{with $\kappa$ given by (\ref{Eq-cons1})}.
\]
As in Section~\ref{C1}, we define our estimator $\widehat \gs$ of $\gs$ as any element of
${\rm Cl}\!\left(\EE(\gX,\S)\strut\right)$. When $\overline \SS$ reduces to a single element 
$\overline S$ and the function    $\pen$ is constant on $S$, the estimator $\widehat\gs$ 
coincides with the one we defined in Section~\ref{C1}.

 It follows from (\ref{Eq-gup}) that the estimator only depends
on the differences $\pen(\gt')-\pen(\gt)$ rather than on the function $\pen$ itself. This means
that, in our computations, we may always replace the actual penalty function $\pen$ that
has been used to build the $\rho$-estimator by another 
one, $\pen'$, with $\pen'(\gt)=\pen(\gt)+G$ for some $G$ independent of $\gt\in\S$.
%
\begin{thm}\label{main0}
If the penalty function $\pen$ satisfies
\begin{equation}\label{def-pen6}
\pen(\gt)\ge\pen_1(\gt)=\inf_{\{S\in\SS\,|\,S\ni\gt\}}\ac{(1/8)\overline{D}^{S}+\kappa\Delta(\overline{S})}
\quad\mbox{for all }\gt\in \S,
\end{equation}
where $\overline D^{S}$ is defined by~\eref{Eq-DS}, any element $\widehat \gs$ in 
${\rm Cl}\!\left(\EE(\gX,\S)\strut\right)$ satisfies, for all $\gs\in\LL_{0}$ and $\xi>0$,
\begin{equation}
\P_{\gs}\left[\gh^{2}\pa{\gs,\widehat \gs}\le\inf_{\overline \gs\in\S}\left\{c_1\gh^{2}(\gs,\overline\gs)+8c_{2}\pen(\overline \gs)\right\}+c_3(1.45+\xi)\right]\ge1-e^{-\xi}.
\label{Eq-bou5}
\end{equation}
In particular, if $\pen(\gt)=\pen_1(\gt)$ for all $\gt\in \S$ and the sets $S\in\SS$ are chosen in order to satisfy
\[
2c_{1}\sup_{\gu\in\overline S}\gh^2(\gu,S)+c_{2}\overline D^{S}\le \overline D(\overline S)+{c_{3}\over 20},
\]
then for all $\gs\in\LL_{0}$ and $\xi>0$,
\begin{equation}\label{enInt}
\P_{\gs}\cro{\gh^{2}\pa{\gs,\widehat \gs}\le \inf_{\overline{S}\in\overline{\SS}}\ac{2c_1\gh^{2}(\gs,\overline{S})+\overline D(\overline S)+c_3\left(\Delta(\overline{S})+1.5+\xi\right)}}\ge 1-e^{-\xi}.
\end{equation}
\end{thm}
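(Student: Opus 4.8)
The plan is to run the argument behind Theorem~\ref{main00} with $\S=\bigcup_{S\in\SS}S$ playing the role of a single countable subset of $\LL_0$, but carrying the penalty terms throughout; the only genuinely new ingredient is that the control of the empirical process $\gZ(\gX,\cdot,\cdot)$ of \eref{def-Z} has to be made uniform over the subcollection $\SS$, the price of this uniformity being a union bound weighted by $\{e^{-\Delta(\overline S)}\}_{\overline S\in\overline{\SS}}$, which is summable by \eref{Eq-subprob}. Since the right-hand side of \eref{Eq-bou5} is deterministic once $\gs$ is fixed and $\gt\mapsto\gh^2(\gs,\gt)$ is continuous on $\LL_0$, it suffices to prove \eref{Eq-bou5} for every $\widetilde\gs\in\EE(\gX,\S)$ and then let $\widetilde\gs$ tend to $\widehat\gs\in{\rm Cl}(\EE(\gX,\S))$. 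So fix $\widetilde\gs\in\EE(\gX,\S)$; by definition, $\overline\gup(\S,\widetilde\gs)\le\overline\gup(\S,\overline\gs)+\kappa/10$ for every $\overline\gs\in\S$.

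Next I would bound the two sides of this inequality using the decomposition $\gT(\gX,\gt,\gt')=\E\cro{\gT(\gX,\gt,\gt')}+\tfrac1{\sqrt2}\gZ(\gX,\gt,\gt')$, the $\LL_0$-version of \eref{eq-UbT}--\eref{eq-LbT} for the deterministic part (obtained by summing the coordinatewise bounds), and the antisymmetry $\gZ(\gX,\gt,\gt')=-\gZ(\gX,\gt',\gt)$, which follows from $\psi(1/u)=-\psi(u)$. From $\overline\gup(\S,\widetilde\gs)\ge\gT(\gX,\widetilde\gs,\overline\gs)-\pen(\overline\gs)+\pen(\widetilde\gs)$ and the lower bound on the deterministic part one gets
\[ \overline\gup(\S,\widetilde\gs)\ge 8c_0\gh^2(\gs,\widetilde\gs)-c_2\gh^2(\gs,\overline\gs)-\tfrac1{\sqrt2}\gZ(\gX,\overline\gs,\widetilde\gs)-\pen(\overline\gs)+\pen(\widetilde\gs), \]
while the upper bound on the deterministic part applied to each $\gt'\in\S$ gives
\[ \overline\gup(\S,\overline\gs)\le c_2\gh^2(\gs,\overline\gs)+\pen(\overline\gs)+\sup_{\gt'\in\S}\ac{\tfrac1{\sqrt2}\gZ(\gX,\overline\gs,\gt')-8c_0\gh^2(\gs,\gt')-\pen(\gt')}. \]
Bounding $\tfrac1{\sqrt2}\gZ(\gX,\overline\gs,\widetilde\gs)-\pen(\widetilde\gs)\le 4c_0\gh^2(\gs,\widetilde\gs)+W(\overline\gs)$ with $W(\overline\gs):=\sup_{\gt'\in\S}\ac{\tfrac1{\sqrt2}\ab{\gZ(\gX,\overline\gs,\gt')}-4c_0\gh^2(\gs,\gt')-\pen(\gt')}$, combining the last three relations, and minimizing over $\overline\gs\in\S$ yields the deterministic inequality
\[ 4c_0\gh^2(\gs,\widetilde\gs)\le\inf_{\overline\gs\in\S}\ac{2c_2\gh^2(\gs,\overline\gs)+2\pen(\overline\gs)+2\,W(\overline\gs)}+\tfrac{\kappa}{10}. \]

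The heart of the proof is to show that, on a single event of probability at least $1-e^{-\xi}$, $W(\overline\gs)\le C_2\gh^2(\gs,\overline\gs)+C_3(\xi+1)$ for every $\overline\gs\in\S$. By \eref{def-pen6}, $\pen(\gt')\ge\inf_{\{S\in\SS\,:\,S\ni\gt'\}}\{(1/8)\overline D^{S}+\kappa\Delta(\overline S)\}$, so it is enough to prove that, on a single event of probability $\ge1-e^{-\xi}$, the bound
\[ \sup_{\gt'\in S}\ac{\tfrac1{\sqrt2}\ab{\gZ(\gX,\overline\gs,\gt')}-4c_0\gh^2(\gs,\gt')}\le C_1\overline D^{S}+C_2\gh^2(\gs,\overline\gs)+C_3\pa{\xi+\Delta(\overline S)+1} \]
holds simultaneously for all $S\in\SS$ and all $\overline\gs\in\S$. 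For fixed $S$ and $\overline\gs$ this is the peeling argument already used for Theorem~\ref{main00}: cut $S$ into the shells $\{\gt':\gh^2(\gs,\gt')+\gh^2(\gs,\overline\gs)\in(y^2,2y^2]\}\subset\B^{S}(\gs,\overline\gs,\sqrt2\,y)$ along a geometric grid of radii starting at $\sqrt{\overline D^{S}}\,(\ge\sqrt{D^{S}(\gs,\overline\gs)})$, on which $\E\cro{\sup_{\B^{S}(\gs,\overline\gs,\sqrt2 y)}\ab{\gZ(\gX,\overline\gs,\cdot)}}=\w^{S}(\gs,\overline\gs,\sqrt2 y)\le 2c_0y^2$ by \eref{def-D}, and then apply a Bernstein/Bousquet-type deviation inequality for the bounded centered empirical process $\gZ$ (the functions $\psi(\sqrt{\gt'/\overline\gs})$ are uniformly bounded by $1$ and the variance over the shell is of order $y^2$ by the Lipschitz bound on $\psi$). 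Using the level $u=\xi+\Delta(\overline S)$ makes the exceptional probabilities sum to at most $e^{-\xi}\sum_{\overline S\in\overline{\SS}}e^{-\Delta(\overline S)}\le e^{-\xi}$ by \eref{Eq-subprob} (the geometric sum over shells being absorbed in the $+1$ above); uniformity over $\overline\gs$ comes for free because $\w^{S}\le 2c_0y^2$ and the variance estimate hold uniformly in $\overline\gs$ once the radii start at $\sqrt{\overline D^{S}}$ --- this is exactly why the penalty is built from the \emph{uniform} dimension $\overline D^{S}$ of \eref{Eq-DS} rather than from $D^{S}(\gs,\overline\gs)$. The delicate point, and the one that dictates the numerical values in \eref{Eq-cons1}, is the bookkeeping of constants: one needs $C_1\le1/8$ and $C_3\le\kappa$ so that the penalty terms $(1/8)\overline D^{S}$ and $\kappa\Delta(\overline S)$ exactly absorb the dimensional and combinatorial fluctuations (this forces $\kappa$, hence $c_3=8c_2\kappa$, to be as large as in \eref{Eq-cons1}), and $C_2$ small enough to be swallowed by the leading Hellinger term below.

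Plugging $W(\overline\gs)\le C_2\gh^2(\gs,\overline\gs)+C_3(\xi+1)$ into the deterministic inequality of the second step and dividing by $4c_0$ gives, on the good event,
\[ \gh^2(\gs,\widetilde\gs)\le\inf_{\overline\gs\in\S}\ac{\frac{c_2+C_2}{2c_0}\gh^2(\gs,\overline\gs)+\frac1{2c_0}\pen(\overline\gs)}+\frac{\kappa/10+2C_3(\xi+1)}{4c_0}. \]
A check of the constants of \eref{Eq-cons1} shows $\tfrac1{2c_0}=8c_2$, $\tfrac{c_2}{2c_0}=12+8\sqrt2<c_1=14+8\sqrt2$ (the slack $c_1-12-8\sqrt2=2$ accommodating $\tfrac{C_2}{2c_0}$), $\tfrac{2C_3}{4c_0}\le c_3$, and $\tfrac{\kappa/10+2C_3}{4c_0}\le1.45\,c_3$; hence the right-hand side is at most $\inf_{\overline\gs\in\S}\{c_1\gh^2(\gs,\overline\gs)+8c_2\pen(\overline\gs)\}+c_3(1.45+\xi)$, which is \eref{Eq-bou5} after passing to the closure ${\rm Cl}(\EE(\gX,\S))$. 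Finally \eref{enInt} is pure bookkeeping: take $\pen=\pen_1$ and, for $\overline S\in\overline{\SS}$, restrict the infimum in \eref{Eq-bou5} to the associated $S$; there $8c_2\pen_1(\overline\gs)\le c_2\overline D^{S}+8c_2\kappa\Delta(\overline S)=c_2\overline D^{S}+c_3\Delta(\overline S)$ uniformly in $\overline\gs\in S$, and $\inf_{\overline\gs\in S}c_1\gh^2(\gs,\overline\gs)=c_1\gh^2(\gs,S)\le2c_1\gh^2(\gs,\overline S)+2c_1\sup_{\gu\in\overline S}\gh^2(\gu,S)$; the near-optimal choice $2c_1\sup_{\gu\in\overline S}\gh^2(\gu,S)+c_2\overline D^{S}\le\overline D(\overline S)+c_3/20$ then bounds $\inf_{\overline\gs\in\S}\{c_1\gh^2(\gs,\overline\gs)+8c_2\pen(\overline\gs)\}$ by $\inf_{\overline S\in\overline{\SS}}\{2c_1\gh^2(\gs,\overline S)+\overline D(\overline S)+c_3\Delta(\overline S)+c_3/20\}$, and adding $c_3(1.45+\xi)$ gives \eref{enInt} since $c_3/20+1.45\,c_3=1.5\,c_3$.
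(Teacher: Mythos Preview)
Your overall strategy is essentially the paper's: both reduce \eref{Eq-bou5} to a deterministic step (your paragraph~2, the paper's ``Step~2'' in Section~\ref{P17}) combined with a concentration lemma for $\gZ(\gX,\overline\gs,\cdot)$ obtained by peeling plus Talagrand/Bernstein (your paragraph~3, the paper's Lemma~\ref{lem1}). Your constant bookkeeping and the derivation of \eref{enInt} from \eref{Eq-bou5} are also correct and match the paper.

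There is, however, one genuine gap. You claim that ``on a single event of probability at least $1-e^{-\xi}$, $W(\overline\gs)\le C_2\gh^{2}(\gs,\overline\gs)+C_3(\xi+1)$ for \emph{every} $\overline\gs\in\S$'', and justify this by saying that uniformity over $\overline\gs$ ``comes for free because $\w^{S}\le2c_0y^{2}$ and the variance estimate hold uniformly in $\overline\gs$''. This is not correct. The concentration inequality (Proposition~\ref{talagrand}) is applied to the supremum of a \emph{fixed} centered process; here that process is $\gt'\mapsto\gZ(\gX,\overline\gs,\gt')$, which changes with $\overline\gs$. Uniformity of the expectation bound $\w^{S}(\gs,\overline\gs,y)$ and of the variance bound in $\overline\gs$ only guarantees that the \emph{right-hand side} of the concentration inequality can be taken independent of $\overline\gs$; it does not make the exceptional events for different $\overline\gs$ coincide. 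To get a bound simultaneous in all $\overline\gs\in\S$ you would need an additional union bound over $\overline\gs$, for which no summable weights are available.

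The good news is that you do not need this uniformity. Fix $\overline\gs\in\S$ once and for all (say an $\epsilon$-approximate minimizer of the deterministic functional $\overline\gs\mapsto c_1\gh^{2}(\gs,\overline\gs)+8c_{2}\pen(\overline\gs)$; this choice is deterministic since $\gs$ is). Your deterministic inequality at the end of paragraph~2 holds in particular for this $\overline\gs$ (drop the $\inf$), and for this single $\overline\gs$ the peeling/concentration argument gives $W(\overline\gs)\le4c_0\gh^{2}(\gs,\overline\gs)+\kappa(1.4+\xi)$ on an event of probability at least $1-e^{-\xi}$ --- this is exactly Lemma~\ref{lem1}. Since the left-hand side $\gh^{2}(\gs,\widetilde\gs)$ does not depend on $\overline\gs$, and $\epsilon$ is arbitrary, the infimum over $\overline\gs$ on the right follows. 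This is precisely what the paper does; the role of the uniform dimension $\overline D^{S}$ in \eref{def-pen6} is \emph{not} to yield probabilistic uniformity in $\overline\gs$, but only to ensure that the penalty lower bound $\pen_0(\gt)$ of \eref{def-pen} holds whatever $\overline\gs$ (and $\gs$) may be, so that the user can choose $\pen$ without knowing them.
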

When the models $\overline S$ are VC-subgraph with respective indices 
$\overline V(\overline S)$ we have seen in Theorem~\ref{main1} that 
\[
\overline D^{S}\le C'\overline V(\overline S)\left[1+\log_+\left(n/\overline V(\overline S)\right)\right]
\]
for all choices of a countable and dense subset $S$ of $\overline S$. For such choices of $S$ and a penalty equal to $\pen_1$, we derive from 
\eref{Eq-bou5} that the estimator $\widehat \gs$ satisfies, for all $\xi>0$,
\begin{equation}\label{Risk-VC}
\P_{\gs}\cro{C\gh^{2}(\gs,\widehat\gs)\le \inf_{\overline{S}\in\overline{\SS}}
\ac{\rule{0mm}{4mm}\gh^{2}(\gs,\overline{S})+\overline V(\overline S)
\left[1+\log_+\left(n/\overline V(\overline S)\right)\right]+\Delta(\overline S)+\xi}}\ge 1-e^{-\xi}.
\end{equation}
As we have seen in Proposition~\ref{histo},  we are not always able to bound the uniform dimension 
$\overline D(\overline{S})$ of a model $\overline{S}$ from above but sometimes only its dimension $D(\overline{S})$. In this case, model selection is still possible under the following alternative assumption. 
\begin{ass}\label{cond-union}
Let $\breve{D}$ be a mapping from $\overline{\SS}$ into $[1,+\infty)$ such that for all 
$\overline{S},\overline{S}'\in\overline{\SS}$, 
\[
D^{S\cup S'}\le \breve{D}(\overline{S})+\breve{D}(\overline{S}').
\]
\end{ass}
\begin{thm}\label{main-histo}
Let Assumption~\ref{cond-union} hold and the penalty function $\pen$ satisfy
\begin{equation}\label{def-pen0bis}
\pen(\gt)\ge\inf_{\{\overline{S}\in\overline{\SS}\,|\,S\ni\gt\}}\ac{(1/8)\breve{D}(\overline{S})+\kappa\Delta(\overline{S})}\quad\mbox{for all }\gt\in \S.
\end{equation}
The estimator $\widehat\gs$ then satisfies for all $\gs\in\LL_0$ and $\xi>0$,
\begin{equation}\label{eq-histo1}
\P_{\gs}\left[\gh^{2}\pa{\gs,\widehat \gs}\le \inf_{\overline \gs\in \S}\ac{c_{1}\gh^{2}\pa{\gs,\overline \gs}+16c_{2}\pen(\overline \gs)}+c_{3}(1.45+\xi)\right]\ge1-e^{-\xi}.
\end{equation}
If, moreover, the sets $S\in\SS$ are chosen to satisfy,
\begin{equation}\label{Eq-fine}
2c_{1}\sup_{\gu\in\overline S}\gh^2(\gu,S)+c_{2}D^{S}\le D(\overline S)+{c_{3}\over 80a}\;\;\mbox{ for all }\overline{S}\in\overline{\SS}\mbox{ and some }a\ge 1/2,
\end{equation}
Assumption~\ref{cond-union} holds with $\breve{D}({\overline{S})\le aD^S}$ and equality holds in~\eref{def-pen0bis}, then for all $\gs\in\LL_0$ and $\xi>0$,
\[
\P_{\gs}\cro{\gh^{2}\pa{\gs,\widehat \gs}\le \inf_{\overline S\in \overline\SS}\ac{2c_{1}
\gh^{2}\pa{\gs,\overline S}+2aD(\overline S)+c_{3}\left(2\Delta(\overline S)+1.5+\xi\right)}}\ge 1-e^{-\xi}.
\]
\end{thm}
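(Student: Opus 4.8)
The plan is to mirror the proof of Theorem~\ref{main0}, replacing its appeals to the uniform dimension $\overline D^S$ by uses of Assumption~\ref{cond-union}. Fix $\gs\in\LL_0$, $\xi>0$, and a competitor $\overline\gs\in\S$; since $\S$ is countable and the right‑hand side of~\eref{eq-histo1} is deterministic, it suffices to produce, for each such fixed $\overline\gs$, the one‑competitor bound $\P_{\gs}[\gh^2(\gs,\widehat\gs)\le c_1\gh^2(\gs,\overline\gs)+16c_2\pen(\overline\gs)+c_3(1.45+\xi)]\ge1-e^{-\xi}$ and then pass to a deterministic near‑minimizer over $\overline\gs\in\S$. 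Given $\varepsilon>0$, choose a model $\overline S\in\overline\SS$ whose associated set $S\in\SS$ contains $\overline\gs$ and with $\tfrac18\breve D(\overline S)+\kappa\Delta(\overline S)\le\pen(\overline\gs)+\varepsilon$ (possible by~\eref{def-pen0bis}); $\varepsilon$ will be sent to $0$. Let $\widehat\gs\in{\rm Cl}\left(\EE(\gX,\S)\strut\right)$; as the target does not involve $\widehat\gs$, I may prove it for points of $\EE(\gX,\S)\subset\S$ and pass to the $\gh$‑limit, hence assume $\widehat\gs\in\S$. Taking $\gt'=\overline\gs$ in the supremum defining $\overline\gup(\S,\widehat\gs)$, using the antisymmetry $\gT(\gX,\gt,\gt')=-\gT(\gX,\gt',\gt)$ and $\overline\gup(\S,\widehat\gs)\le\overline\gup(\S,\overline\gs)+\kappa/10$, gives the master inequality
\[
\gT(\gX,\widehat\gs,\overline\gs)+\pen(\widehat\gs)\ \le\ \sup_{\gt'\in\S}\ac{\gT(\gX,\overline\gs,\gt')-\pen(\gt')}+2\pen(\overline\gs)+\frac{\kappa}{10}.
\]

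I then split $\gT(\gX,\cdot,\cdot)=\E\cro{\gT(\gX,\cdot,\cdot)}+\tfrac1{\sqrt2}\gZ(\gX,\cdot,\cdot)$, bound the mean by the general‑framework versions of~\eref{eq-UbT}--\eref{eq-LbT} (obtained by summing the per‑coordinate inequalities), and control $\gZ$ by the same mechanism that underlies Theorems~\ref{main00} and~\ref{main0}: a peeling decomposition in the Hellinger radius combined with a concentration inequality for the bounded, Lipschitz process $\gZ$, the union over the countable family $\SS$ being paid for by~\eref{Eq-subprob}. This yields, on an event of probability $\ge1-e^{-\xi}$, simultaneously for all $S'\in\SS$ (model $\overline S'$) and all $\gt\in S'$,
\[
\tfrac1{\sqrt2}\ab{\gZ(\gX,\overline\gs,\gt)}\ \le\ c_0\cro{\gh^2(\gs,\overline\gs)+\gh^2(\gs,\gt)}+\tfrac18 D^{S'}(\gs,\overline\gs)+\kappa\Delta(\overline S')+C(1+\xi).
\]
This is the one place Assumption~\ref{cond-union} enters, and it enters essentially: since $\overline\gs\in S$ and $\gt\in S'$, both lie in $S\cup S'$, so $\B^{S'}(\gs,\overline\gs,\cdot)\subseteq\B^{S\cup S'}(\gs,\overline\gs,\cdot)$ forces $D^{S'}(\gs,\overline\gs)\le D^{S\cup S'}(\gs,\overline\gs)\le D^{S\cup S'}\le\breve D(\overline S)+\breve D(\overline S')$. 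The presence of $\breve D(\overline S)$, the competitor's model, alongside $\breve D(\overline S')$ — where Theorem~\ref{main0} uses the single quantity $\overline D^{S'}$ — is exactly what replaces the $8c_2$ of~\eref{Eq-bou5} by $16c_2$ in~\eref{eq-histo1}.

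On the right of the master inequality, for each $\gt'\in S'$ the term $-8c_0\gh^2(\gs,\gt')$ from the mean bound and $+c_0\gh^2(\gs,\gt')$ from the $\gZ$‑bound combine into $-7c_0\gh^2(\gs,\gt')\le0$, while $\tfrac18\breve D(\overline S')+\kappa\Delta(\overline S')$ is absorbed by $-\pen(\gt')$ via~\eref{def-pen0bis}, leaving only $\tfrac18\breve D(\overline S)$; so $\sup_{\gt'}\{\gT(\gX,\overline\gs,\gt')-\pen(\gt')\}\le(c_2+c_0)\gh^2(\gs,\overline\gs)+\tfrac18\breve D(\overline S)+C(1+\xi)$. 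On the left, $\gT(\gX,\widehat\gs,\overline\gs)\ge 8c_0\gh^2(\gs,\widehat\gs)-c_2\gh^2(\gs,\overline\gs)-\tfrac1{\sqrt2}\ab{\gZ(\gX,\overline\gs,\widehat\gs)}$; evaluating the $\gZ$‑bound at $\gt=\widehat\gs$ and minimizing its right side over the models whose associated set contains $\widehat\gs$ turns the $\tfrac18\breve D(\overline S')+\kappa\Delta(\overline S')$ into the right side of~\eref{def-pen0bis} at $\widehat\gs$, hence $\le\pen(\widehat\gs)$, again at the price of an extra $\tfrac18\breve D(\overline S)$. Combining,
\[
7c_0\,\gh^2(\gs,\widehat\gs)\ \le\ 2(c_2+c_0)\,\gh^2(\gs,\overline\gs)+\tfrac14\breve D(\overline S)+2\pen(\overline\gs)+C'(1+\xi)+\frac{\kappa}{10},
\]
and $\tfrac14\breve D(\overline S)\le 2\pen(\overline\gs)+2\varepsilon$ by the choice of $\overline S$. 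Dividing by $7c_0$, checking that $2(c_2+c_0)/(7c_0)\le c_1$ and $4/(7c_0)\le16c_2$ and that the remaining numerical term is $\le c_3(1.45+\xi)$ for the constants of~\eref{Eq-cons1}, letting $\varepsilon\downarrow0$, and passing to a deterministic near‑minimizer over $\overline\gs\in\S$, gives~\eref{eq-histo1}.

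For the last display, insert into~\eref{eq-histo1} the extra hypotheses. Given $\overline S\in\overline\SS$ with its minimal $\eta$‑net $S$, where $\eta=\sup_{\gu\in\overline S}\gh(\gu,S)$, pick $\overline\gs\in S$ arbitrarily close to a $\gh$‑projection of $\gs$ onto $S$, so $c_1\gh^2(\gs,\overline\gs)\le 2c_1\gh^2(\gs,\overline S)+2c_1\eta^2$, and use equality in~\eref{def-pen0bis}, $\breve D(\overline S)\le aD^S$ and $c_3=8c_2\kappa$ to get $16c_2\pen(\overline\gs)\le 2c_2aD^S+2c_3\Delta(\overline S)$. Writing $2c_1\eta^2+2c_2aD^S=(2c_1\eta^2+c_2D^S)+(2a-1)c_2D^S$ and bounding both $2c_1\eta^2+c_2D^S$ and $c_2D^S$ by $D(\overline S)+c_3/(80a)$ via~\eref{Eq-fine}, the two copies of $c_3/(80a)$ recombine to $c_3/40$, yielding $2c_1\eta^2+2c_2aD^S\le 2aD(\overline S)+c_3/40$; hence the infimum in~\eref{eq-histo1} is $\le 2c_1\gh^2(\gs,\overline S)+2aD(\overline S)+2c_3\Delta(\overline S)+c_3/40$, and since $c_3/40+1.45c_3<1.5c_3$, taking the infimum over $\overline S\in\overline\SS$ gives the stated bound. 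The genuine obstacle is the process control of the second step with constants sharp enough for the penalty to absorb the fluctuations; but this is the same machinery as in the single‑model analysis, and the only new, delicate point is justifying — and paying for — the substitution $D^{S'}(\gs,\overline\gs)\le D^{S\cup S'}\le\breve D(\overline S)+\breve D(\overline S')$.
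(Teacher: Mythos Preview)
Your approach is correct in substance and arrives at the same conclusion, but it takes a genuinely different route from the paper's. The paper never re-opens the process-control machinery: it fixes some $\overline S'$ with $\overline\gs\in S'$, defines the shifted penalty $\pen'(\gt)=(1/8)\breve D(\overline S')+\pen(\gt)$, checks via Assumption~\ref{cond-union} that $\pen'$ satisfies the hypothesis~\eref{def-pen} of Theorem~\ref{main}, observes that adding a constant to the penalty leaves $\widehat\gs$ unchanged, and applies Theorem~\ref{main} as a black box. This yields $\gh^2(\gs,\widehat\gs)\le c_1\gh^2(\gs,\overline\gs)+8c_2\pen(\overline\gs)+c_2\breve D(\overline S')+c_3(1.45+\xi)$; minimizing over $S'\ni\overline\gs$ and using $\pen(\overline\gs)\ge(1/8)\inf_{S'\ni\overline\gs}\breve D(\overline S')$ turns $c_2\breve D(\overline S')$ into an extra $8c_2\pen(\overline\gs)$, giving the $16c_2$. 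Your proof instead re-derives Theorem~\ref{main} from scratch and injects $D^{S'}(\gs,\overline\gs)\le\breve D(\overline S)+\breve D(\overline S')$ directly at the peeling step, which is heavier but perfectly valid; the paper's trick buys a five-line proof by exploiting that a constant shift of $\pen$ is invisible to the minimization defining $\EE(\gX,\S)$.

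One correction: the coefficient in front of $[\gh^2(\gs,\overline\gs)+\gh^2(\gs,\gt)]$ delivered by Lemma~\ref{lem1} is $4c_0$, not $c_0$. With $4c_0$ your combination gives $-4c_0\gh^2(\gs,\gt')$ (not $-7c_0$), and after dividing by $4c_0$ the identities $2(c_2+4c_0)/(4c_0)=c_1$, $1/c_0=16c_2$, and $\kappa/(2c_0)=c_3$ yield \eref{eq-histo1} exactly, with no slack. Your treatment of the second display is essentially the paper's, with equivalent algebra.
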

We shall now turn to examples in the next sections. Throughout these sections, we shall assume that 
$\widehat \gs$ is built with a choice of the penalty function equal to $\pen_1$ as defined in \eref{def-pen6}. 
Finally, given a countable set $T$, we shall say that $\pi$ is a positive sub-probability on $T$, if  
$\pi(t)>0$ for all $t$ in $T$ and $\sum_{t\in T}\pi(t)\le 1$. Given such a $\pi$, we shall set $\Delta_{\pi}(t)=
-\log\left(\strut\pi(t)\right)$. It follows that choosing a function $\Delta$ which satisfies (\ref{Eq-subprob}) 
amounts to finding a subprobabilty $\pi$ on $\overline{\SS}$ and setting $\Delta=\Delta_\pi$ .

\subsection{Homoscedastic regression with unknown scaling\label{MS2}}
We consider here the regression setting described in Section~\ref{A3} where $\mu$ is the Lebesgue measure and both $p$ and $\gf$ are unknown. Throughout this section, we shall consider a family $\F$ of subsets $F\subset \R^{n}$ to approximate $\gf$ and a family $\QQ$ of densities $q$ together with a scaling parameter $\lambda>0$ to approximate 
$p$ by densities of the form $q_{0,\lambda}$. We recall from (\ref{Eq-trsc1}) that $q_{0,\lambda}(x)=\lambda^{-1}q(x/\lambda)$ for $\lambda>0$ and $x\in\R$ and make the following assumptions.
\begin{ass}\label{hypo-homo1}
The family $\F$ is a countable family of VC-subgraph classes $F$ with respective VC-indices $\overline V(F)$ and $\F$ is endowed with a positive sub-probability $\pi$.
\end{ass}
\begin{ass}\label{hypo-homo2}
The family $\QQ$ is countable and endowed with a positive sub-probability 
$\gamma$. For each $q\in \QQ$, Assumption~\ref{hypoQ} is satisfied and there exists a non-decreasing function $\w_q$ from $[1,2]$ into $\R_{+}$ such that $\w_q(1)=0$ and
\[
h^{2}\pa{q,q_{0,\lambda}}\le \w_{q}(\lambda)\quad\mbox{for all }\lambda\in [1,2].
\]
\end{ass}
Given a density $q$ on $\R$, a vector $\gg\in\R^{n}$ and $\lambda>0$, we define the density $\gq_{\gg,\lambda}$ with respect to the Lebesgue measure on $\R^n$ according to (\ref{Eq-trsc2}), that is
\[
\gq_{\gg,\lambda}(x_1,\ldots,x_n)=\pa{\rule{0mm}{3.5mm}
q_{0,\lambda}(x_1-g_{1}),\ldots,q_{0,\lambda}(x_n-g_{n})}.
\]
We consider the family of models $\overline{\SS}$ defined as follows. For $i\in\N$, $j\in\Z$, 
$k\in\{0,\ldots,2^{i}-1\}$, $F\in\F$ and $q\in\QQ$, let
\[
\overline{S}^{i,j,k}_{q,F}=\ac{\gq_{\gg,\lambda}\,\left|\,\gg\in F, \lambda=
\lambda_{i,j,k}\right.}\quad\mbox{with}\quad\lambda_{i,j,k}=2^{j}(1+k2^{-i})
\]
and define the family $\overline{\SS}$ as 
\begin{equation}\label{def-fam}
\overline{\SS}=\ac{\left.\overline{S}_{q,F}^{i,j,k}\,\right|\,q\in\QQ, F\in\F, (i,j,k)\in\N\times \Z\times \{0,\ldots,2^{i}-1\}}.
\end{equation}
We endow $\overline{\SS}$ with the weights $\Delta$ given by 
\begin{equation}\label{def-poids}
\Delta\pa{\overline{S}_{q,F}^{i,j,k}}=\Delta_{\gamma}(q)+\Delta_{\pi}(F)+|j|+i+2+i\log 2
\end{equation}
and we check that 
\begin{eqnarray*}
\sum_{\overline{S}\in\overline{\SS}}e^{-\Delta(\overline{S})}&=&\sum_{q\in\QQ}\gamma(q)\sum_{F\in\F}\pi(F)\sum_{j\in\Z}e^{-(|j|+1)}\sum_{i\in\N}e^{-(i+1)}\sum_{k=0}^{2^{i}-1}e^{-i\log 2}<1.
\end{eqnarray*}
Then, the following holds.
\begin{thm}\label{thm-homo}
Let $\F$ be a family of models satisfying Assumption~\ref{hypo-homo1} and $\QQ$ a family of 
densities satisfying Assumption~\ref{hypo-homo2}. For the collection $\overline{\SS}$ and weight 
function $\Delta$ defined by~\eref{def-fam} and~\eref{def-poids} respectively, any $\rho$-estimator 
$\widehat\gs$ satisfies for all densities $p$ and parameters $\gf\in\R^{n}$,
\begin{eqnarray*}
C\E\cro{\gh^{2}(\gs,\widehat \gs)}&\le&\inf_{q\in\QQ,\,\lambda>0}\left[nh^{2}(p,q_{0,\lambda})+
\inf_{i\ge 0}\cro{n\w_{q}\pa{1+2^{-i}}+i}+\Delta_{\gamma}(q)+\ab{\log\lambda}\right]\\
&&\mbox{}+\inf_{F\in\F}\cro{\inf_{\gg\in F}\gh^{2}(\gp_{\gf},\gp_{\gg})+\overline V(F)
\cro{1+\log_{+}\pa{n/\overline V(F)}}+\Delta_{\pi}(F)}.
\end{eqnarray*}
\end{thm}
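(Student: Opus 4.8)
The plan is to derive the bound as a consequence of the VC-subgraph form of the model selection inequality~\eref{Risk-VC}, applied to the family $\overline{\SS}$ of~\eref{def-fam} equipped with the weights~\eref{def-poids} and the penalty $\pen=\pen_1$, by selecting inside $\overline{\SS}$ a model that is simultaneously a good approximation of $\gs=\gp_{\gf}$ and cheap for $\Delta$.

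First I would verify that~\eref{Risk-VC} is available here. By Assumptions~\ref{hypo-homo1} and~\ref{hypo-homo2} the collection $\overline{\SS}$ is countable, and the computation displayed right after~\eref{def-poids} shows that $\Delta$ fulfils~\eref{Eq-subprob}. For a fixed triple $(i,j,k)$ the density $q_{0,\lambda_{i,j,k}}$ is a positive rescaling of the argument of $q$, hence is unimodal since $q$ satisfies Assumption~\ref{hypoQ}; consequently $\overline{S}^{i,j,k}_{q,F}$ is exactly of the form treated in Theorem~\ref{thmVC} (which relies on Proposition~\ref{perm}), so it is VC-subgraph with index at most $C'\overline V(F)$ and $\overline D\bigl(\overline{S}^{i,j,k}_{q,F}\bigr)\le C\overline V(F)\bigl[1+\log_{+}\bigl(n/\overline V(F)\bigr)\bigr]$. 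Attaching to each model a countable dense subset $S$ and taking $\pen=\pen_1$, I may invoke~\eref{Risk-VC}; integrating its deviation form over $\xi$, absorbing the resulting additive constant into the dimensional term (which is $\ge1$ since $\overline V(F)\ge1$), and using that $x\mapsto x[1+\log_{+}(n/x)]$ is non-decreasing together with $\overline V\bigl(\overline{S}^{i,j,k}_{q,F}\bigr)\le C'\overline V(F)$, I arrive at
\[
C\,\E\cro{\gh^{2}(\gs,\widehat\gs)}\le\inf_{q\in\QQ,\,F\in\F,\,(i,j,k)}\Bigl\{\gh^{2}\bigl(\gs,\overline{S}^{i,j,k}_{q,F}\bigr)+\overline V(F)\bigl[1+\log_{+}\bigl(n/\overline V(F)\bigr)\bigr]+\Delta\bigl(\overline{S}^{i,j,k}_{q,F}\bigr)\Bigr\}.
\]

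The heart of the argument is then to discretize the scale. Given $q\in\QQ$, $F\in\F$, $\gg\in F$, a target scale $\lambda>0$ and an integer $i\ge0$, I would choose $j\in\Z$ with $\lambda\in[2^{j},2^{j+1})$ and $k\in\{0,\ldots,2^{i}-1\}$ with $\lambda_{i,j,k}\le\lambda<\lambda_{i,j,k}+2^{j-i}$; since $\lambda_{i,j,k}\ge2^{j}$ this forces $\mu:=\lambda/\lambda_{i,j,k}\in[1,1+2^{-i}]\subset[1,2]$. Then translation invariance of $\gh$ gives $\gh^{2}(\gp_{\gg},\gq_{\gg,\lambda})=nh^{2}(p,q_{0,\lambda})$, and scale invariance of the Hellinger distance gives $h^{2}(q_{0,\lambda},q_{0,\lambda_{i,j,k}})=h^{2}(q,q_{0,\mu})\le\w_{q}(\mu)\le\w_{q}(1+2^{-i})$ by Assumption~\ref{hypo-homo2}. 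A triangle inequality in each coordinate through the points $p(\cdot-g_\ell)$ and $q_{0,\lambda}(\cdot-g_\ell)$, followed by two applications of~\eref{Eq-square}, then yields
\[
\gh^{2}\bigl(\gs,\overline{S}^{i,j,k}_{q,F}\bigr)\le\gh^{2}\bigl(\gp_{\gf},\gq_{\gg,\lambda_{i,j,k}}\bigr)\le C\Bigl[\gh^{2}(\gp_{\gf},\gp_{\gg})+nh^{2}(p,q_{0,\lambda})+n\w_{q}(1+2^{-i})\Bigr],
\]
while $|j|\le1+|\log_{2}\lambda|$ gives, from~\eref{def-poids}, $\Delta\bigl(\overline{S}^{i,j,k}_{q,F}\bigr)\le C\bigl[\Delta_{\gamma}(q)+\Delta_{\pi}(F)+|\log\lambda|+i+1\bigr]$. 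Inserting these two estimates into the displayed bound, taking the infimum over $\gg\in F$, then over $q$, $F$, $\lambda$ and $i$ (the indices $j,k$ being now determined by $\lambda$ and $i$), and finally pulling $\inf_{i\ge0}$ inside, gives the stated inequality.

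The coordinatewise triangle inequalities, the tracking of numerical constants and the absorption of the integration constant are routine. The two points requiring genuine care are: that a positive rescaling preserves unimodality, so that Theorem~\ref{thmVC} really does apply to each $\overline{S}^{i,j,k}_{q,F}$; and the scale discretization itself, namely that the grid $\lambda_{i,j,k}=2^{j}(1+k2^{-i})$ approximates any $\lambda>0$ within a multiplicative factor $1+2^{-i}$ while keeping $|j|$ of order $|\log\lambda|$, so that the scaling mismatch costs only $\w_{q}(1+2^{-i})$ per observation against a weight of order $i$. This last trade-off is exactly what produces the term $\inf_{i\ge0}\bigl[n\w_{q}(1+2^{-i})+i\bigr]$ in the conclusion, and it is the step I expect to demand the most attention.
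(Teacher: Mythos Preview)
Your proposal is correct and follows essentially the same route as the paper's proof: both apply the model selection bound (the paper uses Theorem~\ref{main0}/\eref{enInt} directly and then bounds $\overline D(\overline{S}^{i,j,k}_{q,F})$ via Theorem~\ref{thmVC}, you invoke the pre-packaged VC form~\eref{Risk-VC}), both note that $q_{0,\lambda_{i,j,k}}$ inherits unimodality so Theorem~\ref{thmVC} applies, and both perform the identical scale discretization $2^{j}\le\lambda<2^{j+1}$, $\lambda_{i,j,k}\le\lambda<\lambda_{i,j,k+1}$ with the same triangle inequality through $\gp_{\gg}$ and $\gq_{\gg,\lambda}$ and the same use of translation/scale invariance and Assumption~\ref{hypo-homo2}. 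The only cosmetic difference is the order in which the dimension bound is inserted.
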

\begin{proof}
Applying 
Theorem~\ref{main0} to the family $\overline{\SS}$ and using the fact that $\overline D$ is bounded from below by $c_2$, we derive that 
whatever the choices of $F\in\F$, $\gg\in F$, $i\in\N$, $j\in\Z$ and $k\in\{0,\ldots,2^{i}-1\}$, $\E\cro{\gh^{2}(\gs,\widehat\gs)}
=\E\cro{\gh^{2}( \gp_{\gf},\widehat\gs)}\le CA(q,F,\gg,i,j,k)$ where $C$ is a universal constant and
\[
A(q,F,\gg,i,j,k)=\gh^{2}\pa{\gp_{\gf},\gq_{\gg,\lambda_{i,j,k}}}+\overline D\left(\overline{S}^{i,j,k}_{q,F}\right)
+\Delta_{\gamma}(q)+\Delta_{\pi}(F)+|j|+i.
\]
By the triangular inequality, for $\lambda>0$ and $\lambda'=\lambda_{i,j,k}$,
\[
\gh\pa{\gp_{\gf},\gq_{\gg,\lambda'}}\le\gh(\gp_{\gf},\gp_{\gg})+\gh(\gp_{\gg},\gq_{\gg,\lambda})+
\gh(\gq_{\gg,\lambda},\gq_{\gg,\lambda'}).
\]
Since, for densities with respect to the Lebesgue measure, the Hellinger distance is translation 
and scale invariant, 
\[
\gh^2(\gp_{\gg},\gq_{\gg,\lambda})= nh^{2}(p,q_{0,\lambda})\quad\mbox{and}\quad
\gh(\gq_{\gg,\lambda},\gq_{\gg,\lambda'})=\gh(\gq_{\gg,\lambda/\lambda'},\gq_{\gg})
=\sqrt{n}h\left(q_{0,\lambda/\lambda'},q\right).
\]
To bound $\gh\pa{\gp_{\gf},\gq_{\gg,\lambda_{i,j,k}}}$, it remains to bound 
$h\left(q_{0,\lambda/\lambda'},q\right)$ when $\lambda'=\lambda_{i,j,k}$. Let $j\in\Z$ be such that 
$2^j\le\lambda<2^{j+1}$. Then $|j|\le|\log\lambda|/(\log2)+1$ and for all $i\in \N$ one can find 
$k\in\{0,\ldots,2^{i}-1\}$ such that
\[
\lambda_{i,j,k}=2^{j}\left[1+k2^{-i}\right]\le\lambda< \lambda_{i,j,k+1}=2^{j}\left[1+(k+1)2^{-i}\right],
\]
hence $1\le\lambda/\lambda'\le1+2^{-i}\le2$. It then follows from Assumption~\ref{hypo-homo2} that
\[
h^2(q,q_{0,\lambda/\lambda'})\le\w_q(\lambda/\lambda')\le\w_{q}\pa{1+2^{-i}}
\]
since $\w_{q}$ is non-decreasing. Putting all these bounds together for these choices of $i,j,k$ we 
derive that, for some universal constant $C'$ and all $i\in\N$,
\begin{eqnarray*}
C'A(q,F,\gg,i,j,k)&\le&\gh^2(\gp_{\gf},\gp_{\gg})+nh^{2}(p,q_{0,\lambda})+n\w_{q}\pa{1+2^{-i}}\\&&
\mbox{}+\overline D\left(\overline{S}^{i,j,k}_{q,F}\right)+\Delta_\gamma(q)+\Delta_\pi(F)+\log|\lambda|+i.
\end{eqnarray*}
It remains to bound $\overline D\left(\overline{S}^{i,j,k}_{q,F}\right)$. Since $q$ satisfies 
Assumption~\ref{hypoQ}, so does the density $q_{0,\lambda'}$ and it follows from 
Theorem~\ref{thmVC} that, under Assumptions~\ref{hypo-homo1} and~\ref{hypo-homo2}, 
\[
\overline D\left(\overline{S}^{i,j,k}_{q,F}\right)\le C''\overline V(F)\cro{1+\log_{+}\pa{n/\overline V(F)}},
\]
which concludes the proof. 
\end{proof}
Let us now comment on this result. To fix up the ideas, let us take for $q$ and $\lambda$ the values 
that provide the best approximation of $p$ by $q_{0,\lambda}$ among all choices in $\QQ\times(0,+\infty)$, 
even though this choice might not be the optimal one in view of minimizing our risk bound. The quantity 
$nh^{2}(p,q_{0,\lambda})$ therefore corresponds to the usual bias term resulting from the approximation 
of $p$ by the family of densities $q'_{0,\lambda'}$ for $q'\in\QQ$ and $\lambda'>0$. The quantity
\[
\inf_{F\in\F}\cro{\inf_{\gg\in F}\gh^{2}(\gp_{\gf},\gp_{\gg})+\overline V(F)
\cro{1+\log_{+}\pa{n/\overline V(F)}}+\Delta_{\pi}(F)}
\]
is the bound that we would get, if $p$ were known, for estimating $\gf$ by model selection among the 
family $\bigcup_{F\in\F}F$. Finally, the quantity $\Delta_{\gamma}(q)+\inf_{i\ge 0}\cro{n\w_{q}
\pa{1+2^{-i}}+i}+\ab{\log\lambda}$ comes from our estimation of $q$ and $\lambda$ by model selection. 

This regression model includes in particular the case of a known form of the errors corresponding to $p=\overline{p}_{0,\tau}$ where $\overline{p}$ is known and $\tau$ 
unknown, in which case it is natural to take $\QQ=\{\overline{p}\}$. The risk bound then becomes after a proper rescaling:
\begin{eqnarray}
C\E\cro{\gh^{2}(\gs,\widehat \gs)}
&\le&\inf_{F\in\F}\cro{\inf_{\gg\in F}\gh^{2}(\gp_{\gf},\gp_{\gg})+\overline V(F)
\cro{1+\log_{+}\pa{n/\overline V(F)}}+\Delta_{\pi}(F)}\nonumber\\
&&\mbox{}+\inf_{\lambda>0}\left[nh^{2}(\overline{p}_{0,\tau},\overline{p}_{0,\lambda})
+\ab{\log\lambda}\right]+\inf_{i\ge 0}\cro{n\w_{\overline{p}}\pa{1+2^{-i}}+i}\nonumber\\
&=&\inf_{F\in\F}\cro{\inf_{\gg\in F}\gh^{2}(\gp_{\gf},\gp_{\gg})+\overline V(F)
\cro{1+\log_{+}\pa{n/\overline V(F)}}+\Delta_{\pi}(F)}\nonumber\\
&&\mbox{}+\inf_{\sigma>0}\left[nh^{2}(\overline{p},\overline{p}_{0,\sigma})
+\ab{\log\tau\sigma}\right]+\inf_{i\ge 0}\cro{n\w_{\overline{p}}\pa{1+2^{-i}}+i}.
\label{Eq-Gamma9}
\end{eqnarray}
\subsection{An example\label{MS3}}
Let us consider the family of densities $\overline \QQ=\{p^\beta\,|\,\beta\ge0\}$ indexed by the 
parameter $\beta\in[0,+\infty)$ and given by
\begin{equation}\label{def-OQQ}
p^{\beta}(x)=\Lambda(\beta)e^{-|x|^{1/\beta}},\;\;\Lambda(\beta)=\left[\int_{\R}e^{-|x|^{1/\beta}}dx\right]^{-1}
\;\;\mbox{for }\beta>0\quad\mbox{and}\quad p^0={1\over 2}\1_{[-1,1]}.
\end{equation}
It follows from symmetry and a change of variables that
\begin{equation}
\left[\Lambda(\beta)\right]^{-1}=2\int_0^\infty e^{-x^{1/\beta}}dx=2\beta\Gamma(\beta)\quad\mbox{for }\beta>0.
\label{Eq-Gamma1}
\end{equation}
The family $\overline\QQ$ contains the Laplace and Gaussian distributions as well as the uniform 
distribution on $[-1,1]$ which corresponds to the limit of the densities $p^{\beta} $ when $\beta$ 
tends to 0. We recall from Section~\ref{E1} that these densities are of order $2/\beta$ for $\beta>2$ 
and of order 1 for $\beta<2$. In particular, for such densities inequality~\eref{regul} is satisfied with 
$\overline\alpha=(2/\beta)\wedge1$ in place of $\alpha$. We shall consider $\overline\QQ$ as a 
model for our unknown density $p$. In order to apply Theorem~\ref{thm-homo}, which only holds 
for a countable family $\QQ$, we have to discretize $\overline\QQ$. To do so, we need the following 
approximation result the proof of which is postponed to Section~\ref{P-prop10}.
\begin{prop}\label{approx-q}
For all $\beta>\beta'>0$, 
\begin{equation}
h^{2}\pa{p^{\beta},p^{\beta'}}\le\left\{\begin{array}{ll}(13/6)\left[(\beta/\beta')-1\right]^2&\;\mbox{if }
\:0<\beta'<\beta\le1,\\(7/4)(\beta-\beta')^2&\;\mbox{if }\:1<\beta'<\beta\le3,\\
\left[1.3(\beta-\beta')(\log\beta)\right]^2&\;\mbox{if }\:3<\beta'<\beta.\end{array}\right.
\label{Eq-gamma6}
\end{equation}
Moreover, 
\begin{equation}
h^{2}\pa{p^{\beta},p^0}\le\beta/2\quad\mbox{for }\,0<\beta\le1
\label{Eq-gamma7}
\end{equation}
and $\w_{p^\beta}(\lambda)\le(3/5)(\lambda-1)$ for all $\lambda\in [1,2]$ and $\beta\ge0$.
\end{prop}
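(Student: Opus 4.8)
The plan is to keep $h^2=1-\rho$ throughout and to handle the three groups of inequalities by related but distinct elementary devices; the one point that will need genuine care is a Fisher-information estimate via Gamma functions.

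\textbf{The bounds \eref{Eq-gamma6} by a Hellinger arc-length argument.} For $u>0$ the map $u\mapsto\sqrt{p^u}\in\IL_2(\mu)$ is differentiable in quadratic mean, with $\partial_u\sqrt{p^u}=\tfrac12\sqrt{p^u}\,\partial_u\log p^u$, and $u\mapsto\norm{\partial_u\sqrt{p^u}}_{\IL_2(\mu)}$ is continuous, hence locally bounded, on $(0,+\infty)$; so $u\mapsto\sqrt{p^u}$ is locally absolutely continuous and, for $0<\beta'<\beta$,
\[
\sqrt2\,h\pa{p^\beta,p^{\beta'}}=\norm{\sqrt{p^\beta}-\sqrt{p^{\beta'}}}_{\IL_2(\mu)}\le\int_{\beta'}^{\beta}\norm{\partial_u\sqrt{p^u}}_{\IL_2(\mu)}\,du .
\]
Everything then reduces to bounding $\norm{\partial_u\sqrt{p^u}}_{\IL_2(\mu)}^2=\tfrac14\,\mathbb{E}_{p^u}\!\cro{(\partial_u\log p^u)^2}$, a quarter of the Fisher information of $\ac{p^u}$.

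\textbf{Computing the Fisher information by a Gamma substitution.} For $X\sim p^u$ set $Y=|X|^{1/u}$; by a change of variables and \eref{Eq-Gamma1}, $Y$ follows the $\mathrm{Gamma}(u,1)$ law, and a direct computation from \eref{def-OQQ} gives $\partial_u\log p^u(X)=u^{-1}\pa{Y\log Y-\mathbb{E}[Y\log Y]}$ with centring constant $\mathbb{E}[Y\log Y]=1+u\,\Gamma'(u)/\Gamma(u)$. Hence, using $\mathbb{E}[Y^s]=\Gamma(u+s)/\Gamma(u)$,
\[
\norm{\partial_u\sqrt{p^u}}_{\IL_2(\mu)}^2=\frac{\mathrm{Var}(Y\log Y)}{4u^2}\le\frac{\mathbb{E}[Y^2(\log Y)^2]}{4u^2}=\frac{\Gamma''(u+2)}{4u^2\Gamma(u)}=\frac{u+1}{4u}\cdot\frac{\Gamma''(u+2)}{\Gamma(u+2)}.
\]
I would then bound $\Gamma''/\Gamma=(\Gamma'/\Gamma)^2+(\Gamma'/\Gamma)'$ by elementary di- and trigamma estimates ($\Gamma'(x)/\Gamma(x)<\log x$, monotonicity, $(\Gamma'/\Gamma)'>0$ decreasing) to obtain, with absolute constants comfortably below what is needed (e.g.\ $c^2\le3/4$, $c'\le2$, $c''\le1$),
\[
\norm{\partial_u\sqrt{p^u}}_{\IL_2(\mu)}\le c\,u^{-1/2}\ \ (0<u\le1),\qquad\le c'\ \ (1\le u\le3),\qquad\le c''\log u\ \ (u\ge3).
\]
Integrating the first bound and using $\sqrt r-1\le(r-1)/2$ for $r\ge1$ gives, for $\beta'\le\beta\le1$, $\sqrt2\,h\le c(\sqrt\beta-\sqrt{\beta'})\le c\sqrt{\beta'}\pa{\sqrt{\beta/\beta'}-1}\le\tfrac c2\pa{\beta/\beta'-1}$; the second gives $\sqrt2\,h\le c'(\beta-\beta')$ for $1\le\beta'\le\beta\le3$; and the third, by monotonicity of $\log$, gives $\sqrt2\,h\le c''(\beta-\beta')\log\beta$ for $3\le\beta'\le\beta$. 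Squaring produces the three lines of \eref{Eq-gamma6}.

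\textbf{The bound \eref{Eq-gamma7} and the bound on $\w_{p^\beta}$.} Since $p^0=\tfrac12\1_{[-1,1]}$ is supported where $p^\beta>0$, one has $p^0\cdot\mu\ll p^\beta\cdot\mu$ (while the opposite divergence is infinite), so $K(p^0\cdot\mu,p^\beta\cdot\mu)<\infty$; using the symmetry of $h$ and the known inequality $2h^2\le K$ in this finite direction, I would compute
\[
2h^2(p^\beta,p^0)\le K(p^0\cdot\mu,p^\beta\cdot\mu)=\int_{-1}^{1}\tfrac12\pa{\log\Gamma(1+\beta)+|x|^{1/\beta}}dx=\log\Gamma(1+\beta)+\frac{\beta}{1+\beta},
\]
where $1/\pa{2\Lambda(\beta)}=\beta\Gamma(\beta)=\Gamma(1+\beta)$ was used; since $\Gamma(1+\beta)\le1$ for $\beta\in(0,1]$, this yields $h^2(p^\beta,p^0)\le\beta/\pa{2(1+\beta)}\le\beta/2$. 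For $\w_{p^\beta}$, the same Gamma substitution applied to $\rho\pa{p^\beta,(p^\beta)_{0,\lambda}}=\int\sqrt{p^\beta(x)\,\lambda^{-1}p^\beta(x/\lambda)}\,d\mu(x)$ gives, with $m=\lambda^{1/\beta}\ge1$,
\[
\rho\pa{p^\beta,(p^\beta)_{0,\lambda}}=\lambda^{-1/2}\pa{\frac2{1+\lambda^{-1/\beta}}}^{\!\beta}=\pa{\frac{2\sqrt m}{m+1}}^{\!\beta}\in(0,1],
\]
so that by $1-e^{-x}\le x$ and then $m+1\le2m$,
\[
h^2\pa{p^\beta,(p^\beta)_{0,\lambda}}=1-\pa{\frac{2\sqrt m}{m+1}}^{\!\beta}\le\beta\log\frac{m+1}{2\sqrt m}\le\tfrac\beta2\log m=\tfrac12\log\lambda\le\tfrac35(\lambda-1)\quad(\lambda\in[1,2]),
\]
the last step being that $\tfrac35(\lambda-1)-\tfrac12\log\lambda$ vanishes at $1$ with positive, increasing derivative on $[1,2]$. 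The case $\beta=0$ is immediate ($\rho=\lambda^{-1/2}$ for $\lambda\ge1$, and $1-\lambda^{-1/2}\le\tfrac35(\lambda-1)$ on $[1,2]$ by a like one-variable check), completing $\w_{p^\beta}(\lambda)\le\tfrac35(\lambda-1)$ for all $\beta\ge0$.

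The main obstacle will be the second step: producing clean explicit upper bounds for $\Gamma''(u+2)/\Gamma(u+2)$ that are uniform on $(0,1]$ and on $[1,3]$, and that capture the correct order $\asymp(\log u)^2$ on $(3,+\infty)$ — where the leading $u^2(\log u)^2$ contributions to $\Gamma''(u+2)/\Gamma(u)$ must be seen to cancel upon division by $u^2$, so that the logarithmic (rather than polynomial) bound of the third line of \eref{Eq-gamma6} emerges. This is where the di- and trigamma inequalities require some care; everything else — the quadratic-mean differentiability justifying the arc-length inequality, the elementary Gamma identities, and the one-variable inequalities on $[1,2]$ — is routine.
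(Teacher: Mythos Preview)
Your approach is correct and, for \eref{Eq-gamma6}, shares the paper's reduction to the Fisher information $I(u)=4\norm{\partial_u\sqrt{p^u}}_{\IL_2}^2$ followed by explicit di- and trigamma bounds; indeed the paper's inequality $h^2\le\sup_uI(u)(\beta-\beta')^2/8$ is just the sup version of your arc-length bound. Where you genuinely differ is in the \emph{way} you bound $I$: via the substitution $Y=|X|^{1/u}\sim\mathrm{Gamma}(u,1)$ you obtain the exact identity $I(u)=u^{-2}\mathrm{Var}(Y\log Y)$ and then the clean upper bound $I(u)\le(u+1)\Gamma''(u+2)/(u\Gamma(u+2))$. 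This is tighter than the paper's route (which expands $\dot{p^u}$, uses $(a+b+c)^2\le3(a^2+b^2+c^2)$ and Binet's formula, obtaining $I(u)=O(u^{-2})$ near $0$ whereas your bound gives the true order $O(u^{-1})$), and it makes the verification of the explicit constants shorter. For \eref{Eq-gamma7} you use $2h^2\le K(p^0,p^\beta)$ while the paper bounds the affinity directly; both give $h^2\le\beta/(2(1+\beta))$. For $\w_{p^\beta}$ both you and the paper compute the same closed form for the affinity; you then use $1-e^{-x}\le x$ and $\log(m+1)\le\log(2m)$ to reach $\tfrac12\log\lambda\le\tfrac35(\lambda-1)$, whereas the paper bounds by $\lambda-\sqrt\lambda<\tfrac35(\lambda-1)$ --- either works.

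One small slip: when you integrate $c\,u^{-1/2}$ you get $2c(\sqrt\beta-\sqrt{\beta'})$, not $c(\sqrt\beta-\sqrt{\beta'})$; this costs a harmless factor of $4$ in the final constant and still lands well below $13/6$. Your closing remark about the ``$u^2(\log u)^2$ cancellation'' is a red herring: with your bound the division by $u^2$ has already happened in $I(u)=u^{-2}\mathrm{Var}(Y\log Y)$, so you only need $(u+1)/(4u)\le1/3$ together with $\Gamma''(u+2)/\Gamma(u+2)=\psi(u+2)^2+\psi'(u+2)\le(\log(u+2))^2+O(1/u)$, which is $\lesssim(\log u)^2$ for $u\ge3$.
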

We are now in a position to prove the following result.
\begin{cor}\label{C-QQ}
There exists a countable subset $\QQ$ of $\overline\QQ$ and a positive sub-probability 
$\gamma$ on $\QQ$ with the following properties: for all $p^{\beta} \in\overline\QQ$ there exists 
$p^b\in\QQ$ such that 
\begin{equation}
b\le\beta,\quad h^{2}\left(p^{\beta},p^b\right)\le n^{-1}
\label{eq-control}\end{equation}
and, for all $p^{b}\in\QQ$ and a suitable positive constant $c$ (independent of $n$),
\begin{equation}
\gamma\left(p^b\right)=\left\{\begin{array}{ll}c\left(\sqrt{n}\log n\right)^{-1}&\;\mbox{if }\:0\le b \le3,
\\cn^{-1}(b-3)^{-2}&\;\mbox{if }\:b>3.\end{array}\right.
\label{eq-control'}
\end{equation}
\end{cor}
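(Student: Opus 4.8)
The plan is to obtain $\QQ$ by discretizing $\overline\QQ$ on a grid whose local mesh is dictated by the three regimes of Proposition~\ref{approx-q}, and then to equip it with the weights $\gamma$ prescribed by~\eref{eq-control'}, verifying a posteriori that they sum to at most one for a small enough $c$.

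First I would build the grid, always approximating $p^{\beta}$ from below. Since $h^{2}(p^{\beta},p^{0})\le\beta/2\le n^{-1}$ whenever $\beta\le 2/n$ by~\eref{Eq-gamma7}, the single point $p^{0}$ already covers $(0,2/n]$. On $(2/n,1]$ I would use the geometric grid $b_{k}=(2/n)\pa{1+\sqrt{6/(13n)}}^{k}$ truncated at $1$: by the first line of~\eref{Eq-gamma6}, any $\beta\in[b_{k},b_{k+1}]$ satisfies $h^{2}(p^{\beta},p^{b_{k}})\le(13/6)\cro{(b_{k+1}/b_{k})-1}^{2}=n^{-1}$, and there are of order $\sqrt{n}\log n$ such points. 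On $(1,3]$ I would take the arithmetic grid with mesh $\sqrt{4/(7n)}$, which has of order $\sqrt{n}$ points and, by the second line of~\eref{Eq-gamma6}, approximates from below within $n^{-1}$. On $(3,+\infty)$ I would place, on each dyadic block $[2^{m},2^{m+1})$ meeting $(3,+\infty)$, a uniform grid with mesh $\eta_{m}=\pa{1.3(m+1)\sqrt{n}\log 2}^{-1}$: by the third line of~\eref{Eq-gamma6}, if $\beta$ lies in a cell of this grid with lower endpoint $b$, then $1.3(\beta-b)\log\beta\le 1.3\,\eta_{m}\log(2^{m+1})=n^{-1/2}$, hence $h^{2}(p^{\beta},p^{b})\le n^{-1}$. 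Keeping $0$, $1$ and $3$ as grid points (so that the three regimes patch at $\beta=1$ and $\beta=3$, the lower endpoint of each cell being the one used), this produces a countable $\QQ\subset\overline\QQ$ for which, for every $\beta\ge0$, the nearest grid point $b\le\beta$ satisfies $h^{2}(p^{\beta},p^{b})\le n^{-1}$; this is~\eref{eq-control}.

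It remains to show that $\gamma$ given by~\eref{eq-control'} is a positive sub-probability. On $[0,3]$ there are of order $\sqrt{n}\log n$ grid points, each of weight $c(\sqrt{n}\log n)^{-1}$, contributing a total of order $c$. On the block $(3,4)$ the grid points have $b-3=j\eta_{1}$ with $\eta_{1}$ of order $n^{-1/2}$, so their total weight is of order $cn^{-1}\eta_{1}^{-2}\sum_{j\ge1}j^{-2}$, again of order $c$; the singularity of $(b-3)^{-2}$ as $b\downarrow 3$ is harmless precisely because the nearest grid point sits at distance of order $n^{-1/2}$, so that $(b-3)^{-2}$ is never larger than order $n$ and the prefactor $n^{-1}$ absorbs it. On each block $[2^{m},2^{m+1})$ with $2^{m}\ge 4$ there are of order $2^{m}(m+1)\sqrt{n}$ points, each of weight of order $cn^{-1}2^{-2m}$ since $b-3$ is of order $2^{m}$ there, so the block contributes of order $cn^{-1/2}(m+1)2^{-m}$, and the sum over $m$ is of order $cn^{-1/2}$. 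Adding all contributions gives $\sum_{b\in\QQ}\gamma(p^{b})\le cC$ for an absolute constant $C$, so taking $c\le 1/C$ yields the required sub-probability and completes the proof of~\eref{eq-control'}.

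The only delicate point is this last summability check over the unbounded range $\beta>3$, where one must balance the number of grid points — growing like $\sqrt{n}$ times the length of the range times $\log\beta$ — against the decay $(b-3)^{-2}$, and in particular control the seemingly large weights near $b=3^{+}$. All the pointwise Hellinger bounds needed are already provided by Proposition~\ref{approx-q}, so the remainder is careful bookkeeping of cardinalities and weights.
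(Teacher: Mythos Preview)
Your argument is correct and matches the paper's approach closely on $[0,3]$: the paper also uses the single point $0$ to cover $(0,2/n]$, the same geometric grid with ratio $1+\sqrt{6/(13n)}$ on $(2/n,1]$, and the same arithmetic grid with step $2/\sqrt{7n}$ on $(1,3]$. On $(3,+\infty)$ you take a genuinely different route: you place uniform grids on dyadic blocks $[2^m,2^{m+1})$, whereas the paper uses a single increasing sequence
\[
b_{m+l+j}=3+\frac{j}{1.3\sqrt{n}\,\alpha_j},\qquad \alpha_j=\log\!\pa{3+\frac{j}{1.3\sqrt{n}}},
\]
designed so that the summability check collapses to $n^{-1}\sum_{j\ge1}(b_{m+l+j}-3)^{-2}=(1.3)^2\sum_{j\ge1}\alpha_j^2/j^2<\infty$ in one line. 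Your dyadic decomposition is more modular and perhaps more familiar; the paper's sequence is slicker in that it avoids the block-by-block bookkeeping. One small imprecision to tighten: on the block $[4,8)$ the quantity $b-3$ ranges over $[1,5)$ and is not uniformly ``of order $2^m$'', so the phrase ``each of weight of order $cn^{-1}2^{-2m}$'' is loose there; replacing the pointwise estimate by the integral bound $\int_1^5 x^{-2}\,dx<1$ against the point density $\eta_2^{-1}=O(\sqrt{n})$ still yields the claimed $O(cn^{-1/2})$ contribution.
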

\begin{proof}
We define $\QQ$ as the image by the application $\beta\mapsto p^\beta$ of a countable subset 
$B=B_1\cup B_2\cup B_3$ of $\R_+$. We first build $B_1=\{b_0<b_1<\ldots<b_m\}\subset[0,1]$ 
with $b_0=0$, $b_1=2/n$, $b_{i+1}=b_i\left(1+\sqrt{6/(13n)}\right)$ for $1\le i\le m-2$ and 
$b_m=1\le b_{m-1}\left(1+\sqrt{6/(13n)}\right)$, which defines the value of $m$. It follows from 
(\ref{Eq-gamma6}) and (\ref{Eq-gamma7}) that for any $\beta\le1$, there exists $b_i\le\beta$ 
with $h^{2}(p^{\beta},p^{b_i})\le n^{-1}$ and, since $b_{m-1}=2n^{-1}\left(1+\sqrt{6/(13n)}\right)^{m-2}
<1$, $m\le\kappa_1\sqrt{n}\log n$ for some constant $\kappa_1$. We then build $B_2=\{b_{m+1}
<\ldots<b_{m+l-1}\}\subset(1,3)$ in a similar way with $b_{i+1}=b_i+2/\sqrt{7n}$ for $m\le i\le m+l-2$ 
and $b_{m+l-1}<3\le b_{m+l-1}+2/\sqrt{7n}$. This implies that $l\le\kappa_2\sqrt{n}$ 
and, for $\beta\in(1,3)$, (\ref{eq-control}) holds with $b\in B_2$ by (\ref{Eq-gamma6}). Finally 
we build $B_3\subset[3,+\infty)$ as the infinite sequence $(b_{m+l+j})_{j\ge0}$ with 
\[
b_{m+l+j}=3+\frac{j}{1.3\sqrt{n}\alpha_j},\qquad\alpha_j=\log\left(3+\frac{j}{1.3\sqrt{n}}\right)
\quad\mbox{for }j\ge0.
\]
Since $\log3\le\alpha_j<\log(3+j)$, it follows that $b_{m+l+j}$ goes to infinity with $j$ and that
$\log(b_{m+l+j})<\alpha_j$. Therefore
\[
1.3\sqrt{n}\left(b_{m+l+j+1}-b_{m+l+j}\right)=\frac{j+1}{\alpha_{j+1}}-
\frac{j}{\alpha_j}<\frac{1}{\alpha_{j+1}}<\frac{1}{\log(b_{m+l+j+1})}.
\]
It follows from (\ref{Eq-gamma6}) that, for $\beta\ge3$, there exists $b\in B_3$ such that 
(\ref{eq-control}) holds. Since $|B_1\cup B_2|=m+l<\left(\kappa_1+\kappa_2\right)\sqrt{n}\log n$ and
\[
\frac{1}{n}\sum_{j\ge1}\left(b_{m+l+j}-3\right)^{-2}=(1.3)^2\sum_{j\ge1}\frac{\alpha_j^2}{j^2}
<(1.3)^2\sum_{j\ge1}\frac{\log^2(3+j)}{j^2}=\kappa_3<+\infty,
\]
we derive that 
\[
c^{-1}\gamma\left(\left\{p^b, b\in\QQ\right\}\right)=\sum_{j=0}^{m+l}\left(\sqrt{n}\log n\right)^{-1}+
\frac{1}{n}\sum_{j\ge1}\left(b_{m+l+j}-3\right)^{-2}<\left(1+\kappa_1+\kappa_2+\kappa_3\right).
\]
This implies that $\gamma$ is a sub-probability for a large enough value of $c$.
\end{proof}
We may now apply Theorem~\ref{thm-homo} to our example with the familly $\QQ$ and the 
sub-probability $\gamma$ provided by Corollary~\ref{C-QQ}, which leads to the following result. 
\begin{cor}
Let $\overline \QQ$ be the family of densities defined by~\eref{def-OQQ} and $\F$ be a family 
of models satisfying Assumption~\ref{hypo-homo1}. Let $\QQ$ and $\gamma$ be given by 
Corollary~\ref{C-QQ}. For $\overline \SS$ and $\Delta$ defined by~\eref{def-fam} and 
\eref{def-poids} respectively, the estimator $\widehat \gs$ satisfies, for all densities $p$ 
and vectors $\gf\in\R^{n}$,
\begin{eqnarray*}
C\E\cro{\gh^{2}(\gs,\widehat \gs)}&\le& \inf_{F\in\F}\cro{\inf_{\gg\in F}\gh^{2}(\gp_{\gf},\gp_{\gg})+
\overline V(F)\pa{1+\log_{+}\pa{n\over \overline V(F)}}+\Delta_{\pi}(F)}\\&&\mbox{}+
\inf_{\beta\ge0,\,\lambda>0}\left[nh^{2}(p,p^\beta_{0,\lambda})+\log_+(\beta)+\ab{\log\lambda}\right].
\end{eqnarray*}
In particular, if $p=p^\beta_{0,\tau}$ for some $p^\beta\in\overline\QQ$ with $\beta\ne2$ and $\tau>0$,
\begin{eqnarray*}
C(\beta)\E\cro{\gh^{2}(\gs,\widehat \gs)}&\le&\!\! \inf_{F\in\F}\cro{\inf_{\gg\in F}d_{1+[(2/\beta)\wedge1]}
\pa{\gf,\gg}+\overline V(F)\pa{1+\log_{+}\pa{n\over \overline V(F)}}+\Delta_{\pi}(F)}\\
&&\mbox{}+\inf_{\sigma>0}\left[nh^{2}(p^\beta,p^\beta_{0,\sigma})+
\ab{\log\tau\sigma}\right],
\end{eqnarray*}
where the distance $d_{1+\alpha}$ has been defined in (\ref{eq-d1}).
\end{cor}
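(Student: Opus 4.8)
The plan is to apply Theorem~\ref{thm-homo} to the countable family $\QQ\subset\overline\QQ$ and the sub-probability $\gamma$ produced by Corollary~\ref{C-QQ}, and then to simplify the resulting bound. First one checks the hypotheses of Theorem~\ref{thm-homo}: Assumption~\ref{hypo-homo1} on $\F$ is part of the data, while Assumption~\ref{hypo-homo2} for $\QQ$ holds because each $p^\beta(x)=\Lambda(\beta)e^{-|x|^{1/\beta}}$ is nonincreasing in $|x|$, hence unimodal, so Assumption~\ref{hypoQ} is satisfied by every $q\in\QQ$, and because Proposition~\ref{approx-q} supplies the nondecreasing modulus $\w_{p^b}(\lambda)\le(3/5)(\lambda-1)$ with $\w_{p^b}(1)=0$. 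Theorem~\ref{thm-homo} then yields, for a universal constant $C$,
\[
C\E\cro{\gh^{2}(\gs,\widehat\gs)}\le\inf_{q\in\QQ,\,\lambda>0}\cro{nh^{2}(p,q_{0,\lambda})+\inf_{i\ge0}\pa{n\w_q(1+2^{-i})+i}+\Delta_\gamma(q)+\ab{\log\lambda}}+B,
\]
where $B=\inf_{F\in\F}\cro{\inf_{\gg\in F}\gh^{2}(\gp_\gf,\gp_\gg)+\overline V(F)(1+\log_+(n/\overline V(F)))+\Delta_\pi(F)}$ is already the first term in the announced bound. It remains to bound the first infimum by a constant times $\inf_{\beta\ge0,\lambda>0}\cro{nh^{2}(p,p^\beta_{0,\lambda})+\log_+\beta+\ab{\log\lambda}}$.

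For the key step I would fix $\beta\ge0$ and $\lambda>0$, pick by Corollary~\ref{C-QQ} a point $p^b\in\QQ$ with $b\le\beta$ and $h^2(p^\beta,p^b)\le n^{-1}$, and take $q=p^b$ in the first infimum. The bias term is handled by the triangular inequality together with~\eref{Eq-square} and the translation/scale invariance of the Hellinger distance, which gives $h^2(p^\beta_{0,\lambda},p^b_{0,\lambda})=h^2(p^\beta,p^b)\le n^{-1}$ and hence $nh^2(p,p^b_{0,\lambda})\le 2nh^2(p,p^\beta_{0,\lambda})+2$. The scaling-discretization term is bounded using $\w_{p^b}(1+2^{-i})\le(3/5)2^{-i}$: choosing $i$ of order $\log_2 n$ makes $n\w_{p^b}(1+2^{-i})\le 1$, so $\inf_{i\ge0}\pa{n\w_{p^b}(1+2^{-i})+i}\le C(1+\log n)$. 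The weight is read off from~\eref{eq-control'}: $\Delta_\gamma(p^b)=-\log\gamma(p^b)$ equals $(1/2)\log n+\log\log n+O(1)$ when $b\le 3$ and $\log n+2\log_+(b-3)+O(1)$ when $b>3$, so in both cases $\Delta_\gamma(p^b)\le C(1+\log n+\log_+\beta)$ (using $b\le\beta$). Altogether the first infimum is at most $2nh^2(p,p^\beta_{0,\lambda})+\ab{\log\lambda}+C(1+\log n+\log_+\beta)$.

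It then remains to absorb the stray $1+\log n$. Since $\overline V(F)\ge 1$ for every $F\in\F$, one checks (splitting into $\overline V(F)\le n$ and $\overline V(F)>n$, and using $n\ge 3$) that $\overline V(F)(1+\log_+(n/\overline V(F)))\ge 1+\log n$, hence $B\ge 1+\log n$ and $C(1+\log n)\le CB$. Combining and taking the infimum over $\beta,\lambda$ gives, after renaming constants, the first displayed inequality of the corollary. For the particular case $p=p^\beta_{0,\tau}$ with $\beta\ne 2$, I would specialize the outer infimum to the index $\beta$, use scale invariance to write $nh^2(p^\beta_{0,\tau},p^\beta_{0,\lambda})=nh^2(p^\beta,p^\beta_{0,\sigma})$ with $\sigma=\lambda/\tau$ (so that $\ab{\log\lambda}=\ab{\log\tau\sigma}$), absorb $\log_+\beta$ into the constant, and rewrite $\gh^2(\gp_\gf,\gp_\gg)$ in terms of $d_{1+[(2/\beta)\wedge1]}(\gf,\gg)$ via Definition~\ref{D-order}, since for $\beta\ne 2$ the density $p^\beta$ (hence $p^\beta_{0,\tau}$) is of order $(2/\beta)\wedge1$; the constants from~\eref{regul} are folded into $C(\beta)$.

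The main obstacle I anticipate is the bookkeeping of the two discretizations: one must verify that the errors from approximating the scale $\lambda$ (through the index $i$ and the modulus $\w$) and the shape $\beta$ (through $\QQ$ and the weight $\Delta_\gamma$) are each only $O(\log n)$ up to the term $\log_+\beta$, and then recognize that this extra $\log n$ is dominated by the unavoidable model-complexity term $\overline V(F)(1+\log_+(n/\overline V(F)))\ge 1+\log n$, so that it disappears into the universal constant on the left.
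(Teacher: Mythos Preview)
Your proposal is correct and follows essentially the same route as the paper: verify Assumption~\ref{hypo-homo2} via unimodality and Proposition~\ref{approx-q}, apply Theorem~\ref{thm-homo}, approximate $p^\beta$ by some $p^b\in\QQ$ from Corollary~\ref{C-QQ} to control the bias, bound the scaling and weight contributions by $O(\log n+\log_+\beta)$, and then absorb the stray $\log n$ into the model term using $\overline V(F)(1+\log_+(n/\overline V(F)))\ge 1+\log n$. The specialization to $p=p^\beta_{0,\tau}$ via scale invariance and the order $(2/\beta)\wedge 1$ of $p^\beta$ is also exactly how the paper concludes.
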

\begin{proof}
Clearly, the family $\QQ$ satisfies Assumption~\ref{hypo-homo2}, the last requirement deriving 
from Proposition~\ref{approx-q} with $\w_{q}(\lambda)=(3/5)(\lambda-1)$ for all $\lambda\in [1,2]$ 
and $q\in \QQ$. Under Assumption~\ref{hypo-homo1} on the family $\F$, we may apply 
Theorem~\ref{thm-homo} and we get that, for $q\in \QQ$, $\lambda\in\R_{+}\setminus\{0\}$ and $\gf\in\R^{n}$, the risk of $\widehat \gs$ is bounded by $C(R_1+R_2)$ with
\[
R_1=\inf_{F\in\F}\cro{\inf_{\gg\in F}\gh^{2}(\gp_{\gf},\gp_{\gg})+\overline V(F)\pa{1+\log_{+}\pa{n\over \overline V(F)}}+\Delta_{\pi}(F)}+\inf_{i\ge 0}\cro{n2^{-i}+i}
\]
and
\[
R_2=\inf_{q\in\QQ,\,\lambda>0}\left[nh^{2}(p,q_{0,\lambda})+\Delta_{\gamma}(q)+
\ab{\log\lambda}\right].
\]
Let us first observe that, since $\overline V(F)\ge1$, $\overline V(F)\pa{1+\log_+\pa{n/\overline V(F)}}\ge1+\log n$, so that the term $\inf_{i\ge 0}\cro{n2^{-i}+i}\le2(1+\log n)$ in $R_1$ can be ignored at the price of the modification of the universal constant $C$. The Hellinger distance being unchanged by scale changes, $h^2(p,q_{0,\lambda})\le2h^2(p,p^\beta_{0,\lambda})+2h^2(p^\beta,q)$ for any $p^\beta\in\overline{\QQ}$ and $q\in\QQ$ so that, with $q$ chosen in order that $h^2(p^\beta,q)\le n^{-1}$,
\[
R_2\le\inf_{\beta\ge0,\,\lambda>0}\left[2nh^{2}(p,p^\beta_{0,\lambda})+2+
\Delta_{\gamma}(p^\beta)+\ab{\log\lambda}\right].
\]
In view of (\ref{eq-control}), $\Delta_{\gamma}(p^\beta)\le C'+\log n+2\log_+(\beta)$
and the first risk bound follows since we may again omit terms of order $\log n$. The second one then derives from the fact that $p^\beta$ is of order $(2/\beta)\wedge1$ for $\beta\ne2$ and (\ref{regul}), arguing as we did to get (\ref{Eq-Gamma9}).
\end{proof}

\subsection{Random design regression\label{MS4}}
We now turn back to the framework of Section~\ref{E4}. The same arguments with 
Theorem~\ref{main0} replacing Theorem~\ref{main00} lead to the following generalization of Theorem~\ref{T-RD}.
\begin{thm}\label{T-RD+}
If $q$ is unimodal and symmetric and $\F$ is a family of models for $f$ satisfying Assumption~\ref{hypo-homo1}, there exists a $\rho$-estimator $\widehat s=q_{\widehat f}$ of $s=p_{f}$ such that for all $\xi>0$, with probability at least $1-e^{-\xi}$,
\begin{eqnarray*}
\lefteqn{Ch^{2}(p_{f},q_{\widehat f})}\quad\\
&\le& \inf_{F\in \F}\ac{\inf_{g\in F}h^{2}\pa{p_{f},q_{g}}+{\overline V(F)\over n}\left[1+\log_+\left(\frac{n}{\overline V(F)}\right)\right]+{\Delta_{\pi}(F)\over n}}+{\xi\over n}\\
&\le& 2h^{2}(p,q)+\inf_{F\in \F}\ac{\inf_{g\in F}2\ell(f,g)+{\overline V(F)\over n}\left[1+\log_+\left(\frac{n}{\overline V(F)}\right)\right]+{\Delta_{\pi}(F)\over n}}+{\xi\over n}\\
\end{eqnarray*}
with $\ell$ given by (\ref{Eq-ell}).
In particular, if $p$ is known, unimodal and symmetric and $q=p$, 
\[
C\ell(f,\widehat f)\le \inf_{F\in \F}\ac{\inf_{g\in F}\ell(f,g)+{\overline V(F)\over n}\left[1+\log_+\left(\frac{n}{\overline V(F)}\right)\right]+{\Delta_{\pi}(F)\over n}}+{\xi\over n}
\]
with probability at least $1-e^{-\xi}$.\\
If, morever, (\ref{regul}) holds and $\max\left\{\sup_{g\in\FF}\|g\|_\infty,\|f\|_\infty\right\}\le b<+\infty$, then
\[
C'\norm{f-\widehat f}_{1+\alpha,\nu}^{1+\alpha}\le
\inf_{F\in \F}\ac{\inf_{g\in F}\norm{f-g}_{1+\alpha,\nu}^{1+\alpha}+{\overline V(F)\over n}
\left[1+\log_+\left(\frac{n}{\overline V(F)}\right)\right]+{\Delta_{\pi}(F)\over n}}+{\xi\over n}
\]
with probability at least $1-e^{-\xi}$ for some constant $C'$ depending only on $A_p,a_p,b$ and $\alpha$.
\end{thm}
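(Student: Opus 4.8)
The plan is to apply the model-selection bound of Theorem~\ref{main0} to the collection of density models attached to $\F$, in the same way that Theorem~\ref{T-RD} applied Theorem~\ref{main00} to a single such model. First I would set $\overline\SS=\{\overline S_F\mid F\in\F\}$ and put on it the weights $\Delta(\overline S_F)=\Delta_\pi(F)=-\log\pi(F)$; then \eref{Eq-subprob} holds, since $\sum_{F\in\F}e^{-\Delta(\overline S_F)}=\sum_{F\in\F}\pi(F)\le1$ by Assumption~\ref{hypo-homo1}. To each $\overline S_F$ I attach a countable dense subset $S_F$ (a $0$-net), form $\S=\bigcup_{F\in\F}S_F$, and build $\widehat\gs$ on $\S$ with the penalty $\pen=\pen_1$ of \eref{def-pen6}. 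Exactly as in the proof of Theorem~\ref{T-RD}, symmetry of $q$ gives $\rho(q_g,r)=\rho(q_{g'},r)$ for $r=(q_g+q_{g'})/2$, so that this construction does not involve the unknown $\nu$, and, since $q$ is unimodal, $\{q_g\mid g\in F\}$ is VC-subgraph with index at most $C'\overline V(F)$; Theorem~\ref{main1} then yields $c_2\overline D^{S_F}\le C\,\overline V(F)[1+\log_+(n/\overline V(F))]$ for every $F\in\F$.

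Next I would read the risk bound off \eref{Eq-bou5} applied with $\gs=\mathbf{p_f}$: with probability at least $1-e^{-\xi}$,
\[
\gh^{2}(\mathbf{p_f},\widehat\gs)\le\inf_{\overline\gs\in\S}\ac{c_1\gh^{2}(\mathbf{p_f},\overline\gs)+8c_2\pen_1(\overline\gs)}+c_3(1.45+\xi).
\]
For $\overline\gs\in S_F$ one has $\pen_1(\overline\gs)\le(1/8)\overline D^{S_F}+\kappa\Delta_\pi(F)$, a quantity depending on $F$ only; restricting the infimum to $\overline\gs\in S_F$ and using that $S_F$ is dense in $\overline S_F$, so that $\inf_{\overline\gs\in S_F}\gh^{2}(\mathbf{p_f},\overline\gs)=n\inf_{g\in F}h^{2}(p_f,q_g)$, then taking the infimum over $F\in\F$, and finally writing $\widehat\gs=\mathbf{q_{\widehat f}}$ (the closure argument of Section~\ref{C1} keeps the estimator of this product form) so that $\gh^{2}(\mathbf{p_f},\widehat\gs)=nh^{2}(p_f,q_{\widehat f})$, I obtain, after dividing by $n$ and absorbing the numerical constants (the term $1.45c_3/n$ being dominated by $\overline V(F)[1+\log_+(n/\overline V(F))]/n\ge1/n$), the first displayed inequality of the theorem.

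The remaining statements are purely algebraic. For the second inequality I would combine the triangle inequality with \eref{Eq-square} for $\alpha=1$ to get $h^{2}(p_f,q_g)\le2h^{2}(p_f,p_g)+2h^{2}(p_g,q_g)$, then use translation and scale invariance of the Hellinger distance for densities with respect to Lebesgue measure to write $h^{2}(p_g,q_g)=\int_{\mathscr{W}}h^{2}(p(\cdot-g(w)),q(\cdot-g(w)))\,d\nu(w)=h^{2}(p,q)$ and $h^{2}(p_f,p_g)=\ell(f,g)$ by \eref{Eq-ell}, so that $\inf_{g\in F}h^{2}(p_f,q_g)\le2h^{2}(p,q)+2\inf_{g\in F}\ell(f,g)$; plugging this into the first inequality gives the second. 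When $p$ is known and $q=p$ one has $h^{2}(p,q)=0$ and $q_{\widehat f}=p_{\widehat f}$, so the first inequality reads directly as the stated bound for $\ell(f,\widehat f)=h^{2}(p_f,p_{\widehat f})$. Finally, if \eref{regul} holds and $\max\{\sup_{g\in\FF}\|g\|_\infty,\|f\|_\infty\}\le b$, the two-sided comparison between $\ell(g,g')$ and $\norm{g-g'}_{1+\alpha,\nu}^{1+\alpha}$ recorded in Section~\ref{E4} (with constants depending only on $A_p,a_p,b,\alpha$) turns the $p=q$ bound into the last displayed inequality.

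As for difficulty, there is no genuine obstacle: the proof is an assembly of Theorem~\ref{main0}, the VC-subgraph estimate of Theorem~\ref{main1}, and a few elementary Hellinger identities. The only points deserving care are that the penalty $\pen_1$ is essentially constant on each net $S_F$ --- which is exactly what lets one reorganize the infimum over $\S$ as $\inf_{F\in\F}\inf_{g\in F}$ and thereby obtain an oracle-type bound over the family $\F$ --- and, as in the single-model case, that the symmetry of $q$ keeps the whole construction free of the unknown design distribution $\nu$.
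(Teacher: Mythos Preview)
Your proposal is correct and follows essentially the same route as the paper, which simply says that ``the same arguments'' as for Theorem~\ref{T-RD} ``with Theorem~\ref{main0} replacing Theorem~\ref{main00}'' give the result; you have accurately spelled out those arguments (the VC-subgraph bound from Theorem~\ref{main1} on each $\overline S_F$, the choice $\pen=\pen_1$, the application of \eref{Eq-bou5}, and the elementary Hellinger manipulations), and your derivation is in fact the content of the general VC-case bound \eref{Risk-VC} specialized to the random-design models.
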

We are not aware of any other procedure that leads to a comparable result. To illustrate this fact, let us consider the following example. We assume that $p$ is approximately known (approximately equal to $q$) and that the regression function $f$ takes the form
\[
f=\Psi(\zeta)\quad\mbox{with}\quad\zeta=\sum_{j=1}^M\beta_j\zeta_j,\;\;
\bm{\beta}=(\beta_1,\ldots,\beta_M)\in\R^M,
\]
where the $\zeta_j$ are $M$ given functions on $\mathscr{W}$, $\Psi$ is an unknown non-decreasing function on $\R$ and $M$ may be larger than $n$ but most of the coefficients 
$\beta_j$ are equal to zero, which means that we ignore which functions $\zeta_j$ are really influencial. We also choose a finite set $\{\Psi_k, 1\le k\le K\}$ of non-decreasing
functions to approximate $\Psi$.

Given $k\in\{1,\ldots,K\}$ and a non-void subset $m$ of ${\mathcal M}=\{1,\ldots,M\}$, we consider the model
\[
F_{k,m}=\left\{\Psi_k\left(\sum_{j\in m}\beta_j\zeta_j\right),\; \beta_j\in\R\quad\mbox{for all }j\in{\mathcal M}\right\}.
\]
This leads to the family $\F=\left\{\strut F_{k,m},1\le k\le K\mbox{ and }
m\subset{\mathcal M}\right\}$ of models for $f$ and we may set $\pi(F_{k,m})=\left[KM
(eM/|m|)^{|m|}\right]^{-1}$, so that
\[
\sum_{k=1}^K\sum_{m\in{\mathcal M}}\pi(F_{k,m})=\sum_{k=1}^K\sum_{l=1}^M
\sum_{\{m\in{\mathcal M}\,|\,|m|=l\}}\frac{1}{KM}\left(\frac{eM}{l}\right)^{-l}\le1,
\]
since $\binom{M}{l}\le(eM/l)^{l}$. It follows after some simplifications, that the quadratic risk of the corresponding $\rho$-estimator can be bounded in the following way (since $n\ge3$):
\begin{eqnarray*}
\lefteqn{C\E\left[h^{2}\left(p_{f},q_{\widehat f}\right)\right]}\qquad\\&\le&\ h^{2}(p,q)+\ \inf_{1\le k\le K}
\inf_{m\in\M}\cro{\inf_{g\in F_{k,m}}\ell\pa{\Psi(\zeta),g}+{|m|\over n}\log\left(\frac{nM}{|m|}\right)}+{\log(KM)\over n}.
\end{eqnarray*}
%
\section{VC-classes and subgraphs\label{S}}
We recall, following Dudley~\citeyearpar{MR876079} that
\begin{defi}\label{VC-Class}
Let $\CC$ be a non-empty class of subsets of a set $\Xi$. If $A\subset\Xi$ with 
$|A|=n$, then
\[
\Delta_n(\CC,A)=|\{A\cap B,\,B\in\CC\}|\qquad\mbox{and}\qquad
\Delta_n(\CC)=\max_{A\subset\Xi,\,|A|=n}\Delta_n(\CC,A).
\]
If $V=\sup\,\{n\in\N\,|\,\Delta_n(\CC)=2^n\}<+\infty$, then $\CC$ is a VC-class with VC-dimension $V$ and VC-index $\overline{V}=\inf\,\{n\in\N\,|\,\Delta_n(\CC)<2^n\}
=V+1$.

A class $\FF$ of functions from a set $\X$ with values in $(-\infty,+\infty]$ is 
VC-subgraph with dimension $V$ and index $\overline V$ if the class of subgraphs 
$\{(x,u)\in\X\times\R,\ f(x)>u\}$ as $f$ varies among $\FF$ is a VC-class of sets in
$\X\times\R$ with dimension $V$ and index $\overline V$.
\end{defi}
It immediately follows from this definition that any subset of a VC-subgraph class with index 
$\overline V$ is VC-subgraph with index not larger than $\overline V$, a property that we shall repeatedly use. Other known properties of VC-subgraph classes directly derive from the properties of VC-classes as described in van der Vaart and Wellner~\citeyearpar{MR1385671}, Lemma 2.6.17.

If $\FF$ is VC-subgraph with index $\overline V$ on a set $\X$ and $\norm{f}_{\infty}\le 1$ for all $f\in \FF$, it follows from Theorem~2.6.7 in van der Vaart and Wellner~\citeyearpar{MR1385671} that, for some numerical constant $K$ and all probability measures $Q$ on $\X$,
\begin{equation}\label{propVC}
N(\FF,Q,\epp)\le K\overline V  (16e)^{\overline V+1}
\epp^{-2\left(\overline V-1\right)}\quad\mbox{for }0<\epp<1.
\end{equation}
Noticing that $N(\FF,Q,1)=1$ since the closed ball of center 0 and radius 1 contains $\FF$, we derive, since $\overline{V}\ge1$, that there exists a universal constant $A$ such that
\begin{equation}
\log N(\FF,Q,\epp)\le 2\overline V  \log_+\pa{A/\epp}\quad\mbox{for all }\epp>0.
\label{Eq-h(x)}
\end{equation}
%
When the functions lying in $\FF$ are all non-negative, it is not difficult to see that we can restrict the class of subgraphs to that of  ``non-negative subgraphs" gathering the sets of the form $\{(x,u)\in\X\times \R,\ f(x)>u\ge 0\}$. If $S$ is a subset of a linear space of dimension $D$ then $S$ is VC-subgraph with index $\overline V\le D+2$ (see Lemma~2.6.15 in van der Vaart and Wellner~\citeyearpar{MR1385671}). We shall repeatedly use the following properties of VC-subgraph classes. 

\begin{prop}\label{perm}
Let $\FF$ be VC-subgraph with dimension $V$ on a set $\X$.
\begin{itemize}
\item[($i$)] For all functions $g$ on $\X$, $\FF+g=\{f+g,\ f\in\FF\}$ is VC-subgraph with dimension not larger than $V$.
\item[($ii$)] For all monotone function $\varphi$ on $\R$, $\varphi(\FF)=
\{\varphi\circ f,\ f\in \FF\}$ is VC-subgraph with dimension not larger than $V$. 
\item[($iii$)] The class $-\FF$ is VC-subgraph with dimension not larger than $V$.
\item[($iv$)] The class $\FF_{+}=\{f\vee 0,\ f\in\FF\}$ is VC-subgraph with dimension not larger than $V$.
\item[($v$)] If $\FF$ and $\GG$ are VC-subgraph with respective dimensions $V$ and $V'$, $\FF\vee\GG=\{f\vee g,\,f\in\FF,\,g\in\GG\}$ is VC-subgraph with dimension 
not larger than $4.701(V+V')$ and the same holds for $\FF\wedge\GG=\{f\wedge g,
\,f\in\FF,\,g\in\GG\}$.
\item[($vi$)] If $q$ is unimodal, the class $q(\FF)=\{q\circ f,\ f\in \FF\}$ is VC-subgraph with dimension not larger than $9.41V$.
\item[($vii$)] Let $\psi$ be given by (\ref{def-psi}), $g$ be some non-negative function 
on $\X$ and all functions in $\FF$ be non-negative. The class of functions
$\psi\left(\sqrt{\FF/g}\right)=\left\{\left.\psi(\sqrt{f/g})\,\right|\,f\in\FF\right\}$ is VC-subgraph with dimension not larger than $V$.
\end{itemize}
\end{prop}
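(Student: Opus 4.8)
The seven assertions split into two groups: $(ii)$ and $(iii)$ say that composing the functions of $\FF$ with a monotone map does not increase the VC-index, and the others describe how the subgraphs of the transformed functions sit inside $\X\times\R$ relative to the original subgraphs. So the plan is to record a few elementary stability properties of VC-\emph{classes of sets}, then the monotone-composition lemma, then read off each item. For a class $\CC$ of subsets of a set $\Xi$ with VC-dimension $V$ (and a second class $\DD$ of dimension $W$ when needed) I would record, each fact being immediate from the definition of shattering: (a) for any map $\Phi$ into $\Xi$, $\{\Phi^{-1}(A)\mid A\in\CC\}$ has VC-dimension $\le V$, since if it shatters $p_{1},\ldots,p_{m}$ then $\CC$ shatters the necessarily distinct points $\Phi(p_{1}),\ldots,\Phi(p_{m})$; (b) for fixed $C\subset\Xi$, both $\{A\cap C\mid A\in\CC\}$ and $\{A\cup C\mid A\in\CC\}$ have VC-dimension $\le V$, a shattered set having to avoid $C$, resp.\ be contained in $C$; (c) the complement class $\{A^{c}\mid A\in\CC\}$ has VC-dimension $V$; (d) $\{A\cup B\mid A\in\CC,\,B\in\DD\}$ and $\{A\cap B\mid A\in\CC,\,B\in\DD\}$ have VC-dimension $\le 4.701(V+W)$, because on any $m$-point set the trace of either class has at most $\binom{m}{\le V}\binom{m}{\le W}\le(em/V)^{V}(em/W)^{W}$ elements and the binary-entropy bound together with $2e\cdot 4.701<2^{4.701}$ forces this below $2^{m}$ once $m>4.701(V+W)$ (the cases $V=0$ or $W=0$ handled by (b)).

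For the monotone-composition lemma — if $\FF$ is VC-subgraph with index $\overline V$ and $\varphi\colon\R\to\R$ is monotone then $\varphi(\FF)$ is VC-subgraph with index $\le\overline V$ — take $\varphi$ non-decreasing first. For each real $u$ the superlevel set $\{t\mid\varphi(t)>u\}$ is $(\tau(u),+\infty)$ or $[\tau(u),+\infty)$, and both $\tau(u)$ and the open/closed alternative depend on $\varphi$ and $u$ only. If the subgraphs of $\varphi(\FF)$ shatter $\{(x_{i},u_{i})\}_{i\le m}$ then every $\tau(u_{i})$ is finite (an always-true or always-false coordinate is never shattered); replacing $u_{i}$ by $v_{i}=\tau(u_{i})$ at the ``open'' indices and by $v_{i}=\tau(u_{i})-\varepsilon_{i}$, $\varepsilon_{i}>0$ small, at the ``closed'' indices — and using that only the finitely many functions $f_{B}$ witnessing the shattering need be respected — one checks that the subgraphs of $\FF$ shatter $\{(x_{i},v_{i})\}_{i\le m}$, whence $m\le V$. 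For $\varphi$ non-increasing the same perturbation shows that $\{\,(x,u)\mid f(x)\le u,\ f\in\FF\,\}$, the family of complements of the subgraphs of $\FF$ and so of the same VC-dimension by (c), shatters $\{(x_{i},v_{i})\}_{i\le m}$. This gives $(ii)$, and $(iii)$ is the case $\varphi(t)=-t$ (see also van der Vaart and Wellner~\citeyearpar{MR1385671}).

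It then remains to identify subgraphs. For $(i)$, the subgraph of $f+g$ is the preimage of that of $f$ under $(x,u)\mapsto(x,u-g(x))$, so (a) applies. For $(iv)$, the subgraph of $f\vee 0$ is the subgraph of $f$ together with the fixed half-space $\X\times(-\infty,0)$, so (b) applies. For $(v)$, the subgraphs of $f\vee g$ and $f\wedge g$ are respectively the union and the intersection of those of $f$ and $g$, so (d) applies. For $(vi)$, if $m$ is a mode of $q$ put $q_{1}(t)=q(t\wedge m)$, $q_{2}(t)=q(t\vee m)$; then $q_{1}$ is non-decreasing, $q_{2}$ non-increasing, and $q\circ f=(q_{1}\circ f)\wedge(q_{2}\circ f)$, so by $(ii)$ the classes $q_{1}(\FF)$ and $q_{2}(\FF)$ are VC-subgraph of index $\le\overline V$ and, by the intersection part of $(v)$, $q(\FF)\subset q_{1}(\FF)\wedge q_{2}(\FF)$ has VC-dimension $\le 2\cdot 4.701\,V<9.41\,V$. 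For $(vii)$, $w\mapsto\psi(\sqrt w)$ is non-decreasing on $[0,+\infty]$, so by $(ii)$ it suffices that $\{f/g\mid f\in\FF\}$ be VC-subgraph of index $\le\overline V$: on $\{g>0\}$ the subgraph of $f/g$ is the preimage of that of $f$ under the fixed map $(x,u)\mapsto(x,u\,g(x))$, while on $\{g=0\}$ its trace depends on $f$ only through the slice $\{f>0\}$ of the subgraph of $f$ (here the conventions $a/0=+\infty$ and $0/0=1$ enter), so (a), (b), (c) finish it.

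There is no conceptual difficulty; the proof is bookkeeping, and the two points deserving care are the explicit constant of (d) — checking the Sauer--Shelah product bound drops below $2^{m}$ exactly at $m=4.701(V+W)$, which is what then produces the $9.41$ in $(vi)$ — and item $(vii)$, where one must be scrupulous about the paper's division conventions on $\{g=0\}$. Finally, the ``$q$ has at most $k$ modes'' variant announced in the remark closing this section follows from the same decomposition as $(vi)$ into $k+1$ monotone pieces and $k$ applications of $(v)$, with a resulting constant $C(k)$ growing with $k$.
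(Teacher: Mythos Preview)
Your proof is correct and, for items $(i)$--$(v)$, more self-contained than the paper's, which simply cites Lemma~2.6.18 of van der Vaart and Wellner~\citeyearpar{MR1385671} for $(i)$--$(iv)$ and the bound~(1.2) of van der Vaart and Wellner~\citeyearpar{MR2797943} for $(v)$. Your derivation of the constant $4.701$ via the Sauer--Shelah product bound and the inequality $2e\cdot 4.701<2^{4.701}$ is exactly the content of that reference. For $(vi)$ you and the paper do the same thing: write a unimodal $q$ as $\varphi_1\wedge\varphi_2$ with $\varphi_1$ non-decreasing and $\varphi_2$ non-increasing, then combine $(ii)$ and $(v)$.

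The only genuine difference is in $(vii)$. The paper argues directly: if the subgraphs of $\phi(\FF/g)$ (with $\phi=\psi\circ\sqrt{\cdot}$) shatter $(x_i,u_i)_{i\le m}$, then necessarily $u_i<1$, and $u_i\ge0$ whenever $g(x_i)=0$; one then checks that $\phi((f/g)(x_i))>u_i$ is equivalent to $f(x_i)>g(x_i)\phi^{-1}(u_i)$ in both cases, so $\FF$ shatters $(x_i,g(x_i)\phi^{-1}(u_i))_{i\le m}$ and $m\le V$. You instead factor through $(ii)$ and reduce to showing that $\FF/g$ is VC-subgraph of index at most $\overline V$. That is legitimate (extend $\phi$ to be constant on $(-\infty,0)$ so that $(ii)$ applies verbatim), and your sketch contains the right ingredients, but it is rather compressed. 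The clean statement you are implicitly using is that, with $\Phi(x,u)=(x,u\,g(x))$, the subgraph of $f/g$ equals
\[
\Phi^{-1}\bigl(\text{subgraph of }f\bigr)\ \cup\ \bigl(\{g=0\}\times(-\infty,1)\bigr),
\]
a preimage class united with a \emph{fixed} set (check the four cases $g(x)>0$; $g(x)=0$ with $f(x)>0$ and $u\gtrless1$; $g(x)=0$ with $f(x)=0$ and $u\gtrless1$). With this identity your facts (a) and (b) finish the job immediately. Stating it explicitly would make your argument for $(vii)$ as transparent as the paper's direct one; as written, the phrase ``(a), (b), (c) finish it'' asks the reader to reconstruct this identity.
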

\begin{proof}
For a proof of $(i)-(iv)$, we refer to Lemma~2.6.18 in van der Vaart and Wellner\citeyearpar{MR1385671} and for $(v)$ to the bound (1.2) from van der Vaart and Wellner~\citeyearpar{MR2797943} together with the relationship between VC-classes and VC-subgraph classes as explained in van der Vaart and Wellner\citeyearpar{MR1385671}, Section 2.6.5.

For $(vi)$ we argue as follows :  $q$ can be written as $\varphi_1\wedge\varphi_2$ where $\varphi_1$ is non-decreasing and $\varphi_2$ non-increasing so that $q\circ f=(\varphi_1\circ f)\wedge(\varphi_2\circ f)$. It follows that $q(\FF)\subset
\varphi_1(\FF)\wedge\varphi_2(\FF)$. The bound then follows from $(v)$.

Let us finally prove $(vii)$. It will be useful here and later on to introduce the function $\phi$ from $[0,+\infty]$ to $[-1,1]$ given by 
\begin{equation}
\phi(x)=\psi\left(\sqrt{x}\right),\quad\phi(0/0)=\phi(1)=0\quad\mbox{and}\quad
\phi(x/0)=\phi(+\infty)=1\;\mbox{ for all }x>0,
\label{Eq-phi}
\end{equation}
according to the conventions of Section~(\ref{A7}). Note that $\phi$ is continuous and increasing, hence one-to-one.

Let $(x_{1},u_{1}),\ldots,(x_{m},u_{m})$ be $m\ge 1$ points in $\X\times(-\infty,+\infty]$ shattered by the subgraphs of $\phi(\FF/g)=\psi\pa{\sqrt{\FF/g}}$. It suffices to prove that $m\le V$. First note that we necessarily have $u_i<1$ for all i since $\phi$ is bounded by 1. In particular for all $i$, $\phi^{-1}(u_i)<+\infty$. Besides, because of our convention, we also have $u_i\ge0$ for those $i$ such that $g(x_i)=0$,  since otherwise there would be no $f$ in $\FF$ such that $\phi(f/g)(x_i)\le u_i<0$. For all $I\subset \{1,\ldots,m\}$ there exists an element $f$ of $\FF$, depending on $I$, such that $i$ belongs to $I$ if and only if $\phi(f/g)(x_i)>u_i$. This is  equivalent to $f(x_i)>g(x_i)\phi^{-1}(u_i)$ if $g(x_i)>0$ and equivalent to $f(x_i)>0$ when $g(x_i)=0$ since $u_i\ge0$. In both cases, this is equivalent to $f(x_i)>g(x_i)\phi^{-1}(u_i)$. This means that the subgraphs of $\FF$ shatter the set $\{(x_i,g(x_i)\phi^{-1}(u_i)),i=1,...,m\}$, which is possible only when $m\le V$.
\end{proof}
\noindent{\bf Remark:} The proof of $(vi)$ extends recursively to multimodal functions with a given number $k$ of modes by noticing that a function with $k$ modes can be seen as the supremum of a unimodal function and a multimodal one with  $k-1$ modes. It follows that if $q$ is multimodal with $k$ modes, $q(\FF)$ is VC-subgraph with dimension not larger than $C(k)V$.
%

\section{Proofs\label{P}}

\subsection{Proofs of Theorem~\ref{main00},~\ref{main0} and~\ref{main-histo}\label{P1}}
All three theorems actually follow from the following (slightly) stronger result. 
\begin{thm}\label{main}
Let $\gs\in\LL_0$, $\overline\gs\in\S$, $G(\gs,\overline \gs)$ be an arbitrary function of $\gs$ and $\overline \gs$ and let the penalty function $\pen$ satisfy
\begin{equation}\label{def-pen}
\pen(\gt)+G(\gs,\overline \gs)\ge\pen_0(\gt,\gs,\overline \gs)=\inf_{\{S\in\SS\,|\,S\ni\gt\}}\ac{(1/8)D^{S}(\gs,\overline \gs)+\kappa\Delta(\overline{S})}\quad\mbox{for all }\gt\in\S.
\end{equation}
Then the estimator $\widehat \gs$ satisfies, for all $\xi>0$,
\begin{equation}
\P_{\gs}\left[\gh^{2}\pa{\gs,\widehat \gs}\le c_1\gh^{2}(\gs,\overline\gs)-\gh^{2}(\gs,\S)+
8c_{2}\left[\pen(\overline \gs)+G(\gs,\overline \gs)\right]+c_{3}(1.45+\xi)\right]\ge1-e^{-\xi}.
\label{Eq-Mboun}
\end{equation}
\end{thm}
%

\subsubsection{Proof of Theorem~\ref{main00}\label{P1a}}
Taking $\SS=\{\overline{S}\}$ hence $\S=S$, $\Delta(\overline{S})=0$, $\pen(\gt)=0$ and
$G(\gs,\overline \gs)=\pen_0(\gt,\gs,\overline \gs)=(1/8)D^{S}(\gs,\overline \gs)$ for all 
$\gt\in S$ we derive from (\ref{Eq-Mboun}) that
\[
\P_{\gs}\left[\gh^{2}\pa{\gs,\widehat \gs}\le c_1\gh^{2}(\gs,\overline\gs)-\gh^{2}(\gs,S)
+c_{2}D^{S}(\gs,\overline \gs)+c_3(1.45+\xi)\right]\ge 1-e^{-\xi}\quad\mbox{for all }\xi>0.
\]
Then (\ref{Mmain}) follows from the fact that this inequality is true for all choices of $\overline \gs\in S$. 
As to (\ref{risk}) and (\ref{Eq-risk1}), they follow by integration (see our remark following 
Theorem~\ref{main00}).

\subsubsection{Proof of Theorem~\ref{main0}\label{P1b}}
Inequality~\eref{Eq-bou5} is a straightforward consequence of Theorem~\ref{main} since, 
for all $(\gs,\overline \gs)\in \LL_{0}\times \LL_{0}$, $D^{S}(\gs,\overline\gs)\le \overline D^{S}$, 
therefore (\ref{def-pen6}) implies that (\ref{def-pen}) holds with $G(\gs,\overline \gs)=0$ and then \eref{Eq-bou5} follows from 
(\ref{Eq-Mboun}) .

\subsubsection{Proof of Theorem~\ref{main-histo}\label{P1c}}
Let us fix some $\overline \gs\in \S$. There exists $\overline{S'}\in\overline{\SS}$ such that
$\overline \gs\in S'$ and, by the definition of $D^{S}(\cdot,\cdot)$ and Assumption~\ref{cond-union},
\[
D^{S}(\gs,\overline\gs)\le D^{S\cup S'}(\gs,\overline \gs)\le D^{S\cup S'}\le \breve{D}(\overline{S})
+\breve{D}(\overline{S}')\quad\mbox{for all }\overline{S}\in\overline{\SS}.
\]
If we set $G(\gs,\overline\gs)=(1/8)\breve{D}(\overline{S}')$, the penalty function $\pen$ therefore
satisfies for all $\gt\in \S$
\begin{eqnarray*}
G(\gs,\overline\gs)+\pen(\gt)&=& (1/8)\breve{D}(\overline{S}')+\pen(\gt)\\&\ge& (1/8)\breve{D}(\overline{S}')
+\inf_{\{\overline{S}\in\overline{\SS}\,|\,S\ni\gt\}}\ac{(1/8)\breve{D}(\overline{S})+\kappa\Delta(\overline S)}\\
&\ge&\inf_{\{S\in\SS\,|\,S\ni\gt\}}\ac{(1/8)D^{S}(\gs,\overline \gs)+\kappa\Delta(\overline S)}
\;\;=\;\;\pen_0(\gt,\gs,\overline \gs)
\end{eqnarray*}
and \eref{def-pen} holds. It therefore follows from Theorem~\ref{main} that the estimator $\widehat \gs$  satisfies, for all  $\xi>0$ with probability at least $1-e^{-\xi}$,
\begin{eqnarray*}
\gh^{2}\pa{\gs,\widehat \gs}&\le&c_{1}\gh^{2}\pa{\gs,\overline \gs}+8c_{2}\cro{\pen(\overline \gs)+G(\gs,\overline\gs)}
+c_{3}(1.45+\xi)\\&=&c_{1}\gh^{2}\pa{\gs,\overline \gs}+8c_{2}\pen(\overline \gs)
+c_{2}\breve{D}(\overline{S}')+c_{3}(1.45+\xi).
\end{eqnarray*}
Since this holds for all $S'\ni\overline{\gs}$ and $\pen(\overline \gs)\ge\inf_{\{\overline{S}'\in\overline{\SS}\,|\,S'\ni\overline{\gs}\}}\{(1/8)\breve{D}(\overline{S}')\}$ by (\ref{def-pen0bis}),
\begin{equation}
\P_{\gs}\left[\gh^{2}\pa{\gs,\widehat \gs}\le c_{1}\gh^{2}\pa{\gs,\overline \gs}+16c_{2}\pen(\overline \gs)+c_{3}(1.45+\xi)\right]\ge 1-e^{-\xi}\quad\mbox{for all }\xi>0.
\label{Eq-histo1}
\end{equation}
Then \eref{eq-histo1} follows from the fact that $\overline \gs$ is arbitrary in $\S$.

Let us now fix some model $\overline S\in\overline \SS$, choose $\gs'\in \overline S$ such that $\gh^{2}(\gs,\gs')\le \gh^{2}(\gs,\overline S)+c_{3}/(160c_{1})$ and $\overline s\in S$ such that $\gh^{2}(\gs',\overline \gs)\le \gh^{2}(\gs',S)+c_{3}/(160c_{1})$. It follows that
\[
\gh^{2}(\gs,\overline \gs)\le2\gh^{2}(\gs,\overline S)+2\gh^{2}(\gs',S)+c_{3}/(40c_{1}).
\]
If equality holds in~\eref{def-pen0bis} for all $\gt\in\S$ and $\breve{D}({\overline{S})\le aD^S}$ with 
$2a\ge1$, it follows from \eref{Eq-histo1} that, for all $\xi>0$ with probability at least $1-e^{-\xi}$,
\begin{eqnarray*}
\gh^{2}\pa{\gs,\widehat \gs}&\le&2c_1\gh^{2}(\gs,\overline S)+2c_1\gh^{2}(\gs',S)+(c_{3}/40)
+16c_{2}\pen(\overline \gs)+c_{3}(1.45+\xi)\\
&\le& 2c_{1}\gh^{2}\pa{\gs,\overline S}+2c_{1}\gh^{2}(\gs',S)+2ac_{2}D^{S}+16c_2\kappa \Delta(\overline S)+c_{3}(1.475+\xi)\\
&\le& 2c_{1}\gh^{2}\pa{\gs,\overline S}+2a\pa{2c_1\gh^{2}(\gs',S)+c_{2}D^{S}}+c_{3}(2\Delta(\overline S)+1.475+\xi)\\
&\le& 2c_{1}\gh^{2}\pa{\gs,\overline S}+2aD(\overline S)+c_{3}(2\Delta(\overline S)+1.5+\xi),
\end{eqnarray*}
where the last inequality derives from~\eref{Eq-fine}. The conclusion follows since $\overline S$ is arbitrary in $\overline \SS$.
%

\subsection{Proof of Theorem~\ref{main}\label{P17}}
The proof will be divided into 2 steps.

{\bf Step 1.} Here we prove the following fondamental lemma. 
\begin{lem}\label{lem1}
If the function $\pen$ satisfies~\eref{def-pen}, then for all $\overline \gs\in\LL_{0}$ and $\xi>0$,
\begin{eqnarray*}
\lefteqn{\P_{\gs}\left[{1\over \sqrt{2}}\gZ(\gX,\overline \gs,\gt)\le4c_0\pa{\gh^{2}(\gs,\gt)+\gh^{2}(\gs,\overline \gs)}+\pen(\gt)+G(\gs,\overline \gs)+\kappa(1.4+\xi)\;\,\mbox{for all }\gt\in\S\right]}\hspace{125mm}\\&\ge&1-e^{-\xi}.
\end{eqnarray*}
\end{lem}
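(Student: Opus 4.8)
The plan is to deduce Lemma~\ref{lem1} from a uniform deviation bound for the empirical process $\gZ(\gX,\overline\gs,\cdot)$ over each countable set $S\in\SS$ taken separately, and then to assemble these bounds by a union bound weighted by $\Delta$. Fix $S\in\SS$ and its model $\overline S\in\overline\SS$; it suffices to prove that, on an event of probability at least $1-e^{-\xi-\Delta(\overline S)}$,
\[
\tfrac{1}{\sqrt2}\gZ(\gX,\overline\gs,\gt)\le 4c_0\pa{\gh^2(\gs,\gt)+\gh^2(\gs,\overline\gs)}+\tfrac18 D^S(\gs,\overline\gs)+\kappa\Delta(\overline S)+\kappa(1.4+\xi)\quad\mbox{for every }\gt\in S .
\]
Granting this for every $S$, the intersection of the corresponding events has probability at least $1-\sum_{\overline S\in\overline\SS}e^{-\xi-\Delta(\overline S)}\ge 1-e^{-\xi}$ by~\eref{Eq-subprob}; on that intersection, for each $\gt\in\S$ the displayed bound holds for all $S\in\SS$ containing $\gt$, so taking the infimum of its right-hand side over such $S$ and invoking~\eref{def-pen} yields $\tfrac{1}{\sqrt2}\gZ(\gX,\overline\gs,\gt)\le 4c_0(\gh^2(\gs,\gt)+\gh^2(\gs,\overline\gs))+\pen(\gt)+\kappa(1.4+\xi)$, which is the assertion of the lemma.

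For a fixed $S$, write $W_y=\sup_{\gt\in\B^S(\gs,\overline\gs,y)}|\gZ(\gX,\overline\gs,\gt)|$, so that $\E[W_y]=\w^S(\gs,\overline\gs,y)$ and, by~\eref{def-D}, $\w^S(\gs,\overline\gs,y)\le c_0y^2$ for $y>\sqrt{D^S(\gs,\overline\gs)}$. The process $\gZ(\gX,\overline\gs,\gt)=\sum_{i=1}^n\bigl(\psi(\sqrt{t_i/\overline s_i}(X_i))-\mathbb{E}_{s_i}[\psi(\sqrt{t_i/\overline s_i}(X_i))]\bigr)$ is a sum of independent centred summands bounded by $2$ (because $-1\le\psi\le1$), and its wimpy variance over $\B^S(\gs,\overline\gs,y)$ is controlled: pointwise $\psi^2(\sqrt{t_i/\overline s_i})\le(\sqrt{t_i/\overline s_i}-1)^2\wedge1$, and splitting $\overline{\X}$ according to whether $\overline s_i\ge s_i/2$ or $\overline s_i<s_i/2$ (the latter region being harmless since there $s_i\le(1-2^{-1/2})^{-2}(\sqrt{s_i}-\sqrt{\overline s_i})^2$) gives $\mathbb{E}_{s_i}[\psi^2(\sqrt{t_i/\overline s_i}(X_i))]\le C'\bigl(h^2(s_i,t_i)+h^2(s_i,\overline s_i)\bigr)$, hence $v_y:=\sup_{\gt\in\B^S(\gs,\overline\gs,y)}\sum_i\mathbb{E}_{s_i}[\psi^2(\sqrt{t_i/\overline s_i}(X_i))]\le C'y^2$ for an absolute constant $C'$ (the conventions $0/0=1$, $x/0=+\infty$ needing only a little care). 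A Talagrand-type concentration inequality (in Bousquet's form, applied to the relevant class of functions and to its negation to handle the absolute value) then yields, for all $u>0$ and $y>\sqrt{D^S(\gs,\overline\gs)}$, with probability at least $1-2e^{-u}$,
\[
W_y\le \w^S(\gs,\overline\gs,y)+C\sqrt{u\pa{v_y+\w^S(\gs,\overline\gs,y)}}+C\,u\le c_0y^2+C\,y\sqrt{u}+C\,u ,
\]
where $C$ is absolute.

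The rest is a peeling argument. Put $D=D^S(\gs,\overline\gs)\ge1$, $y_\gt^2=\gh^2(\gs,\gt)+\gh^2(\gs,\overline\gs)$ and $Y_\gt=y_\gt\vee\sqrt D$ for $\gt\in S$, fix a ratio $r>1$, and write $S=\bigcup_{k\ge1}S_k$ with $S_k=\{\gt\in S:r^{k-1}\sqrt D\le Y_\gt<r^k\sqrt D\}$; for $\gt\in S_k$ one has $\gt\in\B^S(\gs,\overline\gs,\rho_k)$ with $\rho_k=r^{k+1}\sqrt D>\sqrt D$ and $\rho_k\le r^2Y_\gt$, while also $Y_\gt^2\le y_\gt^2+D$. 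Applying the concentration bound at radius $\rho_k$ with $u=u_k=\xi+\Delta(\overline S)+a+k$, where $a\le1.4$ is an absolute constant making $\sum_{k\ge1}2e^{-u_k}\le e^{-\xi-\Delta(\overline S)}$, one obtains on the intersection of these events, for all $\gt\in S$ simultaneously,
\[
W_{Y_\gt}\le W_{\rho_k}\le c_0r^4\pa{y_\gt^2+D}+C\,r^2\sqrt{u_k}\,\sqrt{y_\gt^2+D}+C\,u_k .
\]
It then remains to absorb the cross term via $2ab\le\varepsilon a^2+\varepsilon^{-1}b^2$, to bound $u_k$ linearly in $y_\gt^2+D$ using $Y_\gt\ge r^{k-1}$ together with $\log x\le\delta x+\log(1/(e\delta))$, and to choose $r$ close to $1$ and $\varepsilon,\delta$ small enough that the total coefficient of $y_\gt^2+D$ does not exceed $\sqrt2/8$; since $4c_0=(2-\sqrt2)/4>1/8$, this single requirement controls both the $\gh^2(\gs,\gt)+\gh^2(\gs,\overline\gs)$ term and the $D^S(\gs,\overline\gs)$ term after division by $\sqrt2$, the residual constants being collected into $\kappa\Delta(\overline S)+\kappa(1.4+\xi)$. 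This reproduces the per-model bound of the first paragraph.

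The two genuinely delicate steps are the variance estimate $v_y\le C'y^2$ — the only point where no upper bound on $s_i$ or $\overline s_i$ is available, which is what forces the splitting of $\overline{\X}$ and careful handling of the ratio conventions — and the numerical bookkeeping through the peeling, which is tight enough that the value $\kappa=357$ in~\eref{Eq-cons1} is essentially what one is led to. Measurability causes no trouble because $S$ is countable, so each $W_y$ is a bona fide random variable.
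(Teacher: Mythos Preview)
Your approach is essentially the same as the paper's: per-model concentration via a Talagrand-type inequality, variance controlled by the bound $\E[\psi^2(\sqrt{\gt/\overline\gs}(\gX))]\le 6y^2$ (the paper's Proposition~\ref{variance}, which you re-derive), a geometric peeling over the radius $\gh^2(\gs,\gt)+\gh^2(\gs,\overline\gs)$, and a union bound over $S\in\SS$ weighted by $\Delta$. The only cosmetic difference is that the paper folds $\Delta(\overline S)+\xi$ into the base peeling radius via an auxiliary scale $\tau$ rather than into the per-shell deviation levels $u_k$ as you do, and it carries out the numerical optimization explicitly (choosing $\tau$, the ratio $5/4$, and the Talagrand parameter $c$) to land exactly on $\kappa=357$; your sketch asserts this tuning is feasible without performing it.
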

\begin{proof}
The proof relies on two propositions. The first one presents a version of Talagrand's result on the suprema of empirical processes that is proved in Massart~\citeyearpar{MR2319879}. 
An alternative solution would be to use Theorem~1.1 of Klein and Rio~\citeyearpar{MR2135312} instead of (\ref{eq-Massart}) below. This would lead to an analogue of (\ref{massart3}) with different values of the coefficients of $v^{2}$ and $x$ but not uniformly better. 
\begin{prop}\label{talagrand}
Let $T$ be some finite set, $U_{1},\ldots,U_{n}$ be independent centered random vectors with values 
in $\R^{T}$ and $Z=\sup_{t\in T}\ab{\sum_{i=1}^{n}U_{i,t}}$. If for some positive numbers $b$ and $v$, 
\[
\max_{i=1,\ldots,n}\ab{U_{i,t}}\le b\qquad\mbox{and}\qquad
\sum_{i=1}^{n}\E\left[U^2_{i,t}\right]\le v^{2}\ \quad\mbox{for all }t\in T,
\] 
then, for all positive $c$ and $x$,
\begin{equation}
\P\left[Z\le(1+c){\mathbb E}(Z)+(8b)^{-1}cv^2+2\left(1+8c^{-1}\right)bx\right]\ge1-e^{-x}.
\label{massart3}
\end{equation}
\end{prop}
%
\begin{proof}
The second displayed formula on page 170 of Massart~\citeyearpar{MR2319879} tells us that
\begin{equation}
\P\left[Z\le{\mathbb E}(Z)+2\sqrt{[2v^{2}+16b{\mathbb E}(Z)]x}+2bx\right]\ge1-e^{-x}.
\label{eq-Massart}
\end{equation}
To derive (\ref{massart3}) we use the fact that 
$2\sqrt{Ax}\le A/\!\left(16bc^{-1}\right)+\left(16bc^{-1}\right)\!x$ which results in
\[
2\sqrt{[2v^{2}+16b{\mathbb E}(Z)]x}\le 2c(16b)^{-1}v^2+c{\mathbb E}(Z)+16bc^{-1}x. 
\]
\end{proof}
Even though the result is stated for finite $T$, it can easily be extended to countable sets $T$ by monotone convergence.     

The second proposition we need is proved in Baraud~\citeyearpar{MR2834722} (more precisely, we refer to the proof of his Proposition~3 on page~386 with the difference that in this paper his function $\psi$ is equal to our function $\psi$ divided by $\sqrt{2}$ which involves an additional factor 2 for the control of the function $\psi^2$ as defined by (\ref{def-psi})).

\begin{prop}\label{variance}
Let $\gX=(X_{1},\ldots,X_{n})$ be a vector of independent random variables and $\gt,\overline \gs\in \LL_{0}$. Then
\[
\E\cro{\psi^{2}\pa{\sqrt{\gt\over \overline \gs}(\gX)}}=\sum_{i=1}^{n}\E\cro{\psi^{2}\pa{\sqrt{t_{i}\over \overline s_{i}}(X_{i})}}\le6\cro{\gh^{2}(\gs,\gt)+\gh^{2}(\gs,\overline \gs)}.
\]
\end{prop}
Let us now turn to the proof of Lemma~\ref{lem1}. 
We fix $\xi>0$, $S$ in $\SS$, $\tau=a/\!\left(32c_0^{2}\right)>0$ for some positive number $a$ to be chosen later and we set for all $j\in\N$,
\[
y_j^{2}=\left(\frac{5}{4}\right)^j\!\cro{D^{S}(\gs,\overline \gs)+\tau\left(\Delta(\overline S)+\xi+1.4\right)},
\;\quad x_j=\frac{y_j^{2}}{\tau}\ge\Delta(\overline S)+\xi+1.4\left(\frac{5}{4}\right)^j,
\]
\[
B^{S}_{j}(\gs,\overline \gs)=\ac{\gt\in S\mbox{ such that }
y_{j}^{2}<\gh^{2}(\gs,\gt)+\gh^{2}(\gs,\overline \gs)\le y^2_{j+1}}
\]
and
\[
Z_{j}^{S}(\gX,\overline \gs)={1\over \sqrt{2}}\,\sup_{\gt\in B^{S}_{j}(\gs,\overline \gs)}
\ab{\gZ(\gX,\overline \gs,\gt)}.
\]
For each $j\ge 0$, we may apply Proposition~\ref{talagrand} to the supremum 
$Z_{j}^{S}(\gX,\overline \gs)$ by taking  $T=B^{S}_{j}(\gs,\overline \gs)$ (which is countable 
as a subset of $S$) and 
\begin{equation}\label{def-U}
U_{i,\gt}=\frac{1}{\sqrt{2}}\left\{\psi\pa{\sqrt{t_{i}\over \overline s_{i}}(X_{i})}-
\E\cro{\psi\pa{\sqrt{t_{i}\over \overline s_{i}}(X_{i})}}\right\}\quad\mbox{for all }i=1,\ldots,n.
\end{equation}
For such a choice, the assumptions of the Proposition~\ref{talagrand} are met with $b=\sqrt{2}$ (since $\psi$ is bounded by $1$) and $v^{2}=3y_{j+1}^{2}$ (by Proposition~\ref{variance} and the definition of $B_{j}^{S}(\gs,\overline \gs)$). It therefore follows from \eref{massart3} that, with probability at least $1-e^{-x_{j}}$ and for all $\gt\in B^{S}_{j}(\gs,\overline \gs)$,
\begin{equation}
{1\over\sqrt{2}}\gZ(\gX,\overline \gs,\gt)\le Z_{j}^{S}(\gX,\overline \gs)
\le(1+c)\E\left[Z_{j}^{S}(\gX,\overline \gs)\right]+\frac{3}{8\sqrt{2}}cy_{j+1}^{2}+2\sqrt{2}\pa{1+8c^{-1}}x_{j}.
\label{Eq-pr1}
\end{equation}
Since $B^{S}_{j}(\gs,\overline \gs)\subset\B^{S}(\gs,\overline \gs,y_{j+1})$, it follows from  the definition of $D^{S}(\gs,\overline \gs)$ and the fact that $y_{j+1}^2>D^{S}(\gs,\overline \gs)$
that, 
\[
{\mathbb E}\left[Z_{j}^{S}(\gX,\overline \gs)\right]\le2^{-1/2}\w^{S}(\gs,\overline \gs,y_{j+1})
\le2^{-1/2}c_0y_{j+1}^2,
\]
and, since $x_j=4y_{j+1}^2/(5\tau)$, (\ref{Eq-pr1}) becomes
\[
{1\over\sqrt{2}}\gZ(\gX,\overline \gs,\gt)\le\frac{y_{j+1}^2}{\sqrt{2}}
\left[c_0(1+c)+\frac{3c}{8}+\frac{16\pa{1+8c^{-1}}}{5\tau}\right].
\]
Setting $c=16(2\tau)^{-1/2}$, we get with probability at least $1-e^{-x_{j}}$ and for all 
$\gt\in B^{S}_{j}(\gs,\overline \gs)$,
\begin{eqnarray*}
\lefteqn{{1\over\sqrt{2}}\gZ(\gX,\overline \gs,\gt)-4c_0\cro{ \gh^{2}(\gs,\gt)+\gh^{2}(\gs,\overline \gs)}}
\hspace{30mm}\\&\le&\frac{y_{j+1}^2}{\sqrt{2}}\left[c_0\left(1+\frac{16}{\sqrt{2\tau}}\right)
+\frac{6}{\sqrt{2\tau}}+\frac{16}{5\tau}+\frac{16}{5\sqrt{2\tau}}-\frac{16c_0\sqrt{2}}{5}\right].
\end{eqnarray*}
The bracketed factor writes
\begin{eqnarray*}
\lefteqn{c_0+\frac{64c_0^2}{\sqrt{a}}+\frac{24c_0}{\sqrt{a}}+\frac{512c_0^2}{5a}
+\frac{64c_0}{5\sqrt{a}}-\frac{16c_0\sqrt{2}}{5}}\hspace{30mm}\\&=&\frac{c_0}{5}\left[5-16\sqrt{2}+
\frac{184}{\sqrt{a}}+4\left(2-\sqrt{2}\right)\left(\frac{5}{\sqrt{a}}+\frac{8}{a}\right)\right],
\end{eqnarray*}
which is negative for $a=125.4$. With this choice of $a$, $\tau=62.7(4c_0)^{-2}$ and for all $j\in\N$,
\[
\P_{\gs}\left[{1\over\sqrt{2}}\gZ(\gX,\overline \gs,\gt)-4c_0\cro{\gh^{2}(\gs,\gt)+\gh^{2}(\gs,\overline \gs)}<0\;\,\mbox{for all }\gt\in B^{S}_{j}(\gs,\overline \gs)\right]\ge1-e^{-x_j}.
\]
Let us now define
\[
Z^{S}(\gX,\overline\gs)={1\over\sqrt{2}}\sup_{\gt\in\B^{S}(\gs,\overline\gs,y_0)}\ab{\gZ(\gX,\overline\gs,\gt)}
\]
and apply Proposition~\ref{talagrand} in a similar way to $Z^{S}(\gX,\overline \gs)$ with $x=x_0=y_0^2/\tau$ and $c=16\sqrt{3/(10\tau)}=64c_0/\sqrt{209}$. We then deduce analogously that, with probability at least $1-e^{-x_0}$ and for all $\gt\in \B^{S}(\gs,\overline \gs,y_0)$,
\begin{eqnarray*}
{1\over \sqrt{2}}\gZ(\gX,\overline \gs,\gt)&\le&Z^{S}(\gX,\overline \gs)\;\;\le\;\;\frac{y_0^2}{\sqrt{2}}
\left[c_0(1+c)+\frac{3c}{8}+\frac{4\pa{1+8c^{-1}}}{\tau}\right]
\\&\le&\frac{4c_0y_0^2}{\sqrt{2}}
\left[16c_0\left(\frac{1}{\sqrt{209}}+\frac{1}{62.7}\right)+\frac{1}{4}+\frac{6}{\sqrt{209}}+
\frac{2\sqrt{209}}{62.7}\right]\;\;<\;\;0.122\,y_0^2.
\end{eqnarray*}
Since $\{\B^{S}(\gs,\overline \gs,y_0), \{B^{S}_{j}(\gs,\overline \gs),\ j\ge 0\}\}$ provides a partition of $S$, by putting all these inequalities together we derive that for all $\gt\in S$, 
\[
{1\over \sqrt{2}}\gZ(\gX,\overline \gs,\gt)-4c_0\pa{\gh^{2}(\gs,\gt)+\gh^{2}(\gs,\overline \gs)}
<0.122\,y_0^2<(1/8)D^{S}(\gs,\overline \gs)+\kappa\left[\Delta(\overline S)+\xi+1.4\right],
\] 
except on a set of probability not larger than
\[
e^{-x_0}+\sum_{j\ge0}e^{-x_j}\le e^{-\xi-\Delta(\overline S)}\cro{2e^{-1.4}+\sum_{j\ge1}e^{-1.4\times(5/4)^j}}<e^{-\xi-\Delta(\overline S)}.
\]
The result finally extends to all $\gt\in \S$ by summing these bounds over $S\in\SS$ and using
(\ref{Eq-subprob}).
\end{proof}

{\bf Step 2.} 
Let us now set, for $\gs,\gt,\gt'\in\LL_{0}$,
\[
\gT(\gs,\gt,\gt')=\E\cro{\gT(\gX,\gt,\gt')}=\sum_{i=1}^nT(s_i,t_i,t_i').
\]
Applying inequality~\eref{eq-UbT} to each coordinate $s_{i},\:\overline s_{i}$ and $t_{i}$ of $\gs,\:\overline \gs$ and $\gt$ respectively and summing these inequalities over $i\in\{1,\ldots,n\}$ leads to
\begin{equation}
\gT(\gs,\overline \gs,\gt)\le c_2\gh^{2}(\gs,\overline \gs)-8c_0\gh^{2}(\gs,\gt)\quad\mbox{for all }\gs\in\LL_{0},\:\overline\gs\mbox{ and }\gt\in\S.
\label{eq-UbTb}
\end{equation}
Let us fix $\gs\in\LL_{0}$ and $\overline \gs\in \S$. Recalling that 
$\gZ(\gX,\overline \gs,\gt)/\sqrt{2}=\gT(\gX,\overline \gs,\gt)-\gT(\gs,\overline \gs,\gt)$, 
we deduce from Lemma~\ref{lem1} that, with probability at least $1-e^{-\xi}$ and for all $\gt\in\S$,
\begin{equation}\label{eq-lem1}
\gT(\gX,\overline \gs,\gt)-\gT(\gs,\overline \gs,\gt)\le4c_0\pa{\gh^{2}(\gs,\gt)+\gh^{2}(\gs,\overline \gs)}+\pen(\gt)+G(\gs,\overline \gs)+\kappa(1.4+\xi),
\end{equation}
which, together with~\eref{eq-UbTb}, leads to 
\begin{eqnarray}
\lefteqn{\gT(\gX,\overline \gs,\gt)-\pen(\gt)}\hspace{20mm}\nonumber\\&\le&(4c_0+c_2)\gh^{2}(\gs,\overline \gs)-4c_0\gh^{2}(\gs,\gt)+G(\gs,\overline \gs)+\kappa(1.4+\xi)\quad\mbox{for all }\gt\in\S.
\label{eq-bT}
\end{eqnarray}
Hence, with probability at least $1-e^{-\xi}$,
\begin{eqnarray*}
\overline \gup(\S,\overline\gs)-\pen(\overline\gs)&=&\sup_{\gt\in \S}\cro{\gT(\gX,\overline \gs,\gt)-\pen(\gt)}\\&\le&(4c_0+c_2)\gh^{2}(\gs,\overline \gs)-4c_0\gh^{2}(\gs,\S)+G(\gs,\overline \gs)+\kappa(1.4+\xi)
\end{eqnarray*}
and it follows from the definitions of $\widehat \gs$ and $\overline\gup(\S,\cdot)$ respectively that 
\begin{eqnarray}
\overline\gup(\S,\widehat\gs)&\le&\overline\gup(\S,\overline\gs)+(\kappa/10)\nonumber\\&\le&
(4c_0+c_2)\gh^{2}(\gs,\overline \gs)-4c_0\gh^{2}(\gs,\S)+\pen(\overline\gs)+
G(\gs,\overline \gs)+\kappa(1.5+\xi)
\label{eq-bgam}
\end{eqnarray}
and $\gT(\gX,\widehat{\gs},\overline\gs)+\pen(\widehat{\gs})\le\overline\gup(\S,\widehat{\gs})
+\pen(\overline\gs)$. Therefore, using~\eref{eq-bT} with $\gt=\widehat \gs$, the fact that $\gT(\gX,\overline \gs,\widehat \gs)=-\gT(\gX,\widehat \gs,\overline\gs)$ and \eref{eq-bgam}, we derive that, with probability at least $1-e^{-\xi}$,
\begin{eqnarray*}
4c_0\gh^{2}(\gs,\widehat\gs)&\le&(4c_0+c_2)\gh^{2}(\gs,\overline\gs)+\gT(\gX,\widehat\gs,\overline\gs)
+\pen(\widehat \gs)+G(\gs,\overline \gs)+\kappa(1.4+\xi)\\&\le&(4c_0+c_2)\gh^{2}(\gs,\overline \gs)
+\overline\gup(\S,\widehat\gs)+\pen(\overline \gs)+G(\gs,\overline \gs)+\kappa(1.4+\xi)\\&\le&
2(4c_0+c_2)\gh^{2}(\gs,\overline \gs)-4c_0\gh^{2}(\gs,\S)+2\pen(\overline \gs)+
2G(\gs,\overline \gs)+2\kappa(1.45+\xi),
\end{eqnarray*}
which leads to the result since $4c_0=(4c_2)^{-1}$ and $c_1=2+c_2/(2c_0)$ by 
(\ref{Eq-cons1}).

\subsection{Proof of Proposition~\ref{P-KLrob}\label{P22}}
It actually follows from the next one:
%
\begin{prop}\label{P-KLrob'}
If $\gs$ and $\overline{\gs}\in\LL_0$ and $T(\gX)$ is such that $\mathbb{P}_{\overline{\gs}}\cro{T(\gX)\ge z}\le ae^{-z}$ for all $z\ge0$ and some $a>0$, then
\begin{equation}
\E[T(\gX)]\le\left(1+\zeta^{-1}\right)\left[\log(1+a\zeta)+\gK\right]\quad\mbox{for all }\zeta>0\quad\mbox{and}\quad\gK=\gK(\gs,\overline\gs).
\label{Eq-KLc}
\end{equation}
In particular $\E[T(\gX)]\le1+\zeta_0$ where $\zeta_0$ is the largest solution of the equation $\zeta=\log(1+a\zeta)+\gK$ in $(-a^{-1},+\infty)$.
\end{prop}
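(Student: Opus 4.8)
The plan is to combine the Donsker--Varadhan variational formula for the Kullback--Leibler divergence with a tail-to-Laplace-transform conversion. Since the bound is trivial when $\gK(\gs,\overline\gs)=+\infty$, we may assume $\gK=\gK(\gs,\overline\gs)<+\infty$, which forces $\P_{\gs}\ll\P_{\overline{\gs}}$. For any measurable $f$ with $\mathbb{E}_{\overline{\gs}}[e^{f}]<+\infty$ one has the well-known inequality
\[
\mathbb{E}_{\gs}[f(\gX)]\le\gK+\log\mathbb{E}_{\overline{\gs}}\!\left[e^{f(\gX)}\right]
\]
(obtained by writing the difference of the two sides as a non-negative Kullback--Leibler divergence between $\P_{\gs}$ and the tilted measure proportional to $e^{f}\,d\P_{\overline{\gs}}$, as in Barron's argument). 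I would apply this with $f=\lambda T(\gX)$ for a parameter $\lambda\in(0,1)$.

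The second step is to bound $\mathbb{E}_{\overline{\gs}}[e^{\lambda T(\gX)}]$ from the tail hypothesis. Writing $\mathbb{E}_{\overline{\gs}}[e^{\lambda T}]=\int_{0}^{\infty}\P_{\overline{\gs}}[e^{\lambda T}>t]\,dt$, bounding the integrand by $1$ on $(0,1]$ and, on $(1,\infty)$, by $\P_{\overline{\gs}}[T\ge\lambda^{-1}\log t]\le a\,t^{-1/\lambda}$, I get
\[
\mathbb{E}_{\overline{\gs}}\!\left[e^{\lambda T(\gX)}\right]\le1+\int_{1}^{\infty}a\,t^{-1/\lambda}\,dt=1+\frac{a\lambda}{1-\lambda},
\]
the integral converging precisely because $\lambda<1$. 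Plugging this into the variational inequality and dividing by $\lambda$ gives, for every $\lambda\in(0,1)$,
\[
\mathbb{E}_{\gs}[T(\gX)]\le\frac1\lambda\left[\gK+\log\!\left(1+\frac{a\lambda}{1-\lambda}\right)\right].
\]
Setting $\zeta=\lambda/(1-\lambda)$, so that $\lambda$ ranges over $(0,1)$ iff $\zeta$ ranges over $(0,\infty)$ and $\lambda^{-1}=1+\zeta^{-1}$, turns this into exactly $\eref{Eq-KLc}$.

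For the ``in particular'' statement, I would optimize $\eref{Eq-KLc}$ by a fixed-point choice of $\zeta$. The map $\zeta\mapsto\log(1+a\zeta)+\gK$ is concave on $(-a^{-1},+\infty)$, equals $\gK\ge0$ at $\zeta=0$, and is eventually dominated by $\zeta$; hence the equation $\zeta=\log(1+a\zeta)+\gK$ has a largest root $\zeta_{0}\in[0,+\infty)\subset(-a^{-1},+\infty)$. If $\zeta_{0}>0$, substituting $\zeta=\zeta_{0}$ into $\eref{Eq-KLc}$ and using $\log(1+a\zeta_{0})+\gK=\zeta_{0}$ yields $\mathbb{E}_{\gs}[T(\gX)]\le(1+\zeta_{0}^{-1})\zeta_{0}=1+\zeta_{0}$. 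If $\zeta_{0}=0$ then necessarily $\gK=0$ and, there being no positive root, $a\le1$; letting $\zeta\to0^{+}$ in $\eref{Eq-KLc}$ (with $\gK=0$) gives $\mathbb{E}_{\gs}[T(\gX)]\le\lim_{\zeta\to0^{+}}(1+\zeta^{-1})\log(1+a\zeta)=a\le1=1+\zeta_{0}$. Either way the claim holds, and Proposition~\ref{P-KLrob} then follows by feeding in a convenient numerical choice of $\zeta$.

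The computations are all routine; the one point that needs care is the measure-theoretic justification of the variational inequality (and the attendant fact that, once $\gK<+\infty$ and the exponential tail bound hold, $\mathbb{E}_{\gs}[T(\gX)]$ is a well-defined element of $[-\infty,+\infty)$). The only other thing to keep in mind is that the Laplace transform $\mathbb{E}_{\overline{\gs}}[e^{\lambda T}]$ is finite only for $\lambda<1$, which is exactly why the resulting bound is parametrized by $\zeta\in(0,\infty)$ rather than by an unconstrained parameter; this is a mild nuisance rather than a real obstacle.
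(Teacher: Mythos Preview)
Your argument is correct and follows essentially the same route as the paper: the paper also applies the Donsker--Varadhan/Barron inequality (stated there as Lemma~\ref{Info-Th}) with $f=\lambda T$, bounds $\mathbb{E}_{\overline{\gs}}[e^{\lambda T}]\le1+a\lambda/(1-\lambda)$ via the tail hypothesis, and then substitutes $\zeta=\lambda/(1-\lambda)$; the fixed-point argument for $\zeta_0$ and the separate treatment of the degenerate case $\zeta_0=0$ (where $\gK=0$ and $a\le1$) are likewise the same. The only cosmetic difference is that the paper writes the Laplace-transform bound through the intermediate inequality $\int e^{f}dQ\le1+\int_0^\infty e^{\xi}Q[f>\xi]\,d\xi$, which is your computation after the change of variable $t=e^{\xi}$.
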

\begin{proof}
We start with the following lemma which appears in a slightly different form in Barron~\citeyearpar{MR1154352}.
\begin{lem}\label{Info-Th}
Let $P$ and $Q$ be two probabilities on $(\X,\A)$ and $f$ a function from $(\X,\A)$ to $\R$ such that 
$\int_{\X}(f\wedge0)\,dP>-\infty$. Then
\[
\int_{\X}fdP\le\log\pa{\int_{\X}e^{f}dQ}+K(P,Q)\le\log\pa{1+\int_{0}^{+\infty}e^{\xi}Q[f>\xi]\,d\xi}+K(P,Q).
\]
\end{lem}
\begin{proof}
The classical variational formula for the Kullback-Leibler divergence asserts that
\[
K(P,Q)=\sup_{g\in{\mathcal G}} \int_{\X} g dP\quad\mbox{with}\quad{\mathcal G}=\ac{g:(\X,\A)\to \R\;
\mbox{ such that } \int_{\X} e^{g}dQ=1}.
\]
For $g=f-\log\pa{ \int_{\X} e^{f}dQ}$ which belongs to ${\mathcal G}$, we obtain that 
\[
K(P,Q)\ge \int_{\X} g dP=\int_{\X}fdP-\log\pa{ \int_{\X} e^{f}dQ}
\]
which leads to the first inequality. The second inequality derives from 
\[
\int_{\X}e^{f}dQ=\int_0^{+\infty}Q\left[e^{f}>t\right]dt=\int_{-\infty}^{+\infty}Q[f>\xi]e^\xi\,d\xi\le1+\int_{0}^{+\infty}e^{\xi}Q[f> \xi]\,d\xi.
\]
\end{proof}
To prove Proposition~\ref{P-KLrob'} we apply the lemma with $f=\lambda T$, $0<\lambda<1$, $P=\mathbb{P}_{\gs}$ and $Q=\mathbb{P}_{\overline{\gs}}$, getting
\[
\lambda\E[T(\gX)]\le\log\pa{1+\int_{0}^{+\infty}e^{\xi}\mathbb{P}_{\overline{\gs}}
[\lambda T(\gX)>\xi]\,d\xi}+\gK.
\]
Hence, setting $\zeta=\lambda/(1-\lambda)>0$ so that 
$\lambda=\zeta/(\zeta+1)$, we get
\begin{eqnarray*}
\frac{\zeta}{\zeta+1}\E[T(\gX)]&\le&\log\pa{1+\int_{0}^{+\infty}e^{\xi}\mathbb{P}_{\overline{\gs}}
\left[T(\gX)>\xi/\lambda\right]d\xi}+\gK\\&\le&
\log\pa{1+a\int_{0}^{+\infty}\exp\left[-\xi/\zeta\right]d\xi}+\gK\;\;=\;\;\log(1+a\zeta)+\gK,
\end{eqnarray*}
which proves (\ref{Eq-KLc}). The function $g(\zeta)=\zeta-\log(1+a\zeta)-\gK$ is strictly convex 
on $(-a^{-1},+\infty)$ with a minimum equal to $1-a^{-1}-\log a-\gK\le0$ when $\zeta=1-a^{-1}$ and $g(0)=-\gK\le0$ so that $\zeta_0\ge0$ (actually $>0$ except if $\gK=0$ and $a\le1$) and the bound 
$\E[T(\gX)]\le1+\zeta_0$ immediately follows when $\zeta_0>0$. If $\zeta_0=0$, then $\gK=0$ and $a\le1$ so that $\E[T(\gX)]\le a$.
\end{proof}
To prove Proposition~\ref{P-KLrob} we bound $\E[T(\gX)]\le1+\zeta_0$ in the following way, setting $f(x)=x-\log(1+x)$. We first observe that, since $\log(1+uv)\le\log(1+u)+\log(1+v)$ for all $u,v\ge0$,
\[
f(\zeta_0)=\zeta_0-\log(1+\zeta_0)\le c=\log(1+a)+\gK
\]
and, since $f$ is increasing on $[0,+\infty[$, $\zeta_0\le f^{-1}(c)$. Moreover,
\[
f\left(c+\log\left(1+c+\sqrt{2c}\right)\right)-c=\log\left(1+c+\sqrt{2c}\right)-
\log\left(1+c+\log\left(1+c+\sqrt{2c}\right)\right)
\]
has the same sign as
\begin{equation}
\left(1+c+\sqrt{2c}\right)-\left(1+c+\log\left(1+c+\sqrt{2c}\right)\right)=\sqrt{2c}-\log\left(1+c+\sqrt{2c}\right),
\label{Eq-P0}
\end{equation}
which has the sign of $\exp\left[\sqrt{2c}\right]-\left(1+c+\sqrt{2c}\right)>0$ since $c>0$ and $e^x>1+x+\left(x^2/2\right)$ for $x>0$. It follows that $c<f\left(c+\log\left(1+c+\sqrt{2c}\right)\right)$ and finally, using again the fact that the right-hand side of (\ref{Eq-P0}) is positive,
\[
1+\zeta_0\le1+f^{-1}(c)<1+c+\log\left(1+c+\sqrt{2c}\right)<1+c+\sqrt{2c},
\]
which completes the proof of \eref{Eq-KLa}. To get (\ref{Eq-KLb}) we apply (\ref{Eq-KLa}) to the random variable $T'(\gX)=b[T(\gX)-z_0]$.

\subsection{Proof of Proposition~\ref{Kolt2}\label{P3}}
It follows from the next lemma to be proved afterwards.
\begin{lem}\label{Koltb}
Let $X_{1},\ldots,X_{n}$ be independent random variables defined on a probability space $(\Omega,\A,\P)$ and with values in $\X$, $\FF$ a class of functions on $\X$ bounded by 1 and $\Ent$ a function satisfying Assumption~\ref{A-VC}. If 
\begin{equation}\label{Bvar}
\sup_{f\in\FF}\sum_{i=1}^{n}{\mathbb{E}}\cro{f^{2}(X_{i})}\le v^{2} 
\end{equation}
and 
\begin{equation}
\log N\pa{\FF,{1\over n}\sum_{i=1}^{n}\delta_{X_{i}(\omega)},z}\le\Ent\pa{1\over z}
\quad\mbox{for all }\omega\in\Omega\mbox{ and } 0<z\le2,
\label{E-entr}
\end{equation}
then there exists a universal constant $C_0$ such that, 
\[
{\mathbb{E}}\cro{\sup_{f\in \FF}\ab{\sum_{i=1}^{n}\pa{f(X_{i})-{\mathbb{E}}\cro{f(X_{i})}}}} 
\le C_0\cro{vL\sqrt{H}+L^{2}H}\quad\mbox{with}\quad H=\Ent\pa{{\sqrt{n}\over2v}\bigvee\frac{1}{2}}.
\]
\end{lem}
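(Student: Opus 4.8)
The plan is to bound the expected supremum of the empirical process indexed by $\FF$ by a chaining argument on the entropy integral, combined with a symmetrization/contraction step. Concretely, I would first reduce the problem to bounding $\mathbb{E}\big[\sup_{f\in\FF}|\sum_i(\varepsilon_i f(X_i))|\big]$ where the $\varepsilon_i$ are i.i.d.\ Rademacher signs independent of the $X_i$, using the classical symmetrization inequality (the original centered process is bounded, up to a factor $2$, by the symmetrized one). This is permissible since the functions are bounded by $1$, so all expectations exist. After conditioning on the $X_i$, the symmetrized process is sub-Gaussian for the (random) $\IL_2$-metric associated with the empirical measure $Q_n=\frac1n\sum_i\delta_{X_i}$, and its conditional expectation is controlled by Dudley's entropy integral
\[
\mathbb{E}_\varepsilon\cro{\sup_{f\in\FF}\ab{\sum_{i=1}^n\varepsilon_i f(X_i)}}\le C\sqrt{n}\int_0^{\sigma_n}\sqrt{\log N(\FF,Q_n,z)}\,dz,
\]
where $\sigma_n^2=\sup_{f\in\FF}\frac1n\sum_i f^2(X_i)$ is the maximal empirical $\IL_2$-diameter (recall $N(\FF,Q_n,1)=1$ since $\|f\|_\infty\le1$, which truncates the integral at level $1$).

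The next step is to invoke hypothesis~\eref{E-entr} to replace $\log N(\FF,Q_n,z)$ by $\Ent(1/z)$ pointwise in $\omega$, so that the Dudley integral is deterministically dominated by $\sqrt{n}\int_0^{\sigma_n}\sqrt{\Ent(1/z)}\,dz$. Here I would use Assumption~\ref{A-VC}: the change of variables $u=1/z$ turns $\int_0^{\sigma}\sqrt{\Ent(1/z)}\,dz$ into $\int_{1/\sigma}^{+\infty}u^{-2}\sqrt{\Ent(u)}\,du$, and the quantity $L$ is precisely designed so that $\int_x^{+\infty}u^{-2}\sqrt{\Ent(u)}\,du\le L x^{-1}\sqrt{\Ent(x)}$ for $x\ge1/2$. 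Thus (for $1/\sigma_n\ge1/2$, i.e.\ $\sigma_n\le2$, which holds since $\sigma_n\le1$) the integral is bounded by $L\sigma_n\sqrt{\Ent(1/\sigma_n)}$, giving
\[
\mathbb{E}_\varepsilon\cro{\sup_{f\in\FF}\ab{\sum_{i=1}^n\varepsilon_i f(X_i)}}\le CL\sqrt{n}\,\sigma_n\sqrt{\Ent(1/\sigma_n)}.
\]
One must be slightly careful when $\sigma_n$ is small; since $\Ent$ is nondecreasing and $x\mapsto\Ent(1/x)$ should be handled via the lower cutoff $1/2$ built into Assumption~\ref{A-VC}, replacing $1/\sigma_n$ by $(1/\sigma_n)\vee1$ is harmless and matches the form of $H$ in the statement.

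The remaining work is to pass from the random quantity $\sigma_n$ to the deterministic bound $v$. This is the step I expect to be the main obstacle, because $\sigma_n^2$ can exceed $v^2/n$ and one cannot simply bound it above by $v^2/n$; the standard device is a fixed-point / self-bounding argument. Writing $W=\mathbb{E}[\sup_{f\in\FF}|\sum_i(f(X_i)-\mathbb{E} f(X_i))|]$, one has $\mathbb{E}[n\sigma_n^2]=\mathbb{E}[\sup_f\sum_i f^2(X_i)]\le v^2+\mathbb{E}[\sup_f|\sum_i(f^2(X_i)-\mathbb{E} f^2(X_i))|]$, and since $x\mapsto x^2$ is $2$-Lipschitz on $[-1,1]$, the contraction principle bounds the last term by $4$ times the symmetrized process for $\FF$ itself, hence by $CW$ (up to the symmetrization factor). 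Plugging $\mathbb{E}[\sqrt{n}\sigma_n]\le\sqrt{\mathbb{E}[n\sigma_n^2]}\le\sqrt{v^2+CW}$ and $\Ent(1/\sigma_n)\le H$ (monotonicity, after the cutoff) into the chaining bound via Jensen, and using $H\ge\Ent(\cdot)$ together with $\sqrt{ab}\le \frac12(a+b)$, yields an inequality of the shape $W\le CL\sqrt{H}\,\sqrt{v^2+CW}$, i.e.\ $W\le C'L\sqrt{H}\,v+C'L^2H$ after solving the quadratic in $\sqrt{W}$. Absorbing constants gives exactly $W\le C_0[vL\sqrt{H}+L^2H]$ with $H=\Ent((\sqrt{n}/v)\vee1)$, which is the claim. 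The bookkeeping of numerical constants through symmetrization, contraction, Dudley, and the quadratic solve is the only genuinely delicate part; everything else is standard empirical-process machinery.
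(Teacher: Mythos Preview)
Your overall strategy---symmetrization, conditional Dudley bound, contraction to control $\mathbb{E}[n\sigma_n^2]$ in terms of $v^2+CW$, then a fixed-point argument---matches the paper's approach (which in turn follows Gin\'e--Koltchinskii). The ingredients are all correct, but there is one step assembled in the wrong order which, as written, does not go through.

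The problem is the claim ``$\Ent(1/\sigma_n)\le H$ (monotonicity, after the cutoff)''. Since $\Ent$ is nondecreasing, this inequality would require $1/\sigma_n\le(\sqrt{n}/v)\vee1$, i.e.\ $\sigma_n\ge(v/\sqrt{n})\wedge1$. But $\sigma_n$ is random and has no such lower bound: on a given sample the empirical $L_2$-radius of $\FF$ can be much smaller than $v/\sqrt{n}$, in which case $1/\sigma_n$ is large and $\Ent(1/\sigma_n)$ can exceed $H$ arbitrarily. So once you have passed from the entropy integral to the pointwise bound $L\sqrt{n}\,\sigma_n\sqrt{\Ent(1/\sigma_n)}$, you cannot simply replace $\Ent(1/\sigma_n)$ by $H$ and then take expectations; the product $\sigma\sqrt{\Ent(1/\sigma)}$ is not monotone in $\sigma$, and Jensen does not help at this stage.

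The fix is to reverse the order of the last two operations: keep the integral form $\int_0^{2\sigma_n}\sqrt{\Ent(1/z)}\,dz$, observe that $u\mapsto\int_0^u\sqrt{\Ent(1/z)}\,dz$ is concave (its derivative $\sqrt{\Ent(1/u)}$ is nonincreasing), and apply Jensen \emph{first} to obtain
\[
\mathbb{E}\!\cro{\int_0^{2\sigma_n}\sqrt{\Ent(1/z)}\,dz}\le\int_0^{2B}\sqrt{\Ent(1/z)}\,dz,
\qquad B^2=\frac{v^2+CW}{n}\wedge1.
\]
The deterministic quantity $B$ satisfies $B\ge(v/\sqrt{n})\wedge1$ by construction (since $W\ge0$), so that $1/(2B)\ge1/2$ lies in the domain of Assumption~\ref{A-VC} and, after the change of variables $u=1/z$, the $L$-bound gives $\int_0^{2B}\sqrt{\Ent(1/z)}\,dz\le2LB\sqrt{\Ent(1/(2B))}\le2LB\sqrt{H}$ by monotonicity of $\Ent$. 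From here your quadratic-in-$\sqrt{W}$ argument goes through unchanged. This is exactly how the paper organises the computation.
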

To prove (\ref{B10}) for a given value of $y$ we use (\ref{Eq-VCwS}) and apply this lemma to the 
family $\Y$, the elements of which are bounded by 1 and satisfy~\eref{Bvar} with $v^{2}=6y^{2}$ 
by Proposition~\ref{variance}, and to the function $\Ent_{\!y}$, so that $H=H_y$ and $L=L_y$. 
Since $2ab\le\alpha a^2+\alpha^{-1}b^2$ for all $a,b\in\R$ and $\alpha=c_0/C_0$, we derive that
\[
\w^{S}(\gs,\overline \gs,y)\le{C_{0}\over 2}\cro{\alpha y^{2}+\pa{2+6\alpha^{-1}}L_{y}^{2}H_y}
\le{c_0y^{2}\over2}+C_{0}\pa{1+{3C_0\over c_0}}L_{y}^{2}H_{y}.
\]
Then (\ref{B21}) follows from the definition of $D^{S}(\gs,\overline\gs)$.

Let us now turn to the proof of Lemma~\ref{Koltb}. 
The line of proof is the same as that of Theorem~3.1 of Gin\'e and Koltchinskii~\citeyearpar{MR2243881} with minor changes due to the fact that we consider non i.i.d.\ random variables $X_{i}$. Similar arguments were used in Massart and N\'ed\'elec~\citeyearpar{MR2291502} for classes $\FF$ of indicator functions. 

By a symmetrization argument, 
\[
{\mathbb{E}}\cro{\sup_{f\in \FF}\ab{\sum_{i=1}^{n}\pa{f(X_{i})-{\mathbb{E}}\cro{f(X_{i})}}}}\le 2E=2{\mathbb{E}}\cro{\sup_{f\in\FF}\ab{\sum_{i=1}^n\eps_{i}f(X_{i})}},
\]
where the $\eps_{i}$ are Rademacher random variables independent of the $X_{i}$. Arguing as in Gin\'e and Koltchinskii with $F=1$, we get
\[
E\le Cn^{1/2}{\mathbb{E}}\cro{\int_{0}^{2\widehat \sigma_{n}}\sqrt{\Ent(1/z)}\,dz}
\quad\mbox{with}\quad
\widehat \sigma_{n}^{2}=\sup_{f\in\FF}{1\over n}\sum_{i=1}^{n}f^{2}(X_{i})\le 1.
\]
The function $u\mapsto\Ent(1/u)$ being non-increasing, $u\mapsto \int_{0}^{u}\Ent(1/z)dz$ is concave and therefore 
\[
E\le Cn^{1/2}\int_{0}^{2{\mathbb{E}}[\widehat \sigma_{n}]}\sqrt{\Ent(1/z)}\,dz\le Cn^{1/2}\int_{0}^{2\sqrt{{\mathbb{E}}[\widehat \sigma_{n}^{2}]}}\sqrt{\Ent(1/z)}\,dz.
\]
Symmetrization and contraction arguments together with the fact that $|f|\le 1$ for all $f\in\FF$ lead to
\begin{eqnarray*}
{\mathbb{E}}[\widehat \sigma_{n}^{2}]\le B^{2}={v^{2}+8E\over n}\wedge1 \qquad\mbox{hence}\qquad
\frac{v}{\sqrt{n}}\wedge1\le B\le\frac{\left(v+\sqrt{8E}\right)}{\sqrt{n}}\wedge1.
\end{eqnarray*}
Using a change of variables, the definition of $L$, the monotonicity of $\Ent$ and the bounds for $B$, 
we obtain that
\begin{eqnarray*}
E&\le& Cn^{1/2}\int_{0}^{2B}\sqrt{\Ent(1/z)}\,dz\;\;=\;\;Cn^{1/2}\int_{1/(2B)}^{+\infty}{\sqrt{\Ent(u)}\over u^{2}}\,du\\
&\le&2CLn^{1/2}B\sqrt{\Ent\pa{1\over 2B}}\;\;\le\;\;2CL\left(v+\sqrt{8E}\right)\sqrt{H}.
\end{eqnarray*}
Solving this inequality with respect to $E$ leads to the conclusion.

\subsection{Proof of Proposition~\ref{Kolt}\label{P2}}
Since
\[
\w^{S}({\bf s},\overline{\bf s},y)=\sqrt{2}\,\E\cro{\sup_{t\in T}\ab{\sum_{i=1}^{n}\pa{U_{i,t}-\mathbb{E}\cro{U_{i,t}}}}}\quad\mbox{with}\quad T=\B^{S}(\gs,\overline \gs,y)
\]
and the $U_{i,t}$ defined by~\eref{def-U}, the result derives from the next proposition. In this case
$T\subset\B^{S}(\gs,y)$ so that $|T|\le  |\B^{S}(\gs,y)|$, $H=\log_{+}(2|\B^{S}(\gs,y)|)$, $b=\sqrt{2}$ and $v^{2}=3y^{2}$ (because of Proposition~\ref{variance}). 
%
\begin{prop}\label{P-Espsup}
Let $T$ be a finite set and $U_{1},\ldots,U_{n}$ independent random variables with values in 
$\R^{T}$ satisfying for all $t\in T$
\begin{equation}
\max_{i=1,\ldots,n}|U_{i,t}|\le b\;\;a.s.;\qquad\sum_{i=1}^{n}\mathbb{E}\cro{U_{i,t}^{2}}\le v^{2}
\qquad\mbox{and}\qquad \log_{+}(2|T|)\le H
\label{Eq-Espsup}
\end{equation}
for some positive numbers $b,v$ and $H$. Then,
\[
\mathbb{E}\cro{\sup_{t\in T}\ab{\sum_{i=1}^{n}\pa{U_{i,t}-\mathbb{E}\cro{U_{i,t}}}}}\le
bH+v\sqrt{2H}.
\]
\end{prop}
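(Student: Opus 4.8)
The plan is to reduce the supremum over $T$ to a maximum over a finite family of sums of independent, centered, bounded random variables, and then to apply the classical Bernstein-type (sub-gamma) maximal inequality. As a first step, since $|a|=\max\{a,-a\}$ and $T$ is finite, one writes
\[
\sup_{t\in T}\ab{\sum_{i=1}^{n}\pa{U_{i,t}-\mathbb{E}\cro{U_{i,t}}}}=\max_{1\le j\le N}V_{j},
\]
where $V_{1},\ldots,V_{N}$ enumerate the $N=2|T|$ random variables $\pm\sum_{i=1}^{n}(U_{i,t}-\mathbb{E}[U_{i,t}])$, $t\in T$. Each $V_{j}$ is of the form $\sum_{i=1}^{n}X_{i}$ with the $X_{i}$ independent, $\mathbb{E}[X_{i}]=0$, $|X_{i}|\le 2b$ (since $|U_{i,t}|\le b$ forces $|U_{i,t}-\mathbb{E}[U_{i,t}]|\le 2b$) and $\sum_{i=1}^{n}\mathbb{E}[X_{i}^{2}]=\sum_{i=1}^{n}\Var(U_{i,t})\le\sum_{i=1}^{n}\mathbb{E}[U_{i,t}^{2}]\le v^{2}$.

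The second step is to bound the exponential moment of each $V_{j}$. Using that $u\mapsto(e^{u}-1-u)/u^{2}$ is nondecreasing on $\R$ (so that $e^{\lambda X_{i}}-1-\lambda X_{i}\le X_{i}^{2}(e^{2b\lambda}-1-2b\lambda)/(2b)^{2}$ for $\lambda>0$), then taking expectations with $\mathbb{E}[X_{i}]=0$, using $\log(1+x)\le x$, summing over $i$, and finally invoking the elementary bound $e^{u}-1-u\le u^{2}/(2(1-u/3))$ valid for $0\le u<3$, one obtains, for all $0<\lambda<3/(2b)$,
\[
\log\mathbb{E}\cro{e^{\lambda V_{j}}}\le\frac{v^{2}}{(2b)^{2}}\pa{e^{2b\lambda}-1-2b\lambda}\le\frac{v^{2}\lambda^{2}}{2\pa{1-\frac{2b}{3}\lambda}}.
\]
In other words each $V_{j}$ is sub-gamma with variance factor $v^{2}$ and scale parameter $2b/3$; this is precisely the version of Bernstein's inequality recalled in Massart~\citeyearpar{MR2319879}.

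The third step combines the bound $\mathbb{E}[\max_{j}V_{j}]\le\lambda^{-1}\log\big(\sum_{j}\mathbb{E}[e^{\lambda V_{j}}]\big)\le\lambda^{-1}\big(\log N+\frac{v^{2}\lambda^{2}}{2(1-2b\lambda/3)}\big)$ with an optimisation over $\lambda\in(0,3/(2b))$; equivalently, inverting the Cramér transform of the sub-gamma cumulant generating function (see again Massart~\citeyearpar{MR2319879}) yields
\[
\mathbb{E}\cro{\max_{1\le j\le N}V_{j}}\le v\sqrt{2\log N}+\frac{2b}{3}\log N.
\]
Since $N=2|T|$, one has $\log(2|T|)\le\log_{+}(2|T|)\le H$, and as $2b/3\le b$ the right-hand side is at most $bH+v\sqrt{2H}$, which is the asserted inequality.

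I do not expect any real difficulty here; the only point requiring care is the bookkeeping of constants in the second step. Centering inflates the almost-sure bound from $b$ to $2b$, and one has to check that this loss is absorbed by the factor $1/3$ in the denominator of the Bernstein bound, so that the scale parameter stays below $b$ and the constant in front of $H$ is not degraded; likewise, one must optimise the exponential moment inequality rather than merely integrate a deviation inequality, the latter introducing a spurious additive constant of order $1$.
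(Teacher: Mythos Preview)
Your argument is correct and follows essentially the same route as the paper: a Bernstein/Bennett bound on the Laplace transform of each centered sum, followed by the Jensen/log-sum-exp trick to pass to the maximum over the (doubled) index set, and an optimisation in $\lambda$. The only cosmetic difference is in the bookkeeping of the cumulant bound: the paper applies Bennett's inequality directly to the uncentered variables with the one-sided bound $U_{i,t}\le b$ and the cruder estimate $e^{u}-1-u\le u^{2}/(2(1-u))$, obtaining the denominator $2(1-\lambda b)$ and hence exactly the scale $b$; you center first (bound $2b$), use the sharper $e^{u}-1-u\le u^{2}/(2(1-u/3))$, land on the scale $2b/3$, and then discard the gain. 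Either way one reaches $bH+v\sqrt{2H}$.
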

\begin{proof}
Since the $U_{i,t}$ are independent for $i=1,\ldots,n$ and satisfy (\ref{Eq-Espsup}) for all 
$t\in T$, classical computations of the Laplace transform of $S_{n,t}=\sum_{i=1}^{n}\left(U_{i,t}-{\mathbb{E}}\cro{U_{i,t}}\right)$ give, for $\lambda\in(0,1/b)$, 
\[
{\mathbb{E}}\cro{\exp\pa{\lambda |S_{n,t}|}}\le 2\exp\cro{{\lambda^{2}v^{2}\over 2(1-\lambda b)}}
\quad\mbox{for all }t\in T.
\]
For a proof of this inequality we refer to inequality~(2.21) in Massart~\citeyearpar{MR2319879}. Applying Jensen's inequality and then this bound leads to
\begin{eqnarray*}
\mathbb{E}\cro{\sup_{t\in T}|S_{n,t}|}&=&{1\over \lambda}\log\pa{\exp\pa{\mathbb{E}
\cro{\lambda\sup_{t\in T}|S_{n,t}|}}}\;\;\le\;\;{1\over\lambda}\log\mathbb{E}
\cro{\exp\pa{\lambda\sup_{t\in T}|S_{n,t}|}}\\&\le&{1\over\lambda}\log\pa{\sum_{t\in T}\mathbb{E}\cro{\exp\pa{\lambda |S_{n,t}|}}}\;\;\le\;\;\frac{H}{\lambda}+{\lambda v^{2}\over 2(1-\lambda b)}.
\end{eqnarray*}
Minimizing the right-hand side with respect to $\lambda\in(0,1/b)$ leads to 
$\lambda=\left(v+b\sqrt{2H}\right)^{-1}\!\sqrt{2H}$ and 
finally $\mathbb{E}\cro{\sup_{t\in T}|S_{n,t}|}\le bH+v\sqrt{2H}$.
\end{proof}

\subsection{Proof of Proposition~\ref{histo}\label{P6}}
Let us denote by $P_{s}$ the probability associated to $s$ on $(\X,\A,\mu)$. For $t=\sum_{I\in\I}\left[t_{I}/\mu(I)\right]\1_{I}$ with $\gt\in\B^{S}(\gs,\overline \gs,y)$, $P_s$ almost surely,
\[
\psi\pa{\sqrt{t\over \overline s}(X_{i})}=\sum_{I\in\J}\psi\pa{\sqrt{t_{I}\over \overline s_{I}}}\1_{I}(X_{i})\ \ \mbox{for all}\ i=1,\ldots,n
\]
and, by Cauchy-Schwarz Inequality,
\begin{eqnarray*}
S_{n}(t)&=&\ab{\sum_{i=1}^{n}\cro{\psi\pa{\sqrt{t\over\overline s}(X_{i})}-
\E\cro{\psi\pa{\sqrt{t\over \overline s}(X_{i})}}}}\\&=&\ab{\sum_{I\in\J}
\psi\pa{\sqrt{t_{I}\over\overline s_{I}}}\sum_{i=1}^{n}\cro{\strut\1_{I}(X_{i})-\E\cro{\1_{I}(X_{i})}}}\\
&\le&\cro{\sum_{I\in\J}\psi^{2}\pa{\sqrt{t_{I}\over\overline s_{I}}}P_{s}(I)}^{1/2}\cro{\sum_{I\in\J}\pa{\sum_{i=1}^{n}{{\1_{I}(X_{i})-\E\cro{\1_{I}(X_{i})}}\over\sqrt{P_{s}(I)}}}^{2}}^{1/2}.
\end{eqnarray*}
By Proposition~\ref{variance}, for all $\gt\in \B^{S}(\gs,\overline \gs,y)$,
\[
n\E\cro{\psi^{2}\pa{\sqrt{t\over \overline s}(X_{1})}}=n\sum_{I\in\I}\psi^{2}\pa{\sqrt{t_{I}\over \overline s_{I}}}P_{s}(I)\le 6y^{2},
\]
hence, $P_s$ almost surely,
\begin{eqnarray*}
\sup_{\gt\in \B^{S}(\gs,\overline \gs,y)}S_{n}(t)\le{y\sqrt{6}\over\sqrt{n}}\cro{\sum_{I\in\J}
\pa{\sum_{i=1}^{n}{{\1_{I}(X_{i})-\E\cro{\1_{I}(X_{i})}}\over \sqrt{P_{s}(I)}}}^{2}}^{1/2}.
\end{eqnarray*}
Taking expectations on both sides and using the concavity of the square-root, we get
\[
\w^{S}\pa{\gs,\overline \gs,y}\le {y\sqrt{6}\over \sqrt{n}}\times \sqrt{\sum_{I\in\J}\sum_{i=1}^{n}{P_{s}(I)\over P_{s}(I)}}= y\sqrt{6|\J|},
\]
which leads to the result. 

\subsection{Proof of Theorem~\ref{thm-MLE}\label{P14}}
In order to simplify the notations, when using the Hellinger distance on our model, we shall 
write $h(\theta,\theta')$ instead of $h(t_\theta,t_{\theta'})$. All along this proof, we shall denote 
by $\ab{\cdot}$ the Euclidean distance on $\R^{d}$ (as well as the absolute value when $d=1$) and
by $A_i$, $2\le i\le9$, constants that only depend on the structure of the parametric 
model $\overline{S}$ as described by Assumption~\ref{MLE}.

Since the parametric family $\{t_\theta, \theta\in\Theta'\}$ is regular it has a continuous Fisher 
Information matrix $I(\theta)$ which is also invertible on the compact set $\Theta$ by 
Assumption~\ref{MLE}-$(ii)$. Therefore its eigenvalues are bounded away from zero and 
infinity on $\Theta$ which implies --- see (7.20) p.82 of the book by Ibragimov and Has{'}minski{\u\i}~\citeyearpar{MR620321} --- that
\begin{equation}\label{metric-h}
A_2\ab{\overline\theta-\theta}\le h\left(\overline\theta,\theta\right)\le A_3\ab{\overline\theta-\theta}
\quad\mbox{with }0<A_2<A_3\quad\mbox{for all }\overline\theta,\,\theta\in \Theta.
\end{equation}
It then follows from Assumption~\ref{MLE}-$(iii)$ that
\begin{equation}
\norm{\sqrt{t_{\theta}\over t_{\theta'}}-\sqrt{t_{\overline\theta}\over t_{\theta'}}}_{\infty}\le
\frac{A_1}{A_2}h\left(\theta,\overline{\theta}\right)=A_4h\left(\theta,\overline{\theta}\right)
\quad\mbox{for all }\theta,\,\overline\theta\mbox{ and }\theta'\in\Theta
\label{Eq-control-0}
\end{equation}
and $\overline{S}$ is therefore identifiable.
Using the triangular inequality together with the facts that $\psi$ is 1.15-Lipschitz and satisfies $\psi(1/x)=-\psi(x)$ for all $x>0$, we get for all $\theta,\overline \theta,\theta',\overline \theta'$ in $\Theta$,
\begin{equation}\label{control-1}
\norm{\psi\pa{\sqrt{t_{\theta'}\over t_{\theta}}}-\psi\pa{\sqrt{t_{\overline \theta'}\over t_{\overline \theta}}}}_{\infty}\le 1.15A_4\cro{h\left(\theta,\overline{\theta}\right)+h\left(\theta',\overline{\theta}'\right)}.
\end{equation}
Moreover 
%
\begin{lem}\label{L-cont}
The function
\[
(t,t')\mapsto\rho\left(t',\frac{t+t'}{2}\right)-\rho\left(t,\frac{t+t'}{2}\right)
\]
is uniformly continuous on $\overline{S}\times\overline{S}$ with respect to the Hellinger distance. 
\end{lem}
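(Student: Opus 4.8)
The plan is to reduce everything to a uniform control on $\rho(t,r)$ and $\rho(t',r)$ as $(t,t')$ ranges over $\overline S\times\overline S$, where $r=(t+t')/2$, using the representation of $\rho$ in terms of the function $\psi$ together with the Lipschitz and boundedness properties already established in the proof. Recall from (\ref{def-rho})--(\ref{def-Z}) that, writing everything in the density framework (so each coordinate is the same), one has $\varrho(s,t,t')-\varrho(s,t',t)$ expressed through $\psi(\sqrt{t'/t})$; here, however, the relevant quantity does not involve $s$ at all, only the deterministic pair $(\rho(t',r),\rho(t,r))$. So first I would write, for densities $t,t'$ with $r=(t+t')/2$,
\[
\rho\left(t',r\right)-\rho\left(t,r\right)=\int_{\X}\left(\sqrt{t'}-\sqrt{t}\right)\sqrt{r}\,d\mu
=\frac{1}{\sqrt 2}\int_{\X}\psi\!\left(\sqrt{t'/t}\right)\sqrt{t+t'}\,\sqrt{t}\,d\mu\cdot(\text{const}),
\]
or more simply bound the difference directly: the map $(t,t')\mapsto\rho(t',r)-\rho(t,r)$ is the integral of $(\sqrt{t'}-\sqrt{t})\sqrt{r}$, and $|\sqrt{r}|\le(\sqrt t+\sqrt{t'})/\sqrt 2$, so by Cauchy--Schwarz the difference is controlled by $h(t,t')$ times a bounded quantity.

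The key steps, in order, are: (i) Fix $\eps>0$ and two pairs $(t,t')$, $(\overline t,\overline t')$ in $\overline S$ with $h(t,\overline t)+h(t',\overline t')$ small. (ii) Use the triangle inequality to split
\[
\left|\left[\rho(t',r)-\rho(t,r)\right]-\left[\rho(\overline t',\overline r)-\rho(\overline t,\overline r)\right]\right|
\le\left|\rho(t',r)-\rho(\overline t',\overline r)\right|+\left|\rho(t,r)-\rho(\overline t,\overline r)\right|,
\]
where $\overline r=(\overline t+\overline t')/2$. (iii) For each term, write $\rho(t',r)-\rho(\overline t',\overline r)=\int(\sqrt{t'}\sqrt{r}-\sqrt{\overline t'}\sqrt{\overline r})\,d\mu$ and telescope: $\sqrt{t'}\sqrt{r}-\sqrt{\overline t'}\sqrt{\overline r}=(\sqrt{t'}-\sqrt{\overline t'})\sqrt{r}+\sqrt{\overline t'}(\sqrt{r}-\sqrt{\overline r})$. (iv) Bound the first piece by Cauchy--Schwarz by $\sqrt 2\,h(t',\overline t')$ (since $\int r\,d\mu=1$), and the second piece by $\sqrt 2\,h(r,\overline r)$; finally note $h(r,\overline r)^2=\frac12\int(\sqrt r-\sqrt{\overline r})^2\le\frac12\int(\sqrt t-\sqrt{\overline t})^2+\frac12\int(\sqrt{t'}-\sqrt{\overline t'})^2$ up to a universal constant, using convexity of $u\mapsto\sqrt u$ and the elementary bound $|\sqrt{(a+b)/2}-\sqrt{(a'+b')/2}|\le(|\sqrt a-\sqrt{a'}|+|\sqrt b-\sqrt{b'}|)/\sqrt 2$, which follows from concavity/monotonicity of the square root. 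Collecting constants yields
\[
\left|\left[\rho(t',r)-\rho(t,r)\right]-\left[\rho(\overline t',\overline r)-\rho(\overline t,\overline r)\right]\right|
\le C\left[h(t,\overline t)+h(t',\overline t')\right],
\]
which is uniform in the four densities and proves uniform continuity (indeed Lipschitz continuity) on $\overline S\times\overline S$.

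I expect the only mildly delicate point to be step (iv): obtaining a clean pointwise inequality of the form $|\sqrt{(a+b)/2}-\sqrt{(a'+b')/2}|\le 2^{-1/2}(|\sqrt a-\sqrt{a'}|+|\sqrt b-\sqrt{b'}|)$ for all nonnegative $a,b,a',b'$, so that $h(r,\overline r)$ is genuinely controlled by $h(t,\overline t)+h(t',\overline t')$. This is elementary (it follows from $|\sqrt{u}-\sqrt{v}|\le\sqrt{|u-v|}$ applied with $u=(a+b)/2$, $v=(a'+b')/2$, together with $|u-v|\le(|a-a'|+|b-b'|)/2$ and then $\sqrt{|a-a'|+|b-b'|}\le\sqrt{|a-a'|}+\sqrt{|b-b'|}$), but it is the one place where one must be a little careful with the algebra rather than just invoking Cauchy--Schwarz. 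Everything else is a routine telescoping argument with the triangle and Cauchy--Schwarz inequalities, and in fact gives Lipschitz continuity, which is more than the uniform continuity claimed. Note that no regularity assumption on the model is needed here — the statement holds for any pair of probability densities — so Assumption~\ref{MLE} is not used in this lemma.
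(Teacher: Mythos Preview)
Your approach is correct and in fact yields a stronger conclusion than the paper's. The paper reduces to the continuity of $(t,t')\mapsto h\!\left(t,(t+t')/2\right)$ and controls $h(r,\overline r)$ via the total-variation sandwich $h^2\le\tfrac12\int|p-q|\,d\mu\le\sqrt2\,h$, which only gives $h^2(r,\overline r)\le 2^{-1/2}\bigl[h(t,\overline t)+h(t',\overline t')\bigr]$, i.e.\ a H\"older-$1/2$ modulus. Your telescoping argument on $\rho$ together with the pointwise inequality for $\sqrt{r}$ gives a genuine Lipschitz bound, so you get more for essentially the same effort.

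One correction: the parenthetical justification you give for
\[
\left|\sqrt{\tfrac{a+b}{2}}-\sqrt{\tfrac{a'+b'}{2}}\right|\le 2^{-1/2}\bigl(|\sqrt a-\sqrt{a'}|+|\sqrt b-\sqrt{b'}|\bigr)
\]
does not work. The chain $|\sqrt u-\sqrt v|\le\sqrt{|u-v|}$ followed by $|u-v|\le\tfrac12(|a-a'|+|b-b'|)$ leads to $\sqrt{|a-a'|}+\sqrt{|b-b'|}$, and in general $|\sqrt a-\sqrt{a'}|\le\sqrt{|a-a'|}$, which is the wrong direction. The clean way to see the inequality is to set $x=\sqrt a$, $y=\sqrt b$, $x'=\sqrt{a'}$, $y'=\sqrt{b'}$ and note that $\sqrt{(a+b)/2}=2^{-1/2}\|(x,y)\|_2$, so the reverse triangle inequality gives
\[
2^{-1/2}\bigl|\|(x,y)\|_2-\|(x',y')\|_2\bigr|\le 2^{-1/2}\|(x-x',y-y')\|_2\le 2^{-1/2}\bigl(|x-x'|+|y-y'|\bigr),
\]
which is exactly what you need. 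With this fix, squaring and integrating yields $h^2(r,\overline r)\le h^2(t,\overline t)+h^2(t',\overline t')$, and the rest of your argument goes through unchanged.
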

%
%
\begin{proof} 
It is clearly enough to show the continuity of $(t,t')\mapsto h\left(t,(t+t')/2\strut\right)$ and, since 
\[
\left|h\left(t,(t+t')/2\strut\right)-h\left(u,(u+u')/2\strut\right)\right|\le h(t,u)+h\left((t+t')/2,(u+u')/2\strut\right),
\]
it is enough to bound the second term. By the classical inequalities between the Hellinger and variation distances,
\begin{eqnarray*}
h^2\left(\frac{t+t'}{2},\frac{u+u'}{2}\right)&\le&\frac{1}{2}\int\left|\frac{t+t'}{2}-\frac{u+u'}{2}\right|d\mu\\
&\le&\frac{1}{4}\int\left[|t-u|+|t'-u'|\right]d\mu\;\;\le\;\;\frac{1}{\sqrt{2}}[h(t,u)+h(t',u')],
\end{eqnarray*}
which concludes the proof.
\end{proof}
Together with (\ref{metric-h}) and (\ref{control-1}) the lemma shows that $(\theta,\theta')\mapsto 
\gT(\gX,\gt_{\theta},\gt_{\theta'})$ is continuous from $\Theta\times\Theta$ into $\R$ with probability 1,
uniformly with respect to $\gX$.

Recalling that $s=t_\vartheta\in\overline{S}$, let us set, for $\Gamma\ge1$, $J\in\N$ and $n\ge 1$,
\[
\delta=\sqrt{\Gamma d/n}\quad\mbox{and}\quad
\CC(\Gamma,J)=\ac{(\theta,\theta')\in \Theta^{2}\ \mbox{with}\ h(\vartheta,\theta)\le\delta\;\mbox{ and }\;h(\vartheta,\theta')>2^{J/2}\delta}.
\]
We want to establish the following intermediate result.
\begin{prop}\label{etape0001}
Under Assumption~\ref{MLE}-(i),(ii) and (iii), there exist a positive constant $C$ and a  positive integer $J_{0}$, both depending on $\overline{S}$ only, such that, for all $J\ge J_{0}$ and $\Gamma\in[1,n/d]$,
\[
\P_{\gs}\cro{\sup_{(\theta,\theta')\in\CC(\Gamma,J)}\gT(\gX,\gt_{\theta},\gt_{\theta'})< 0}\ge 
1-\exp[-C2^{J}\Gamma d].
\]
\end{prop}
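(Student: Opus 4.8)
The goal is to show that on the set $\CC(\Gamma,J)$, where $\theta$ is close to the truth $\vartheta$ (within $\delta=\sqrt{\Gamma d/n}$ in Hellinger) and $\theta'$ is far (beyond $2^{J/2}\delta$), the statistic $\gT(\gX,\gt_\theta,\gt_{\theta'})$ is negative with overwhelming probability. The natural strategy is the peeling (slicing) argument over dyadic shells in $h(\vartheta,\theta')$, combined with the deterministic upper bound \eref{eq-UbT} for the mean $T(s,t,t')$ and a uniform control of the fluctuation process $\gZ$ via Talagrand's inequality (Proposition~\ref{talagrand}) together with the entropy/variance bounds already available. First I would recall that $\sqrt{2}\,\gT(\gX,\gt_\theta,\gt_{\theta'}) = \sqrt{2}\,\gT(\gs,\gt_\theta,\gt_{\theta'}) + \gZ(\gX,\gt_\theta,\gt_{\theta'})$ where, applying \eref{eq-UbT} coordinatewise and summing (and using $\gh^2 = n h^2$ on this i.i.d.\ model),
\[
\sqrt{2}\,\gT(\gs,\gt_\theta,\gt_{\theta'}) \le \sqrt{2}\,n\left[c_2 h^2(\vartheta,\theta) - 8c_0 h^2(\vartheta,\theta')\right] \le \sqrt{2}\,n\left[c_2\delta^2 - 8c_0\,2^{J}\delta^2\right],
\]
which is a large negative quantity of order $-n 2^J \delta^2 = -c\,2^J\Gamma d$ once $J\ge J_0$ with $2^{J_0}> c_2/(8c_0)$ (up to the factor $\sqrt2$). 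So it suffices to show that $\sup_{\CC(\Gamma,J)}|\gZ(\gX,\gt_\theta,\gt_{\theta'})|$ does not exceed, say, half of $8\sqrt2 c_0 n 2^J\delta^2$, except on an event of probability $\exp[-C2^J\Gamma d]$.

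The second, and main, step is the peeling estimate. Decompose $\CC(\Gamma,J) \subset \bigcup_{k\ge J}\CC_k$ where on $\CC_k$ we have $h(\vartheta,\theta)\le\delta$ and $2^{k/2}\delta < h(\vartheta,\theta') \le 2^{(k+1)/2}\delta$. On each slice, the relevant class of functions is $\{\psi(\sqrt{t_{\theta'}/t_\theta}) : (\theta,\theta')\in\CC_k\}$; by \eref{control-1} this class is Lipschitz (in $(\theta,\theta')$, hence in $h$) with constant $1.15 A_4$, so its $L_2$-covering numbers are controlled by those of a Euclidean ball of radius $\asymp 2^{k/2}\delta$ in $\R^{2d}$, giving $\log N \le C' d \log(A_5/z)$ — or, more simply, one can invoke that $\overline S$ is VC-subgraph (being a regular parametric family, it embeds into a finite-dimensional structure; alternatively just use the explicit covering bound from the Lipschitz property) so Proposition~\ref{Kolt2}-type bounds apply. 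The variance of $\psi(\sqrt{t_{\theta'}/t_\theta})$ is bounded by $6[h^2(\vartheta,\theta)+h^2(\vartheta,\theta')]/1 \le 6\cdot 2^{k+2}\delta^2$ per coordinate by Proposition~\ref{variance}, so $v_k^2 \le C n 2^k\delta^2$; the functions are bounded by $1$. Feeding $v_k, b=1$, the entropy $H_k \asymp d\log(2^k)\cdot$const (or just $\asymp d$ after solving), and the deviation level $x_k = C'' 2^k\Gamma d$ into Proposition~\ref{talagrand} (with a fixed $c$), we get that on $\CC_k$,
\[
\P_{\gs}\left[\sup_{\CC_k}|\gZ| > \tfrac12\cdot 8\sqrt2 c_0\, n\, 2^k\delta^2\right] \le e^{-x_k} = \exp[-C'' 2^k\Gamma d],
\]
provided $J_0$ is chosen large enough that the expectation term $\E[\sup|\gZ|] \le c_0 v_k\sqrt{H_k} + c_0 H_k \lesssim \sqrt{n 2^k\delta^2 \cdot d} + d = \sqrt{n}\,2^{k/2}\delta\sqrt{d} + d$ and the $v_k^2 c b^{-1}$ term are both dominated by $n 2^k\delta^2$ — this holds because $n 2^k\delta^2 = 2^k\Gamma d \ge 2^k d$, so the linear-in-$d$ terms are absorbed, and $\sqrt{n}\,2^{k/2}\delta\sqrt{d} = \sqrt{\Gamma}\,2^{k/2}d \le 2^k\Gamma d$ once $2^k\Gamma \ge 1$.

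Finally I would sum over the slices:
\[
\P_{\gs}\left[\sup_{\CC(\Gamma,J)}\gT(\gX,\gt_\theta,\gt_{\theta'}) \ge 0\right] \le \sum_{k\ge J}\exp[-C'' 2^k\Gamma d] \le \exp[-C'' 2^J\Gamma d]\sum_{m\ge 0}\exp[-C''(2^{J+m}-2^J)\Gamma d] \le 2\exp[-C'' 2^J\Gamma d],
\]
using $2^{J+m}-2^J \ge m$ for the geometric tail, and then relabel $C = C''/2$ (adjusting so the factor $2$ is absorbed, e.g.\ using $2e^{-C'' 2^J\Gamma d}\le e^{-C 2^J\Gamma d}$ for $J\ge J_0$ large), which gives exactly the claimed bound $1 - \exp[-C2^J\Gamma d]$. \textbf{The main obstacle} I anticipate is bookkeeping the constants in the peeling step so that the deterministic negative drift $-8c_0 n 2^k\delta^2$ genuinely beats the sum of the Talagrand expectation term, the variance term, and the deviation term uniformly over all $k\ge J$ — this is precisely where the hypothesis $\Gamma\le n/d$ (ensuring $\delta\le 1$, so that the local metric equivalence \eref{metric-h} and the Lipschitz bound \eref{control-1} are usable on the whole region $h\le\delta$) and the freedom to enlarge $J_0$ get used. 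A secondary technical point is justifying the covering-number bound \eref{vap0}-style hypothesis for the slice classes; the cleanest route is the Lipschitz estimate \eref{control-1} reducing everything to covering numbers of Euclidean balls, so that $\Ent_y$ can be taken of the form $c\,d\log_+(A/z)$ and Assumption~\ref{A-VC} holds with $L=O(1)$ exactly as in the proof of Theorem~\ref{main1}.
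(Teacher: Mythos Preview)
Your proposal is correct and follows the same overall architecture as the paper's proof: dyadic peeling over shells in $h(\vartheta,\theta')$, a deterministic negative drift from \eref{eq-UbT} (the paper obtains the same thing directly from \eref{rhoM-approx}), the Lipschitz bound \eref{control-1} to control complexity, and a union bound over slices. The one substantive difference lies in how the fluctuation $\sup|\gZ|$ is handled on each slice. The paper carries out an explicit chaining argument: it builds $2^{-k/2}\delta$-nets $B_k\times C_{j,k}$ of the parameter region, applies Bernstein's inequality at the coarsest net and again to each chain increment (bounded in sup-norm via \eref{control-1}), and sums telescopically. You instead package this into a single application of Proposition~\ref{talagrand}, bounding the expectation term by a Dudley/Lemma~\ref{Koltb}-type entropy estimate derived from the same Lipschitz property. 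Your route is more modular and recycles machinery already developed in the paper; the paper's hands-on chaining avoids having to check Assumption~\ref{A-VC} for a slice-dependent entropy function $\Ent(x)=2d\log_+(A_kx)$ with possibly small $A_k\asymp 2^{k/2}\delta$ (a minor wrinkle, curable by flooring $\Ent$ at a positive constant as in the proof of Theorem~\ref{main1}). Both approaches yield the same bound after summation over slices.
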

%
\begin{proof}
First of all, let us note that $\sup_{(\theta,\theta')\in\CC(\Gamma,J)}\gT(\gX,\gt_{\theta},
\gt_{\theta'})$ is measurable since $\CC(\Gamma,J)$ is separable and $(\theta,\theta')\mapsto \gT(\gX,\gt_{\theta},\gt_{\theta'})$ is continuous. Let us then set $B=\ac{\theta\in \Theta\left|\,h(\vartheta,\theta)\le\delta\right.}$,
\[
C_{j}=\ac{\theta\in \Theta\left|\,2^{(J+j)/2}\delta\le h(\vartheta,\theta)<
2^{(J+j+1)/2}\delta\right.}\quad\mbox{for all }j\in\N
\]
and, for $k\in\N$, let $B_{k}\subset B$ and $C_{j,k}\subset C_j$ be $2^{-k/2}\delta$-nets for $B$ and $C_{j}$ respectively. Since, by~\eref{metric-h}, $(\overline S,h)$ and $(\Theta,\ab{\ })\subset (\R^{d},\ab{\ })$ are isometric (up to constants), we may choose $B_{k}$ and $C_{j,k}$ in such a way that 
\begin{equation}
\log\ab{B_{k}}\le A_5dk\qquad\mbox{and}\qquad\log\ab{C_{j,k}}\le A_5d(J+j+1+k)\quad\mbox{for all }k,j\in\N,
\label{card-res}
\end{equation}
as would be the case for Euclidean balls. 

For $\theta\in \Theta$ and $j,k\in\N$, we denote by $\theta_{k}$ and $\theta_{j,k}$ minimizers of the function $\theta'\mapsto h(\theta,\theta')$ over $B_{k}$ and $C_{j,k}$ respectively.
For all $j\in \N$ and $(\theta_{0},\theta'_{j,0})\in B_{0}\times C_{j,0}$, by Proposition~\ref{variance},
\[
\E\cro{\psi^{2}\pa{\sqrt{t_{\theta'_{j,0}}/t_{\theta_0}}}}\le6\cro{h^{2}\left(\vartheta,\theta_{0}\right)+h^{2}\left(\vartheta,\theta'_{j,0}\right)}\le6\delta^2\pa{1+2^{J+j+1}}\le2^{J+j+4}\delta^2.
\]
Since $\ab{\psi\pa{\sqrt{t_{\theta'_{j,0}}/t_{\theta_{0}}}}}\le1$, we may use Bernstein's inequality with $x_{j,0}=2^{J+j}\Gamma d/100$, then \eref{card-res} to derive that 
\begin{eqnarray*}
\lefteqn{\P_{\gs}\cro{\sup_{(\theta_{0},\theta_{j,0}')\in B_{0}\times C_{j,0}}\gT\pa{\gX,t_{\theta_{0}},t_{\theta'_{j,0}}}-\E\cro{\gT\pa{\gX,t_{\theta_{0}},t_{\theta'_{j,0}}}}>x_{j,0}}}\quad
\\&\le& \sum_{(\theta_0,\theta'_{j,0})\in B_{0}\times C_{j,0}}\P_{\gs}\cro{\gT
\pa{\gX,t_{\theta_{0}},t_{\theta'_{j,0}}}-\E\cro{\gT\pa{\gX,t_{\theta_{0}},t_{\theta'_{j,0}}}}>x_{j,0}}
\\&=& \sum_{(\theta_0,\theta'_{j,0})\in B_{0}\times C_{j,0}}\P_{\gs}\cro{\sum_{i=1}^{n}\left(
\psi\pa{\sqrt{{t_{\theta'_{j,0}}\over t_{\theta_{0}}}}(X_{i})}-\E\cro{\psi\pa{\sqrt{{t_{\theta'_{j,0}}
\over t_{\theta_{0}}}}(X_{i})}}\right)>\sqrt{2}\,x_{j,0}}\\&\le& 
\exp\cro{A_5d\pa{J+j+1}-{2x_{j,0}^{2}\over
2\left(2^{J+j+4}\Gamma d+\sqrt{2}\,x_{j,0}/3\right)}}\\&\le& \exp\cro{-C2^{J+j+1}\Gamma d}
\;\;\le\;\; \exp\cro{-(j+1)-C2^{J}\Gamma d}
\end{eqnarray*}
for some $C>0$ and $J_0$ large enough (depending on the $A_i$, which means on $\overline{S}$) since $J\ge J_0$. 

For $(\theta,\theta')\in B\times C_{j}$ and $k\in\N$, let
\begin{eqnarray*}
\lefteqn{\Delta\gT\pa{\gX,\gt_{\theta_{k}},\gt_{\theta'_{j,k}},\gt_{\theta_{k+1}},\gt_{\theta'_{j,k+1}}}}
\hspace{28mm}\\&=&\ac{\gT\pa{\gX,\gt_{\theta_{k+1}},\gt_{\theta'_{j,k+1}}}-
\E\cro{\gT\pa{\gX,\gt_{\theta_{k+1}},\gt_{\theta'_{j,k+1}}}}}\\&&\mbox{}-
\ac{\gT\pa{\gX,\gt_{\theta_{k}},\gt_{\theta'_{j,k}}}-\E\cro{\gT\pa{\gX,\gt_{\theta_{k}},\gt_{\theta'_{j,k}}}}}\\
&=&\frac{1}{\sqrt{2}}\sum_{i=1}^{n}\left[\psi\pa{\sqrt{t_{\theta'_{j,k+1}}\over t_{\theta_{k+1}}}(X_{i})}
-\psi\pa{\sqrt{t_{\theta'_{j,k}}\over t_{\theta_{k}}}(X_{i})}\right]\\&&\mbox{}- \frac{1}{\sqrt{2}}
\sum_{i=1}^{n}\E\cro{\psi\pa{\sqrt{t_{\theta'_{j,k+1}}
\over t_{\theta_{k+1}}}(X_{i})}-\psi\pa{\sqrt{t_{\theta'_{j,k}}\over t_{\theta_{k}}}(X_{i})}}.
\end{eqnarray*}
It follows from \eref{control-1} and \eref{metric-h} that
\begin{eqnarray*}
\norm{\psi\pa{\sqrt{t_{\theta'_{j,k+1}}\over t_{\theta_{k+1}}}}-\psi\pa{\sqrt{t_{\theta'_{j,k}}
\over t_{\theta_{k}}}}}_{\infty}&\le& A_6\cro{h\left(\theta'_{j,k+1},\theta'_{j,k}\right)+h\left(\theta_{k+1},\theta_{k}\right)}\\&\le&A_62^{1-k/2}\delta\pa{2^{-1/2}+1}\;\;<\;\;7A_62^{-k/2-1}\delta,
\end{eqnarray*}
therefore,
\[
\E\left[\rule{0mm}{9mm}\left[\psi\pa{\sqrt{t_{\theta'_{j,k+1}}\over t_{\theta_{k+1}}}}
-\psi\pa{\sqrt{t_{\theta'_{j,k}}\over t_{\theta_{k}}}}\right]^2\right]<
A_72^{-k}\delta^2.
\]
For $x_{j,k}=(k+1)2^{-k/2+J+j}\Gamma d/100$ and $\Gamma d\le n$, we deduce from Bernstein's inequality and~\eref{card-res} that 
\begin{eqnarray*}
\lefteqn{\P_{\gs}\cro{\sup_{\left.
\begin{array}{c}
\scriptstyle(\theta_{k},\theta_{k+1})\in B_{k}\times B_{k+1}\\
\scriptstyle(\theta_{j,k}',\theta_{j,k+1}')\in B_{j,k}\times B_{j,k+1}\\
\end{array}\right.}
\Delta\gT\pa{\gX,\gt_{\theta_{k}},\gt_{\theta'_{j,k}},\gt_{\theta_{k+1}},\gt_{\theta'_{j,k+1}}}>x_{j,k}}}\hspace{20mm}\\
&\le& \exp\cro{2A_5d\pa{J+j+2k+2}-{2x_{j,k}^{2}\over 2\pa{A_72^{-k}\Gamma d+(7/6)x_{j,k}A_62^{-k/2}\delta}}}\\
&\le& \exp\cro{-C\pa{(k+1)2^{J+j+1}\Gamma d}}\;\;\le\;\;\exp\cro{-(k+1)-(j+1)-C2^{J}\Gamma d},
\end{eqnarray*}
for some $C>0$ and $J_0$ large enough (depending on $\overline{S}$). 

Putting all these bounds together, we get, for $J\ge J_0$ large enough and with probability at least 
\[
1-e^{-C2^{J}\Gamma d}\pa{\sum_{j\ge 1}e^{-j}+\sum_{j\ge 1}e^{-j}\sum_{k\ge 1}e^{-k}}\ge 1-e^{-C2^{J}\Gamma d},
\] 
for some $C>0$, that for all $j\in\N$, $\theta\in B$ and $\theta'\in C_{j}$, 
\begin{eqnarray*}
\gT(\gX,\gt_{\theta},\gt_{\theta'})&=&\E\cro{\gT(\gX,\gt_{\theta},\gt_{\theta'})}+\gT(\gX,\gt_{\theta},\gt_{\theta'})-\E\cro{\gT(\gX,\gt_{\theta},\gt_{\theta'})}\\&=& 
\E\cro{\gT(\gX,\gt_{\theta},\gt_{\theta'})}+\lim_{k\rightarrow+\infty}
\left\{\gT\pa{\gX,\gt_{\theta_{k}},\gt_{\theta'_{j,k}}}-
\E\cro{\gT\pa{\gX,\gt_{\theta_{k}},\gt_{\theta'_{j,k}}}}\right\}\\&=&
\E\cro{\gT(\gX,\gt_{\theta},\gt_{\theta'})}+\gT\pa{\gX,\gt_{\theta_{0}},\gt_{\theta'_{j,0}}}-
\E\cro{\gT\pa{\gX,\gt_{\theta_{0}},\gt_{\theta'_{j,0}}}}\\&&\mbox{}+\sum_{k\in\N}
\Delta\gT\pa{\gX,\gt_{\theta_{k}},\gt_{\theta'_{j,k}},\gt_{\theta_{k+1}},\gt_{\theta'_{j,k+1}}}\\
&\le&\E\cro{\gT(\gX,\gt_{\theta},\gt_{\theta'})}+x_{j,0}+\sum_{k\in\N}x_{j,k}\\&\le& 
\E\cro{\gT(\gX,\gt_{\theta},\gt_{\theta'})}+{2^{J+j}\Gamma d\over 100}\pa{1+\sum_{k\ge 0}(k+1)2^{-k/2}}.
\end{eqnarray*}
Finally, with probability at least $1-e^{-C2^{J}\Gamma d}$, for all $(\theta,\theta')\in B\times C_{j}$ and $j\in\N$,
\begin{equation}\label{eq-step}
\gT(\gX,\gt_{\theta},\gt_{\theta'})<\E\cro{\gT(\gX,\gt_{\theta},\gt_{\theta'})}+0.13\left(2^{J+j}n\delta^2\right).
\end{equation}
We conclude by using (\ref{rhoM-approx}) which implies that, if $(\theta,\theta')\in B\times C_{j}$,
\begin{eqnarray*}
n^{-1}\E\cro{\gT(\gX,t_{\theta},t_{\theta'})}&=&\varrho(s,t_{\theta'},t_{\theta})-
\varrho(s,t_{\theta},t_{\theta'})\;\;\le\;\; \varrho(s,t_{\theta'},t_{\theta})-\rho(s,t_{\theta})\\
&\le&\varrho(s,t_{\theta'},t_{\theta})-\rho(s,t_{\theta'})+\rho(s,t_{\theta'})-\rho(s,t_{\theta})\\
&\le&{1\over \sqrt{2}}\cro{h^{2}(\vartheta,\theta')+h^{2}(\vartheta,\theta)}+
h^{2}(\vartheta,\theta)-h^{2}(\vartheta,\theta')\\&=& 
-\cro{\pa{1-{1\over \sqrt{2}}}h^{2}(\vartheta,\theta')-\pa{1+{1\over\sqrt{2}}}h^{2}(\vartheta,\theta)}\\
&\le&-\cro{2^{J+j}\pa{1-{1\over \sqrt{2}}}-\pa{1+{1\over\sqrt{2}}}}\delta^2\;\;<\;\; 
-0.255\times2^{J+j}\delta^2
\end{eqnarray*}
provided that $J_{0}$ is large enough since $J\ge J_{0}$.
\end{proof}

Let us now proceed with the proof of Theorem~\ref{thm-MLE}. By Assumption~\ref{MLE}-$(iv)$, 
the MLE $\widetilde\theta_n$ converges towards the true parameter $\vartheta$ and, since the model is regular, it converges at rate $1/\sqrt{n}$ by Corollary 5.53 of van der Vaart~\citeyearpar{MR1652247}. 
Therefore, given $\varepsilon>0$, for $\Gamma$ large enough depending on $\varepsilon$, 
$h\pa{\vartheta,\widetilde\theta_{n}}\le\delta=\sqrt{\Gamma d/n}$ with probability larger than 
$1-\varepsilon/2$. We may now apply Proposition~\ref{etape0001} with this particular value of 
$\Gamma$, provided that $n\ge\Gamma d$. It follows that, for a suitable choice of $J\ge J_0$,
\[
\P_{\gs}\cro{\sup_{(\theta,\theta')\in\CC(\Gamma,J)}\gT(\gX,\gt_{\theta},\gt_{\theta'})< 0}\ge 
1-\exp[-C2^{J}\Gamma d]\ge1-\varepsilon/2.
\]
Therefore
\[
\P_{\gs}\cro{\sup_{\theta'\in\mathcal B^c}\gT\pa{\gX,\gt_{\widetilde\theta_{n}},\gt_{\theta'}}<0}
\ge 1-\varepsilon\;\;\mbox{ with }\;\;
\mathcal B=\left\{\theta'\in\Theta\;\mbox{ such that }h\pa{\vartheta,\theta'}\le2^{J/2}\delta\right\}.
\]
From now on, we shall work on the event of probability larger than $1-\varepsilon$ on which 
\begin{equation}\label{eq88}
h\pa{\vartheta,\widetilde\theta_{n}}\le\delta\qquad\mbox{and}\qquad
\sup_{\theta'\in\mathcal B^c}\gT\pa{\gX,\gt_{\widetilde\theta_{n}},\gt_{\theta'}}<0.
\end{equation}
It remains to evaluate 
$\sup_{\theta'\in\mathcal B}\gT\pa{\gX,\gt_{\widetilde\theta_{n}},\gt_{\theta'}}$ on this event.
For all $\theta'\in\mathcal B$, $h\pa{\widetilde\theta_{n},\theta'}\le2^{(J+2)/2}\delta$.
Moreover, using the inequalities
\[
0\le \sqrt{{a+b\over 2}}-\frac{\sqrt{a}+\sqrt{b}}{2}\le {\pa{\sqrt{b}-\sqrt{a}}^{2}\over 4\sqrt{a}}
\quad\mbox{for all }a,b>0,
\]
which both derive from $2xy\le x^2+y^2$, we get
\begin{eqnarray}
\lefteqn{\rho\pa{t_{\theta'},{t_{\widetilde\theta_{n}}+t_{\theta'}\over 2}}-\rho\pa{t_{\widetilde\theta_{n}},{t_{\widetilde\theta_{n}}+t_{\theta'}\over 2}}}\hspace{40mm}
\nonumber\\&=&\int \pa{\sqrt{t_{\theta'}}-\sqrt{t_{\widetilde\theta_{n}}}}\sqrt{{t_{\widetilde\theta_{n}}+t_{\theta'}\over 2}}\,d\mu\nonumber\\&=&
\int \pa{\sqrt{t_{\theta'}}-\sqrt{t_{\widetilde\theta_{n}}}}\cro{\sqrt{{t_{\widetilde\theta_{n}}+t_{\theta'}\over 2}}-{\sqrt{t_{\theta'}}+\sqrt{t_{\widetilde\theta_{n}}}\over 2}}d\mu
\nonumber\\&\le& {1\over4}
\int\ab{\sqrt{t_{\theta'}\over t_{\theta'}}-\sqrt{t_{\widetilde\theta_{n}}\over t_{\theta'}}}\times \pa{\sqrt{t_{\theta'}}-\sqrt{t_{\widetilde\theta_{n}}}}^{2}d\mu\nonumber\\&\le&{1\over2}
\norm{\sqrt{t_{\theta'}\over t_{\theta'}}-\sqrt{t_{\widetilde\theta_{n}}\over t_{\theta'}}}_\infty
h^2\left(\widetilde\theta_{n},\theta'\right)\;\;\le\;\;\frac{A_4}{2}h^3\left(\widetilde\theta_{n},\theta'\right)
\label{Eq-MLE1}
\end{eqnarray}
by (\ref{Eq-control-0}). Besides, when $u$ converges to 0, $\psi(1+u)=\left(1/\sqrt{2}\right)\log(1+u)+O(u^{3})$.
Setting $u=\sqrt{t_{\theta'}/t_{\widetilde\theta_{n}}}-1$ so that by (\ref{Eq-control-0}) $|u|\le A_4
h\pa{\widetilde\theta_{n},\theta'}$, then using the fact that $\widetilde \theta_{n}$ maximizes the likelihood,  we derive that 
\[
2\sqrt{2}\sum_{i=1}^{n}\psi\!\pa{\sqrt{t_{\theta'}\over t_{\widetilde\theta_{n}}}(X_{i})}\!\le
\sum_{i=1}^{n}\log t_{\theta'}(X_{i})-\sum_{i=1}^{n}\log t_{\widetilde\theta_{n}}(X_{i})+
A_{8}nh^3\!\pa{\widetilde\theta_{n},\theta'}\!\le A_{8}nh^3\!\pa{\widetilde\theta_{n},\theta'}\!.
\]
Together with (\ref{eq88}) and (\ref{Eq-MLE1}) this shows that, with probability larger than $1-\varepsilon$,
\[
\sup_{\theta'\in\mathcal B}\gT\pa{\gX,\gt_{\widetilde\theta_{n}},\gt_{\theta'}}\le
\frac{A_4+A_{8}}{4}nh^3\pa{\widetilde\theta_{n},\theta'},
\]
hence
\begin{equation}\label{eq-topo}
\sup_{\theta'\in\Theta}\gT\pa{\gX,\gt_{\widetilde\theta_{n}},\gt_{\theta'}}
<A_{9}\left(2^J\Gamma d\right)^{3/2}n^{-1/2}.
\end{equation}
Since the mapping $(\theta,\theta')\mapsto \gT\pa{\gX,t_{\theta},t_{\theta'}}$ is uniformly continuous on 
$\Theta\times \Theta$, 
\[
{\mathcal E}=\ac{t_{\theta}\in \overline S,\ \quad\sup_{\theta'\in\Theta}\gT\pa{\gX,\gt_{\theta},\gt_{\theta'}}<A_{9}\left(2^J\Gamma d\right)^{3/2}n^{-1/2}}
\]
is an open subset of $\overline S$ hence $S\cap{\mathcal E}$ is also dense in ${\mathcal E}$.
Besides,  $S\cap{\mathcal E}\subset \EE(\gX, S)$ for $n$ large enough. Then, using~\eref{eq-topo} 
we get with probability at least $1-\eps$
\[
\gt_{\widetilde\theta_{n}}\in{\mathcal E}={\mathcal E}\cap{\rm Cl}(S\cap{\mathcal E})\subset {\rm Cl}(S\cap{\mathcal E})\subset {\rm Cl}(\EE(\gX, S)),
\]
showing that $\gt_{\widetilde\theta_{n}}$ is a $\rho$-estimator.

\subsection{Proof of Theorem~\ref{T-crochet}}\label{P15}
Inequality~\eref{eq-BR-rho-MLE} is obtained by combining~\eref{eq-Krobuste} (Assumption~\ref{A-MLE}-$(i)$ corresponds to~\eref{hypo-KL} in the density context) and~\eref{borneD-S}. Consequently, it suffices to prove~\eref{borneD-S} and to do so  we may assume with no loss of generality that $s=\overline s\in S$, which we shall do in the remaining part of this proof.

Let us consider the symmetric family $\FF=\FF(y)$ defined for $y=\sigma\sqrt{n}>0$ by 
\[
\FF=\FF(y)=\ac{\left.\psi(\sqrt{t/s})\,\right|\,\gt\in \B^{S}(\gs,y)}\bigcup \ac{\left.-\psi(\sqrt{t/s})\,\right|\,\gt \in \B^{S}( \gs,y)}.
\]
For all $f\in\FF$, $\ab{f}\le 1$ and it follows from Proposition~\ref{variance} that for all integers $k\ge 2$, $\E\cro{|f(X_{1})|^{k}}\le \E\cro{f^{2}(X_{1})}\le(6\sigma^{2})\wedge1$. Since $\psi$ is increasing and Lipschitz with Lipschitz constant $L<\sqrt{3}$, it follows from Assumption~\ref{A-MLE}-$(ii),(iii)$ that the family of pairs $\I^{\psi}(s,\sigma,\epsilon)$ given by
\[
\ac{\pa{\psi(\sqrt{t_{L}/s}),\psi(\sqrt{t_{U}/s})},\pa{-\psi(\sqrt{t_{U}/s}),-\psi(\sqrt{t_{L}/s})}, \ (t_{L},t_{U})\in \I\left(s,\sigma,L^{-1}\epsilon/\sqrt{2}\right)}
\]
covers $\FF$ with at most $2\exp\cro{\HH_{[\ ]}^{S}\left(s,\sigma,L^{-1}\epsilon/\sqrt{2}\right)}\le 
\exp\cro{2\HH_{[\ ]}^{S}\left(s,\sigma,L^{-1}\epsilon/\sqrt{2}\right)}$ brackets and that for all integers $k\ge 2$
\begin{eqnarray*}
\lefteqn{\E\cro{\pa{\psi(\sqrt{t_{U}/s})(X_{1})-\psi(\sqrt{t_{L}/s})(X_{1})}^{k}}}\hspace{30mm}\\
&\le & 2^{k-2}\E\cro{\pa{\psi(\sqrt{t_{U}/s})(X_{1})-\psi(\sqrt{t_{L}/s})(X_{1})}^{2}}\\
&\le& 2^{k-2}L^{2}\int_{\X}\pa{\sqrt{t_{U}}-\sqrt{t_{L}}}^{2}d\mu\;\;\le\;\;\epsilon^{2}\times 2^{k-2}
\;\;\le\;\;\frac{k!}{2}\epsilon^{2}.
\end{eqnarray*}
Note that
\[
\w^{S}(\gs,\gs,y)\le\E\cro{\sup_{f\in\FF}\ab{\sum_{i=1}^{n}\pa{f(X_{i})-\E\cro{f(X_{i})}}}}=\E\cro{\sup_{f\in\FF}\pa{\sum_{i=1}^{n}\pa{f(X_{i})-\E\cro{f(X_{i})}}}}.
\]
We may therefore apply to this last expectation the bound (6.25) of Theorem~6.8 in Massart~\citeyearpar{MR2319879} with $\sigma^2$ replaced by $(6\sigma^{2})\wedge1$, $b=1$, 
$\delta=\epsilon$, $H(\delta)=2\HH_{[\ ]}^{S}\left(s,\sigma,L^{-1}\delta/\sqrt{2}\right)$, 
$\eps=L/\sqrt{3}\in (0,1]$ and $A=\Omega$. It leads to
\begin{eqnarray*}
\w^{S}(\gs,\gs,y)&\le& {27L^{-1}\sqrt{6n}}\int_{0}^{\sigma L\sqrt{2}}
\sqrt{\HH_{[\ ]}^{S}\left(s,\sigma,L^{-1}\epsilon/\sqrt{2}\right)}\,d\epsilon+
8\HH_{[\ ]}^{S}\left(s,\sigma,\sigma\sqrt{3}/L\right)\\&=& 54\sqrt{3n}\int_{0}^{\sigma}
\sqrt{\HH_{[\ ]}^{S}(s,\sigma,z)}\,dz+ 8\HH_{[\ ]}^{S}\left(s,\sigma,\sigma\sqrt{3}/L\right).
\end{eqnarray*}
Since $z\mapsto\HH_{[\ ]}^{S}(s,\sigma,z)$ is non-increasing, $\HH_{[\ ]}^{S}
\left(s,\sigma,\sigma\sqrt{3}/L\right)\le\HH_{[\ ]}^{S}(s,\sigma,\sigma)\le \sigma^{-2}\phi^{2}(\sigma)$. 
Let us now choose some $\lambda_0>1$. It follows from the definition of $\tau_{n}$ and the monotonicity 
of $\sigma\mapsto\phi(\sigma)/\sigma$ that for all 
$\lambda'\in ]1,\lambda_{0}]$ and $\sigma\ge \lambda_{0} \tau_{n}$ 
\[
{\phi(\sigma)\over \sigma}\le {\phi(\lambda'\tau_{n})\over \lambda' \tau_{n}}\le \lambda'\tau_{n}\sqrt{n}\le {\lambda'\over \lambda_{0}}\sigma\sqrt{n}.
\]
Letting $\lambda'$ tend to 1 we get $\phi(\sigma)\le \sigma^{2}\sqrt{n}/\lambda_{0}$. Putting these bounds 
together we get that, for all $y=\sqrt{n}\sigma\ge \lambda_{0}\sqrt{n}\tau_{n}$ with $\lambda_0=2555$,
\[
\w^{S}(\gs,\gs,y)\le 54\sqrt{3n}\phi(\sigma)+8\sigma^{-2}\phi^{2}(\sigma)\le 
\pa{{54\sqrt{3}\over \lambda_{0}}+{8\over \lambda_{0}^{2}}}n\sigma^{2}\le c_0y^{2}.
\]
Finally, $\sup_{\gs\in S}D^{S}(\gs,\gs)\le (\lambda_{0}^{2}n\tau_{n}^{2})\vee 1$.

\subsection{Proof of Proposition~\ref{P-dimpara}\label{P4}}
If $s\ne\overline s$, let $J\in\N$ be such that $h^{2}(s,\overline s)=2^{-J}$ and $\Omega_{J}(\gX)=
\{\omega\in\Omega\,|\,\ell(X_{i})\le J\mbox{  for }i=1,\ldots,n\}$. Since $s\in \overline S$, there exists 
$\theta^{\star}\in \Theta$ such that $s=s_{\theta^{\star}}$ and for $y\ge 1$, let us us set 
\[
\Theta[\theta^{\star},y]=\ac{\theta\in \Theta\ \telque\ \gs_{\theta}\in\B^{S}(\gs,\overline \gs,y)}.
\]
We decompose $\w^{S}(\gs,\overline \gs,y)$ as 
\[
\w^{S}(\gs,\overline \gs,y)=\E\cro{\sup_{\gt\in\B^{S}(\gs,\overline \gs,y)}\ab{\gZ(\gX,\overline \gs,\gt)}}
=\E\cro{\sup_{\theta\in \Theta[\theta^{\star},y]}\ab{\gZ(\gX,\overline \gs,\gs_{\theta})}}=E_1+E_2
\]
with 
\[
E_1=\E\cro{\sup_{\theta\in \Theta[\theta^{\star},y]}\ab{\gZ(\gX,\overline \gs,\gs_{\theta})}
\1_{\Omega_{J}(\gX)}}\quad\mbox{and}\quad
E_2=\E\cro{\sup_{\theta\in \Theta[\theta^{\star},y]}\ab{\gZ(\gX,\overline \gs,\gs_{\theta})}
\1_{\left(\Omega_{J}(\gX)\right)^{c}}}.
\]
Let us bound each of these last two terms from above. 
On the event $\Omega_{J}(\gX)$, $s(X_{i})=\overline s(X_{i})$ for all $i=1,\ldots,n$, hence
\[
E_1\le\E\cro{\sup_{\theta\in \Theta[\theta^{\star},y]}\ab{\sum_{i=1}^{n}\pa{\psi\pa{\sqrt{s_{\theta}(X_{i})\over s(X_{i})}}-\E\cro{\psi\pa{\sqrt{s_{\theta}(X_{i})\over s(X_{i})}}}}}}.
\]
For $\theta\in\Theta$ and $i=1,\ldots,n$, either $\ell(X_{i})\le J(\theta,\theta^{\star})$ and
$s_{\theta}(X_{i})=s(X_{i})=2^{-\ell(X_{i})}$ in which case,  $\psi\pa{\sqrt{s_{\theta}(X_{i})/s(X_{i})}}=\psi(1)=0$, or $\ell(X_{i})>J(\theta,\theta^{\star})$, $s_{\theta}(X_{i})=0$ and then $\psi\pa{\sqrt{s_{\theta}(X_{i})/s(X_{i})}}=\psi(0)=-1$. In both cases $\psi\pa{\sqrt{s_{\theta}(X_{i})/s(X_{i})}}=-\1_{\ell(X_{i})>J(\theta,\theta^{\star})}$. Let us now introduce $n$ Rademacher random variables $\eps_{1},\ldots,\eps_{n}$, independent of the $X_{i}$. By a symmetrization argument,
\[
E_1\le2\E\cro{\sup_{\theta\in \Theta[\theta^{\star},y]}\ab{\sum_{i=1}^{n}\eps_{i}\psi\pa{\sqrt{s_{\theta}(X_{i})\over s(X_{i})}}}}=2\E\cro{\sup_{\theta\in \Theta[\theta^{\star},y]}\ab{\sum_{i=1}^{n}\eps_{i}\1_{\ell(X_{i})>J(\theta,\theta^{\star})}}}.
\]

Let us now work conditionally on $X_{1},\ldots,X_{n}$ and denote by $\mathbb{E}_{\eps}$ the 
corresponding conditional expectation. Up to a re-ordering of the $\eps_{i}$, we may assume with 
no loss of generality that $\ell(X_{1})\ge \ell(X_{2})\ge \ldots\ge \ell(X_{n})$. Then 
$\sum_{i=1}^{n}\eps_{i}\1_{\ell(X_{i})>J(\theta,\theta^{\star})}$ is necessarily of the form 
$\sum_{i=1}^{k}\eps_{i}$ for some non-negative integer $k=k(\theta,\theta^{\star},\gX)$ corresponding 
to the number of $X_{i}$ of length $\ell(X_{i})$ larger than $J(\theta,\theta^{\star})$. By~\eref{f-h}, for 
$\theta\in \Theta[\theta^{\star},y]$, $J(\theta,\theta^{\star})\ge\log_{2}(n/y^{2})$, hence $k$ cannot 
exceed the number $\widehat N$ of $X_{i}$ of length not smaller than $\log_{2}(n/y^{2})$. 
We deduce that 
\[
\sup_{\theta\in \Theta[\theta^{\star},y]}\ab{\sum_{i=1}^{n}\eps_{i}\1_{\ell(X_{i})>J(\theta,\theta^{\star})}}
\le\max_{0\le k\le \widehat N}\ab{\sum_{i=1}^{k}\eps_{i}}
\]
with the convention $\sum_{i=1}^{0}=0$. Taking the expectation (conditionnaly on $\gX$) and using 
Doob's maximal inequality we get
\begin{eqnarray*}
\lefteqn{\mathbb{E}_{\eps}\cro{\sup_{\theta\in \Theta[\theta^{\star},y]}\ab{\sum_{i=1}^{n}
\eps_{i}\1_{\ell(X_{i})>J(\theta,\theta^{\star})}}\,}}\hspace{40mm}\\&\le&\mathbb{E}_{\eps}
\cro{\max_{0\le k\le \widehat N}\ab{\sum_{i=1}^{k}\eps_{i}}\,}\;\;\le\;\;\pa{\mathbb{E}_{\eps}
\cro{\max_{0\le k\le \widehat N}\ab{\sum_{i=1}^{k}\eps_{i}}^{2}}}^{1/2}\\&\le& 
2\pa{\mathbb{E}_{\eps}\cro{\ab{\sum_{i=1}^{\widehat N}\eps_{i}}^{2}}}^{1/2}\;\;=\;\;2\sqrt{\widehat N}.
\end{eqnarray*}
Taking the expectation with respect to $X_{1},\ldots,X_{n}$ finally leads to
\begin{eqnarray*}
E_1&\le&4\E\cro{\sqrt{\widehat N}}\;\;\le\;\;4\sqrt{\E\pa{\sum_{i=1}^{n}
\1_{\ell(X_{i})>\log_{2}(n/y^{2})}}}\\
&=&4\sqrt{n\P_{\gs}\cro{\ell(X_{1})>\log_{2}\left(\frac{n}{y^2}\right)}}\;\;=\;\;
4\cro{n\sum_{j>\log_{2}(n/y^{2})}2^{-j}}^{1/2}\;\;\le\;\;4\sqrt{2y^{2}}.
\end{eqnarray*}

As to $E_2$, since $\psi$ is bounded by one and $y\ge 1$, it satisfies
\[
E_2\le2n\P_{\gs}\cro{\Omega_{J}(\gX)^{c}}\le 2n^{2}\P_{\gs}\cro{\ell(X_{1})>J}= 2n^{2}2^{-J}\le 2n^{2}2^{-J}y= 2n^{2}h^{2}(s,\overline s)y.
\]
Putting these bounds together, we get 
\begin{equation}
\w^{S}(\gs,\overline \gs,y)=E_1+E_2\le 2\left(2\sqrt{2}+n^2h^2(s,\overline s)\right)y\quad\mbox{for all }y\ge1,
\label{Eq-aux7}
\end{equation}
which leads to the bound on $D^{S}(\gs,\overline \gs)$.

If $s=\overline{s}$, we proceed in the same way with $J=+\infty$ which means that $\Omega_{J}(\gX)^{c}=\varnothing$, $E_2=0$ and (\ref{Eq-aux7}) remains valid.

\subsection{Proof of Proposition~\ref{approx-q}}\label{P-prop10}
Note that the set of densities $\{p^\beta,\beta>0\}$ is a regular statistical model with respect to  the parameter $\beta$. Following Theorem 2.1 p. 121 (equation~2.9) and Section~5 p.133 of the book by Ibragimov and Has{'}minski{\u\i}~\citeyearpar{MR620321}, for $\beta'>\beta>0$
\begin{equation}
h^2\left(p^\beta,p^{\beta'}\right)\le
\left[\sup_{\beta\le b\le\beta'}\frac{I(b)(\beta'-\beta)^2}{8}\right]\bigwedge1,
\label{Eq-Fisher}
\end{equation}
where $I$ denotes the Fisher Information of this parametric model which is given by
\[
I(b)=\int_\R\frac{\left[\dot{p^b}(x)\right]^2}{p^b(x)}dx=
2\int_0^\infty\frac{\left[\dot{p^b}(x)\right]^2}{p^b(x)}dx
\]
and $\dot{p^b}(x)$ is the derivative of $p^b(x)$ with respect to $b$. It follows from
(\ref{Eq-Gamma1}) that
\[
\dot{p^b}(x)=p^b(x)\left[-\frac{1}{b}-\frac{\Gamma'(b)}{\Gamma(b)}+
\frac{1}{b^2}x^{1/b}\log x\right]\quad\mbox{for }x>0,
\]
hence
\[
\frac{\left[\dot{p^b}(x)\right]^2}{p^b(x)}\le3p^b(x)\left[\frac{1}{b^2}+
\left(\frac{\Gamma'(b)}{\Gamma(b)}\right)^2+\frac{1}{b^4}x^{2/b}(\log x)^2\right]
\]
and
\[
I(b)\le3\left[\frac{1}{b^2}+\left(\frac{\Gamma'(b)}{\Gamma(b)}\right)^2+
\frac{1}{b^5\Gamma(b)}\int_0^\infty e^{-x^{1/b}}x^{2/b}(\log x)^2dx\right].
\]
Using again a change of variables we get 
\[
\int_0^\infty e^{-x^{1/b}}x^{2/b}(\log x)^2dx=b^3\int_0^\infty e^{-u}u^{b+1}(\log u)^2du=
b^3\Gamma''(b+2),
\]
so that finally,
\[
I(b)\le3J\quad\mbox{with}\quad J\le\frac{1}{b^2}+\left(\frac{\Gamma'(b)}{\Gamma(b)}\right)^2+
\frac{\Gamma''(b+2)}{b^2\Gamma(b)}=\frac{1}{b^2}+\left(\frac{\Gamma'(b)}{\Gamma(b)}\right)^2
+\frac{(b+1)\Gamma''(b+2)}{b\Gamma(b+2)}.
\]
Binet's formula for $\log\Gamma$ (see Whittaker and Watson~\citeyearpar{MR1424469}  page 251)  tells us that
\begin{equation}
\frac{\Gamma'(b)}{\Gamma(b)}=\log b-\frac{1}{2b}-2k(b)\quad\mbox{with}\quad 
k(b)=\int_0^\infty\frac{x}{\left(x^2+b^2\right)\left(e^{2\pi x}-1\right)}\,dx
\label{Eq-gamma1}
\end{equation}
hence
\[
\frac{\Gamma''(b)}{\Gamma(b)}-\left(\frac{\Gamma'(b)}{\Gamma(b)}\right)^2=\frac{1}{b}
+\frac{1}{2b^2}+4b\int_0^\infty\frac{x}{\left(x^2+b^2\right)^2\left(e^{2\pi x}-1\right)}\,dx.
\]
One should then observe that, since $e^u\ge1+u$,
\begin{equation}
0\le2k(b)\le\frac{1}{\pi}\int_0^\infty\frac{dx}{\left(x^2+b^2\right)}=\frac{1}{2b}
\label{Eq-gamma2}
\end{equation}
and
\[
0\le\int_0^\infty\frac{x}{\left(x^2+b^2\right)^2\left(e^{2\pi x}-1\right)}\,dx\le\frac{1}{2\pi}\int_0^\infty
\frac{dx}{\left(x^2+b^2\right)^2}=\frac{1}{2\pi b^3}\int_0^\infty\frac{dx}{\left(x^2+1\right)^2}=
\frac{1}{8b^3}.
\]
It follows that
\begin{equation}
J\le\frac{1}{b^2}+\left(\frac{\Gamma'(b)}{\Gamma(b)}\right)^2+\frac{b+1}{b}
\left[\left(\frac{\Gamma'(b+2)}{\Gamma(b+2)}\right)^2+\frac{1}{b+2}+\frac{1}{(b+2)^2}\right].
\label{Eq-gamma3}
\end{equation}
Moreover, since $\log\Gamma$ is a strictly convex function (Whittaker and Watson~\citeyearpar{MR1424469}  page 250) with a minimum value at $b_0\in(1,2)$, by 
(\ref{Eq-gamma1}) and (\ref{Eq-gamma2}), $0<\Gamma'(b)/\Gamma(b)<\log b-(2b)^{-1}$ 
for $b>b_0$ and $0<-\Gamma'(b)/\Gamma(b)<b^{-1}-\log b$ for $b<b_0$. It follows that
\[
\left|\frac{\Gamma'(b)}{\Gamma(b)}\right|\le\left\{\begin{array}{ll}1.37b^{-1}&\quad\mbox{if }0<b\le1;
\\1&\quad\mbox{if }1<b\le3;\\ \log b-(2b)^{-1}&\quad\mbox{if }b>3.
\end{array}\right.
\]
Therefore, by (\ref{Eq-gamma3}),
\[
J\le\frac{2.88}{b^2}+\frac{b(b+1)}{b^2}\left[1+\frac{1}{b+2}+\frac{1}{(b+2)^2}\right]
\le\frac{52}{9b^2}\quad\mbox{for }b\le1;
\]
\[
J\le\frac{1}{b^2}+1+\frac{b+1}{b}\left[\left(\log(b+2)-\frac{1}{2(b+2)}\right)^2+\frac{1}{b+2}
+\frac{1}{(b+2)^2}\right]\le4.63\quad\mbox{for }1<b\le3
\]
and
\[
J\le\frac{1}{b^2}+(\log b)^2+\frac{b+1}{b}\left[\left[\log(b+2)\right]^2+\frac{1}{b+2}+\frac{1}{(b+2)^2}\right]
\le4.23(\log b)^2\quad\mbox{for }b>3.
\]
Finally
\[
\frac{I(b)}{8}\le\left\{\begin{array}{ll}13/(6b^2)&\quad\mbox{if }0<b\le1
\\ 7/4&\quad\mbox{if }1<b\le3\\(1.3\log b)^2&\quad\mbox{if }b>3
\end{array}\right.
\]
and our first bound then follows from (\ref{Eq-Fisher}).

Let us now turn to the second inequality. 
\[
h^{2}\pa{p^{\beta},p^0}=1-\int_{-1}^1\sqrt{p^\beta(x)/2}\,dx=
1-\frac{1}{\sqrt{\beta\Gamma(\beta)}}\int_0^1\exp\left[-{x^{1/\beta}\over 2}\right]dx.
\]
Since $\beta\Gamma(\beta)=\Gamma(\beta+1)\le1$ for $0<\beta\le1$ and
\[
\int_0^1\exp\left[-{x^{1/\beta}\over 2}\right]dx\ge1-\int_0^1{x^{1/\beta}\over 2}dx=
1-\frac{\beta}{2(\beta+1)},
\]
(\ref{Eq-gamma7}) follows.

To control $\w_{p^\beta}$ we observe that, for $\beta>0$,
\[
h^{2}\pa{p^{\beta} ,{1\over \lambda}p^{\beta} \pa{\cdot \over \lambda}}=1-\frac{1}{\beta\Gamma(\beta)\sqrt{\lambda}}\int_{0}^{+\infty}e^{-(1/2){x^{1/\beta}}\pa{1+\lambda^{-1/\beta}}}dx.
\]
Using the change of variables $z=x\pa{(1+\lambda^{-1/\beta})/2}^{\beta}$, and the assumption 
$\lambda\in[1,2]$, we get
\begin{eqnarray*}
h^{2}\pa{p^{\beta} ,{1\over \lambda}p^{\beta} \pa{\cdot \over \lambda}}&=&1-\frac{1}{\beta\Gamma(\beta)\sqrt{\lambda}}\int_{0}^{+\infty}2^{\beta}\pa{1+\lambda^{-1/\beta}}^{-\beta}e^{-{z^{1/\beta}}}dz\\
&=&1-\frac{2^{\beta}}{\sqrt{\lambda}\pa{1+\lambda^{-1/\beta}}^{\beta}}\;\;=\;\;
\left(\frac{2}{\lambda^{1/\beta}+1}\right)^\beta\left[\left(\frac{\lambda^{1/\beta}+1}{2}\right)^\beta
-\sqrt{\lambda}\right]\\
&\le&\left(\frac{\lambda^{1/\beta}+1}{2}\right)^\beta
-\sqrt{\lambda}\;\;\le\;\;\lambda-\sqrt{\lambda}\;\;<\;\;(3/5)(\lambda-1).
\end{eqnarray*}
The particular case of $\beta=0$ is straightforward.\\

\paragraph{\bf Acknowledgements} One of the authors is grateful to Vladimir Koltchinskii for stimulating discussions and especially letting him know about the nice properties of VC-subgraph classes and all authors would like to thank the referee for his/her many useful comments.


\begin{thebibliography}{}

\bibitem[Audibert and Catoni, 2011]{MR2906886}
Audibert, J.-Y. and Catoni, O. (2011).
\newblock Robust linear least squares regression.
\newblock {\em Ann. Statist.}, 39(5):2766--2794.

\bibitem[Baraud, 2002]{MR1918295}
Baraud, Y. (2002).
\newblock Model selection for regression on a random design.
\newblock {\em ESAIM Probab. Statist.}, 6:127--146.

\bibitem[Baraud, 2011]{MR2834722}
Baraud, Y. (2011).
\newblock Estimator selection with respect to {H}ellinger-type risks.
\newblock {\em Probab. Theory Related Fields}, 151(1-2):353--401.

\bibitem[Barron et~al., 1999]{MR1679028}
Barron, A., Birg{\'e}, L., and Massart, P. (1999).
\newblock Risk bounds for model selection via penalization.
\newblock {\em Probab. Theory Related Fields}, 113(3):301--413.

\bibitem[Barron, 1991]{MR1154352}
Barron, A.~R. (1991).
\newblock Complexity regularization with application to artificial neural
  networks.
\newblock In {\em Nonparametric Functional Estimation and Related Topics
  ({S}petses, 1990)}, volume 335 of {\em NATO Adv. Sci. Inst. Ser. C Math.
  Phys. Sci.}, pages 561--576. Kluwer Acad. Publ., Dordrecht.

\bibitem[Birg{\'e}, 1983]{MR722129}
Birg{\'e}, L. (1983).
\newblock Approximation dans les espaces m\'etriques et th\'eorie de
  l'estimation.
\newblock {\em Z. Wahrsch. Verw. Gebiete}, 65(2):181--237.

\bibitem[Birg{\'e}, 1984]{MR762855}
Birg{\'e}, L. (1984).
\newblock Stabilit\'e et instabilit\'e du risque minimax pour des variables
  ind\'ependantes \'equidistribu\'ees.
\newblock {\em Ann. Inst. H. Poincar\'e Probab. Statist.}, 20(3):201--223.

\bibitem[Birg{\'e}, 2006]{MR2219712}
Birg{\'e}, L. (2006).
\newblock Model selection via testing: an alternative to (penalized) maximum
  likelihood estimators.
\newblock {\em Ann. Inst. H. Poincar\'e Probab. Statist.}, 42(3):273--325.

\bibitem[Birg{\'e}, 2013]{Robusttests}
Birg{\'e}, L. (2013).
\newblock Robust tests for model selection.
\newblock In Banerjee, M., Bunea, F., Huang, J., Koltchinskii, V., and
  Maathuis, M.~H., editors, {\em From Probability to Statistics and Back:
  High-Dimensional Models and Processes}, volume~9, pages 47--64. IMS
  Collections.

\bibitem[Birg{\'e} and Massart, 1993]{MR1240719}
Birg{\'e}, L. and Massart, P. (1993).
\newblock Rates of convergence for minimum contrast estimators.
\newblock {\em Probab. Theory Related Fields}, 97(1-2):113--150.

\bibitem[Birg{\'e} and Massart, 1997]{MR1462939}
Birg{\'e}, L. and Massart, P. (1997).
\newblock From model selection to adaptive estimation.
\newblock In {\em Festschrift for Lucien Le Cam}, pages 55--87. Springer, New
  York.

\bibitem[Birg{\'e} and Massart, 1998]{MR1653272}
Birg{\'e}, L. and Massart, P. (1998).
\newblock Minimum contrast estimators on sieves: exponential bounds and rates
  of convergence.
\newblock {\em Bernoulli}, 4(3):329--375.

\bibitem[Birg{{\'e}} and Massart, 2007]{MR2288064}
Birg{{\'e}}, L. and Massart, P. (2007).
\newblock Minimal penalties for {G}aussian model selection.
\newblock {\em Probab. Theory Related Fields}, 138(1-2):33--73.

\bibitem[Dudley, 1984]{MR876079}
Dudley, R.~M. (1984).
\newblock A course on empirical processes.
\newblock In {\em \'{E}cole d'\'{\'e}t\'e de Probabilit\'es de {S}aint-{F}lour,
  {XII}---1982}, volume 1097 of {\em Lecture Notes in Math.}, pages 1--142.
  Springer, Berlin.

\bibitem[Ghosal et~al., 2000]{MR1790007}
Ghosal, S., Ghosh, J.~K., and van~der Vaart, A.~W. (2000).
\newblock Convergence rates of posterior distributions.
\newblock {\em Ann. Statist.}, 28(2):500--531.

\bibitem[Gin{\'e} and Koltchinskii, 2006]{MR2243881}
Gin{\'e}, E. and Koltchinskii, V. (2006).
\newblock Concentration inequalities and asymptotic results for ratio type
  empirical processes.
\newblock {\em Ann. Probab.}, 34(3):1143--1216.

\bibitem[Grenander, 1981]{MR599175}
Grenander, U. (1981).
\newblock {\em Abstract inference}.
\newblock John Wiley \& Sons, Inc., New York.
\newblock Wiley Series in Probability and Mathematical Statistics.

\bibitem[H{{\'a}}jek, 1972]{MR0400513}
H{{\'a}}jek, J. (1972).
\newblock Local asymptotic minimax and admissibility in estimation.
\newblock In {\em Proceedings of the {S}ixth {B}erkeley {S}ymposium on
  {M}athematical {S}tatistics and {P}robability ({U}niv. {C}alifornia,
  {B}erkeley, {C}alif., 1970/1971), {V}ol. {I}: {T}heory of statistics}, pages
  175--194. Univ. California Press, Berkeley, Calif.

\bibitem[Huber, 1964]{MR0161415}
Huber, P.~J. (1964).
\newblock Robust estimation of a location parameter.
\newblock {\em Ann. Math. Statist.}, 35:73--101.

\bibitem[Huber, 1981]{MR606374}
Huber, P.~J. (1981).
\newblock {\em Robust Statistics}.
\newblock John Wiley \& Sons, Inc., New York.
\newblock Wiley Series in Probability and Mathematical Statistics.

\bibitem[Ibragimov and Has{'}minski{\u\i}, 1980]{Ibrag-Hasm}
Ibragimov, I.~A. and Has{'}minski{\u\i}, R.~Z. (1980).
\newblock On estimate of the density function.
\newblock {\em Zap. Nauchn. Semin. LOMI}, 98(61--85).

\bibitem[Ibragimov and Has{'}minski{\u\i}, 1981]{MR620321}
Ibragimov, I.~A. and Has{'}minski{\u\i}, R.~Z. (1981).
\newblock {\em Statistical Estimation. Asymptotic Theory}, volume~16.
\newblock Springer-Verlag, New York.

\bibitem[Klein and Rio, 2005]{MR2135312}
Klein, T. and Rio, E. (2005).
\newblock Concentration around the mean for maxima of empirical processes.
\newblock {\em Ann. Probab.}, 33(3):1060--1077.

\bibitem[Kolmogorov and Tihomirov, 1961]{MR0124720}
Kolmogorov, A.~N. and Tihomirov, V.~M. (1961).
\newblock {$\varepsilon $}-entropy and {$\varepsilon $}-capacity of sets in
  functional space.
\newblock {\em Amer. Math. Soc. Transl. (2)}, 17:277--364.

\bibitem[Koltchinskii, 2006]{MR2329442}
Koltchinskii, V. (2006).
\newblock Local {R}ademacher complexities and oracle inequalities in risk
  minimization.
\newblock {\em Ann. Statist.}, 34(6):2593--2656.

\bibitem[Le~Cam, 1970]{MR0267676}
Le~Cam, L. (1970).
\newblock On the assumptions used to prove asymptotic normality of maximum
  likelihood estimates.
\newblock {\em Ann. Math. Statist.}, 41:802--828.

\bibitem[Le~Cam, 1973]{MR0334381}
Le~Cam, L. (1973).
\newblock Convergence of estimates under dimensionality restrictions.
\newblock {\em Ann. Statist.}, 1:38--53.

\bibitem[Le~Cam, 1975]{MR0395005}
Le~Cam, L. (1975).
\newblock On local and global properties in the theory of asymptotic normality
  of experiments.
\newblock In {\em Stochastic processes and related topics ({P}roc. {S}ummer
  {R}es. {I}nst. {S}tatist. {I}nference for {S}tochastic {P}rocesses, {I}ndiana
  {U}niv., {B}loomington, {I}nd., 1974, {V}ol. 1; dedicated to {J}erzy
  {N}eyman)}, pages 13--54. Academic Press, New York.

\bibitem[Le~Cam, 1986]{MR856411}
Le~Cam, L. (1986).
\newblock {\em Asymptotic Methods in Statistical Decision Theory}.
\newblock Springer Series in Statistics. Springer-Verlag, New York.

\bibitem[Le~Cam, 1990]{Lecam-MLE}
Le~Cam, L. (1990).
\newblock Maximum likelihood: An introduction.
\newblock {\em Inter. Statist. Review}, 58(2):153--171.

\bibitem[Le~Cam and Yang, 1990]{MR1066869}
Le~Cam, L. and Yang, G.~L. (1990).
\newblock {\em Asymptotics in Statistics. Some Basic Concepts}.
\newblock Springer Series in Statistics. Springer-Verlag, New York.

\bibitem[Massart, 2007]{MR2319879}
Massart, P. (2007).
\newblock {\em Concentration Inequalities and Model Selection}, volume 1896 of
  {\em Lecture Notes in Mathematics}.
\newblock Springer, Berlin.
\newblock Lectures from the 33rd Summer School on Probability Theory held in
  Saint-Flour, July 6--23, 2003.

\bibitem[Massart and N{{\'e}}d{{\'e}}lec, 2006]{MR2291502}
Massart, P. and N{{\'e}}d{{\'e}}lec, {\'E}. (2006).
\newblock Risk bounds for statistical learning.
\newblock {\em Ann. Statist.}, 34(5):2326--2366.

\bibitem[Sart, 2014]{Sart2014}
Sart, M. (2014).
\newblock Estimation of the transition density of a markov chain.
\newblock {\em Annales de l'I.H.P. Probabilit{\'e}s et statistiques},
  50(3):1028--1068.

\bibitem[Sart, 2015]{refId0}
Sart, M. (2015).
\newblock Model selection for poisson processes with covariates.
\newblock {\em ESAIM: PS}, 19:204--235.

\bibitem[Sart, 2016]{sart2016}
Sart, M. (2016).
\newblock Robust estimation on a parametric model via testing.
\newblock {\em Bernoulli}, 22(3):1617--1670.

\bibitem[van~de Geer, 1995]{MR1324688}
van~de Geer, S. (1995).
\newblock The method of sieves and minimum contrast estimators.
\newblock {\em Math. Methods Statist.}, 4(1):20--38.

\bibitem[van~der Vaart and Wellner, 2009]{MR2797943}
van~der Vaart, A. and Wellner, J.~A. (2009).
\newblock A note on bounds for {VC} dimensions.
\newblock In {\em High Dimensional Probability {V}: the {L}uminy volume},
  volume~5 of {\em Inst. Math. Stat. Collect.}, pages 103--107. Inst. Math.
  Statist., Beachwood, OH.

\bibitem[van~der Vaart, 1998]{MR1652247}
van~der Vaart, A.~W. (1998).
\newblock {\em Asymptotic statistics}, volume~3 of {\em Cambridge Series in
  Statistical and Probabilistic Mathematics}.
\newblock Cambridge University Press, Cambridge.

\bibitem[van~der Vaart and Wellner, 1996]{MR1385671}
van~der Vaart, A.~W. and Wellner, J.~A. (1996).
\newblock {\em Weak Convergence and Empirical Processes. With Applications to
  Statistics}.
\newblock Springer Series in Statistics. Springer-Verlag, New York.

\bibitem[Whittaker and Watson, 1996]{MR1424469}
Whittaker, E.~T. and Watson, G.~N. (1996).
\newblock {\em A Course of Modern Analysis}.
\newblock Cambridge Mathematical Library. Cambridge University Press,
  Cambridge.
\newblock An introduction to the general theory of infinite processes and of
  analytic functions; with an account of the principal transcendental
  functions, Reprint of the fourth (1927) edition.

\bibitem[Yang and Barron, 1999]{MR1742500}
Yang, Y. and Barron, A. (1999).
\newblock Information-theoretic determination of minimax rates of convergence.
\newblock {\em Ann. Statist.}, 27(5):1564--1599.

\end{thebibliography}
\bibliographystyle{apalike}

\end{document}